\theoremstyle{plain}
\newtheorem{thm}{Theorem}[section]
\newtheorem{cor}[thm]{Corollary}
\newtheorem{lem}[thm]{Lemma}
\newtheorem{prop}[thm]{Proposition}
\newtheorem{rem}[thm]{Remark}
\newtheorem{ques}[thm]{Question}
\newtheorem{conj}[thm]{Conjecture}
\newtheorem{exam}[thm]{Example}
\def\cal{\mathcal}
\def\bbb{\mathbb}
\def\op{\operatorname}
\renewcommand{\phi}{\varphi}
\newcommand{\N}{\bbb{N}}
\newcommand{\Z}{\bbb{Z}}
\newcommand{\Q}{\bbb{Q}}
\newcommand{\bs}{\backslash}
\begin{document}

\title[On some properties of the number of certain permutations]{On some properties of the number of permutations being products of pairwise disjoint $d$-cycles}
\author{Piotr Miska and Maciej Ulas}

\keywords{permutation, periodicity, $p$-adic valuation, generating function, polynomial} \subjclass[2010]{11B50, 11B83}
\thanks{The research of the first author was partially supported by the grant of the Polish National Science Centre no. UMO-2018/29/N/ST1/00470}


\maketitle

\begin{abstract}
Let $d\geq 2$ be an integer. In this paper we study arithmetic properties of the sequence $(H_d(n))_{n\in\N}$, where $H_{d}(n)$ is the number of permutations in $S_{n}$ being products of pairwise disjoint cycles of a fixed length $d$. In particular we deal with periodicity modulo a given positive integer, behaviour of the $p$-adic valuations and various divisibility properties. Moreover, we introduce some related families of polynomials and study they properties. Among many results we obtain qualitative description of the $p$-adic valuation of the number $H_{d}(n)$ extending in this way earlier results of Ochiai and Ishihara, Ochiai, Takegehara and Yoshida.
\end{abstract}

\section{Introduction}\label{Section1}
We let $\N$ denote the set of non-negative integers, $\N_{+}$ the set of positive integers, $\mathbb{P}$ the set of prime numbers and finally we write $\N_{\geq k}$ for the set $\{n\in\N:\;n\geq k\}$. In the sequel we will also need the notion of the $p$-adic valuation of an integer, where $p\in\mathbb{P}$ is fixed. More precisely, if $n\in\Z$ then the number
$$
\nu_{p}(n):=\op{max}\{k\in\N:\;p^{k}\mid n\}
$$
is called the $p$-adic valuation of $n$. We also adopt the standard convention that $\nu_{p}(0)=+\infty$. From the definition we easily deduce that for each $n_{1}, n_{2}\in\Z$ the following properties hold:
$$
\nu_{p}(n_{1}n_{2})=\nu_{p}(n_{1})+\nu_{p}(n_{2})\quad\mbox{and}\quad \nu_{p}(n_{1}+n_{2})\geq \op{min}\{\nu_{p}(n_{1}),\nu_{p}(n_{2})\}.
$$
If $\nu_{p}(n_{1})\neq \nu_{p}(n_{2})$, then the inequality can be replaced by equality. Moreover, one can easily extend the notion of $p$-adic valuation to rational numbers $x=a/b, b\neq 0$, in the following way: $\nu_{p}(x)=\nu_{p}(a)-\nu_{p}(b)$.

Let $d\in\N_{\geq 2}$, $n\in\N$ and consider the number $H_{d}(n)$ of those $\sigma\in S_{n}$ which are products of pairwise disjoint cycles of length $d$. As usual, the identity permutation is also counted as a product of $0$ cycles of length $d$. In particular, if $d=p\in\mathbb{P}$ is a prime number, then $H_p(n)$ counts the number of solutions in $S_{n}$ of the equation $\sigma^{p}=id$.

In the paper \cite{AmdMoll} the authors initiated the study of arithmetic properties of the sequence $(H_{d}(n))_{n\in\N}$ for $d=2$. In this case, the number $H_{2}(n)$ is the number of involutions in the group $S_{n}$. In particular, they obtained several combinatorial identities, presented description of the $2$-adic valuation of $H_{2}(n)$ and gave precise information about the rates of growth of $H_2(n)$. The mentioned paper can be seen as a complement of case $p=2$ of the papers \cite{GraNew, Och, IOTY, Kim^2} concerning $p$-adic valuations of numbers $H_p(n)$, where $p$ is a prime number. The study of the sequence $(H_{p}(n))_{n\in\N}$ with $p$-prime, is quite natural because the number $H_{p}(n)$ counts the elements of order $p$ in the group $S_{n}$. However, it seems that there are no results concerning the general sequence $(H_{d}(n))_{n\in\N}$, where $d\in\N_{\geq 2}$ is not necessarily a prime. In particular, it should be stressed that there is no results on $p$-adic valuations of numbers $H_d(n)$, where $p$ is a prime number greater than $d$ (even in the case when $d$ is a prime). Our first aim is to fill this gap and present broad spectrum of results concerning various arithmetic properties of the sequence $(H_d(n))_{n\in\N}$ and also some families of polynomials related to them.

Let us introduce the content of the paper.

Section \ref{Section2} is devoted to basic properties of the general sequence $(H_{d}(n))_{n\in\N}$. In particular, we recall the standard recurrence relation and closed formula for our sequence. Moreover, we present the expression for the exponential generating function
$$
\cal{H}_d(x)=\sum_{n=0}^{\infty}\frac{H_{d}(n)}{n!}x^n.
$$
With the help of the function $\cal{H}_{d}(x)$ we obtain several interesting identities involving elements of our sequence and related sequences (such as roots of unity or Bernoulli numbers).

In Section \ref{Section3} we study periodicity properties of the sequences of remainders of numbers $H_d(n)$ modulo a given positive integer $c$. We start with the results for $c\in\{d+1,d+2\}$. We continue the work with the case of $c$ being of the form $d+k$, where $k$ is a fixed positive integer. Next we show that if $c$ is a power of a prime number not equal to $d$, then $c$ is a period of the sequence $(H_d(n)\pmod{c})_{n\in\N}$. At the end of the section, the result for composite $d$ and arbitrary $c$ or for $c$ co-prime to $d$ is given.

Section \ref{Section4} begins with a revision of a lower estimate of the $p$-adic valuation of the number $H_p(n)$:
$$\nu_p(H_p(kp^2+jp+i))\geq k(p-1)+j,\quad k\in\N, i,j\in\{0,...,p-1\},$$
where the prime number $p$ is fixed. We give a new proof of this known fact in order to obtain some results on periodicity of sequences $\left(\frac{H_p(n)}{p^{\beta_n}}\pmod{p^r}\right)_{n\in\N}$, where $r\in\N_+$ and $\beta_n$ is mentioned lower bound of $\nu_p(H_p(n))$ or $\beta_n=\nu_p(H_p(n))$. Moreover, we show that for each prime number $p$ and $j\in\{0,...,p-1\}$ there exists a $b_j\in\{0,...,p-1\}$ such that $H_p(kp^2+jp+b_j)=k(p-1)+j$ for any $k\in\N$. These results can be seen as a complement of the series of papers \cite{GraNew, Och, IOTY, Kim^2}.

In Section \ref{Section5} we describe the $p$-adic valuations of the numbers $H_d(n)$, where $p>d$. In order to do this, we show that the sequence $(H_d(n))_{n\in\N}$ is a restriction of a differentiable function $f_d:\Z_p\rightarrow\Z_p$, where $\Z_p$ is a ring of $p$-adic integers. Then, we prove that there exists a function $g_d:\Z_p\times p\Z_p\rightarrow\Q_p$ such that for any $x\in\Z_p$ and $h\in p\Z_p$ we have
$$f_d(x+h)=f_d(x)+hf'_d(x)+h^2g_d(x,h)$$
and $g_d(x,h)\in\Z_p$ if $d\geq 3$ or $p\mid f_2(x)$. At last, we obtain a qualitative description of the sequence $(\nu_p(H_d(n)))_{n\in\N}$ using Hensel's lemma.

Section \ref{Section6} is devoted to study of a family of polynomials $W_{d,m}(x)$ which are closely related to $m$-th derivative of the generating function $\cal{H}_{d}(x)$. More precisely, we have $\cal{H}^{(m)}_d(x)=W_{d,m}(x)\cal{H}_d(x)$. We show that $W_{d,m}(x)$ is a monic polynomial with integral coefficients and we give a formula for its coefficients in case of odd $d$. These coefficients are expressed in terms of (shifted) elements of the sequence $(H_{d}(n))_{n\in\N}$. Next, we compute the exponential generating function $\cal{W}_d(x,t)$ of the sequence $(W_{d,m}(x))_{m\in\N}$ and prove identities and congruences involving these polynomials. We finish the section with the proof of periodicity of the sequence $(x^{\left\lfloor\frac{m}{p}\right\rfloor p(1-p)}W_{p,m}(x)\pmod{p})_{m\in\N}$.

In Section \ref{Section7} we deal with divisors of the number $H_d(n)$. The section is divided into three parts. In the first part we show that for each $d\geq 2$ the set of prime divisors of numbers $H_d(n)$ is infinite. In the second part we compute the greatest common divisor of numbers $H_d(n)-1$ and $n$. As a consequence, we get that the set prime divisors of numbers $H_{d}(n)-1$, $n\in\N$, is whole $\bbb{P}$. The last part is devoted to the study of  values $2\leq a<b$, $n\in\N$ and $c\in\N_{\geq 2}$ such that $c$ divides both $H_a(n)$ and $H_b(n)$.

In Section \ref{Section8} we introduce the sequence of polynomials $(H_d(n,x))_{n\in\N}$ being a generalizations of the sequence $(H_d(n))_{n\in\N}$. Namely, $H_d(n,1)=H_d(n)$ and the coefficient near the $x^k$ in the polynomial $H_d(n,x)$ is the number of those permutations in $S_n$ which are products of pairwise disjoint $d$-cycles and have exactly $k$ fixed points. After that, we compute the remainders of polynomials $H_d(n,x)$ modulo $d$ and study coefficients of the polynomials $H_2(n-1,x)H_2(n+1,x)-H_2(n,x)^2$ and $H_d(n-d,x)H_d(n+d,x)-H_d(n,x)^2$. In particular, we prove that if $U(n,x)=H_2(n-1,x)H_2(n+1,x)-H_2(n,x)^2$, then $U(n,x)=V(n,x^2)$, where $V\in\Z[x]$ has degree $n-1$ and all the coefficients of $V$ near $x^{i}, i\neq 0, n-1$ are positive. Moreover, among some identities and congruences the most interesting result of this section is the result which says that the congruence $H_d(n-d,x)H_d(n+d,x)\equiv H_d(n,x)^2\pmod{d!}$ is true if and only if $d$ is a prime number or a square of a prime number.

Section \ref{Section9} collects some observations, questions and conjectures related to objects introducted in previous sections.

\section{First results}\label{Section2}
For $d\in\N_{\geq 2}$ we have $H_{d}(i)=1$ for $i\in\{0,1,\ldots,d-1\}$ by definition and for $n\geq d$ the standard combinatorial argument shows that
\begin{equation}\label{basicrec}
H_{d}(n)=H_{d}(n-1)+(n-1)_{(d-1)}H_{d}(n-d),
\end{equation}
where as usual, for $m\in\N$ the symbol $(u)_{(m)}$ denotes the falling factorial defined as
$$
(u)_{(m)}=u(u-1)\cdot\ldots\cdot (u-(m-1)).
$$
Indeed, let us fix $\sigma\in S_{d,n}$. If $\sigma(n)=n$ then $\sigma=\sigma_1$ for some $\sigma_1\in S_{d,n-1}$ and it can be chosen by $H_d(n-1)$ ways. If $\sigma(n)\neq n$ then $\sigma=\sigma_1\circ\pi$, where $\pi$ is a $d$-cycle containing $n$ and $\sigma_1$ is a product of pairwise disjoint $d$-cycles on a set $\{1,...,n\}\bs\mbox{supp}(\pi)$. Then $\sigma_1$ can be treated as a member of the set $S_{d,n-d}$. Thus, it can be chosen by $H_d(n-d)$ ways. Moreover, $\pi$ can be chosen in $(n-1)_{(d-1)}$ ways.

Furthermore, in the sequel we will also use the convention $H_{d}(n)=0$ for $n<0$. As a simple consequence of the recurrence formula we see that if $q\in\N$, with $q\mid (d-1)!$, then $H_{d}(n)\equiv 1\pmod{q}$ for each $n\in\N$. In particular, if $d$ is a composite number $>4$, then $H_{d}(n)\equiv 1\pmod{d}$. Moreover, if $p$ is a prime number $<d$, then $\nu_p(H_d(n))=0$ for each $n\in\N$.

It is well known that the exponential generating function for the sequence $(H_{d}(n))_{n\in\N}$ is given by
$$
\cal{H}_{d}(x)=\sum_{n=0}^{\infty}\frac{H_{d}(n)}{n!}x^{n}=e^{x+\frac{x^{d}}{d}}.
$$
In consequence, the exact expression for $H_{d}(n)$ is given by
\begin{equation}\label{exactform}
H_{d}(n)=\sum_{k=0}^{\lfloor\frac{n}{d}\rfloor}\frac{n!}{(n-dk)!k!}\frac{1}{d^{k}}.
\end{equation}
The knowledge of the closed form of exponential generating function allows us to obtain some identities involving numbers $H_d(n)$.

\begin{thm}
 The following identities are true.
 \begin{enumerate}
  \item
  $$
  \sum_{k=0}^{n}{n\choose k}(-1)^{n-k}H_{d}(k)=\begin{cases}
\begin{array}{lll}
0, &  & \mbox{if}\;n\not\equiv 0\pmod{d}\\
\frac{(md)!}{m!d^{m}}, &  & \mbox{if}\;n=md.
\end{array}
\end{cases}
  $$
  \item For $n\in\N_{+}$ and $d\in\N_{\geq 3}$ an odd number we have
  $$
  \sum_{k=0}^{n}{n\choose k}(-1)^{k}H_{d}(k)H_{d}(n-k)=0.
  $$
  \item For $n\in\N_{+}$ and $d\in\N_{\geq 2}$ an even number we have
   $$
  \sum_{k=0}^{n}{n\choose k}(-1)^{k}H_{d}(k)H_{d}(n-k)=\begin{cases}
\begin{array}{lll}
0, &  & \mbox{if}\;n\not\equiv 0\pmod{d}\\
\frac{(md)!}{m!\left(\frac{d}{2}\right)^{m}},&  & \mbox{if}\;n=md.
\end{array}
\end{cases}
  $$
  \item Let $\zeta_{d}$ be a $d$-th root of unity. Then
  $$
  H_{d}(n)=\zeta_{d}^{-n}\sum_{k=0}^{n}{n\choose k}(\zeta_{d}-1)^{k}H_{d}(n-k).
  $$
  \item Let $\zeta_{2d}$ be a $2d$-th root of unity satisfying $\zeta_{2d}^d=-1$. Then
  $$
  \sum_{k=0}^{n}{n\choose k}\zeta_{2d}^{k}H_{d}(k)H_{d}(n-k)=(1+\zeta_{2d})^{n}.
  $$
  \item Let $d$ be an even number and $(B_{n})_{n\in\N}$ denote the sequence of Bernoulli numbers. Then
  $$
  \sum_{k=0}^{\lfloor\frac{n}{2}\rfloor}\binom{n}{2k}\frac{2^{n-2k}}{2k+1}B_{n-2k}H_{d}(2k+1)=(-1)^{n}H_{d}(n).
  $$
  \item For $n\in\N$ we have the identity
  $$
  H_{d}(n)=1+\sum_{i=1}^{n}(i-1)_{(d-1)}H_{d}(i-d).
  $$
 \end{enumerate}
\end{thm}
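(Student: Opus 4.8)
My plan is to establish each identity by manipulating the exponential generating function $\cal{H}_d(x) = e^{x + x^d/d}$, reading off the coefficient of $x^n/n!$ on both sides. The guiding principle is that multiplication of EGFs corresponds to the binomial convolution appearing throughout the statement, so each identity will amount to recognizing a simple closed form for a product or quotient of exponentials.

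For (1), the alternating binomial sum $\sum_k \binom{n}{k}(-1)^{n-k}H_d(k)$ is the coefficient of $x^n/n!$ in $e^{-x}\cal{H}_d(x) = e^{x^d/d}$; expanding $e^{x^d/d} = \sum_m \frac{x^{dm}}{m!\,d^m}$ and extracting coefficients gives exactly the stated dichotomy. For (2) and (3), the sum $\sum_k \binom{n}{k}(-1)^k H_d(k)H_d(n-k)$ is the coefficient of $x^n/n!$ in $\cal{H}_d(-x)\cal{H}_d(x) = e^{(-x+(-x)^d/d) + (x + x^d/d)}$. When $d$ is odd this exponent is $0$, giving (2); when $d$ is even it is $2x^d/d = x^d/(d/2)$, and expanding the exponential yields the formula in (3). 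Identity (5) is analogous: $\cal{H}_d(\zeta_{2d}x)\cal{H}_d(x)$ has exponent $(1+\zeta_{2d})x + (\zeta_{2d}^d + 1)x^d/d = (1+\zeta_{2d})x$ using $\zeta_{2d}^d = -1$, so the product equals $e^{(1+\zeta_{2d})x}$, whose coefficients are $(1+\zeta_{2d})^n$. Identity (4) comes from writing $\cal{H}_d(x) = e^{-(\zeta_d-1)x}\,e^{\zeta_d x + x^d/d}$ and noting $e^{\zeta_d x + x^d/d} = e^{\zeta_d x + (\zeta_d x)^d/d} = \cal{H}_d(\zeta_d x)$ since $\zeta_d^d = 1$; multiplying by $\zeta_d^{-n}$ accounts for the scaling $x \mapsto \zeta_d x$ on the right-hand factor, and the binomial convolution with $e^{-(\zeta_d-1)x}$ produces the sum. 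For (6), I would use the standard EGF $\frac{t}{e^t - 1} = \sum_n B_n \frac{t^n}{n!}$, or rather the identity $\frac{e^t - 1}{t} = \sum_{k\ge 0} \frac{t^k}{(k+1)!}$ combined with the odd/even decomposition of $\cal{H}_d$; concretely, since $d$ is even, $\cal{H}_d(x) + \cal{H}_d(-x) = 2\cosh(x)e^{x^d/d}$ is even and $\cal{H}_d(x) - \cal{H}_d(-x) = 2\sinh(x)e^{x^d/d}$ is odd, and one multiplies the odd part by $\frac{2x}{e^{2x}-1}$-type factor to shift Bernoulli numbers in; I expect a short computation with $\frac{x}{\sinh x}$ or with $\frac{2x}{e^{2x}-1}$ to produce exactly the claimed convolution, after matching $(-1)^n$ with the substitution $x \mapsto -x$. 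Finally (7) is just a telescoping reformulation of the recurrence \eqref{basicrec}: summing $H_d(i) - H_d(i-1) = (i-1)_{(d-1)}H_d(i-d)$ over $i = 1, \dots, n$ collapses the left side to $H_d(n) - H_d(0) = H_d(n) - 1$.

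The main obstacle will be identity (6): unlike the others it is not a pure product-of-exponentials statement, and getting the Bernoulli-number generating function to interact correctly with $e^{x^d/d}$ requires care about which variable substitution ($x \mapsto 2x$ versus $x \mapsto -x$) is in play and why only the odd-index values $H_d(2k+1)$ survive. I would handle it by isolating the odd part $\frac{1}{2}(\cal{H}_d(x) - \cal{H}_d(-x)) = e^{x^d/d}\sinh x$ (legitimate since $d$ even makes $x^d/d$ even), writing $\sinh x = \frac{e^x - e^{-x}}{2}$, and then recognizing $\frac{2^{n-2k}}{2k+1}$ together with $B_{n-2k}$ as coefficients forcing multiplication by $\frac{2x}{e^{2x}-1}$; the factor $(-1)^n$ on the right should then emerge from evaluating the resulting exponential at $-x$. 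The remaining identities (1)--(5) and (7) I expect to be routine once the generating-function translation is set up.
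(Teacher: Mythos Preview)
Your approach is essentially identical to the paper's: each of (1)--(5) is obtained by recognizing the binomial sum as a coefficient in a product of exponentials built from $\cal{H}_d$, and (7) is the telescoping of the basic recurrence. For (6) your instinct is exactly right and matches the paper: take $f(x)=\dfrac{2x}{e^{2x}-1}=\sum_n \dfrac{2^nB_n}{n!}x^n$ and $g(x)=\dfrac{1}{2x}\bigl(\cal{H}_d(x)-\cal{H}_d(-x)\bigr)$, whose nonzero coefficients are $H_d(2k+1)/(2k+1)$ at even indices $2k$; then $f(x)g(x)=\cal{H}_d(-x)$, and reading off the $n$-th coefficient gives the identity with the $(-1)^n$ appearing on the right.
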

\begin{proof}
The identities (1)-(6) can be easily proved with the help of generating functions technique. Indeed, it is well known that for
$$
f(x)=\sum_{n=0}^{\infty}\frac{a_{n}}{n!}x^{n},\quad g(x)=\sum_{n=0}^{\infty}\frac{b_{n}}{n!}x^{n}
$$
we have $f(x)g(x)=\sum_{n=0}^{\infty}\frac{c_{n}}{n!}x^{n}$ with
$$
c_{n}=\sum_{k=0}^{n}\binom{n}{k}a_{k}b_{n-k}.
$$

In order to get the first identity we write $f(x)=\cal{H}_{d}(x)$ and $g(x)=e^{-x}$ and get $f(x)g(x)=e^{\frac{x^{d}}{d}}$. Comparing the coefficients on the both sides of this identity we get the result.

In order to get the second identity we put $f(x)=\cal{H}_{d}(x), g(x)=\cal{H}_{d}(-x)$. Because $d\geq 3$ is odd then $f(x)g(x)=1$ and hence the result.

The third identity can be proved in the same way. Taking $d$ even and $f(x)=\cal{H}_{d}(x), g(x)=\cal{H}_{d}(-x)$ we get $f(x)g(x)=e^{2\frac{x^{d}}{d}}$ and hence the result.

In order to get the fourth identity we put $f(x)=e^{(1-\zeta_{d})x}, g(x)=\cal{H}_{d}(\zeta_{d}x)$. Then $f(x)g(x)=\cal{H}_{d}(x)$ and the identity follows.

The fifth identity follows by taking $f(x)=\cal{H}_{d}(\zeta_{2d}x), g(x)=\cal{H}_{d}(x)$. In this case $f(x)g(x)=e^{(1+\zeta_{2d})x}$ and we get required identity.

In order to get the sixth identity we put
$$
f(x)=2x/(e^{2x}-1)=\sum_{n=0}^{\infty}\frac{2^{n}B_{n}}{n!}x^{n},
$$
i.e., $f$ is the exponential generating function of the sequence $(2^{n}B_{n})_{n\in\N}$, and
$$
g(x)=\frac{1}{2x}(\cal{H}_{d}(x)-\cal{H}_{d}(-x))=\sum_{n=0}^{\infty}\frac{A_{n}}{n!}x^{n},
$$
where $A_{n}=0$ for $n$ odd and $A_{n}=H_{d}(n+1)/(n+1)$ for $n$ even. With $f, g$ chosen in this way we get $f(x)g(x)=\cal{H}_{d}(-x)$ and comparing the coefficients near $x^{n}$ on both sides of this identity we get the result.

The last identity is a consequence of the recurrence relation (\ref{basicrec}). Indeed, rewriting (\ref{basicrec}) as $H_{d}(m)-H_{d}(m-1)=(m-1)_{(d-1)}H_{d}(m-d)$ and summing from $m=1$ to $n$ on both sides we get telescoping sum on the left and get
$$
H_{d}(n)-H_{d}(0)=\sum_{i=1}^{n}(i-1)_{(d-1)}H_{d}(i-d).
$$
The result follows.
\end{proof}

\section{Periodicity of the numbers $H_d(n)$ modulo a given positive integer}\label{Section3}

We know that if $c<d$ or $c=d>4$ is a composite number then the sequence $(H_d(n)\pmod{c})_{n\in\N}$ is constant and equal to $1$. In this section we present some results concerning periodicity of the sequences $(H_d(n)\pmod{c})_{n\in\N}$ when $c>d$. We start with cases $c\in\{d+1,d+2\}$. As we will see, the behaviour of the sequence $(H_d(n)\pmod{c})_{n\in\N}$ depends mainly on the question whether $c$ is a prime or not.

\begin{prop}
Let $p$ be an odd prime number. Then for each $n\in\N$ we have
\begin{align}\label{Hp-1modp}
H_{p-1}(n)\equiv
\begin{cases}
1,\mbox{ if } n\not\equiv -1\pmod{p}\\
2,\mbox{ if } n\equiv -1\pmod{p}
\end{cases}
\pmod{p}.
\end{align}
In particular, $\nu_p(H_{p-1}(n))=0$ for any $n\in\N$.

If additionally $p>3$, then
\begin{align}\label{Hp-2modp}
H_{p-2}(n)\equiv
\begin{cases}
\begin{array}{lll}
1,            &&\mbox{ if } n\not\equiv -1,-2\pmod{p}\\
\frac{p+1}{2},&&\mbox{ if } n\equiv -2\pmod{p}\\
\frac{p+3}{2},&&\mbox{ if } n\equiv -1\pmod{p}
\end{array}
\end{cases}
\pmod{p}.
\end{align}
As a result, $\nu_p(H_{p-2}(n))=0$ for any $n\in\N$.
\end{prop}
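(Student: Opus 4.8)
The plan is to work with the recurrence \eqref{basicrec} reduced modulo $p$, the key point being an explicit evaluation of the falling factorial $(n-1)_{(d-1)}$ modulo $p$ via Wilson's theorem, followed by an induction on $n$ that keeps track of the residue of $n$ modulo $p$.

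For $d=p-1$, the coefficient $(n-1)_{(p-2)}=(n-1)(n-2)\cdots(n-p+2)$ is a product of $p-2$ consecutive integers, so modulo $p$ its factors cover every residue class except those of $n$ and of $n-p+1\equiv n+1$. Consequently $(n-1)_{(p-2)}\equiv 0\pmod p$ unless $n\equiv 0$ or $n\equiv -1\pmod p$; in the first case the remaining factors are $2,3,\dots,p-1$, whose product is $(p-1)!\equiv -1$, and in the second case they are $1,2,\dots,p-2$, whose product is $(p-1)!/(p-1)\equiv 1$. Hence modulo $p$ the recurrence reads $H_{p-1}(n)\equiv H_{p-1}(n-1)$ when $n\not\equiv 0,-1$, $H_{p-1}(n)\equiv H_{p-1}(n-1)-H_{p-1}(n-p+1)$ when $n\equiv 0$, and $H_{p-1}(n)\equiv H_{p-1}(n-1)+H_{p-1}(n-p+1)$ when $n\equiv -1$. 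Now I induct on $n$. The initial values $H_{p-1}(0)=\cdots=H_{p-1}(p-2)=1$ establish \eqref{Hp-1modp} for $n\le p-2$, and for $n\ge p-1$ one checks the three cases using the inductive hypothesis on $H_{p-1}(n-1)$ and $H_{p-1}(n-p+1)$: e.g.\ if $n\equiv -1$ then $n-1\equiv p-2$ and $n-p+1\equiv 0$, both giving value $1$, so $H_{p-1}(n)\equiv 2$; the cases $n\equiv 0$ (where $n-1\equiv -1$, $n-p+1\equiv 1$) and $n\not\equiv 0,-1$ reproduce the value $1$. Since $1,2\not\equiv 0\pmod p$ for odd $p$, this also gives $\nu_p(H_{p-1}(n))=0$.

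For $d=p-2$ the structure is identical with one more residue class to track. Here $(n-1)_{(p-3)}$ is a product of $p-3$ consecutive integers, so modulo $p$ its factors omit exactly the classes of $n$, $n+1$, $n+2$; thus it is $\equiv 0$ unless $n\equiv 0,-1,-2\pmod p$, and Wilson's theorem evaluates it to $\frac{p-1}{2}$ when $n\equiv 0$ or $n\equiv -2$ (the remaining factors have product $(p-3)!\equiv (p-1)!/((p-1)(p-2))\equiv -\tfrac12$) and to $1$ when $n\equiv -1$ (remaining factors $2,\dots,p-2$ with product $(p-1)!/(p-1)\equiv 1$). Reducing \eqref{basicrec} accordingly and inducting on $n$ — base values $H_{p-2}(0)=\cdots=H_{p-2}(p-3)=1$, then four cases for $n\ge p-2$ — yields \eqref{Hp-2modp}; the arithmetic that closes the induction is $2\cdot\frac{p+1}{2}\equiv 1$, $\frac{p-1}{2}\equiv\frac{p+1}{2}-1$ and $\frac{p+3}{2}\equiv\frac{p+1}{2}+1\pmod p$. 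For instance, if $n\equiv 0$ then $n-1\equiv -1$ (value $\frac{p+3}{2}$ by the hypothesis) and $n-p+2\equiv 2$ (value $1$), so $H_{p-2}(n)\equiv\frac{p+3}{2}+\frac{p-1}{2}\equiv 1\pmod p$, as claimed. For $p>3$ each of $1,\frac{p+1}{2},\frac{p+3}{2}$ lies in $\{1,\dots,p-1\}$, so $\nu_p(H_{p-2}(n))=0$.

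The only real work is this case analysis: getting the Wilson-type evaluation of the falling factorial right in each residue class, and verifying that the small indices $n\in\{p-2,p-1,p\}$ — where $n-1$ or $n-d$ slips into the constant initial segment — are consistent with the stated formulas. There is no structural difficulty; the hypothesis $p>3$ enters exactly to keep the classes $\{0,-1,-2\}$ distinct from the ``generic'' class used in the induction and to ensure $\frac{p+3}{2}\not\equiv 0\pmod p$.
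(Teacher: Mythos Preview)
Your proof is correct and follows essentially the same approach as the paper: induction on $n$ via the recurrence \eqref{basicrec}, with a case split on $n\pmod p$ and Wilson's theorem to evaluate the falling factorial $(n-1)_{(d-1)}$ modulo $p$. The only difference is organizational---you evaluate the coefficient $(n-1)_{(d-1)}\pmod p$ once for all residue classes before the induction, whereas the paper computes it inside each case---but the substance is identical.
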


\begin{proof}
We start with the computation of $H_{p-1}(n)\pmod{p}$. We proceed by induction on $n\in\N$. For $n\in\{0,...,p-2\}$ we have $H_{p-1}(n)=1$. Let us fix now $N\geq p-1$ and assume that congruence (\ref{Hp-1modp}) is satisfied for each $n<N$. If $N\not\equiv -1,0\pmod{p}$ then by (\ref{basicrec}) we have
\begin{align*}
H_{p-1}(N) =H_{p-1}(N-1)+(N-1)_{(p-2)}H_{p-1}(N-p+1)\equiv H_{p-1}(N-1)\equiv 1\pmod{p},
\end{align*}
since in the product of numbers $N-1,...,N-p+2$ there is a number divisible by $p$ and $N-1\not\equiv -1\pmod{p}$, which means that $H_{p-1}(N-1)\equiv 1\pmod{p}$. If $N\equiv 0\pmod{p}$, then
\begin{align*}
H_{p-1}(N) &=H_{p-1}(N-1)+(N-1)_{(p-2)}H_{p-1}(N-p+1)\\
&\equiv H_{p-1}(N-1)+(p-1)!H_{p-1}(N-p+1)\equiv 2-1=1\pmod{p},
\end{align*}
since $N-1\equiv -1\pmod{p}$, $N-p+1\equiv 1\not\equiv -1\pmod{p}$ ($p>2$) and by Wilson's theorem $(p-1)!\equiv -1\pmod{p}$. If $N\equiv -1\pmod{p}$, then
\begin{align*}
H_{p-1}(N) &=H_{p-1}(N-1)+(N-1)_{(p-2)}H_{p-1}(N-p+1)\\
&\equiv H_{p-1}(N-1)+(p-2)!H_{p-1}(N-p+1)\equiv 1+1=2\pmod{p},
\end{align*}
since $N-1$ and $N-p+1$ are not congruent to $-1$ modulo $p$ and $(p-2)!\equiv 1\pmod{p}$ as an immediate consequence of Wilson's theorem.

The proof of congruence (\ref{Hp-2modp}) is analogous. We proceed by induction on $n\in\N$. For $n\in\{0,...,p-3\}$ we have $H_{p-2}(n)=1$. Let us fix now $N\geq p-2$ and assume that congruence (\ref{Hp-2modp}) is satisfied for each $n<N$. If $N\not\equiv -2,-1,0\pmod{p}$, then by (\ref{basicrec}) we have
\begin{align*}
H_{p-2}(N) =H_{p-2}(N-1)+(N-1)_{(p-3)}H_{p-2}(N-p+2)\equiv H_{p-2}(N-1)\equiv 1\pmod{p},
\end{align*}
since in the product of numbers $N-1,...,N-p+3$ there is a number divisible by $p$ and $N-1\not\equiv -2,-1\pmod{p}$, which means that $H_{p-1}(N-1)\equiv 1\pmod{p}$. If $N\equiv 0\pmod{p}$, then
\begin{align*}
H_{p-2}(N) &=H_{p-2}(N-1)+(N-1)_{(p-3)}H_{p-2}(N-p+2)\\
&\equiv H_{p-2}(N-1)+\frac{(p-1)!}{2}H_{p-2}(N-p+2)\equiv \frac{p+3}{2}-\frac{1}{2}=\frac{p}{2}+1\equiv 1\pmod{p},
\end{align*}
since $N-1\equiv -1\pmod{p}$, $N-p+2\equiv 2\not\equiv -2,-1\pmod{p}$ ($p\geq 5$) and by Wilson's theorem $(p-1)!\equiv -1\pmod{p}$. If $N\equiv -1\pmod{p}$, then
\begin{align*}
H_{p-2}(N) &=H_{p-2}(N-1)+(N-1)_{(p-3)}H_{p-2}(N-p+2)\\
&\equiv H_{p-2}(N-1)+(p-2)!H_{p-2}(N-p+2)\equiv \frac{p+1}{2}+1=\frac{p+3}{2}\pmod{p},
\end{align*}
since $N-1\equiv -2\pmod{p}$, $N-p+2\equiv 1\not\equiv -2,-1\pmod{p}$ ($p\geq 5$) and $(p-2)!\equiv 1\pmod{p}$ as an immediate consequence of Wilson's theorem. If $N\equiv -2\pmod{p}$, then
\begin{align*}
H_{p-2}(N) &=H_{p-2}(N-1)+(N-1)_{(p-3)}H_{p-2}(N-p+2)\\
&\equiv H_{p-2}(N-1)+(p-3)!H_{p-2}(N-p+2)\equiv 1-\frac{1}{2}=\frac{1}{2}\pmod{p},
\end{align*}
since $N-1\equiv -3\not\equiv -2,-1\pmod{p}$, $N-p+2\equiv 0\not\equiv -2,-1\pmod{p}$ ($p\geq 5$) and $(p-3)!\equiv -\frac{1}{2}\pmod{p}$ as an easy consequence of Wilson's theorem.
\end{proof}

\begin{prop}
Let $d$ be a positive integer at least $5$. If $d+1$ is a composite number, then for each $n\in\N$ we have $H_{d}(n)\equiv 1\pmod{d+1}$.

Let $d$ be a positive integer at least $4$. If $d+2$ is a composite number, then for each $n\in\N$ we have $H_{d}(n)\equiv 1\pmod{d+2}$.
\end{prop}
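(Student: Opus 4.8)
The plan is to reduce both assertions to the elementary remark already recorded after the recurrence (\ref{basicrec}): if $q\in\N$ satisfies $q\mid (d-1)!$, then $H_d(n)\equiv 1\pmod{q}$ for every $n\in\N$. Consequently it suffices to prove the purely number-theoretic statement that, under the stated hypotheses, the modulus $c$ (equal to $d+1$ in the first part and to $d+2$ in the second) is itself a divisor of $(d-1)!$. So after invoking that remark I would devote the rest of the argument to showing: if $c=d+1$ is composite and $d\geq 5$, or $c=d+2$ is composite and $d\geq 4$, then $c\mid (d-1)!$.

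To prove this divisibility I would split into two cases according to whether $c$ is the square of a prime. If it is not, then $c$ can be written as $c=ab$ with integers $2\leq a<b$ (take $a$ to be the least prime factor of $c$; since $c$ is composite and not a prime square, $b=c/a$ strictly exceeds $a$). Here $b=c/a\leq c/2$, and a one-line computation shows $c/2\leq d-1$ in both cases — this is exactly where the hypotheses $d\geq 5$ (for $c=d+1$) and $d\geq 4$ (for $c=d+2$) are used. Hence $a$ and $b$ are two distinct integers occurring among the factors $1,2,\dots,d-1$, so $c=ab$ divides $(d-1)!$. If instead $c=p^2$ for a prime $p$, then $c\geq 6$ forces $p\geq 3$, and I would count the multiples of $p$ in $\{1,\dots,d-1\}$: in either case there are exactly $p-1\geq 2$ of them (the largest being $(p-1)p=p^2-p$, which is at most $p^2-2=d-1$, respectively $p^2-3=d-1$, precisely because $p\geq 3$), so $\nu_p((d-1)!)\geq 2$ and again $c=p^2\mid (d-1)!$.

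I do not expect any genuine obstacle; the only points requiring care are isolating the prime-square case (where the ``factorization'' $c=a\cdot a$ with $a=b$ is useless and one must instead count multiples of $p$) and checking that the numerical thresholds in the statement are precisely those making $c/2\leq d-1$ and $p\geq 3$ hold. Once $c\mid (d-1)!$ has been established, the congruence $H_d(n)\equiv 1\pmod{c}$ follows immediately from the cited consequence of (\ref{basicrec}).
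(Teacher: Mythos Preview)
Your proposal is correct and follows essentially the same route as the paper: both reduce to showing $c\mid(d-1)!$ via the remark after (\ref{basicrec}), then take the least prime factor of $c$ and distinguish the prime-square case (using $p,2p\leq d-1$, equivalently your count of $p-1\geq 2$ multiples of $p$) from the case $c=ab$ with $a<b\leq d-1$. One tiny remark: for $c=d+1$ the inequality $c/2\leq d-1$ already holds once $d\geq 3$, so the full strength of $d\geq 5$ is actually consumed in the prime-square branch (to force $p\geq 3$), not in the factorization branch.
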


\begin{proof}
Since $H_d(n)=1$ for $n\in\{0,...,d-1\}$, $H_{d}(n)=H_{d}(n-1)+(n-1)_{(d-1)}H_{d}(n-d)$ for $n\geq d$ and $(d-1)!\mid (n-1)_{(d-1)}$ for any $n\in\N$, it suffices to prove that $d+1\mid (d-1)!$ in the first part of the statement of our theorem and $d+2\mid (d-1)!$ in the second part.

At first, we show $d+1\mid (d-1)!$ if $d\geq 5$ and $d+1$ is a composite number. Let $q$ be the least divisor of $d+1$ greater than $1$. Then $\frac{d+1}{q}\leq\frac{d+1}{2}<d-1$, since $d\geq 5$. If $q<\frac{d+1}{q}$, then $q$ and $\frac{d+1}{q}$ are two distinct positive integers appearing as factors in $(d-1)!$, thus $d+1\mid (d-1)!$. If $q=\frac{d+1}{q}$ then $d+1=q^2$. Since $d+1\geq 6$ thus $q\geq 3$. Then $2q<q^2-2=d-1$ and $q, 2q$ appear as factors in $(d-1)!$. Hence $d+1\mid (d-1)!$.

Now we prove $d+2\mid (d-1)!$ if $d\geq 4$ and $d+2$ is a composite number. Let $q$ be the least divisor of $d+2$ greater than $1$. Then $\frac{d+2}{q}\leq\frac{d+2}{2}\leq d-1$, since $d\geq 4$. If $q<\frac{d+2}{q}$, then $q$ and $\frac{d+2}{q}$ are two distinct positive integers appearing as factors in $(d-1)!$, thus $d+2\mid (d-1)!$. If $q=\frac{d+2}{q}$ then $d+2=q^2$. Since $d+2\geq 6$ thus $q\geq 3$. Then $2q\leq q^2-3=d-1$ and $q, 2q$ appear as factors in $(d-1)!$. Hence $d+1\mid (d-1)!$.
\end{proof}

According to the last proposition and numerical computations, we prove that for a fixed positive integer $k$ and  sufficiently large positive integer $d$ such that $d+k$ is a composite number, the sequence $(H_d(n)\pmod{d+k})_{n\in\N}$ is constant and equal to $1$.

\begin{thm}
Let us fix $k\in\N_+$. Then for each positive integer $d\geq\max\{k+2,3+2\sqrt{k+2}\}$ such that $d+k$ is a composite number there holds $H_d(n)\equiv 1\pmod{d+k}$ for all $n\in\N$.
\end{thm}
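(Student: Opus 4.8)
The plan is to reduce the whole statement to a single elementary fact about factorials and then prove that fact by a two-case analysis on the factorization of $d+k$.

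First I would recall the observation made right after the recurrence~(\ref{basicrec}): if $q\in\N$ and $q\mid (d-1)!$, then $H_d(n)\equiv 1\pmod{q}$ for every $n\in\N$ (an immediate induction using $H_d(n)=1$ for $n<d$, the recurrence, and $(d-1)!\mid (n-1)_{(d-1)}$). Hence it suffices to show that, under the hypotheses of the theorem, $(d+k)\mid (d-1)!$. In other words the theorem is equivalent to the purely number-theoretic claim: if $c=d+k$ is composite and $d\geq\max\{k+2,\,3+2\sqrt{k+2}\}$, then $c\mid (d-1)!$.

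To prove this claim, let $q$ be the least prime factor of $c$ and set $m=c/q$; since $c$ is composite, $m$ is an integer and $m\geq q$ (a smaller value would force a prime factor of $c$ below $q$). If $m>q$, then $q$ and $m$ are distinct positive integers and $m=c/q\leq c/2=(d+k)/2\leq d-1$ because $d\geq k+2$; thus $q<m\leq d-1$, so both $q$ and $m$ appear among $1,\dots,d-1$ and $c=qm\mid (d-1)!$. It remains to treat $m=q$, i.e.\ $c=q^2$; this is exactly the case for which the bound $d\geq 3+2\sqrt{k+2}$ is designed. From $q^2=d+k\geq (k+3)+2\sqrt{k+2}=(\sqrt{k+2}+1)^2$ we get $q\geq\sqrt{k+2}+1$, equivalently $(q-1)^2\geq k+2$, i.e.\ $2q\leq q^2-k-1=d-1$. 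Since $q\geq 2$, the integers $q$ and $2q$ are distinct and both lie in $\{1,\dots,d-1\}$, so $2q^2=q\cdot 2q\mid (d-1)!$ and in particular $c=q^2\mid (d-1)!$.

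The only genuinely delicate step is the prime-square case $c=q^2$: there a product of two \emph{distinct} factors absorbing $q^2$ must involve $2q$, and the inequality $2q\leq d-1$ is precisely what $d\geq 3+2\sqrt{k+2}$ provides, while the generic case needs only $d\geq k+2$. I would also note that neither bound is superfluous: when $d+k$ is prime the conclusion fails outright since $d+k>d-1$, so some lower restriction on $d$ in terms of $k$ is unavoidable.
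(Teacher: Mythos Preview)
Your proof is correct and follows essentially the same route as the paper: both reduce to showing $(d+k)\mid (d-1)!$, take $q$ the least prime divisor of $d+k$, handle the case $q<\frac{d+k}{q}$ via $d\geq k+2$, and handle the perfect-square case $d+k=q^2$ by deriving $2q\leq d-1$ from $d\geq 3+2\sqrt{k+2}$. Your closing remark on why each bound is needed is a nice addition not present in the paper's version.
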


\begin{proof}
Similarly as in the proof of the previous proposition, it suffices to show that $d+k\mid (d-1)!$. Let $q$ be the least divisor of $d+k$ greater than $1$. Then $\frac{d+k}{q}\leq\frac{d+k}{2}\leq d-1$, since $d\geq k+2$. If $q<\frac{d+k}{q}$, then $q$ and $\frac{d+k}{q}$ are two distinct positive integers appearing as factors in $(d-1)!$, thus $d+k\mid (d-1)!$. If $q=\frac{d+k}{q}$ then $d+k=q^2$. Since $3+2\sqrt{k+2}=(1+\sqrt{k+2})^2-k$ and $d\geq (1+\sqrt{k+2})^2-k$, we have $q=\sqrt{d+k}\geq 1+\sqrt{k+2}$. As a consequence, $2q\leq q^2-k-1=d-1$ and $q, 2q$ appear as factors in $(d-1)!$. Hence $d+k\mid (d-1)!$.
\end{proof}

For our next result, we need the following auxiliary lemma.

\begin{lem}\label{ineqval}
Let $p$ be a prime number and $d$ be a composite positive integer divisible by $p$. Assume that $(p,d)\neq (2,4)$. Then $\frac{d-2}{p}\geq \nu_p(d)$.
\end{lem}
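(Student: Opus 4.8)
The plan is to translate the inequality into an elementary statement about $d$ and then to dispose of a couple of small exceptional cases by hand. Write $d=p^{a}m$ with $a=\nu_p(d)\geq 1$ and $\gcd(m,p)=1$. The claimed inequality $\frac{d-2}{p}\geq \nu_p(d)$ is then equivalent to $d\geq ap+2$, and this is what I would prove.

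The first step is to record the auxiliary estimate: for every prime $p$ and every integer $a\geq 2$, with the single exception $(p,a)=(2,2)$, one has $p^{a}\geq ap+2$. For $p\geq 3$ the case $a=2$ is immediate (indeed $p^{2}-2p-2\geq 1$), and the step $a\to a+1$ follows from $p^{a+1}=p\cdot p^{a}\geq p(ap+2)\geq (a+1)p+2$; for $p=2$ one checks $a=3$ directly (there $2^{3}=2\cdot 3+2$) and induction again finishes it. This is the only computation in the argument, and it is routine.

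With this in hand the proof splits into three cases according to $a$. If $a\geq 2$ and $(p,a)\neq(2,2)$, then $d\geq p^{a}\geq ap+2$ and we are done. If $a=1$, then since $d$ is composite and divisible by $p$ we must have $d=pm$ with $m\geq 2$, so $d\geq 2p\geq p+2=ap+2$. Finally, if $(p,a)=(2,2)$, then $d=4m$ with $m$ odd; compositeness alone still permits $m=1$, but the hypothesis $(p,d)\neq(2,4)$ excludes it, so $m\geq 3$ and hence $d\geq 12\geq 6=ap+2$.

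I expect the only delicate point to be the treatment of the small exponents — the cases $a=1$ and $(p,a)=(2,2)$ — where the crude bound $d\geq p^{a}$ is insufficient and one must genuinely use both that $d$ is composite (forcing $d/p\geq 2$) and that $(p,d)\neq(2,4)$. Everything else is bookkeeping, and the inequality $p^{a}\geq ap+2$ carries the main content.
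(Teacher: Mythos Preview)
Your proof is correct and follows essentially the same approach as the paper: both split into the cases $\nu_p(d)=1$, $\nu_p(d)\geq 2$ with $(p,\nu_p(d))\neq(2,2)$, and $(p,\nu_p(d))=(2,2)$, and in the middle case both reduce to an elementary inequality of the form $p^{a}\geq ap+2$ (the paper phrases it as $p^{a-1}\geq a+1$, which is slightly stronger but proved the same way). Your reformulation $d\geq ap+2$ is a clean way to organise the argument.
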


\begin{proof}
If $\nu_p(d)=1$ then $d\geq 2p$ as $d$ is a composite number. We thus have
$$\frac{d-2}{p}\geq\frac{2p-2}{p}=2-\frac{2}{p}\geq 1$$
as $p\geq 2$. If $p\geq 3$ and $\nu_p(d)\geq 2$ or $p=2$ and $\nu_2(d)\geq 3$ then
$$\frac{d}{p}\geq\frac{p^{\nu_p(d)}}{p}=p^{\nu_p(d)-1}\geq \nu_p(d)+1\geq \nu_p(d)+\frac{2}{p},$$
which implies $\frac{d-2}{p}\geq \nu_p(d)$. If $p=2$, $\nu_2(d)=2$ and $d\neq 4$ then $d\geq 3\cdot 4$ and
$$\frac{d-2}{2}\geq\frac{12-2}{2}=5>2=\nu_2(d).$$
Our lemma is proved.
\end{proof}

We are ready to prove the main result of the section. It concerns periodicity of the sequence $(H_d(n)\pmod{p^r})_{n\in\N}$, where $p$ is a prime number not equal to $d$ and $r$ is any positive integer. More precisely, we have the following

\begin{thm}
Let $d,r$ be positive integers with $d\geq 2$. Let $p$ be a prime number not equal to $d$. Assume that $(d,p,r)\neq (4,2,2)$. Then the sequence $(H_d(n)\pmod{p^r})_{n\in\N}$ is periodic of period $p^r$. Moreover, if $p>d$, then $p^r$ is the basic period of the sequence $(H_d(n)\pmod{p^r})_{n\in\N}$.
\end{thm}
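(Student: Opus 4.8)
The plan is to work with the recurrence \eqref{basicrec} reduced modulo $p^r$ and exploit the fact that the falling factorial $(n-1)_{(d-1)}$ is itself periodic modulo $p^r$ in $n$. Concretely, write $a(n) := (n-1)_{(d-1)} \bmod p^r$ and observe that $a(n)$ depends only on $n \bmod p^r$ (indeed only on $n$ modulo the least common multiple of $p^r$ and the factors involved, but $p^r$ suffices), so $(a(n))_{n\in\N}$ is periodic with period $p^r$. If we could simply say that a linear recurrence with periodic coefficients and periodic initial segment has periodic solution, we would be done; but the period of the solution need not equal $p^r$, so the real content is to identify the period precisely.

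First I would establish the weaker claim that $p^r$ is a period. The clean way is to introduce the shifted sequence $\widetilde H(n) := H_d(n+p^r) - H_d(n) \bmod p^r$ and show it vanishes identically by strong induction on $n$. For $n \in \{0,\dots,d-1\}$ this requires checking $H_d(i+p^r) \equiv 1 \pmod{p^r}$, i.e. that the ``extra'' terms contributed by the recurrence between index $d$ and index $p^r+i$ all die modulo $p^r$; this is exactly where one needs the hypothesis $p \neq d$ together with Lemma~\ref{ineqval}, since one must check that $\nu_p\big((m-1)_{(d-1)}\big) \geq r$ whenever the index $m$ is in the relevant range and ``$p^r$ divides enough of the falling factorial''. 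When $p > d$ this is immediate because among any $d-1$ consecutive integers at most one is divisible by $p$ and none by $p^2,\dots$; when $p \mid d$ (so $d$ composite, $d \ge 2p$) one counts the $p$-adic valuation of $(m-1)_{(d-1)}$ via Legendre-type estimates, and Lemma~\ref{ineqval} (with the excluded case $(d,p,r)=(4,2,2)$) is precisely what guarantees the valuation reaches $r$. Once the base cases hold, the inductive step is automatic: $\widetilde H(n) = \widetilde H(n-1) + a(n)\widetilde H(n-d) \equiv 0$.

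Next, for the ``basic period'' assertion under the extra hypothesis $p > d$: suppose $t \mid p^r$ is a period, say $t = p^s$ with $s < r$; I would derive a contradiction. The idea is to locate a single index where $H_d$ changes by a nonzero amount modulo $p^{s+1}$ over a shift by $p^s$. Using identity (7) of the Theorem in Section~\ref{Section2}, namely $H_d(n) = 1 + \sum_{i=1}^n (i-1)_{(d-1)} H_d(i-d)$, the difference $H_d(n+p^s) - H_d(n) = \sum_{i=n+1}^{n+p^s} (i-1)_{(d-1)} H_d(i-d)$, and I would evaluate this modulo $p^{s+1}$ at a well-chosen $n$ — the natural choice is an $n$ making the $p$-adic valuations of the summands as small and as controlled as possible, so that exactly one term (or a term with a unit coefficient times $p^s$) survives. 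Since $p > d$ forces $\nu_p((i-1)_{(d-1)}) = \nu_p(p^{\lfloor \cdot \rfloor}\text{-count})$ to be at most $\lfloor (d-1)/p\rfloor + \dots$ which is small, and $H_d(i-d) \equiv 1 \pmod p$ is a unit, one can arrange the sum over a window of length $p^s$ to be $\equiv c\, p^{s} \not\equiv 0 \pmod{p^{s+1}}$ for a suitable $c$ coprime to $p$ — a standard computation summing a periodic sequence over one of its periods.

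**Main obstacle.** The delicate point is the base-case verification in the first part when $p \mid d$: showing $H_d(i + p^r) \equiv H_d(i) \pmod{p^r}$ for small $i$ requires a genuine $p$-adic valuation estimate on products of consecutive integers, and this is exactly where Lemma~\ref{ineqval} and the excluded triple $(4,2,2)$ enter — getting these bounds tight (rather than off by one) is the crux. For the sharpness half, the obstacle is instead combinatorial: pinning down an explicit index $n$ at which the period-$p^s$ shift is provably nonzero modulo $p^{s+1}$, i.e. ruling out unexpected cancellation in the length-$p^s$ window sum; I expect this to require a careful bookkeeping of which summands $(i-1)_{(d-1)}H_d(i-d)$ have minimal $p$-adic valuation as $i$ ranges over the window, and the hypothesis $p>d$ is what makes that valuation pattern transparent enough to control.
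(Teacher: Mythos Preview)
Your inductive framework is correct and matches the paper's, but the way you propose to verify the base case --- that $H_d(p^r)\equiv 1\pmod{p^r}$ --- contains a genuine gap. You suggest using identity~(7), $H_d(n)=1+\sum_{m=d}^{n}(m-1)_{(d-1)}H_d(m-d)$, and then claim one should check $\nu_p\big((m-1)_{(d-1)}\big)\ge r$ for the relevant $m$. That term-by-term vanishing is simply false: already for $d=3$, $p=5$, $r=1$ one has $H_3(5)=1+2\cdot1+6\cdot1+12\cdot1=21\equiv1\pmod5$, yet none of the summands $2,6,12$ is divisible by $5$. The required congruence comes from cancellation across the sum, not from each falling factorial carrying $p^r$. (Your parenthetical ``among any $d-1$ consecutive integers at most one is divisible by $p$'' actually points the wrong direction: it gives an \emph{upper} bound on $\nu_p((m-1)_{(d-1)})$, not the lower bound you need.) The paper bypasses this by working instead with the closed form $H_d(p^r)=\sum_{k}\frac{p^r!}{(p^r-dk)!\,k!\,d^k}$; the $k$th summand factors as $p^r\cdot\frac{(p^r-1)_{(dk-1)}}{k!\,d^k}$, and one checks the second factor has non-negative $p$-adic valuation using $(dk-1)!\mid(p^r-1)_{(dk-1)}$ together with Lemma~\ref{ineqval} when $p\mid d$. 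That is where the excluded triple $(4,2,2)$ genuinely enters, not in bounds on individual $(m-1)_{(d-1)}$.

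For the sharpness half your approach diverges from the paper's and is left only as a heuristic. The paper does something much more concrete than locating a window where the identity-(7) sum has controlled valuation: it assumes $p^{r-1}$ is a period, writes out the congruences $H_d(p^{r-1}+d)\equiv H_d(d)$ and $H_d(p^{r-1}+d+1)\equiv H_d(d+1)$ modulo $p^r$, expands both via the recurrence, and after cancelling the known periodic pieces obtains $\sum_{j=1}^{d-1}\frac1j\equiv0$ and $\sum_{j=2}^{d}\frac1j\equiv0\pmod p$; subtracting yields $1-\tfrac1d\equiv0\pmod p$, impossible since $0<d-1<p$. Your windowed-sum idea could perhaps be made to work, but you would still need to control the valuations of \emph{all} summands $(i-1)_{(d-1)}H_d(i-d)$ over a window of length $p^s$ and rule out cancellation --- and as the example above shows, individual terms need not have large valuation, so ``exactly one term survives'' is not what happens. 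The paper's two-index trick avoids this entirely.
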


\begin{proof}
Let us notice that for the proof that $p^r$ is a period of $(H_d(n)\pmod{p^r})_{n\in\N}$ we only need to show that
\begin{equation}\label{modpr}
H_d(p^r)\equiv H_d(0)=1\pmod{p^r}.
\end{equation}
Then, using (\ref{basicrec}) and a simple induction on $n\in\N$, we prove that $H_d(n+p^r)\equiv H_d(n)\pmod{p^r}$.

In order to prove (\ref{modpr}) we will use the exact formula $$H_d(p^r)=\sum_{k=0}^{\lfloor\frac{p^r}{d}\rfloor}\frac{p^r!}{(p^r-dk)!k!}\frac{1}{d^{k}}$$ and show that for $k>1$, the $k$-th summand in the above sum has the $p$-adic valuation greater than $r$. We write this summand:
$$\frac{p^r!}{(p^r-dk)!k!}\frac{1}{d^{k}}=p^r\frac{(p^r-1)!}{(p^r-dk)!k!}\frac{1}{d^{k}}=p^r\frac{(p^r-1)_{(dk-1)}}{k!}\frac{1}{d^{k}}.$$
It suffices to show, that $\nu_p\left(\frac{(p^r-1)_{(dk-1)}}{k!}\frac{1}{d^{k}}\right)\geq 0$. If $\nu_p(d)=0$ then the mentioned inequality is true. We thus assume that $\nu_p(d)>0$. Since $(dk-1)!\mid (p^r-1)_{(dk-1)}$, we have
$$\nu_p\left(\frac{(p^r-1)_{(dk-1)}}{k!}\frac{1}{d^{k}}\right)\geq \nu_p\left(\frac{(dk-1)!}{k!}\frac{1}{d^{k}}\right)=\nu_p((dk-1)_{((d-1)k-1)})-k\nu_p(d).$$
We estimate $\nu_p((dk-1)_{((d-1)k-1)})$ from below by number of multiples of $p$ among factors $dk-1, dk-2, ..., k+1$ and use Lemma \ref{ineqval} to obtain the following.
$$\nu_p((dk-1)_{((d-1)k-1)})>\frac{(d-1)k-1}{p}-1\geq\frac{k(d-2)}{p}-1\geq k\nu_p(d)-1.$$
This implies $\nu_p\left(\frac{(p^r-1)_{(dk-1)}}{k!}\frac{1}{d^{k}}\right)\geq 0$ and finishes the proof of (\ref{modpr}).

We need to show that $p^r$ is the basic period of the sequence $(H_d(n)\pmod{p^r})_{n\in\N}$ provided that $p>d$. Let us assume by contrary that $p^{r-1}$ is a period of $(H_d(n)\pmod{p^r})_{n\in\N}$. If $r=1$ then we directly obtain a contradiction, since the sequence $(H_d(n)\pmod{p})_{n\in\N}$ is not constant. Indeed, $H_d(d)=1+(d-1)!\not\equiv 1\pmod{p}$ as $d<p$. If $r>1$, then we consider the congruences
\begin{align*}
H_d(p^{r-1}+d)\equiv &H_d(d)\pmod{p^r},\\
H_d(p^{r-1}+d+1)\equiv &H_d(d+1)\pmod{p^r},
\end{align*}
that are equivalent to
\begin{align*}
H_d(p^{r-1}+d-1)+(p^{r-1}+d-1)_{(d-1)}H_d(p^{r-1})\equiv & H_d(d-1)+(d-1)!H_d(0)\pmod{p^r},\\
H_d(p^{r-1}+d)+(p^{r-1}+d)_{(d-1)}H_d(p^{r-1}+1)\equiv & H_d(d)+d!H_d(1)\pmod{p^r}.
\end{align*}
Since $H_d(p^{r-1}+d-1)\equiv H_d(d-1)\pmod{p^r}$, $H_d(p^{r-1}+d)\equiv H_d(d)\pmod{p^r}$ and $H_d(p^{r-1})\equiv H_d(0)\equiv H_d(p^{r-1}+1)\equiv H_d(1)\equiv 1\pmod{p^r}$, we get the following:
\begin{align*}
(p^{r-1}+d-1)_{(d-1)}\equiv & (d-1)!\pmod{p^r},\\
(p^{r-1}+d)_{(d-1)}\equiv & d!\pmod{p^r},
\end{align*}
and equivalently
\begin{align*}
(d-1)!+p^{r-1}(d-1)!\sum_{j=1}^{d-1}\frac{1}{j}\equiv & (d-1)!\pmod{p^r},\\
d!+p^{r-1}d!\sum_{j=2}^d\frac{1}{j}\equiv & d!\pmod{p^r}.
\end{align*}
After simplification (where we divide by $(d-1)!$ and $d!$, respectively, and these numbers are coprime to $p$) we obtain two congruences
\begin{align*}
\sum_{j=1}^{d-1}\frac{1}{j}\equiv & 0\pmod{p},\\
\sum_{j=2}^d\frac{1}{j}\equiv & 0\pmod{p}.
\end{align*}
Subtracting the second congruence from the first one, we get $1-\frac{1}{d}\equiv 0\pmod{p}$, or equivalently $\frac{d-1}{d}\equiv 0\pmod{p}$. This is impossible since $0<d-1<p$. Hence $p^r$ must be the basic period of the sequence $(H_d(n)\pmod{p^r})_{n\in\N}$.
\end{proof}

\begin{rem}
In the case of $(d,p,r)=(4,2,2)$ the sequence $(H_4(n)\pmod{4})_{n\in\N}$ has the basic period $8$. More precisely, we have
\begin{align*}
H_{4}(n)\equiv
\begin{cases}
1,\mbox{ if } n\equiv 0,1,2,3\pmod{8}\\
3,\mbox{ if } n\equiv 4,5,6,7\pmod{8}
\end{cases}
\pmod{4}.
\end{align*}
\end{rem}

\begin{proof}
We prove by induction on $m\in\N$ that
\begin{align}\label{H4mod8}
H_4(8m+i)\equiv
\begin{cases}
1,\mbox{ if } i\in\{0,1,2,3\}\\
3,\mbox{ if } i\in\{4,5,6,7\}
\end{cases}
\pmod{4}.
\end{align}
We check that congruence (\ref{H4mod8}) holds for $m=0$ and $i\in\{0,1,2,3,4,5,6,7\}$. Assume now that it is satisfied by some $m\in\N$. By direct calculations we show (\ref{H4mod8}) for $m+1$.
\end{proof}

\begin{exam}
If $p<d$, then it is possible that $p^r$ is not the basic period of the sequence $(H_d(n)\pmod{p^r})_{n\in\N}$. For example, if $d=5$, $p=2$, $r=4$, then we obtain a sequence $(H_5(n)\pmod{16})_{n\in\N}$, which has a period $8<2^4$.

Another example is for $d=10$, $p=3$, $r=5$. The basic period of the sequence $(H_{10}(n)\pmod{243})_{n\in\N}$ is $27<3^5$.
\end{exam}

Based on the above considerations we can give precise value of the period of the sequence $(H_d(n)\pmod{c})_{n\in\N}$ provided that $d$ is a composite number or $d\nmid c$.

\begin{cor}\label{Hdnmodc}
Let $c,d\in\N_+$ and $d>1$. Assume additionally that $d$ is a composite number or $d\nmid c$. Then the sequence $(H_d(n)\pmod{c})_{n\in\N}$ is periodic of period $2c$ if $d=4$ and $\nu_2(c)=2$ and $c$ otherwise.
\end{cor}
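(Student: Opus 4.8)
The plan is to reduce everything to prime powers via the Chinese Remainder Theorem and then to quote the Theorem above (together with the Remark concerning the triple $(4,2,2)$). The bookkeeping rests on two elementary observations: any positive multiple of a period of a sequence is again a period of that sequence; and, writing $c=\prod_{i}p_{i}^{r_{i}}$ with the $p_{i}$ pairwise distinct primes, a positive integer $T$ is a period of $(H_{d}(n)\pmod{c})_{n\in\N}$ if and only if $T$ is a period of $(H_{d}(n)\pmod{p_{i}^{r_{i}}})_{n\in\N}$ for every $i$ --- the forward implication by reduction modulo $p_{i}^{r_{i}}$, the converse by the Chinese Remainder Theorem.

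First I would note that, under the hypothesis of the corollary, no prime divisor of $c$ equals $d$: if $d$ is composite this is clear, and if $d\nmid c$ then a prime $p\mid c$ with $p=d$ would give $d\mid c$, a contradiction. Hence for every $i$ the Theorem above applies to the pair $(p_{i},r_{i})$ (we have $p_{i}\neq d$ and $r_{i}\geq 1$) as soon as $(d,p_{i},r_{i})\neq(4,2,2)$, and it yields that $p_{i}^{r_{i}}$ is a period of $(H_{d}(n)\pmod{p_{i}^{r_{i}}})_{n\in\N}$; by the first observation so is the multiple $c$. If $d\neq 4$ or $\nu_{2}(c)\neq 2$, the triple $(4,2,2)$ occurs for no $i$, so $c$ is a period modulo each $p_{i}^{r_{i}}$, and the second observation gives that $c$ is a period of $(H_{d}(n)\pmod{c})_{n\in\N}$.

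It remains to deal with the case $d=4$ and $\nu_{2}(c)=2$; then $4\mid c$, so $d\mid c$ (and $d=4$ is composite). Write $c=4m$ with $m$ odd, so the prime power factors of $c$ are $4$ together with the odd prime powers dividing $m$. For every odd prime power $q^{s}$ dividing $c$, the preceding paragraph (applied to the pair $(q,s)$, which avoids the triple $(4,2,2)$ since $q$ is odd) gives that $q^{s}$ is a period of $(H_{4}(n)\pmod{q^{s}})_{n\in\N}$, hence so is the multiple $2c$; and by the Remark above $8$ is a period of $(H_{4}(n)\pmod{4})_{n\in\N}$, hence so is the multiple $2c$. The second observation then yields that $2c$ is a period of $(H_{4}(n)\pmod{c})_{n\in\N}$. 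Conversely, $c$ is not a period in this case, since that would force the basic period $8$ of $(H_{4}(n)\pmod{4})_{n\in\N}$ to divide $c$, contradicting $\nu_{2}(c)=2$; this is why the answer is $2c$ and not $c$. The only point that needs care is thus purely arithmetical: identifying that the exceptional triple $(4,2,2)$ arises exactly when $d=4$ and $\nu_{2}(c)=2$, which is precisely the situation where the local period at $2$ exceeds the $2$-part of $c$ and pushes the global period from $c$ up to $2c$.
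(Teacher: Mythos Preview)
Your proof is correct and is exactly the intended deduction: the paper states the corollary without proof, as an immediate consequence of the preceding theorem (period $p^r$ modulo $p^r$ when $p\neq d$ and $(d,p,r)\neq(4,2,2)$) and the remark on the exceptional case $(4,2,2)$, and your Chinese Remainder Theorem bookkeeping is precisely how one passes from prime powers to general $c$. Your verification that the hypothesis ``$d$ composite or $d\nmid c$'' forces every prime divisor of $c$ to differ from $d$ is the only point that needs a moment's thought, and you handle it cleanly; the additional observation that $c$ itself fails to be a period when $d=4$ and $\nu_2(c)=2$ is a welcome bonus beyond what the statement asserts.
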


\section{The behaviour of the $p$-adic valuation of $H_{p}(n)$ for some $p$}\label{Section4}

Amdeberhan and Moll were able to compute the exact value of the 2-adic valuation of $H_{2}(n)$ for $n\in\N$. Quite unexpected they proved that for given $i\in\{0,1,2,3\}$ the sequence $(\nu_{2}(H_{2}(4n+i))-n)_{n\in\N}$ is constant. This raises natural question whether the sequence of $p$-adic valuations of $H_{p}(n)$ can share similar property. In fact, this question was studied earlier by Ochiai in \cite{Och}. In the cited paper the reader can find the formulae for the $p$-adic valuation of the numbers $H_p(n)$, $n\in\N$, where $p$ is a prime number at most equal to $11$. Actually, Ochiai in his paper gave a qualitative description of the sequence $(\nu_p(H_p(n)))_{n\in\N}$ for any prime number $p\leq 23$. Further, Ishihara, Ochiai, Takegahara and Yoshida extend this result for any prime number $p$ in \cite{IOTY}. In particular, the following inequality holds true
\begin{align}\label{lowerestimvpHpn}
\nu_p(H_p(n))\geq \left\lfloor\frac{n}{p^2}\right\rfloor\cdot (p-1)+\left(\left\lfloor\frac{n}{p}\right\rfloor\pmod{p}\right)
\end{align}
for all $n\in\N$, where the equality holds if $p\mid\left\lfloor\frac{n}{p}\right\rfloor$. This inequality was proved for $p=2$ by Chowla, Herstein and Moore in \cite{CHM}. In general, this inequality was proved by Grady and Newman in \cite{GraNew}. Moreover, in 2010 Kim and Kim gave a combinatorial proof of the above inequality. Despite this facts we would like to give the proof of (\ref{lowerestimvpHpn}) different than the ones presented by mentioned authors. In consequence, we will be able to prove the following two results which have not appeared yet in the literature.

\begin{thm}\label{Hpperiod}
For each odd prime number $p$ the sequence $$\left((-1)^{\left\lfloor\frac{n}{p^2}\right\rfloor}\frac{H_p(n)}{p^{\left\lfloor\frac{n}{p^2}\right\rfloor\cdot (p-1)+\left(\left\lfloor\frac{n}{p}\right\rfloor\pmod{p}\right)}}\pmod{p}\right)_{n\in\N}$$ is periodic with the basic period $p^2$. Moreover, the sequence $$\left(\frac{H_p(n)}{p^{\left\lfloor\frac{n}{p^2}\right\rfloor\cdot (p-1)+\left(\left\lfloor\frac{n}{p}\right\rfloor\pmod{p}\right)}}\pmod{p}\right)_{n\in\N}$$ has basic period equal to $2p^2$.
\end{thm}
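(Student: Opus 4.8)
Throughout, fix the odd prime $p$, write $\beta_n:=\lfloor n/p^2\rfloor(p-1)+(\lfloor n/p\rfloor\bmod p)$ so that (\ref{lowerestimvpHpn}) reads $\nu_p(H_p(n))\ge\beta_n$, and set $b_n:=H_p(n)/p^{\beta_n}$. The plan is to read the sequence $(H_p(n))_{n\in\N}$ off in blocks of $p$ consecutive arguments. Put $v_m:=(H_p(mp),H_p(mp+1),\dots,H_p(mp+p-1))^{T}$. Iterating the recurrence (\ref{basicrec}) with $d=p$ exactly $p$ times turns it into a block recurrence $v_m=A_mv_{m-1}$, where $A_m$ is explicit: its nonzero entries are the falling factorials $(mp+j-1)_{(p-1)}$ (arranged in a staircase pattern) together with the $1$'s coming from the ``$\sigma(n)=n$'' summand, which sit in the last column. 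One first records that the exponent $\beta_{mp+i}$ depends only on $m$: indeed $\beta_{mp+i}=c_m:=m-\lfloor m/p\rfloor$ for $0\le i\le p-1$, and $c_m-c_{m-1}=\varepsilon_m:=1-[p\mid m]\in\{0,1\}$. So the theorem is a statement about the normalised blocks $\widetilde v_m:=p^{-c_m}v_m=:(\widetilde G_m(0),\dots,\widetilde G_m(p-1))^{T}$, which transform by $\widetilde v_m=p^{-\varepsilon_m}A_m\widetilde v_{m-1}$: I must show $\widetilde v_m\in\Z_p^{\,p}$ (this is precisely (\ref{lowerestimvpHpn})), that $(-1)^{\lfloor m/p\rfloor}(\widetilde v_m\bmod p)$ does not depend on $\lfloor m/p\rfloor$, and then determine the basic period.

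The heart of the preparation is a $p$-adic analysis of $A_m$ via Wilson's theorem. One checks that $\nu_p\big((mp-1)_{(p-1)}\big)=0$ with $(mp-1)_{(p-1)}\equiv(p-1)!\pmod{p^2}$ (using $\sum_{s=1}^{p-1}s^{-1}\equiv0\pmod p$), while $\nu_p\big((mp+j-1)_{(p-1)}\big)=1+\nu_p(m)$ and $p^{-1}(mp+j-1)_{(p-1)}\equiv-m/j\pmod p$ for $1\le j\le p-1$. Consequently, uniformly in $m$, one obtains $A_m\equiv C+pD_m\pmod{p^2}$, where $C$ is the matrix with first column all $-1$, last column all $1$, and all other entries $0$, and $D_m$ is an explicit integer matrix whose entries modulo $p$ are the Wilson quotient $w$ in the first column and $-m/j$ in the staircase positions. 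In particular $A_{m+p}\equiv A_m\pmod{p^2}$, so the block transfer is ``$p$-periodic modulo $p^2$''; also $C$ kills the all-ones vector and sends every vector to a constant one. The matrix $C$ — the Wilson $-1$ — is what will produce the sign $(-1)^{\lfloor n/p^2\rfloor}$ of the theorem.

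I would then run an induction on $m$ (the first $p$ blocks being the base case, handled directly from $H_p(i)=1$ for $0\le i<p$) proving simultaneously: $(a)$ $\widetilde v_m\in\Z_p^{\,p}$; $(b)$ $\widetilde v_m\equiv-\widetilde v_{m-p}\pmod p$ for $m\ge p$; and $(c)$ a mod-$p^2$ refinement, retaining enough of $\widetilde v_m\bmod p^2$ — concretely, that $(\widetilde v_m+\widetilde v_{m-p})/p$ is a constant vector modulo $p$ whenever $p\nmid m+1$. At a step with $p\mid m$ one has $\widetilde v_m\equiv C\widetilde v_{m-1}\pmod p$, so $(a)$ is automatic and $(b)$ follows from $(b)$ at $m-1$ by linearity of $C$; at a step with $p\nmid m$, integrality in $(a)$ forces $C(\widetilde v_{m-1}\bmod p)=0$, i.e.\ $\widetilde G_{m-1}(0)\equiv\widetilde G_{m-1}(p-1)\pmod p$, after which $(b)$ at $m$ needs exactly the refinement $(c)$ at $m-1$ together with $A_{m+p}\equiv A_m\pmod{p^2}$ — and this same step also produces $(c)$ at $m$. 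Granting $(a)$--$(c)$: $(a)$ is the lower bound (\ref{lowerestimvpHpn}), and $(b)$ says that $(-1)^{\lfloor n/p^2\rfloor}b_n\bmod p$ depends only on $n\bmod p^2$, so the first sequence in the theorem has period $p^2$.

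It remains to pin down the basic periods. Block $0$ is $\widetilde v_0=(1,\dots,1)$; on the other hand (\ref{basicrec}) gives $H_p(p+1)=H_p(p)+p!\,H_p(1)$, hence $\widetilde G_1(1)-\widetilde G_1(0)=(p-1)!\not\equiv0\pmod p$, so block $1$ modulo $p$ is non-constant — in particular it is congruent neither to $(1,\dots,1)$ nor to $(-1,\dots,-1)$. Since the signed sequence has period $p^2$, this rules out its only proper candidate periods $p$ and $1$, so its basic period is $p^2$. For the unsigned sequence $(b_n\bmod p)$, part $(b)$ (at $m=n+p^2$) gives $b_{n+p^2}\equiv-b_n\pmod p$, so $2p^2$ is a period; as $p$ is odd, $p^2\equiv p\pmod{2p}$, and combining $b_{n+p^2}\equiv-b_n$ with $b_i=1$ for $0\le i<p$ and the non-constancy of block $1$ excludes each of the remaining divisors $1,2,p,2p,p^2$ of $2p^2$; hence its basic period is $2p^2$. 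The step I expect to be the main obstacle is the mod-$p^2$ bookkeeping hidden in $(c)$: pushing the induction through the steps with $p\nmid m$ genuinely requires controlling the blocks modulo $p^2$, and one has to verify that all the falling-factorial congruences — and the harmonic-sum identities underlying them, which for $p=3$ do not vanish modulo $p$ and survive as fixed $p$-dependent constants — conspire so that the dependence of $\widetilde v_{qp+r}$ on $q$ is only through the sign $(-1)^q$.
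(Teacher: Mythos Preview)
Your block--matrix framework is a genuine alternative to the paper's route, and the pieces you do work out are correct: the transfer matrix $A_m$, the Wilson computation $(mp-1)_{(p-1)}\equiv(p-1)!\pmod{p^2}$, the staircase entries $p^{-1}(mp+j-1)_{(p-1)}\equiv -m/j\pmod p$, and hence the reductions $A_m\equiv C+pD_m\pmod{p^2}$ and $A_{m+p}\equiv A_m\pmod{p^2}$ are all right. Your basic-period arguments at the end are also fine.

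The gap is exactly where you flag it: you state invariant (c) but do not prove it, and it is not obvious. Carrying out your induction at a step with $p\nmid m$ and $p\nmid m+1$, one finds
\[
\frac{\widetilde v_m+\widetilde v_{m-p}}{p}\;\equiv\; Cw'\;+\;cD_m\mathbf 1\;+\;p^{-2}(A_{m-p}-A_m)\widetilde v_{m-p-1}\pmod p,
\]
where $w=(\widetilde v_{m-1}+\widetilde v_{m-p-1})/p=c\mathbf 1+pw'$. Now $Cw'$ is constant, but row $i$ of $D_m\mathbf 1$ is $w_p-m\sum_{j=1}^{i}j^{-1}$, which is \emph{not} constant for $p\nmid m$; so for (c) to survive you must show that the third term, which depends on $A_m\bmod p^3$, exactly compensates. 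Numerics (e.g.\ $p=3$) suggest this works, but the proposal does not supply the cancellation, and it is not automatic from what you have recorded about $A_m$.

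The paper avoids this altogether. Instead of a $\bmod\,p^2$ invariant on the normalised blocks, it first proves an explicit congruence (Theorem~\ref{Hpmodp}): writing $n=kp^2+jp+i$,
\[
b_{kp+j}(i)\;\equiv\;b_{kp+j}(0)\;+\;\sum_{t=1}^j(-1)^t(j)_{(t)}\,b_{kp+j-t}(0)\!\!\sum_{1\le l_t\le\dots\le l_1\le i}\frac{1}{l_1\cdots l_t}\pmod p,
\]
with the coefficients independent of $k$; the complete-homogeneous reciprocal sums are controlled by Lemma~\ref{sumprodrecip}. Then anti-periodicity $b_{(k+1)p}(0)\equiv -b_{kp}(0)$ (the first line of Theorem~\ref{Hpmodp}) propagates to all $b_{kp+j}(i)$ by linearity, which is exactly (\ref{period}). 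So the paper tracks how each $b_n\bmod p$ depends on a fixed set of anchor values $b_{kp+j'}(0)$ via $k$-independent coefficients, rather than maintaining a second-order congruence on the blocks. Your (c), if true, encapsulates the same content, but establishing it would essentially require redoing the paper's Theorem~\ref{Hpmodp} inside your matrix language; as written, the proposal has not closed that loop.
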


\begin{prop}\label{vpHpeq}
For each prime number $p$ there are numbers $b_0,...,b_{p-1}\in\{0,...,p-1\}$ such that $b_0=p-1$, $b_{j-1}-b_j\in\{0,1\}$ for $j\in\{1,...,p-1\}$ and $\nu_p(H_p(kp^2+jp+b_j))=k(p-1)+j$ for all $k\in\N$ and $j\in\{0,...,p-1\}$. In particular, $b_j\geq p-j-1$.
\end{prop}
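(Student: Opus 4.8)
The plan is to reduce the statement, via Theorem \ref{Hpperiod}, to a combinatorial construction governed by a single linear recurrence modulo $p$ for the normalised values of $H_p$; throughout I take $p$ odd, the case $p=2$ being identical with the explicit $2$-adic description of Amdeberhan and Moll \cite{AmdMoll} used in place of Theorem \ref{Hpperiod}. For $n\in\N$ put $\beta_n:=\lfloor n/p^2\rfloor(p-1)+(\lfloor n/p\rfloor\bmod p)$, so that $H_p(n)/p^{\beta_n}\in\Z_p$ by (\ref{lowerestimvpHpn}). By Theorem \ref{Hpperiod} the residue $(-1)^{\lfloor n/p^2\rfloor}\big(H_p(n)/p^{\beta_n}\bmod p\big)$ depends only on $n\bmod p^2$, so for $j,i\in\{0,\dots,p-1\}$ I may define $g(j,i)\in\{0,\dots,p-1\}$ to be this common residue taken over all $n\equiv jp+i\pmod{p^2}$. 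As $(-1)^{\lfloor n/p^2\rfloor}$ is a unit modulo $p$, for every $k\in\N$ one has $\nu_p(H_p(kp^2+jp+i))=k(p-1)+j$ precisely when $g(j,i)\neq 0$ (the valuation being strictly larger otherwise). Hence it suffices to produce $b_0,\dots,b_{p-1}\in\{0,\dots,p-1\}$ with $b_0=p-1$, $b_{j-1}-b_j\in\{0,1\}$ and $g(j,b_j)\neq 0$ for all $j$; the bound $b_j\geq p-1-j$ then follows from $b_0=p-1$ and $b_{j-1}-b_j\le 1$.

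Two facts about $g$ are needed. First, taking $n=i<p$ gives $g(0,i)=H_p(i)\bmod p=1$ for every $i\in\{0,\dots,p-1\}$. Second, I claim that
\begin{equation}\label{grec}
g(j,i)\equiv g(j,i-1)-j\,i^{-1}g(j-1,i)\pmod p\qquad\text{for }1\le i,j\le p-1,
\end{equation}
where $i^{-1}$ denotes the inverse of $i$ modulo $p$. To see this, apply the recurrence (\ref{basicrec}) to $n=jp+i$: one computes $\beta_n=\beta_{n-1}=j$ and $\beta_{n-p}=j-1$, and among the $p-1$ factors of $(n-1)_{(p-1)}=\prod_{l=1}^{p-1}(jp+i-l)$ exactly one, the factor with $l=i$ (equal to $jp$), is divisible by $p$; writing $(n-1)_{(p-1)}=p\gamma$, the cofactor satisfies $\gamma\equiv j\prod_{l=1,\,l\neq i}^{p-1}(i-l)\equiv j\,(p-1)!\,i^{-1}\equiv -j\,i^{-1}\pmod p$ by Wilson's theorem. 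If $H_p(n-1)=p^{j}\alpha$ and $H_p(n-p)=p^{j-1}\alpha'$, then $H_p(n)=p^{j}(\alpha+\gamma\alpha')$, and reducing modulo $p$ — noting that $\alpha,\alpha'$ reduce to $g(j,i-1),g(j-1,i)$, since the signs $(-1)^{\lfloor(n-1)/p^2\rfloor}$, $(-1)^{\lfloor(n-p)/p^2\rfloor}$ are both $+1$ here — gives exactly (\ref{grec}).

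It remains to build $(b_j)$ greedily, proving $g(j,b_j)\neq 0$ by induction on $j$ while keeping $b_j\geq p-1-j$ (so that $b_{j-1}\geq p-j\geq 1$ for $1\le j\le p-1$). Set $b_0=p-1$; then $g(0,b_0)=1\neq 0$. Given $b_{j-1}$ with $g(j-1,b_{j-1})\neq 0$: if $g(j,b_{j-1})\neq 0$, put $b_j=b_{j-1}$; otherwise put $b_j=b_{j-1}-1$, and note that (\ref{grec}) with $i=b_{j-1}$ then gives $g(j,b_{j-1}-1)\equiv j\,b_{j-1}^{-1}g(j-1,b_{j-1})\pmod p$, a product of three units modulo $p$, hence nonzero. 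In either case $b_{j-1}-b_j\in\{0,1\}$ and $b_j\geq b_{j-1}-1\geq p-1-j$, so the induction closes and the resulting $(b_j)$ has all the required properties.

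I expect the only genuine work to be the derivation of (\ref{grec}): one must control the three valuations $\beta_n,\beta_{n-1},\beta_{n-p}$, single out the unique $p$-divisible factor of $(n-1)_{(p-1)}$, and compute the residue of its cofactor via Wilson's theorem. Once (\ref{grec}) and the normalisation $g(0,\cdot)\equiv 1$ are available, the greedy argument is immediate; the other essential ingredient is Theorem \ref{Hpperiod}, which makes $g$ well defined and thereby upgrades the conclusion from $k=0$ to all $k\in\N$.
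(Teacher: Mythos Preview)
Your proof is correct and follows essentially the same route as the paper's. Both arguments use the basic recurrence (\ref{basicrec}) at $n=jp+b_{j-1}$, observe that $(n-1)_{(p-1)}$ has $p$-adic valuation exactly $1$, conclude that at least one of $H_p(jp+b_{j-1})$ and $H_p(jp+b_{j-1}-1)$ has valuation exactly $j$, and then invoke Theorem \ref{Hpperiod} to pass from $k=0$ to all $k$. Your presentation is slightly more explicit in that you compute the cofactor $\gamma\equiv -ji^{-1}\pmod p$ via Wilson's theorem and package everything through the function $g$, whereas the paper only needs $\nu_p((jp+b_{j-1}-1)_{(p-1)})=1$ and argues directly with valuations; the extra precision is not needed here but does no harm.
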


We start with one lemma. We omit its proof as it can be found in \cite[Lemma 2.1]{IOTY}.

\begin{lem}\label{sumprodrecip}
If $p$ is an odd prime number and $k$ is a positive integer then
\begin{align*}
\sum_{1\leq l_k\leq ...\leq l_1\leq p-1} \frac{1}{l_1\cdot ...\cdot l_k}\equiv
\begin{cases}
1,&\mbox{ when } p-1\mid k\\
0,&\mbox{ when } p-1\nmid k
\end{cases}\pmod{p}.
\end{align*}
\end{lem}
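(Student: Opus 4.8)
The plan is to recognize the left-hand side as the complete homogeneous symmetric function $h_k$ evaluated at the reciprocals $1,\tfrac12,\ldots,\tfrac1{p-1}$, and to read off the congruence from the associated generating function. Since $1/l$ lies in $\Z_{(p)}$ (the localization of $\Z$ at $p$) for every $l\in\{1,\ldots,p-1\}$, each summand, and hence the whole finite sum
$$
S_k:=\sum_{1\le l_k\le\cdots\le l_1\le p-1}\frac1{l_1\cdots l_k},
$$
is a well-defined element of $\Z_{(p)}$, so it makes sense to ask for $S_k\bmod p$. First I would introduce the power series
$$
F(x):=\prod_{l=1}^{p-1}\frac{1}{1-x/l}\in\Z_{(p)}[[x]]
$$
and observe that the coefficient of $x^k$ in $F$ is precisely the sum of all products $\tfrac1{l_1\cdots l_k}$ over multisets $\{l_1,\ldots,l_k\}$ of size $k$ with entries in $\{1,\ldots,p-1\}$, i.e. $F(x)=\sum_{k\ge0}S_kx^k$.

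The next step is to reduce $F$ coefficientwise modulo $p$. This reduction map $\Z_{(p)}[[x]]\to\F_p[[x]]$ is a ring homomorphism, and it sends $\frac1{1-x/l}$ to $\frac1{1-\overline{l}^{-1}x}$; as $l$ runs over $1,\ldots,p-1$, the residue $\overline{l}^{-1}$ runs over all of $\F_p^{\times}$. Hence the image of $F$ is $\prod_{a\in\F_p^{\times}}\frac1{1-ax}$. From the factorization $\prod_{a\in\F_p}(T-a)=T^p-T$ in $\F_p[T]$ we get $\prod_{a\in\F_p^{\times}}(T-a)=T^{p-1}-1$, and then $\prod_{a\in\F_p^{\times}}(1-ax)=x^{p-1}\prod_{a\in\F_p^{\times}}(x^{-1}-a)=x^{p-1}(x^{-(p-1)}-1)=1-x^{p-1}$ as an identity of polynomials over $\F_p$. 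Therefore
$$
\sum_{k\ge0}\big(S_k\bmod p\big)\,x^k=\prod_{a\in\F_p^{\times}}\frac1{1-ax}=\frac1{1-x^{p-1}}=\sum_{m\ge0}x^{m(p-1)}\quad\text{in }\F_p[[x]],
$$
and comparing coefficients of $x^k$ yields $S_k\equiv1\pmod p$ when $(p-1)\mid k$ and $S_k\equiv0\pmod p$ otherwise, which is exactly the assertion of the lemma.

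I do not expect a genuine obstacle here; the only point requiring a little care is the bookkeeping that legitimizes the "reduce mod $p$" step — namely that every denominator occurring is coprime to $p$, so that $F$ really lies in $\Z_{(p)}[[x]]$ and coefficientwise reduction is a well-defined ring homomorphism — together with the evaluation $\prod_{a\in\F_p^{\times}}(1-ax)=1-x^{p-1}$, which is a routine consequence of Fermat's little theorem. If one wished to avoid power series one could instead induct on $k$ using Newton's identities relating $h_k$ to the power sums $\sum_{l=1}^{p-1}l^{-j}$ (which are $\equiv-1$ or $\equiv0\pmod p$ according as $(p-1)\mid j$ or not), but that route is more cumbersome, and the generating-function computation above is the most transparent.
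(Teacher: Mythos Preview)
Your proof is correct and coincides with the paper's approach. In the main text the paper omits the proof (citing \cite[Lemma 2.1]{IOTY}), but the source contains a draft proof that proceeds exactly as you do: interpret the sum as the complete homogeneous symmetric polynomial $h_k$ in the reciprocals $1,\tfrac12,\ldots,\tfrac1{p-1}$, pass to the generating function $\prod_{l=1}^{p-1}(1-t/l)^{-1}$, and reduce modulo $p$ to obtain $(1-t^{p-1})^{-1}$; the only cosmetic difference is that the paper's draft parametrizes $\F_p^\times$ via a primitive root rather than working with all nonzero residues directly.
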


At this moment, we give a result on congruences involving numbers $H_p(n)$, $n\in\N$. This result is crucial in the proof of Theorem \ref{Hpperiod}.

\begin{thm}\label{Hpmodp}
For odd prime number $p$, a non-negative integer $k$ and integers $i,j\in\{0,1,...,p-1\}$ we have
\begin{align*}
H_p(kp^2)\equiv & (-1)^kp^{k(p-1)}\pmod{p^{k(p-1)+1}},\\
H_p(kp^2+jp)\equiv & 0\pmod{p^{k(p-1)+j}},\\
H_p(kp^2+jp+i)\equiv & H_p(kp^2+jp)\\
& +\sum_{t=1}^j (-1)^t(j)_{(t)}p^tH_p(kp^2+(j-t)p)\sum_{1\leq l_t\leq ...\leq l_1\leq i}\frac{1}{l_1\cdot ...\cdot l_t}\pmod{p^{k(p-1)+j+1}}.
\end{align*}
\end{thm}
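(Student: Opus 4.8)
The plan is to prove the three congruences simultaneously by a nested induction: the outer induction on $k$, and for fixed $k$ an inner induction on $j$, and for fixed $k,j$ an inner induction on $i$. The engine throughout is the basic recurrence \eqref{basicrec}, which for $d=p$ reads $H_p(n)=H_p(n-1)+(n-1)_{(p-1)}H_p(n-p)$; the key observation is that $(n-1)_{(p-1)}=(n-1)(n-2)\cdots(n-p+1)$ is a product of $p-1$ consecutive integers, so it is divisible by $p$ exactly when one of those factors is, i.e. when $n\equiv 0\pmod p$, in which case $\nu_p((n-1)_{(p-1)})=1$ and $(n-1)_{(p-1)}/p\equiv -1\pmod p$ by Wilson. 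For $n\not\equiv 0\pmod p$ we have $(n-1)_{(p-1)}\equiv -1\pmod p$, again by Wilson.

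First I would do the innermost step: fix $k$ and $j$ and prove the third congruence by induction on $i\in\{0,\dots,p-1\}$, assuming the first two congruences (for the relevant $k$ and all $j'\le j$) as the induction hypothesis of the outer layers. The base case $i=0$ is trivial (empty sum). For the inductive step $i-1\to i$, since $1\le i\le p-1$ the index $kp^2+jp+i$ is not divisible by $p$, so \eqref{basicrec} gives
$$H_p(kp^2+jp+i)=H_p(kp^2+jp+i-1)+(kp^2+jp+i-1)_{(p-1)}H_p(kp^2+(j-1)p+i).$$
Here $(kp^2+jp+i-1)_{(p-1)}\equiv \tfrac{1}{i}\cdot\tfrac{(i-1)\cdots(i-p+1)\,\text{stuff}}{\cdots}$ — more precisely one checks $(kp^2+jp+i-1)_{(p-1)}\equiv -\tfrac1i\pmod p$ for $1\le i\le p-1$ (the product runs over a full residue system missing the class of $i$, so it equals $-(p-1)!/i\cdot(\text{unit}\equiv1)$). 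Substituting the induction hypothesis for $H_p(kp^2+jp+i-1)$ (same $j$, index $i-1$) and for $H_p(kp^2+(j-1)p+i)$ (smaller $j$, so available from the $j$-induction) and reindexing the double sum via the identity $\sum_{1\le l_t\le\cdots\le l_1\le i}\tfrac1{l_1\cdots l_t}=\sum_{1\le l_t\le\cdots\le l_1\le i-1}\tfrac1{l_1\cdots l_t}+\tfrac1i\sum_{1\le l_{t-1}\le\cdots\le l_1\le i}\tfrac1{l_1\cdots l_{t-1}}$ collapses everything to the claimed formula; one must track that the valuations in the error terms stay above $k(p-1)+j$, which follows from the second congruence since $(j)_{(t)}p^t H_p(kp^2+(j-t)p)$ has $p$-valuation $\ge t+k(p-1)+(j-t)=k(p-1)+j$.

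Next, with the third congruence in hand for all $i$ at level $(k,j)$, I would take $i=p$ there — equivalently use \eqref{basicrec} at $n=kp^2+(j+1)p$ — to push from $j$ to $j+1$: this yields $H_p(kp^2+(j+1)p)$ in terms of $H_p(kp^2+jp+p-1)$ and $H_p(kp^2+jp)$, and combined with Lemma \ref{sumprodrecip} (which evaluates $\sum_{1\le l_t\le\cdots\le l_1\le p-1}1/(l_1\cdots l_t)$ mod $p$ as $1$ iff $(p-1)\mid t$, else $0$) the sum over $t$ telescopes to give $H_p(kp^2+(j+1)p)\equiv 0\pmod{p^{k(p-1)+j+1}}$, the second congruence at $j+1$; running $j$ from $0$ to $p-1$ completes that layer. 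Finally the outer step $k\to k+1$: applying the $j=p$ case (i.e. \eqref{basicrec} at $n=(k+1)p^2$) together with Lemma \ref{sumprodrecip} again — now the sum over $t\in\{1,\dots,p-1\}$ picks up exactly the term $t=p-1$ with coefficient $(p-1)_{(p-1)}=(p-1)!\equiv -1$ — produces $H_p((k+1)p^2)\equiv (-1)^{k+1}p^{(k+1)(p-1)}\pmod{p^{(k+1)(p-1)+1}}$, the first congruence at $k+1$, and the induction closes. The main obstacle I anticipate is bookkeeping in the $j$-step: one has to verify that the double sum $\sum_{t=1}^{p-1}(-1)^t (p-1)_{(t)}p^t H_p(kp^2+(p-1-t)p)\sum_{1\le l_t\le\cdots\le l_1\le p-1}1/(l_1\cdots l_t)$, reduced modulo the appropriate power $p^{k(p-1)+j+1}$, really does vanish — this requires simultaneously using the valuation bounds from the second congruence and the exact mod-$p$ value of the inner sum from Lemma \ref{sumprodrecip}, and the interaction of these two facts at the boundary where $(p-1)\mid t$ is where the argument is most delicate.
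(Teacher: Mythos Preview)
Your overall strategy is exactly that of the paper: a single induction on $n=kp^2+jp+i$ (which amounts to your nested induction on $k$, then $j$, then $i$), driven by the recurrence \eqref{basicrec}, with Wilson's theorem and Lemma~\ref{sumprodrecip} supplying the mod-$p$ evaluations, and the splitting identity
\[
\sum_{1\le l_t\le\cdots\le l_1\le i}\frac{1}{l_1\cdots l_t}
=\sum_{1\le l_t\le\cdots\le l_1\le i-1}\frac{1}{l_1\cdots l_t}
+\frac1i\sum_{1\le l_{t-1}\le\cdots\le l_1\le i}\frac{1}{l_1\cdots l_{t-1}}
\]
doing the work in the $i$-step. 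The $j$-step and $k$-step match the paper's treatment essentially line for line.

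However, your ``key observation'' about $(n-1)_{(p-1)}$ is stated backward. Among the $p-1$ consecutive integers $n-1,\dots,n-p+1$ there is a multiple of $p$ precisely when $n\not\equiv 0\pmod p$ (if $n\equiv 0$ then both adjacent multiples of $p$, namely $n$ and $n-p$, lie just outside the range). Thus when $n\equiv 0\pmod p$ one has $(n-1)_{(p-1)}\equiv (p-1)!\equiv -1\pmod p$, whereas for $1\le i\le p-1$ and $n=kp^2+jp+i$ the product \emph{is} divisible by $p$: the unique factor in the relevant class is $n-i=kp^2+jp$, and one computes
\[
(kp^2+jp+i-1)_{(p-1)}\equiv \frac{(p-1)!}{i}\cdot jp\equiv -\frac{jp}{i}\pmod{p^2},
\]
not $-\tfrac1i\pmod p$ as you wrote. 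This is not cosmetic: that extra factor of $p$ is exactly what lifts $H_p(kp^2+(j-1)p+i)$, whose valuation the induction hypothesis only controls at level $k(p-1)+j-1$, up to the modulus $p^{k(p-1)+j+1}$ you are aiming for; with your stated value the substitution at $(j-1,i)$ would not be legitimate at that modulus. Once you swap the two cases and use $-\tfrac{jp}{i}$ in the $i$-step, the reindexing collapses exactly as you describe and your argument coincides with the paper's.
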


\begin{proof}
We prove the congruences concerning numbers $H_p(n)$ for $n\in\N$. We proceed by induction on $n$. We write $n=kp^2+jp+i$ with $k\in\N$ and $i,j\in\{0,...,p-1\}$. If $k=j=0$, then the statement of the theorem holds for all $i\in\{0,...,p-1\}$.

Let us assume that $n=kp^2$ for some $k\in\N_+$. By induction hypothesis for $(k-1)p^2+(p-1)p+p-1$ and $(k-1)p^2+(p-1)p$, Wilson's theorem and Lemma \ref{sumprodrecip} we obtain
\begin{align*}
& H_p(kp^2) = H_p((k-1)p^2+(p-1)p+p-1)+(kp^2-1)_{(p-1)}H_p((k-1)p^2+(p-1)p)\\
& \equiv H_p((k-1)p^2+(p-1)p+p-1)+(p-1)!H_p((k-1)p^2+(p-1)p)\\
& \equiv H_p((k-1)p^2+(p-1)p)+\sum_{t=1}^{p-1} (-1)^t(p-1)_{(t)}p^tH_p((k-1)p^2+(p-1-t)p)\sum_{1\leq l_t\leq ...\leq l_1\leq p-1}\frac{1}{l_1\cdot ...\cdot l_t}\\
& - H_p((k-1)p^2+(p-1)p)\\
& \equiv (-1)^{p-1}\cdot(-1)p^{p-1}H_p((k-1)p^2)\equiv -p^{p-1}(-1)^{k-1}p^{(k-1)(p-1)}=(-1)^kp^{k(p-1)}\pmod{p^{k(p-1)+1}}.
\end{align*}

Assume now that $n=kp^2+jp$ for $k\in\N$ and $j\in\{1,...,p-1\}$. By the recurrence (\ref{basicrec}), induction hypothesis for $n=kp^2+(j-1)p+p-1$, Lemma \ref{sumprodrecip} and Wilson's theorem we have
\begin{align*}
& H_p(kp^2+jp)=H_p(kp^2+(j-1)p+p-1)+(kp^2+(j-1)p+p-1)_{(p-1)}H_p(kp^2+(j-1)p)\\
& \equiv H_p(kp^2+(j-1)p)+\sum_{t=1}^{j-1} (-1)^t(j-1)_{(t)}p^tH_p(kp^2+(j-1-t)p)\sum_{1\leq l_t\leq ...\leq l_1\leq p-1}\frac{1}{l_1\cdot ...\cdot l_t}\\
& + (p-1)!H_p(kp^2+(j-1)p)\\
& \equiv H_p(kp^2+(j-1)p)-H_p(kp^2+(j-1)p)=0\pmod{p^{k(p-1)+j}},
\end{align*}
since $p^{k(p-1)+j-1-t}\mid H_p(kp^2+(j-1-t)p)$.

At this moment we assume that $n=kp^2+jp+i$ for $k\in\N$ and $i,j\in\{0,...,p-1\}$. If $i=0$, then the third congruence obviously is true. For $i>0$, by (\ref{basicrec}), induction hypothesis for $n=kp^2+jp+i-1$ and Wilson's theorem we have
\begin{align*}
& H_p(kp^2+jp+i)=H_p(kp^2+jp+i-1)+(kp^2+jp+i-1)_{(p-1)}H_p(kp^2+(j-1)p+i)\\
& \equiv H_p(kp^2+jp+i-1)+\frac{jp\cdot(p-1)!}{i}H_p(kp^2+(j-1)p+i)\\
& \equiv H_p(kp^2+jp+i-1)-\frac{jp}{i}H_p(kp^2+(j-1)p+i)\\
& \equiv H_p(kp^2+jp)+\sum_{t=1}^j (-1)^t(j)_{(t)}p^tH_p(kp^2+(j-t)p)\sum_{1\leq l_t\leq ...\leq l_1\leq i-1}\frac{1}{l_1\cdot ...\cdot l_t}\\
& -\frac{pj}{i}H_p(kp^2+(j-1)p)+\sum_{t=1}^{j-1} (-1)^{t+1}(j)_{(t+1)}p^{t+1}H_p(kp^2+(j-1-t)p)\sum_{1\leq l_t\leq ...\leq l_1\leq i}\frac{1}{l_1\cdot ...\cdot l_t\cdot i}\\
& \equiv H_p(kp^2+jp)+\sum_{t=1}^j (-1)^t(j)_{(t)}p^tH_p(kp^2+(j-t)p)\sum_{1\leq l_t\leq ...\leq l_1\leq i}\frac{1}{l_1\cdot ...\cdot l_t}\pmod{p^{k(p-1)+j+1}}.
\end{align*}

\end{proof}

We are ready to prove the lower bound for the $p$-adic valuation of $H_p(n)$.

\begin{cor}\label{vpHpnfrombelow}
For odd prime number $p$, non-negative integer $k$ and integers $i,j\in\{0,1,...,p-1\}$ we have $\nu_p(H_p(kp^2+jp+i))\geq k(p-1)+j$, where the inequality is in fact equality for $j=0$. In other words, $$\nu_p(H_p(n))\geq \left\lfloor\frac{n}{p^2}\right\rfloor\cdot (p-1)+\left(\left\lfloor\frac{n}{p}\right\rfloor\pmod{p}\right)$$ for all $n\in\N$.
\end{cor}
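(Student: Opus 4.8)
The plan is to read everything off directly from Theorem \ref{Hpmodp}; the corollary is essentially a bookkeeping of $p$-adic valuations in the three displayed congruences. Write $n=kp^2+jp+i$ with $k\in\N$ and $i,j\in\{0,\dots,p-1\}$, so that $\lfloor n/p^2\rfloor=k$ and $\lfloor n/p\rfloor\bmod p=j$, and $p\mid\lfloor n/p\rfloor$ precisely when $j=0$. It therefore suffices to prove $\nu_p(H_p(n))\geq k(p-1)+j$, with equality when $j=0$. First I would dispose of the case $i=0$: the second congruence of Theorem \ref{Hpmodp} states $H_p(kp^2+jp)\equiv 0\pmod{p^{k(p-1)+j}}$, which is exactly the required lower bound.

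Next, the case $i\geq 1$. Here I would invoke the third congruence of Theorem \ref{Hpmodp}, which expresses $H_p(kp^2+jp+i)$ modulo $p^{k(p-1)+j+1}$ as $H_p(kp^2+jp)$ plus a sum over $t\in\{1,\dots,j\}$ of terms $(-1)^t(j)_{(t)}p^tH_p(kp^2+(j-t)p)S_{t,i}$, where $S_{t,i}=\sum_{1\leq l_t\leq\cdots\leq l_1\leq i}\frac{1}{l_1\cdots l_t}$. The only point requiring a moment's thought is that, because $i\leq p-1<p$, every $l_m$ occurring in these sums is coprime to $p$, so $\nu_p(S_{t,i})\geq 0$; this is what prevents the factors $p^t$ from being cancelled. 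Combining this with $(j)_{(t)}\in\Z$ and the bound $\nu_p(H_p(kp^2+(j-t)p))\geq k(p-1)+(j-t)$ already furnished by the second congruence, each summand has $p$-adic valuation at least $t+k(p-1)+(j-t)=k(p-1)+j$, and the leading term $H_p(kp^2+jp)$ does too. Hence $H_p(n)$ is congruent modulo $p^{k(p-1)+j+1}$ to a quantity divisible by $p^{k(p-1)+j}$, which forces $\nu_p(H_p(n))\geq k(p-1)+j$.

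For the equality claim ($j=0$): the first congruence of Theorem \ref{Hpmodp} gives $H_p(kp^2)\equiv(-1)^kp^{k(p-1)}\pmod{p^{k(p-1)+1}}$, so $\nu_p(H_p(kp^2))=k(p-1)$ exactly; and for $1\leq i\leq p-1$ the sum in the third congruence is empty when $j=0$, whence $H_p(kp^2+i)\equiv H_p(kp^2)\equiv(-1)^kp^{k(p-1)}\pmod{p^{k(p-1)+1}}$, again giving valuation exactly $k(p-1)$. Translating back through $\lfloor n/p^2\rfloor=k$, $\lfloor n/p\rfloor\bmod p=j$ yields the stated inequality and the equality case. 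I do not anticipate any real obstacle: all the substance sits in Theorem \ref{Hpmodp}, and the only thing to be careful about is recording the $p$-integrality of the reciprocal sums $S_{t,i}$, which is where the restriction $i\leq p-1$ is used.
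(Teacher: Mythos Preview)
Your proof is correct and follows essentially the same route as the paper: both deduce the inequality and the equality case at $j=0$ from Theorem \ref{Hpmodp}. The only difference is tactical: for the equality at $j=0$ with $i\geq 1$, the paper does an induction on $i$ via the recurrence (\ref{basicrec}), whereas you read it off the third congruence of Theorem \ref{Hpmodp} directly (the sum over $t$ being empty), which is slightly more streamlined but equivalent.
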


\begin{proof}
We only need to prove that $\nu_p(H_p(kp^2+jp+i))=k(p-1)+j$ for $j=0$. We proceed by induction on $i\in\{0,...,p-1\}$. The equality $\nu_p(H_p(kp^2+i))=k(p-1)$ holds for $i=0$ by Theorem \ref{Hpmodp}. If $i>0$, then we write
$$H_p(kp^2+i)=H_p(kp^2+i-1)+(kp^2+i-1)_{(p-1)}H_p((k-1)p^2+(p-1)p+i).$$
Then $\nu_p((kp^2+i-1)_{(p-1)})\geq 2$, since $kp^2$ appears as a factor in $(kp^2+i-1)_{(p-1)}$. We know that $\nu_p(H_p((k-1)p^2+(p-1)p+i))\geq (k-1)(p-1)+(p-1)=k(p-1)$. Thus the summand $(kp^2+i-1)_{(p-1)}H_p((k-1)p^2+(p-1)p+i)$ has the $p$-adic valuation at least $k(p-1)+2$ while, by induction hypothesis $\nu_p(H_p(kp^2+i-1))=k(p-1)$. Hence, we get $\nu_p(H_p(kp^2+i))=k(p-1)$.
\end{proof}

Now we give the proofs of the main results of this section.

\begin{proof}[Proof of Theorem \ref{Hpperiod}]
Let us define $\gamma_n=\left\lfloor\frac{n}{p^2}\right\rfloor\cdot (p-1)+\left(\left\lfloor\frac{n}{p}\right\rfloor\pmod{p}\right)$ for simplicity of notation. At first, we show that
\begin{align}\label{period}
\frac{H_p(n+p^2)}{p^{\gamma_{n+p^2}}}\equiv -\frac{H_p(n)}{p^{\gamma_n}}\pmod{p}
\end{align}
for each $n\in\N$. We write $n=kp^2+jp+i$, $k\in\N$, $i,j\in\{0,1,...,p-1\}$, and proceed by induction on $jp+i$. By Theorem \ref{Hpmodp} we see that the congruence (\ref{period}) is satisfied for $jp+i=0$. If $jp+i>0$ and $i>0$ then
\begin{align*}
& \frac{H_p((k+1)p^2+jp+i)}{p^{\gamma_{(k+1)p^2+jp+i}}}=\frac{H_p((k+1)p^2+jp+i)}{p^{(k+1)(p-1)+j}}\\
& \equiv \frac{H_p((k+1)p^2+jp)}{p^{(k+1)(p-1)+j}}+\sum_{t=1}^j (-1)^t(j)_{(t)}\frac{H_p((k+1)p^2+(j-t)p)}{p^{(k+1)(p-1)+j-t}}\sum_{1\leq l_t\leq ...\leq l_1\leq i}\frac{1}{l_1\cdot ...\cdot l_t}\\
&\equiv -\frac{H_p(kp^2+jp)}{p^{k(p-1)+j}}-\sum_{t=1}^j (-1)^t(j)_{(t)}\frac{H_p(kp^2+(j-t)p)}{p^{k(p-1)+j-t}}\sum_{1\leq l_t\leq ...\leq l_1\leq i}\frac{1}{l_1\cdot ...\cdot l_t}\\
&\equiv -\frac{H_p(kp^2+jp+i)}{p^{k(p-1)+j}}=-\frac{H_p(kp^2+jp+i)}{p^{\gamma_{kp^2+jp+i}}}\pmod{p}.
\end{align*}
If $jp+i>0$ and $i=0$ then we perform similar computations as in the proof of Theorem \ref{Hpmodp}.
\begin{align*}
& \frac{H_p((k+1)p^2+jp)}{p^{\gamma_{(k+1)p^2+jp}}}=\frac{H_p((k+1)p^2+jp)}{p^{(k+1)(p-1)+j}}\\
& \equiv \frac{H_p((k+1)p^2+(j-1)p)}{p^{(k+1)(p-1)+j}}\\
& +\sum_{t=1}^{j-1} (-1)^t(j-1)_{(t)}\frac{H_p((k+1)p^2+(j-1-t)p)}{p^{(k+1)(p-1)+j-1-t}}\cdot\frac{1}{p}\sum_{1\leq l_t\leq ...\leq l_1\leq p-1}\frac{1}{l_1\cdot ...\cdot l_t}\\
& +\frac{(p-1)!H_p((k+1)p^2+(j-1)p)}{p^{(k+1)(p-1)+j}}\\
& \equiv \frac{1+(p-1)!}{p}\frac{H_p((k+1)p^2+(j-1)p)}{p^{(k+1)(p-1)+j-1}}\\
& +\sum_{t=1}^{j-1} (-1)^t(j-1)_{(t)}\frac{H_p((k+1)p^2+(j-1-t)p)}{p^{(k+1)(p-1)+j-1-t}}\cdot\frac{1}{p}\sum_{1\leq l_t\leq ...\leq l_1\leq p-1}\frac{1}{l_1\cdot ...\cdot l_t}\\
& \equiv -\frac{1+(p-1)!}{p}\frac{H_p(kp^2+(j-1)p)}{p^{k(p-1)+j-1}}\\
& -\sum_{t=1}^{j-1} (-1)^t(j-1)_{(t)}\frac{H_p(kp^2+(j-1-t)p)}{p^{k(p-1)+j-1-t}}\cdot\frac{1}{p}\sum_{1\leq l_t\leq ...\leq l_1\leq p-1}\frac{1}{l_1\cdot ...\cdot l_t}\\
&\equiv -\frac{H_p(kp^2+jp)}{p^{k(p-1)+j}}=-\frac{H_p(kp^2+jp)}{p^{\gamma_{kp^2+jp}}}\pmod{p}.
\end{align*}
We are left with proving that $p^2$ is a basic period of the sequence $\left((-1)^{\left\lfloor\frac{n}{p^2}\right\rfloor}\frac{H_p(n)}{p^{\gamma_n}}\pmod{p}\right)_{n\in\N}$. It suffices to show that $p$ is not a period of the sequence $\left((-1)^{\left\lfloor\frac{n}{p^2}\right\rfloor}\frac{H_p(n)}{p^{\gamma_n}}\pmod{p}\right)_{n\in\N}$. In order to do this, we assume the contrary. In particular, we have $\frac{H_p(p+1)}{p}\equiv H_p(1)=1\pmod{p}$. Using recurrence relation for numbers $H_p(n)$ we obtain the congruences
\begin{align*}
\frac{H_p(p)+(p)_{(p-1)}H_p(1)}{p}\equiv 1\pmod{p}
\end{align*}
or equivalently
\begin{align*}
\frac{H_p(p)}{p}+(p-1)_{(p-2)}H_p(1)\equiv 1\pmod{p}.
\end{align*}
Since $p$ is a period of the sequence $\left((-1)^{\left\lfloor\frac{n}{p^2}\right\rfloor}\frac{H_p(n)}{p^{\gamma_n}}\pmod{p}\right)_{n\in\N}$, we thus have $\frac{H_p(p)}{p}\equiv H_p(0)=1\pmod{p}$. Hence,
\begin{align*}
1+(p-1)_{(p-2)}H_p(1)\equiv 1\pmod{p}
\end{align*}
which means that
\begin{align*}
(p-1)_{(p-2)}\equiv 0\pmod{p}.
\end{align*}
However, the number $(p-1)_{(p-2)}$ is not divisible by $p$. A contradiction shows that $p$ is not a period of the sequence $\left((-1)^{\left\lfloor\frac{n}{p^2}\right\rfloor}\frac{H_p(n)}{p^{\gamma_n}}\pmod{p}\right)_{n\in\N}$.

The sequence $\left(\frac{H_p(n)}{p^{\gamma_n}}\pmod{p}\right)_{n\in\N}$ is thus periodic with period $2p^2$. We know, that $p^2$ is not a period of this sequence, as $\frac{H_p(kp^2)}{p^{k(p-1)}}\equiv (-1)^k\pmod{p}$. The number $2$ also cannot be a period of this sequence. If we assume the contrary then, since $H_p(0)\equiv H_p(1)\equiv 1\pmod{p}$, we have $\frac{H_p(n)}{p^{\gamma_n}}\equiv 1\pmod{p}$. This stays in contradiction with the fact, that $\frac{H_p(p^2)}{p^{p-1}}\equiv -1\pmod{p}$. It suffices to show that $2p$ is not a period of the sequence $\left(\frac{H_p(n)}{p^{\gamma_n}}\pmod{p}\right)_{n\in\N}$. In order to do this, we assume the contrary. In particular, we have $\frac{H_p(2p+1)}{p^2}\equiv H_p(1)=1\pmod{p}$ and $\frac{H_p(2p+2)}{p^2}\equiv H_p(2)=1\pmod{p}$. Using recurrence relation for numbers $H_p(n)$ we obtain the congruences
\begin{align*}
\frac{H_p(2p)+(2p)_{(p-1)}H_p(p+1)}{p^2} & \equiv 1\pmod{p}\\
\frac{H_p(2p+1)+(2p+1)_{(p-1)}H_p(p+2)}{p^2} & \equiv 1\pmod{p}
\end{align*}
or equivalently
\begin{align*}
\frac{H_p(2p)}{p^2}+\frac{2(p-1)_{(p-2)}H_p(p+1)}{p} & \equiv 1\pmod{p}\\
\frac{H_p(2p+1)}{p^2}+\frac{2(2p+1)(2p-1)_{(p-3)}H_p(p+2)}{p} & \equiv 1\pmod{p}.
\end{align*}
Since $2p$ is a period of the sequence $\left(\frac{H_p(n)}{p^{\gamma_n}}\pmod{p}\right)_{n\in\N}$, we thus have $\frac{H_p(2p)}{p^2}\equiv H_p(0)=1\pmod{p}$ and $\frac{H_p(2p+1)}{p^2}\equiv H_p(1)=1\pmod{p}$. Hence,
\begin{align*}
1+\frac{2(p-1)_{(p-2)}H_p(p+1)}{p} & \equiv 1\pmod{p}\\
1+\frac{2(2p+1)(2p-1)_{(p-3)}H_p(p+2)}{p} & \equiv 1\pmod{p},
\end{align*}
which means that
\begin{align*}
\frac{2(p-1)_{(p-2)}H_p(p+1)}{p} & \equiv 0\pmod{p}\\
\frac{2(2p+1)(2p-1)_{(p-3)}H_p(p+2)}{p} & \equiv 0\pmod{p}.
\end{align*}
The numbers $2(p-1)_{(p-2)}$ and $2(2p+1)(2p-1)_{(p-3)}$ are both not divisible by $p$. Hence, the numbers $H_p(p+1)$ and $H_p(p+2)$ are divisible by $p^2$. On the other hand, $H_p(p+2)=H_p(p+1)+(p+1)_{(p-1)}H_p(2)=H_p(p+1)+(p+1)p(p-1)_{(p-3)}$. The number $(p+1)p(p-1)_{(p-3)}$ is not divisible by $p^2$. This is why the numbers $H_p(p+1)$ and $H_p(p+2)$ cannot be both divisible by $p^2$. A contradiction shows that $p$ is not a period of the sequence $\left(\frac{H_p(n)}{p^{\gamma_n}}\pmod{p}\right)_{n\in\N}$.
\end{proof}

\begin{proof}[Proof of Proposition \ref{vpHpeq}]
If $p=2$, then the result is well known (see for example \cite[Theorem 4.1]{AmdMoll}). From Corollary \ref{vpHpnfrombelow} we know that $\nu_p(H_p(p-1))=0$. Hence we put $b_0=p-1$. Let us fix $j\in\{1,...,p-1\}$ and assume by induction hypothesis that we have $b_{j-1}\in\{p-j,...,p-1\}$ satisfies the equality $\nu_p(H_p((j-1)p+b_{j-1}))=j-1$. By Corollary \ref{vpHpnfrombelow} we have $\nu_p(H_p(jp+b_{j-1}-1))\geq j$ and $\nu_p(H_p(jp+b_{j-1}))\geq j$. Moreover, $\nu_p((jp+b_{j-1}-1)_{(p-1)})=1$. By the equality
$$H_p(jp+b_{j-1})=H_p(jp+b_{j-1}-1)+(jp+b_{j-1}-1)_{(p-1)}H_p((j-1)p+b_{j-1}),$$
since $\nu_p((jp+b_{j-1}-1)_{(p-1)}H_p((j-1)p+b_{j-1}))=j$, we infer that at least one of the numbers $H_p(jp+b_{j-1})$, $H_p(jp+b_{j-1}-1)$ has the $p$-adic valuation equal to $j$. We thus take $b_j$ as one of the values $b_{j-1}-1$, $b_{j-1}$ such that $\nu_p(H_p(jp+b_j))=j$. In particular, $b_j\geq p-j-1$. At last, we use Theorem \ref{Hpperiod} to obtain
$$(-1)^k\frac{H_p(kp^2+jp+b_j)}{p^{k(p-1)+j}}\equiv\frac{H_p(jp+b_j)}{p^j}\not\equiv 0\pmod{p},$$
since $\nu_p(H_p(jp+b_j))=j$. Hence $\nu_p(H_p(kp^2+jp+b_j))=k(p-1)+j$ for each $k\in\N$.
\end{proof}

In Theorem \ref{Hpperiod} we proved periodicity of the sequence $(H_{p}(n)/p^{\gamma_{p}(n)})_{n\in\N}$, where
$$
\gamma_{p}(n)=\left\lfloor\frac{n}{p^2}\right\rfloor\cdot (p-1)+\left(\left\lfloor\frac{n}{p}\right\rfloor\pmod{p}\right).
$$

In general, we ask the following.

\begin{ques}\label{Hpn/pvp}
For which $p\in\mathbb{P}$ and $w\in\N_+$ the sequence
$$
\left(\frac{H_{p}(n)}{p^{\nu_{p}(H_{p}(n))}}\pmod{p^w}\right)_{n\in\N}
$$
is periodic?
\end{ques}

\begin{thm}
For each prime number $p$ and positive integer $w$ the sequence $$\left(\frac{H_{p}(n)}{p^{\left\lfloor\frac{n}{p^2}\right\rfloor (p-1)}}\pmod{p^w}\right)_{n\in\N}$$ is periodic with period $p^{2w+1}(p-1)$.
\end{thm}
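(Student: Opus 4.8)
The plan is to recast the assertion as a congruence for $H_p$ alone and then establish it by induction on $w$, at each step multiplying the period by $p^2$; the engine is the recurrence~(\ref{basicrec}) divided by the normalising power of $p$, together with the fine congruences of Theorem~\ref{Hpmodp}.

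Write $T_w=p^{2w+1}(p-1)$, $\delta(n)=\lfloor n/p^2\rfloor(p-1)$ and $G(n)=H_p(n)/p^{\delta(n)}$; each $G(n)$ lies in $\Z_p$ by Corollary~\ref{vpHpnfrombelow}. Since $p^2\mid T_w$ we have $\delta(n+T_w)=\delta(n)+E_w$ with $E_w=(T_w/p^2)(p-1)=p^{2w-1}(p-1)^2$, so $G(n+T_w)\equiv G(n)\pmod{p^w}$ is equivalent to
$$H_p(n+T_w)\equiv p^{E_w}H_p(n)\pmod{p^{\delta(n)+E_w+w}}\qquad(\star_w)$$
for all $n$. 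For $w=1$ this is almost immediate from Theorem~\ref{Hpperiod}: on the residue classes $n\bmod p^2\in\{0,\dots,p-1\}$ one has $\delta(n)=\gamma_p(n)$ and that theorem yields that $H_p(n)/p^{\gamma_p(n)}\bmod p$ has period $2p^2$, whereas on the remaining classes $p\mid G(n)$ by Corollary~\ref{vpHpnfrombelow}; as the two residue sets have period $p^2$ and $2p^2\mid p^3(p-1)$, $(\star_1)$ holds. (The prime $p=2$ of the base case is contained in~\cite{AmdMoll}.)

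For the inductive step assume $(\star_w)$ and write $G(n+T_w)=G(n)+p^w u_w(n)$ with $u_w\colon\N\to\Z_p$; telescoping $p^2$ times gives $G(n+p^2T_w)=G(n)+p^w\sum_{m=0}^{p^2-1}u_w(n+mT_w)$, so $(\star_{w+1})$ (with $T_{w+1}=p^2T_w$) follows once we show
$$\sum_{m=0}^{p^2-1}u_w(n+mT_w)\equiv0\pmod p\qquad(\dagger)$$
for all $n$. Dividing~(\ref{basicrec}) by $p^{\delta(n)}$ produces the renormalised recurrence: for $n=kp^2+jp+i$ with $1\le i\le p-1$ the coefficient of $G(n-p)$ is divisible by $p$ (it is $p(kp+j)\prod_{a\neq i}(kp^2+jp+i-a)$ if $j\ge1$, and $kp^2\prod_{a\neq i}(\cdots)$ if $j=0$), while along the boundary class $j\ge1$, $i=0$ one has $\delta(kp^2+jp)=\delta(kp^2+jp-1)=\delta(kp^2+(j-1)p)$ and the coefficient of $G(kp^2+(j-1)p)$ is $\equiv(p-1)!\equiv-1\pmod p$. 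Differencing these identities at $n$ and at $n+T_w$ and dividing by $p^w$: since $p^2\mid T_w$ the coefficients change by a quantity divisible by $p^{2w+1}(p-1)$, so all ``coefficient–difference'' terms are $\equiv0\pmod{p^{w+1}}$, and modulo $p$ one gets $u_w(n)\equiv u_w(n-1)$ whenever $1\le i\le p-1$, and $u_w(kp^2+jp)\equiv u_w(kp^2+jp-1)-u_w(kp^2+(j-1)p)$ for $1\le j\le p-1$. The first relation shows $u_w\bmod p$ is constant along each row $\{kp^2+jp+i:0\le i\le p-1\}$; feeding this into the second forces $u_w(kp^2+jp)\equiv0\pmod p$ for $1\le j\le p-1$, hence $u_w\equiv0\pmod p$ off the bottom row, and there $u_w(kp^2+i)\equiv u_w(kp^2)\pmod p$. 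Thus $(\dagger)$ reduces to $\sum_{m=0}^{p^2-1}u_w((k+mK_w)p^2)\equiv0\pmod p$ with $K_w=T_w/p^2$, and this sum telescopes to $\bigl(G((k+p^2K_w)p^2)-G(kp^2)\bigr)/p^w$; so $(\dagger)$ is equivalent to the periodicity, modulo $p^{w+1}$, of the sparse sequence $k\mapsto H_p(kp^2)/p^{k(p-1)}$ with period dividing $p^2K_w=T_{w+1}/p^2$.

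The hard part is exactly this last point — controlling $H_p(kp^2)/p^{k(p-1)}$ modulo higher and higher powers of $p$. The congruence $H_p(kp^2)\equiv(-1)^kp^{k(p-1)}\pmod{p^{k(p-1)+1}}$ of Theorem~\ref{Hpmodp} gives only the leading $p$-adic digit of $H_p(kp^2)/p^{k(p-1)}$; to obtain the next $w$ digits — and hence the required $k$-periodicity with period a divisor of $p^{2w+1}(p-1)$ — one must re-run the proof of Theorem~\ref{Hpmodp} keeping more terms, which brings in deeper cancellations among the nested reciprocal sums of Lemma~\ref{sumprodrecip} (higher congruences of Wolstenholme type) together with Wilson's theorem. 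It is also here that the factor $p-1$ is genuinely needed: it arises both from the $p^{p-1}$ that the jump of $\delta$ forces at every block boundary $n=kp^2$ and from the order of the prime-to-$p$ part of $(\Z/p^{w+1})^{*}$, where the ``$(-1)^{k}$'' twist lives. Finally the prime $p=2$ — where $(p-1)!=1$ and the sign twists degenerate — must be revisited separately throughout, consistently with the $2$-adic analysis of~\cite{AmdMoll}.
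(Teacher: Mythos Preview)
Your approach differs substantially from the paper's. The paper invokes \cite[Theorem~B]{IOTY}, which furnishes a structural decomposition $H_p(kp^2+r)=p^{k(p-1)}f_r(k)\prod_{l=1}^k\lambda(l)$ with $f_r,\lambda\in\Z_p[[x]]$ convergent on $\Z_p$; periodicity modulo $p^w$ then follows in a few lines from the $p$-adic continuity of $f_r,\lambda$ and Euler's theorem applied to the units $\lambda(l)$. Your idea --- induction on $w$ via the renormalised recurrence, together with a reduction of the general $n$ to the sparse subsequence $n=kp^2$ --- is a natural elementary alternative, and the row-by-row analysis of $u_w\bmod p$ is a nice observation.

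However, the proposal has a genuine gap: you leave the ``hard part'' undone. After telescoping, $(\dagger)$ becomes exactly the statement that $k\mapsto G(kp^2)=H_p(kp^2)/p^{k(p-1)}$ is periodic modulo $p^{w+1}$ with period $p^2K_w$; but this is precisely $(\star_{w+1})$ restricted to the indices $n=kp^2$. You then only assert that establishing it ``brings in deeper cancellations among the nested reciprocal sums \ldots\ (higher congruences of Wolstenholme type)'' and would require ``re-running the proof of Theorem~\ref{Hpmodp} keeping more terms''. None of this is carried out, and there is no indication that such a direct attack succeeds for arbitrary $w$: controlling $H_p(kp^2)/p^{k(p-1)}$ modulo growing powers of $p$ is exactly the depth of the theorem, and it is what the $p$-adic analytic input from \cite{IOTY} supplies. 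A secondary issue is your treatment of the renormalised recurrence on the bottom row $j=0$: there $\delta(n-p)-\delta(n)=-(p-1)$, so the coefficient of $G(n-p)$ is $(n-1)_{(p-1)}p^{-(p-1)}$, which is not $p$-integral by itself (for $p\ge5$ it has $\nu_p=3-p<0$). One must combine it with the extra divisibility $\nu_p(G(n-p))\ge p-1$ from Corollary~\ref{vpHpnfrombelow} and carry that through the differencing and the definition of $u_w$; as written, the claim that ``the coefficient of $G(n-p)$ is divisible by $p$'' when $j=0$ is not correct for the renormalised recurrence.
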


\begin{proof}
By \cite[Theorem B]{IOTY} we know that there exist power series $\lambda, f_r\in\Z_p[[x]]$, $r\in\{0,...,p^2-1\}$, convergent on $\Z_p$ such that $H_p(kp^2+r)=p^{k(p-1)}f_r(k)\prod_{l=1}^k\lambda(l)$ for each $k\in\N$ and $\nu_p(H_p(kp^2+r))=k(p-1)+\nu_p(f_r(k))$. Let us take any $n\in\N$ and write $n=kp^2+r$ for some $k\in\N$ and $r\in\{0,...,p^2-1\}$. Then, by the above fact we have the following chain of congruences for any $t\in\N$:
\begin{align*}
& \frac{H_{p}(n+tp^{2w+1}(p-1))}{p^{\left\lfloor\frac{n+tp^{2w+1}(p-1)}{p^2}\right\rfloor (p-1)}}=\frac{H_{p}((k+tp^{2w-1}(p-1))p^2+r)}{p^{(k+tp^{2w-1}(p-1))(p-1)}}\\
& =f_r(k+tp^{2w-1}(p-1))\prod_{l=1}^{k+tp^{2w-1}(p-1)}\lambda(l)\equiv f_r(k+tp^{2w-1}(p-1))\prod_{l=1}^{p^w}\lambda(l)^{\left\lceil\frac{k+tp^{2w-1}(p-1)-l+1}{p^w}\right\rceil}\\
& \equiv f_r(k)\prod_{l=1}^{p^w}\lambda(l)^{\left\lceil\frac{k-l+1}{p^w}\right\rceil+tp^{w-1}(p-1)}\equiv f_r(k)\prod_{l=1}^{p^w}\lambda(l)^{\left\lceil\frac{k-l+1}{p^w}\right\rceil}\equiv f_r(k)\prod_{l=1}^{k}\lambda(l)=\frac{H_p(kp^2+r)}{p^{k(p-1)}}\\
& =\frac{H_{p}(n)}{p^{\left\lfloor\frac{n}{p^2}\right\rfloor (p-1)}}\pmod{p^w},
\end{align*}
where we use Euler's theorem as $p\nmid\lambda(l)$ for any $l\in\N$.
\end{proof}

Let us define a particular subset of the set of prime numbers:
\begin{align*}
\cal{A}=\{p\in\mathbb{P}: \exists_{\alpha_p(p),...,\alpha_{p^2-1}(p)\in\N_+}\forall_{r\in\{p,...,p^2-1\}}\forall_{k\in\N} \nu_p(H_p(kp^2+r))=k(p-1)+\alpha_r(p)\}.
\end{align*}

Then we are able to give a partial answer to Question \ref{Hpn/pvp}.

\begin{cor}
If $p\in\cal{A}$ then for each $w\in\N_+$ the sequence $\left(\frac{H_{p}(n)}{p^{\nu_{p}(H_{p}(n))}}\pmod{p^w}\right)_{n\in\N}$ is periodic with period $p^{2w+2\alpha(p)+1}(p-1)$, where $\alpha(p)=\max_{r\in\{p,...,p^2-1\}} \alpha_r(p)$
\end{cor}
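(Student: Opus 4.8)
The plan is to obtain the Corollary directly from the preceding Theorem. The point is that for $p\in\cal{A}$ the sequence $\bigl(H_p(n)/p^{\nu_p(H_p(n))}\bigr)_{n\in\N}$ is obtained from $\bigl(H_p(n)/p^{\lfloor n/p^2\rfloor(p-1)}\bigr)_{n\in\N}$ by dividing the $n$-th term by a power of $p$ which is at most $\alpha(p)$ and depends on $n$ only through the residue of $n$ modulo $p^2$; this is a small enough perturbation that periodicity survives, at the cost of inflating the modulus.

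First I would recall, exactly as in the proof of the preceding Theorem, the structure supplied by \cite[Theorem B]{IOTY}: there are power series $\lambda,f_r\in\Z_p[[x]]$ ($r\in\{0,\dots,p^2-1\}$), convergent on $\Z_p$, with $p\nmid\lambda(l)$ for all $l$, such that $H_p(kp^2+r)=p^{k(p-1)}f_r(k)\prod_{l=1}^k\lambda(l)$ and $\nu_p(H_p(kp^2+r))=k(p-1)+\nu_p(f_r(k))$ for every $k\in\N$. For $r\in\{p,\dots,p^2-1\}$ the hypothesis $p\in\cal{A}$ says precisely that $\nu_p(f_r(k))$ equals the constant $\alpha_r(p)$ for all $k$; for $r\in\{0,\dots,p-1\}$, Corollary \ref{vpHpnfrombelow} (and, for $p=2$, the computation of $\nu_2(H_2(n))$ in \cite{AmdMoll}) gives $\nu_p(H_p(kp^2+r))=k(p-1)$, so that $\nu_p(f_r(k))=0$ and I may set $\alpha_r(p)=0$ there. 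In every case $0\le\alpha_r(p)\le\alpha(p)$, and writing $n=kp^2+r$ with $0\le r<p^2$ we get $\nu_p(H_p(n))=k(p-1)+\alpha_r(p)$ and
$$\frac{H_p(n)}{p^{\nu_p(H_p(n))}}=\frac1{p^{\alpha_r(p)}}\cdot\frac{H_p(n)}{p^{\lfloor n/p^2\rfloor(p-1)}},$$
the right-hand fraction being a $p$-adic integer of $p$-adic valuation exactly $\alpha_r(p)$.

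Next I would apply the preceding Theorem with the positive integer $w+\alpha(p)$ in place of $w$ (note $\alpha(p)\in\N_+$, since the $\alpha_r(p)$ with $r\ge p$ lie in $\N_+$): the sequence $\bigl(H_p(n)/p^{\lfloor n/p^2\rfloor(p-1)}\pmod{p^{w+\alpha(p)}}\bigr)_{n\in\N}$ is periodic with period $P:=p^{2(w+\alpha(p))+1}(p-1)=p^{2w+2\alpha(p)+1}(p-1)$. Since $p^2\mid P$, the integers $n$ and $n+tP$ have the same residue $r$ modulo $p^2$, so for every $t\in\N$
$$\frac{H_p(n+tP)}{p^{\lfloor(n+tP)/p^2\rfloor(p-1)}}\equiv\frac{H_p(n)}{p^{\lfloor n/p^2\rfloor(p-1)}}\pmod{p^{w+\alpha(p)}}.$$
Both sides here are $p$-adic integers of valuation $\alpha_r(p)$, and $\alpha_r(p)\le\alpha(p)<w+\alpha(p)$; dividing this congruence by $p^{\alpha_r(p)}$ therefore yields a valid congruence modulo $p^{w+\alpha(p)-\alpha_r(p)}$, and in particular modulo $p^w$. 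Invoking the displayed identity of the previous paragraph for $n$ and for $n+tP$ (which carry the same $\alpha_r(p)$), this reads $H_p(n+tP)/p^{\nu_p(H_p(n+tP))}\equiv H_p(n)/p^{\nu_p(H_p(n))}\pmod{p^w}$, so $P$ is a period of the sequence in the statement.

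I expect no essential obstacle once the preceding Theorem and \cite[Theorem B]{IOTY} are in hand; the only delicate point is bookkeeping: one has to run the preceding Theorem at the refined modulus $p^{w+\alpha(p)}$ rather than $p^w$ so as to leave enough room to absorb the division by $p^{\alpha_r(p)}$, which may consume up to $\alpha(p)$ powers of $p$. This is exactly what accounts for the extra factor $p^{2\alpha(p)}$ in the period $p^{2w+2\alpha(p)+1}(p-1)$.
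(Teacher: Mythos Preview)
Your proposal is correct and follows essentially the same approach as the paper: apply the preceding Theorem with $w+\alpha(p)$ in place of $w$ to obtain periodicity of $H_p(n)/p^{\lfloor n/p^2\rfloor(p-1)}$ modulo $p^{w+\alpha(p)}$ with period $p^{2w+2\alpha(p)+1}(p-1)$, and then divide by $p^{\alpha_r(p)}$. Your write-up is in fact a bit more careful than the paper's --- you explicitly note that $p^2\mid P$ so that $n$ and $n+tP$ share the same residue $r$, and you spell out the case $r\in\{0,\dots,p-1\}$ (where $\alpha_r(p)=0$) via Corollary~\ref{vpHpnfrombelow} --- but the argument is substantively identical.
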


\begin{proof}
By the previous theorem we have
$$\frac{H_{p}(n+tp^{2w+2\alpha(p)+1}(p-1))}{p^{\left\lfloor\frac{n+tp^{2w+2\alpha(p)+1}(p-1)}{p^2}\right\rfloor (p-1)}}\equiv \frac{H_{p}(n)}{p^{\left\lfloor\frac{n}{p^2}\right\rfloor (p-1)}}\pmod{p^{w+\alpha(p)}}$$
for each $t\in\N$. After division by $p^{\alpha_r(p)}$, where $r=n\pmod{p^2}$, we get
$$\frac{H_{p}(n+tp^{2w+2\alpha(p)+2})}{p^{\nu_{p}(H_{p}(n+tp^{2w+2\alpha(p)+2}))}}\equiv \frac{H_{p}(n)}{p^{\nu_{p}(H_{p}(n))}}\pmod{p^{w+\alpha(p)-\alpha_r(p)}}.$$
Since $\alpha(p)-\alpha_r(p)\geq 0$, we thus obtain
$$\frac{H_{p}(n+tp^{2w+2\alpha(p)+2})}{p^{\nu_{p}(H_{p}(n+tp^{2w+2\alpha(p)+2}))}}\equiv \frac{H_{p}(n)}{p^{\nu_{p}(H_{p}(n))}}\pmod{p^w},$$
as we wanted to prove.
\end{proof}

\begin{cor}
The sequence
$$
\left(\frac{H_{2}(n)}{2^{\nu_{2}(H_{2}(n))}}\pmod{4}\right)_{n\in\N}
$$
is periodic of period $16$.
\end{cor}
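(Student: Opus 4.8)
\medskip
\noindent\emph{Proof proposal.}
The plan is to run an induction based on the recurrence \eqref{basicrec} split along the four residue classes of $n$ modulo $4$. For $r\in\{0,1,2,3\}$ and $k\in\N$ put $b_k^{(r)}=H_2(4k+r)/2^{k}$. By the theorem of Amdeberhan and Moll recalled at the start of this section, $\nu_2(H_2(4k+r))=k+c_r$ with $(c_0,c_1,c_2,c_3)=(0,0,1,2)$ (the $c_r$ being the values $\nu_2(H_2(r))$, obtained by taking $k=0$). Hence each $b_k^{(r)}$ is an integer, $\nu_2(b_k^{(r)})=c_r$, and
\[
\frac{H_2(4k+r)}{2^{\nu_2(H_2(4k+r))}}=\frac{b_k^{(r)}}{2^{c_r}}.
\]
Since $4(k+4)+r=(4k+r)+16$, it suffices to prove that $b_{k+4}^{(r)}\equiv b_k^{(r)}\pmod{2^{c_r+2}}$ for all $k\in\N$ and $r\in\{0,1,2,3\}$; call this assertion $(\ast)$. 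Substituting $H_2(4j+r)=2^{j}b_j^{(r)}$ in \eqref{basicrec} for $n=4k,4k+1,4k+2,4k+3$ and dividing out the power of $2$ gives, for $k\ge 1$,
\[
b_k^{(0)}=\tfrac12\bigl(b_{k-1}^{(3)}+(4k-1)b_{k-1}^{(2)}\bigr),\qquad b_k^{(1)}=b_k^{(0)}+2k\,b_{k-1}^{(3)},
\]
\[
b_k^{(2)}=b_k^{(1)}+(4k+1)b_k^{(0)},\qquad b_k^{(3)}=b_k^{(2)}+(4k+2)b_k^{(1)},
\]
with $b_0^{(0)}=b_0^{(1)}=1$, $b_0^{(2)}=2$, $b_0^{(3)}=4$ (the first identity makes sense because $b^{(2)}$ and $b^{(3)}$ are even).

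\medskip
I would prove $(\ast)$ by induction on $k$, in the sharper form $b_{k+4}^{(2)}\equiv b_k^{(2)}\pmod{8}$ and $b_{k+4}^{(3)}\equiv b_k^{(3)}\pmod{16}$; the argument will simultaneously produce $b_{k+4}^{(0)}\equiv b_k^{(0)}\pmod 4$ and $b_{k+4}^{(1)}\equiv b_k^{(1)}\pmod 4$, which settles the remaining two classes of $(\ast)$. The base case $k=0$ is the finite check, using the values $H_2(0),\dots,H_2(19)$ computed from \eqref{basicrec}, that $b_0^{(2)}=2\equiv b_4^{(2)}\pmod 8$ and $b_0^{(3)}=4\equiv b_4^{(3)}\pmod{16}$. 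For the inductive step, write $\delta^{(r)}=b_{k+4}^{(r)}-b_k^{(r)}$ and $\Delta^{(r)}=b_{k+5}^{(r)}-b_{k+1}^{(r)}$, apply the four recurrences at the indices $k+1$ and $k+5$, and use that each linear-in-$k$ coefficient changes by a multiple of $16$ under $k\mapsto k+4$ (so $4(k+5)-1=(4k+3)+16$, $2(k+5)=2(k+1)+8$, etc.). Cascading the lower recurrences into the higher ones yields
\[
2\Delta^{(0)}=\delta^{(3)}+(4k+3)\delta^{(2)}+16\,b_{k+4}^{(2)},\qquad
\Delta^{(1)}=\Delta^{(0)}+2(k+1)\delta^{(3)}+8\,b_{k+4}^{(3)},
\]
\[
\Delta^{(2)}=2(2k+3)\Delta^{(0)}+2(k+1)\delta^{(3)}+8\,b_{k+4}^{(3)}+16\,b_{k+5}^{(0)},
\]
\[
\Delta^{(3)}=4(2k+3)\Delta^{(0)}+2(k+1)(4k+7)\delta^{(3)}+8(4k+7)\,b_{k+4}^{(3)}+16\bigl(b_{k+5}^{(0)}+b_{k+5}^{(1)}\bigr).
\]
By the induction hypothesis $8\mid\delta^{(2)}$ and $16\mid\delta^{(3)}$, and $b_{k+4}^{(2)},b_{k+4}^{(3)}$ are even; the first identity then gives $\Delta^{(0)}\equiv0\pmod4$, and the other three give in turn $\Delta^{(1)}\equiv0\pmod4$, $\Delta^{(2)}\equiv0\pmod8$, $\Delta^{(3)}\equiv0\pmod{16}$. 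This is $(\ast)$ (in the sharper form) for $k+1$, completing the induction. Dividing $b_{k+4}^{(r)}-b_k^{(r)}\equiv0\pmod{2^{c_r+2}}$ by $2^{c_r}$ and invoking the displayed identity for $H_2(4k+r)/2^{\nu_2(H_2(4k+r))}$ then shows $\frac{H_2(n+16)}{2^{\nu_2(H_2(n+16))}}\equiv\frac{H_2(n)}{2^{\nu_2(H_2(n))}}\pmod4$ for every $n$, i.e. $16$ is a period.

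\medskip
The delicate point is the inductive step, and a naive version of it fails: the hypothesis only gives $\Delta^{(0)}\equiv0\pmod4$ (and nothing stronger in general), so reducing the recurrence $b^{(2)}=b^{(1)}+(4k+1)b^{(0)}$ term by term produces only $\Delta^{(2)}\equiv0\pmod4$, which is too weak. The fix is exactly the cascading above: once the lower recurrences are substituted into the higher ones, the $\Delta^{(0)}$-contribution to $\Delta^{(2)}$ carries a factor $2$ and the one to $\Delta^{(3)}$ a factor $4$, which upgrades $\Delta^{(0)}\equiv0\pmod4$ to the required $\pmod8$ and $\pmod{16}$; all the remaining terms are divisible by the needed power of $2$ because $16\mid\delta^{(3)}$ and $b^{(3)}$ is even. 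Everything else is routine $2$-adic bookkeeping. Finally, from the above data the odd parts of $H_2(n)$ modulo $4$ begin $1,1,1,1,1,1,3,1,\dots$, whereas $H_2(14)=2^4\cdot149405$ has odd part $\equiv1\pmod4$, so the value at $n=14$ differs from that at $n=6$; hence $8$ is not a period and $16$ is the basic period.
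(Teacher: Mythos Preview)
Your argument is correct and takes a genuinely different route from the paper's own proof. The paper invokes the preceding corollary (which in turn rests on the structural result \cite[Theorem~B]{IOTY} about the $p$-adic analytic description of $H_p(kp^2+r)$) to conclude that the sequence is periodic with period $2^9=512$, and then reduces the statement to a finite check of the values for $n\in\{0,\ldots,511\}$. By contrast, you work entirely from the elementary recurrence \eqref{basicrec} and the exact $2$-adic valuation $\nu_2(H_2(4k+r))=k+c_r$, and close the induction by the ``cascading'' trick that forces the extra factor of $2$ (resp.\ $4$) in front of $\Delta^{(0)}$ in the formulas for $\Delta^{(2)}$ and $\Delta^{(3)}$. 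This is more hands-on but entirely self-contained: it avoids both the $p$-adic power-series machinery and the brute-force verification of $512$ values, and it makes transparent \emph{why} the period is $16$ rather than merely certifying it. Conversely, the paper's approach is more conceptual and immediately generalises to other $p\in\cal{A}$.

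Two small remarks. First, what you call the ``sharper form'' for $r=2,3$ is in fact exactly $(\ast)$ for those residues, since $2^{c_2+2}=8$ and $2^{c_3+2}=16$; the terminology is harmless but slightly misleading. Second, your base case is stated only for $r=2,3$, whereas to have $(\ast)$ at $k=0$ for all residues you also need $b_4^{(0)}\equiv b_0^{(0)}$ and $b_4^{(1)}\equiv b_0^{(1)}\pmod 4$; these are of course covered by your phrase ``using the values $H_2(0),\ldots,H_2(19)$'', and indeed $b_4^{(0)}=2887921\equiv 1$ and $b_4^{(1)}=13237457\equiv 1\pmod 4$, but it would be cleaner to say so explicitly. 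The final observation that $8$ is not a period (via $n=6$ versus $n=14$) is a welcome bonus, since the corollary as stated only asserts that $16$ is \emph{a} period.
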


\begin{proof}
Since $2\in\cal{A}$ and $\alpha(2)=2$ (see for example \cite[Theorem 4.1]{AmdMoll}), by the previous corollary we know that the sequence $\left(\frac{H_{2}(n)}{2^{\nu_{2}(H_{2}(n))}}\pmod{4}\right)_{n\in\N}$ is periodic with period $2^9$. Hence it suffices to check the values $\frac{H_{2}(n)}{2^{\nu_{2}(H_{2}(n))}}$ modulo $4$ for $n\in\{0,...,511\}$ one by one.
\end{proof}

In case of $p\not\in\cal{A}$ we predict that the sequence $\left(\frac{H_{p}(n)}{p^{\nu_{p}(H_{p}(n))}}\pmod{p^w}\right)_{n\in\N}$ is not periodic for any $w\in\N_+$ but proving this fact seems to be out of reach for us.

\section{The behaviour of the $p$-adic valuation of $H_d(n)$, where $p>d$}\label{Section5}

On the beginning of the paper we marked that if $p$ is a prime number $<d$, then $\nu_p(H_d(n))=0$ for each $n\in\N$. The problem of computation of the  $p$-adic valuations of the numbers $H_d(n)$ in case $p=d$ was considered in the previous section and it was studied in several publications. In the opposition to this fact the question concerning the computation of the $p$-adic valuations of the numbers $H_d(n)$, where $p>d$, is almost unexplored. It was first considered by Amdeberhan and Moll in \cite{AmdMoll}. According to results on periodicity of the sequence $(H_2(n)\pmod{c})_{n\in\N}$ they claimed that if $p$ is an odd prime number not dividing the values $H_2(n)$ for $n\in\{0,...,p-1\}$, then $\nu_p(H_d(n))=0$ for each $n\in\N$. The situation is more complicated if $p$ divides some of those values. It happens for $p=5$. Based on numerical computations they stated a conjecture which equivalent version is as follows

\begin{conj}\label{vpH2n}
If $p$ is an odd prime number such that $p\mid H_2(n)$ for some $n\in\{0,...p-1\}$ then for each $k\in\N_+$ there exists a unique $n_k$ modulo $p^k$ such that $p^k\mid H_2(n)$ if and only if $n\equiv n_k\pmod{p^k}$.
\end{conj}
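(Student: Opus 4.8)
The plan is to reduce the conjecture to the analytic framework developed in Section~\ref{Section5}, which guarantees that $(H_2(n))_{n\in\N}$ extends to a differentiable function $f_2:\Z_p\rightarrow\Z_p$ together with the Taylor-type expansion
\begin{equation*}
f_2(x+h)=f_2(x)+hf_2'(x)+h^2g_2(x,h),
\end{equation*}
where $g_2(x,h)\in\Z_p$ whenever $p\mid f_2(x)$. First I would observe that the hypothesis ``$p\mid H_2(n)$ for some $n\in\{0,\dots,p-1\}$'' combined with the periodicity result of Section~\ref{Section3} (the sequence $(H_2(n)\pmod p)_{n\in\N}$ has period $p$) produces an $x_0\in\Z_p$ (indeed $x_0\in\{0,\dots,p-1\}$) with $f_2(x_0)\equiv 0\pmod p$. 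The heart of the matter is then to run Hensel's lemma on $f_2$ around $x_0$: if $\nu_p(f_2'(x_0))=0$, the simple Hensel lemma yields a unique $p$-adic integer $\xi$ with $f_2(\xi)=0$ and $\xi\equiv x_0\pmod p$, and the expansion above shows $\nu_p(f_2(x))=\nu_p(x-\xi)$ for all $x\in\Z_p$ with $x\equiv x_0\pmod p$ (while $\nu_p(f_2(x))=0$ for $x\not\equiv x_0\pmod p$). Setting $n_k$ to be the residue of $\xi$ modulo $p^k$ gives exactly the statement: $p^k\mid H_2(n)$ iff $n\equiv n_k\pmod{p^k}$.

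Concretely, the chain of steps I would carry out is: (i) invoke the periodicity of $(H_2(n)\pmod p)_{n\in\N}$ to see that the set of residues $n\bmod p$ with $p\mid H_2(n)$ is well defined, and by hypothesis nonempty; (ii) show this set is a \emph{singleton} $\{x_0\}$ --- this is where I expect the real work, see below; (iii) check the non-vanishing of the derivative $\nu_p(f_2'(x_0))=0$, again a finite check in terms of the explicit expression for $f_2'$ coming from $\cal{H}_2(x)=e^{x+x^2/2}$ (so that $f_2'$ corresponds to the sequence $H_2(n+1)$, via $\cal{H}_2'(x)=(1+x)\cal{H}_2(x)$, i.e. $H_2(n+1)=H_2(n)+nH_2(n-1)$); (iv) apply Hensel to get the unique root $\xi$; (v) use the quadratic expansion with $g_2\in\Z_p$ (valid because $p\mid f_2(x)$ on the relevant residue class) to upgrade ``unique root mod $p$'' to ``$\nu_p(f_2(x))=\nu_p(x-\xi)$'', which immediately gives uniqueness of $n_k$ modulo $p^k$ for every $k$.

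The main obstacle, as flagged in step (ii), is proving that there is at most one residue class mod $p$ on which $H_2$ vanishes, i.e. that the conjectured $n_1$ is unique. A priori the hypothesis only supplies \emph{at least one} such class. To handle this I would argue by contradiction: suppose $f_2(x_0)\equiv f_2(x_1)\equiv 0\pmod p$ with $x_0\not\equiv x_1\pmod p$. Using $f_2'(x)$ expressed through the recurrence and the identity $\cal{H}_2'=(1+x)\cal{H}_2$, one gets $f_2'(x)\equiv (1+x)f_2(x)\pmod{?}$ is not literally available over $\Z_p$, so instead I would work with the congruence $H_2(n+1)\equiv nH_2(n-1)\pmod p$ on a zero class and with the second difference, pushing the vanishing of $f_2$ at two points into a polynomial congruence mod $p$ of bounded degree that can have too many roots --- forcing either $f_2\equiv 0\pmod p$ identically (contradicting $H_2(0)=1$) or $x_0\equiv x_1$. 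Alternatively, and perhaps more cleanly, one reduces mod $p$ the functional relation satisfied by $f_2$ and shows the reduction is a nowhere-locally-constant map whose zero set is a single point; this is essentially the statement that the ``mod $p$ shadow'' of $e^{x+x^2/2}$ has a simple structure. I expect this uniqueness-of-the-zero-class to be the one genuinely delicate point, with everything after it (Hensel plus the $g_2\in\Z_p$ estimate) being a routine consequence of the machinery already set up in Section~\ref{Section5}.
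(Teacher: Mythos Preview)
The conjecture you are trying to prove is \emph{false}, and the paper says so immediately after stating it. So there can be no valid proof, and the ``delicate points'' you flagged in your outline are precisely the places where the argument must break down.

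Concretely, your step (ii) --- showing that at most one residue class modulo $p$ has $p\mid H_2(n)$ --- fails outright: for $p=59$ and $p=61$ there are several distinct classes $n_1\bmod p$ with $p\mid H_2(n_1)$. Your proposed contradiction argument (pushing two zeros of $f_2\bmod p$ into a polynomial identity forcing $f_2\equiv 0$) cannot work, because $H_2(n)\bmod p$ is genuinely a periodic sequence of period $p$ that may vanish at several places without vanishing identically. Your step (iii) --- the non-vanishing $\nu_p(f_2'(x_0))=0$ needed for Hensel --- also fails: for $(p,n_1)=(19,6)$ one has $\nu_p(H_2(n))=1$ for \emph{every} $n\equiv 6\pmod{19}$, so the derivative is $\equiv 0\pmod{19}$ there and the lift does not sharpen. (Incidentally, your heuristic that $f_2'(n)$ ``corresponds to $H_2(n+1)$'' via $\cal{H}_2'=(1+x)\cal{H}_2$ confuses the exponential generating function with the $p$-adic interpolation $f_2$; the derivative $f_2'$ is given by the series in Section~\ref{Section5}, not by a simple shift.)

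What the paper actually proves in Section~\ref{Section5} is the correct, weaker statement: for each individual residue class $n_1\bmod p$ with $p\mid H_2(n_1)$, Hensel's lemma applies \emph{provided} $f_2'(n_1)\not\equiv 0\pmod p$, and in that case there is a unique $p$-adic root $\xi\equiv n_1$ controlling $\nu_p(H_2(n))$ on that class. No global uniqueness across classes is claimed, and the derivative condition is genuinely necessary.
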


The above conjecture is not true for two reasons. The first reason is that there exist prime numbers $p$ and positive integers $n_1$ such that $\nu_p(H_2(n))=1$ for each $n\equiv n_1\pmod{p}$, for example $(p,n_1)=(19,6)$. The second one is that for some prime numbers $p$, for example $59$ and $61$, there exist more than one value $n_1$ modulo $p$ such that $p\mid H_2(n)$ for $n\equiv n_1\pmod{p}$.

The aim of this section is to shed some light on the behaviour of $\nu_{p}(H_{d}(n))$ in the case when $p>d$.

We use the exact formula for the number $H_d(n)$ and make some modifications:
\begin{align*}
H_d(n)=\sum_{k=0}^{\left\lfloor\frac{n}{d}\right\rfloor} \frac{n!}{(n-dk)!k!d^k}=\sum_{k=0}^{\left\lfloor\frac{n}{d}\right\rfloor} \frac{(n)_{(dk)}}{k!d^k}=\sum_{k=0}^{+\infty} \frac{(n)_{(dk)}}{k!d^k}.
\end{align*}
The above notation suggests us to define the function $f_d(x)=\sum_{k=0}^{+\infty} \frac{(x)_{(dk)}}{k!d^k}$, $x\in\Z_p$. This function is well defined for each $x\in\Z_p$ as
\begin{align*}
\nu_p\left(\frac{(x)_{(dk)}}{k!d^k}\right)\geq \nu_p\left(\frac{(dk)!}{k!d^k}\right)=\nu_p\left(\frac{(dk)!}{k!}\right)=\nu_p((dk)_{((d-1)k)})\geq \nu_p(((d-1)k)!),
\end{align*}
where the last value in the above inequalities tend to $+\infty$ when $k\rightarrow +\infty$. The following three lemmas show that
\begin{itemize}
\item $f_d$ is a $p$-adic differentiable function,
\item there exists a function $g_d:\Z_p\times p\Z_p\rightarrow\Q_p$ such that $f_d(x+h)=f_d(x)+hf'_d(x)+h^2g_d(x,h)$ for any $x\in\Z_p$, $h\in p\Z_p$ and $g_d(x,h)\in\Z_p$ if only $d\geq 3$ or $p\mid f_2(x)$,
\item $f'_d(x_1)\equiv f'_d(x_2)\pmod{p^r}$ if $x_1\equiv x_2\pmod{p^r}$.
\end{itemize}
By the above facts we will be able to describe behaviour of the $p$-adic valuations of numbers $H_d(n)$, $n\in\N$, using classical Hensel's lemma.

\begin{lem}
The function $f_d$ is differentiable and $f'_d(x)=\sum_{k=1}^{+\infty}\sum_{j=0}^{dk-1} \frac{(x)_{(j)}(x-j-1)_{(dk-j-1)}}{k!d^k}\in\Z_p$ for each $x\in\Z_p$.
\end{lem}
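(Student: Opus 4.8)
The plan is to show that the series defining $f_d$ can be differentiated term by term, and that the resulting series of derivatives converges in $\Z_p$. First I would fix $x \in \Z_p$ and a term $a_k(x) = \frac{(x)_{(dk)}}{k!d^k}$ of the defining series. Since $(x)_{(dk)} = \prod_{j=0}^{dk-1}(x-j)$ is a polynomial in $x$, its derivative is, by the product (Leibniz) rule for a product of the linear factors $x-j$,
\begin{equation*}
\frac{d}{dx}(x)_{(dk)} = \sum_{j=0}^{dk-1} \prod_{\substack{0 \le l \le dk-1 \\ l \neq j}} (x-l) = \sum_{j=0}^{dk-1} (x)_{(j)}(x-j-1)_{(dk-j-1)},
\end{equation*}
which gives the claimed formula for the formal term-by-term derivative. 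So the real content is two points: (i) that the formal series $\sum_{k\ge 1}\sum_{j=0}^{dk-1} \frac{(x)_{(j)}(x-j-1)_{(dk-j-1)}}{k!d^k}$ converges $p$-adically to an element of $\Z_p$, and (ii) that this series really is the derivative of $f_d$ in the $p$-adic sense (i.e. the difference quotients converge to it, uniformly enough to interchange the limit with the infinite sum).

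For (i), I would estimate $\nu_p$ of a single summand. Each factor $(x)_{(j)}$ and $(x-j-1)_{(dk-j-1)}$ is a product of $j$ (resp. $dk-j-1$) consecutive-type $p$-adic integers, so both lie in $\Z_p$; moreover $(x)_{(j)}(x-j-1)_{(dk-j-1)}$ divides — up to a unit — $(x)_{(dk-1)}$, whose valuation is at least $\nu_p((dk-1)!)$. Hence
\begin{equation*}
\nu_p\!\left(\frac{(x)_{(j)}(x-j-1)_{(dk-j-1)}}{k!d^k}\right) \ge \nu_p((dk-1)!) - \nu_p(k!) - k\nu_p(d) \ge \nu_p\big(((d-1)k-1)!\big) - k\nu_p(d),
\end{equation*}
and since $p > d > \nu_p(d)$ — actually since $p$ is fixed and $d$ fixed, $\nu_p(d)$ is a constant — Legendre's formula shows $\nu_p(((d-1)k-1)!) \sim \frac{(d-1)k}{p-1}$ grows linearly in $k$ with a slope exceeding $\nu_p(d)$ once $k$ is large (this is essentially the computation already carried out in the paragraph preceding the lemma, and in Lemma~\ref{ineqval}). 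There are $dk$ summands over $j$ for each $k$, which contributes only a bounded $\nu_p$-loss of $\nu_p(dk) = O(\log k)$, negligible against the linear growth. Therefore the $k$-th inner sum has $\nu_p \to +\infty$, the double series converges, and its sum lies in $\Z_p$.

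For (ii), I would use the standard criterion that a series of functions $\sum_k a_k$ on $\Z_p$ can be differentiated term by term provided each $a_k$ is differentiable and the series $\sum_k a_k'$ converges uniformly on $\Z_p$ (equivalently, here, provided $\inf_{x,y}\nu_p\big(\frac{a_k(x)-a_k(y)}{x-y}\big) \to \infty$). For the polynomial term $a_k(x) = \frac{(x)_{(dk)}}{k!d^k}$, the difference quotient $\frac{a_k(x)-a_k(y)}{x-y}$ is again a polynomial with $\Z_p$-controlled coefficients, and the same valuation estimate as in (i) — now applied to each monomial appearing after expanding $(x)_{(dk)}-(y)_{(dk)}$ and dividing by $x-y$ — shows its valuation is bounded below by a quantity tending to $+\infty$ in $k$, uniformly in $x,y$. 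Hence the interchange is legitimate: $f_d$ is differentiable and $f_d' = \sum_{k\ge1} a_k'$, which is the asserted formula. I expect the main obstacle to be the bookkeeping in (ii): making the uniform-in-$x$ estimate on the difference quotients genuinely clean, since one must control $\frac{(x)_{(dk)}-(y)_{(dk)}}{x-y}$ rather than just $(x)_{(dk)}$ itself; the trick is to write $(x)_{(dk)}-(y)_{(dk)}$ as a telescoping sum $\sum_{j=0}^{dk-1}\big((x)_{(j)}(y)_{(dk-j)} - (x)_{(j+1)}(y)_{(dk-j-1)}\big)$, each term of which is visibly divisible by $x-y$ with quotient $(x)_{(j)}(y)_{(dk-j-1)}$, reducing the uniform bound to exactly the valuation estimate already established in (i).
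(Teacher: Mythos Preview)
Your overall strategy is sound and close in spirit to the paper's argument, but one of your intermediate estimates in (i) is wrong and should be replaced. You claim that $(x)_{(j)}(x-j-1)_{(dk-j-1)}$ ``divides --- up to a unit --- $(x)_{(dk-1)}$'' and hence has $p$-adic valuation at least $\nu_p((dk-1)!)$. This is false: that product is $(x)_{(dk)}$ with the single factor $x-j$ deleted, not a shift of $(x)_{(dk-1)}$, and one easily builds counterexamples (e.g.\ $p=5$, $d=2$, $k=3$, $x=4$, $j=4$ gives valuation $0<1=\nu_5(5!)$). The correct and sufficient bound, which the paper uses, is
\[
\nu_p\bigl((x)_{(j)}(x-j-1)_{(dk-j-1)}\bigr)\ \ge\ \nu_p(j!)+\nu_p((dk-j-1)!),
\]
coming from $\binom{x}{j},\binom{x-j-1}{dk-j-1}\in\Z_p$. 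Since at least one of $j$, $dk-j-1$ is $\ge k$, this already gives each summand $\nu_p\ge 0$, and Legendre's formula shows the bound tends to $+\infty$ as $k\to\infty$. Two smaller slips: since this section assumes $p>d$, we have $\nu_p(d)=0$ and the earlier lemma on $\frac{d-2}{p}\ge\nu_p(d)$ is irrelevant; and by the ultrametric inequality there is no ``$O(\log k)$ loss'' from summing $dk$ terms --- the valuation of the inner sum is simply at least the minimum of the termwise valuations.

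For part (ii), be careful with the ``standard criterion'': over $\Q_p$, uniform convergence of $\sum a_k'$ alone does \emph{not} justify termwise differentiation (unlike over $\R$). What one actually needs --- and what you do verify via the telescoping --- is the uniform Lipschitz bound $\sup_{x\neq y}\bigl|\tfrac{a_k(x)-a_k(y)}{x-y}\bigr|_p\to 0$; so your parenthetical condition is the genuine hypothesis, not an ``equivalent'' reformulation. Your telescoping identity is essentially right (the quotient should read $(x)_{(j)}(y-j-1)_{(dk-j-1)}$, not $(x)_{(j)}(y)_{(dk-j-1)}$), and it reduces exactly to the corrected valuation estimate from (i). For comparison, the paper bypasses the general criterion and computes $\lim_{h\to 0}\tfrac{f_d(x+h)-f_d(x)}{h}$ directly, expanding $(x+h)_{(dk)}$ in powers of $h$ and bounding the $h^{t-1}$ terms for $t\ge 2$; the underlying estimates are the same as yours, only the packaging differs.
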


\begin{proof}
We compute the derivative from the very definition:
\begin{align*}
& f'_d(x)=\lim_{h\rightarrow 0}\frac{f_d(x+h)-f_d(x)}{h}=\lim_{h\rightarrow 0}\frac{1}{h}\left(\sum_{k=0}^{+\infty} \frac{(x+h)_{(dk)}}{k!d^k}-\sum_{k=0}^{+\infty} \frac{(x)_{(dk)}}{k!d^k}\right)\\
& =\lim_{h\rightarrow 0}\frac{1}{h}\sum_{k=0}^{+\infty} \frac{(x+h)_{(dk)}-(x)_{(dk)}}{k!d^k}\\
& =\lim_{h\rightarrow 0}\frac{1}{h}\sum_{k=0}^{+\infty} \frac{1}{k!d^k}\left[\left(\sum_{t=0}^{dk}h^t\sum_{0\leq j_1<...<j_t\leq dk-1}\frac{(x)_{(dk)}}{(x-j_1)\cdot ...\cdot(x-j_t)}\right)-(x)_{(dk)}\right]\\
& =\lim_{h\rightarrow 0}\frac{1}{h}\sum_{k=1}^{+\infty} \frac{1}{k!d^k}\sum_{t=1}^{dk}h^t\sum_{0\leq j_1<...<j_t\leq dk-1}\frac{(x)_{(dk)}}{(x-j_1)\cdot ...\cdot(x-j_t)}\\
& =\lim_{h\rightarrow 0}\sum_{k=1}^{+\infty} \frac{1}{k!d^k}\sum_{t=1}^{dk}h^{t-1}\sum_{0\leq j_1<...<j_t\leq dk-1}\frac{(x)_{(dk)}}{(x-j_1)\cdot ...\cdot(x-j_t)}\\
& =\lim_{h\rightarrow 0}\sum_{k=1}^{+\infty} \frac{1}{k!d^k}\left(\sum_{j=0}^{dk-1}\frac{(x)_{(dk)}}{x-j}+h\sum_{t=2}^{dk}h^{t-2}\sum_{0\leq j_1<...<j_t\leq dk-1}\frac{(x)_{(dk)}}{(x-j_1)\cdot ...\cdot(x-j_t)}\right)\\
& =\lim_{h\rightarrow 0}\sum_{k=1}^{+\infty} \sum_{j=0}^{dk-1}\frac{(x)_{(dk)}}{k!d^k(x-j)}+\sum_{k=1}^{+\infty} h\sum_{t=2}^{dk}h^{t-2}\sum_{0\leq j_1<...<j_t\leq dk-1}\frac{(x)_{(dk)}}{k!d^k(x-j_1)\cdot ...\cdot(x-j_t)},
\end{align*}
where we put $\sum_{0\leq j_1<...<j_t\leq dk-1}\frac{(x)_{(dk)}}{(x-j_1)\cdot ...\cdot(x-j_t)}=(x)_{(dk)}$ for $t=0$. We estimate the $p$-adic valuation of $\frac{h^{t-2}(x)_{(dk)}}{k!d^k(x-j_1)\cdot ...\cdot(x-j_t)}$ from below. We will do this by estimating the $p$-adic valuation of the product $(x-j_1)\cdot ...\cdot (x-j_{dk-l})$, where $0\leq j_1<...<j_{dk-l}\leq dk-1$, from below by the number of factors $x-j_1, ..., x-j_{dk-l}$ divisible by $p$. This number is at least equal to $\left\lfloor\frac{dk}{p}\right\rfloor -l$. Indeed, we have at least $\left\lfloor\frac{dk}{p}\right\rfloor$ numbers divisible by $p$ among all the $p$-adic integers $x-dk+1, x-dk+2, ..., x$ and we cancel at most $l$ numbers divisible by $p$ from them. Hence,
\begin{align*}
\nu_p &\left(\frac{h^{t-2}(x)_{(dk)}}{k!d^k(x-j_1)\cdot ...\cdot(x-j_t)}\right)\geq (t-2)\nu_p(h)+\left\lfloor\frac{dk}{p}\right\rfloor-t-\frac{k-s_p(k)}{p-1}\\
& >t-2+\frac{dk}{p}-1-t-\frac{k}{p-1}=\frac{(p-1)dk-pk}{p(p-1)}-3=\frac{[(p-1)(d-1)-1]k}{p(p-1)}-3\geq\frac{[2\cdot 1-1]k}{p(p-1)}-3\\
& =\frac{k}{p(p-1)}-3,
\end{align*}
where we assumed that $\nu_p(h)\geq 1$ and use the Legendre formula $\nu_p(k!)=\frac{k-s_p(k)}{p-1}$. Here, $s_p(k)$ is the sum of digits of the number $k$ written in the (unique) $p$-ary expansion. Hence, the $p$-adic valuation of $$\sum_{t=2}^{dk}h^{t-2}\sum_{0\leq j_1<...<j_t\leq dk-1}\frac{(x)_{(dk)}}{k!d^k(x-j_1)\cdot ...\cdot(x-j_t)}$$ is at least $-2$ and it tends to $+\infty$ when $k\rightarrow +\infty$. Thus, the series $$\sum_{k=1}^{+\infty} \sum_{t=2}^{dk}h^{t-2}\sum_{0\leq j_1<...<j_t\leq dk-1}\frac{(x)_{(dk)}}{k!d^k(x-j_1)\cdot ...\cdot(x-j_t)}$$ is convergent and its $p$-adic valuation is at least $-2$. We infer that
\begin{align*}
& \nu_p\left(\sum_{k=1}^{+\infty} h\sum_{t=2}^{dk}h^{t-2}\sum_{0\leq j_1<...<j_t\leq dk-1}\frac{(x)_{(dk)}}{k!d^k(x-j_1)\cdot ...\cdot(x-j_t)}\right)\\
& =\nu_p(h)+\nu_p\left(\sum_{k=2}^{+\infty} \sum_{t=2}^{dk}h^{t-2}\sum_{0\leq j_1<...<j_t\leq dk-1}\frac{(x)_{(dk)}}{k!d^k(x-j_1)\cdot ...\cdot(x-j_t)}\right)\rightarrow +\infty
\end{align*}
when $h\rightarrow 0$. This is why $f'_d(x)=\sum_{k=1}^{+\infty} \sum_{j=0}^{dk-1}\frac{(x)_{(dk)}}{k!d^k(x-j)}=\sum_{k=1}^{+\infty}\sum_{j=0}^{dk-1} \frac{(x)_{(j)}(x-j-1)_{(dk-j-1)}}{k!d^k}$. This series is convergent, as
\begin{align*}
& \nu_p\left(\frac{(x)_{(j)}(x-j-1)_{(dk-j-1)}}{k!d^k}\right)=\nu_p\left({x\choose j}{x-j-1\choose dk-j-1}\frac{j!(dk-j-1)!}{k!d^k}\right)\\
& \geq\frac{j-s_p(j)+dk-j-1-s_p(dk-j-1)-k+s_p(k)}{p-1}\geq\frac{(d-1)k-1}{p-1}-2\log_p dk-2\rightarrow +\infty
\end{align*}
when $k\rightarrow +\infty$, where we used the fact that $j,dk-j-1<dk$, so $j,dk-j-1$ have at most $\lfloor\log_p dk\rfloor+1$ $p$-ary digits. Moreover,
\begin{align*}
& \nu_p\left(\frac{(x)_{(j)}(x-j-1)_{(dk-j-1)}}{k!d^k}\right)=\nu_p\left({x\choose j}{x-j-1\choose dk-j-1}\frac{j!(dk-j-1)!}{k!d^k}\right)\geq 0,
\end{align*}
as at least one of the numbers $j,dk-j-1$ is greater than or equal to $k$. Thus the $p$-adic valuation of $\sum_{k=1}^{+\infty}\sum_{j=0}^{dk-1} \frac{(x)_{(j)}(x-j-1)_{(dk-j-1)}}{k!d^k}$ is non-negative.
\end{proof}

\begin{lem}
For each $x,h\in\Z_p$ with $\nu_p(h)\geq 1$ we have $f_d(x+h)=f_d(x)+hf'_d(x)+h^2g_d(x,h)$, where $g_d(x,h)=\sum_{l=2}^{+\infty}x^{l-2}\sum_{k=\left\lceil\frac{l}{d}\right\rceil}^{+\infty}\frac{1}{k!d^k}\sum_{0\leq j_1<...<j_{dk-l}\leq dk-1}(x-j_1)\cdot ...\cdot (x-j_{dk-l})$. Moreover, if $d\geq 3$ or $\nu_p(f_2(x))\geq 1$, then $g_d(x,h)\in\Z_p$.
\end{lem}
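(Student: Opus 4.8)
The plan is to prove the expansion first and then the integrality of $g_d$; the latter splits into a routine part and a genuinely delicate one.

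\emph{The degree-two expansion.} I would begin from $f_d(x+h)=\sum_{k\ge 0}\frac{(x+h)_{(dk)}}{k!d^{k}}$ and expand each falling factorial as a polynomial in $h$ by means of the elementary symmetric functions of the numbers $x,x-1,\dots,x-(dk-1)$:
\[
(x+h)_{(dk)}=\prod_{i=0}^{dk-1}\bigl((x-i)+h\bigr)=\sum_{l=0}^{dk}h^{l}\sum_{0\le j_{1}<\dots<j_{dk-l}\le dk-1}(x-j_{1})\cdots(x-j_{dk-l}).
\]
Dividing by $k!d^{k}$ and summing over $k$, the resulting double series (over $k$ and over the exponent $l$ of $h$) may be rearranged into an outer sum over $l$, since its general term has $p$-adic valuation tending to $+\infty$; this is precisely the estimate already carried out inside the proof of the previous lemma, now used without the restriction $l\in\{0,1\}$. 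The coefficient of $h^{0}$ is $f_d(x)$; the coefficient of $h^{1}$ equals $\sum_{k\ge 1}\frac1{k!d^{k}}\sum_{j=0}^{dk-1}\frac{(x)_{(dk)}}{x-j}=\sum_{k\ge 1}\frac1{k!d^{k}}\sum_{j=0}^{dk-1}(x)_{(j)}(x-j-1)_{(dk-j-1)}$, which is $f_d'(x)$ by the formula for the derivative proved in the previous lemma; collecting all terms of order $\ge 2$ in $h$ then gives $h^{2}g_d(x,h)$ with $g_d(x,h)=\sum_{l\ge 2}h^{l-2}F_{l}(x)$, where $F_{l}(x)$ is the inner series appearing in the statement, as asserted.

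\emph{Integrality of $g_d$.} We must show $\nu_p(g_d(x,h))\ge 0$ whenever $\nu_p(h)\ge 1$. Since $\nu_p(h^{l-2})\ge l-2$, the summands with $l\ge 3$ are harmless provided $\nu_p(F_{l}(x))\ge 2-l$, and these bounds follow, exactly as in the previous lemma, from $\nu_p\bigl((x)_{(n)}\bigr)\ge\nu_p(n!)$ (among $x,x-1,\dots,x-n+1$ at least $\lfloor n/p^{r}\rfloor$ are divisible by $p^{r}$) together with $\nu_p\bigl((dk)!/(k!d^{k})\bigr)\ge 0$. The delicate point is the single term $F_{2}(x)$, for which $\nu_p(F_{2}(x))\ge 0$ is required; here no term-by-term estimate can succeed, because each summand of $F_{2}(x)$ is obtained from $(x)_{(dk)}$ by deleting two of its factors and so can have valuation smaller by as much as $\nu_p(k!)$, whence the individual summands are $p$-adically unbounded below and the conclusion must come from cancellation in $\sum_{k}$. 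To extract it I would use the functional equation $f_d(x)=f_d(x-1)+(x-1)_{(d-1)}f_d(x-d)$, valid on all of $\Z_p$ since both sides are continuous and coincide on the dense set $\N$; applying the degree-two expansion to $f_d(x+h)$, $f_d(x+h-1)$, $f_d(x+h-d)$ and expanding $(x+h-1)_{(d-1)}$ in powers of $h$, then comparing the terms of order $\ge 2$ in $h$, gives
\[
g_d(x,h)=g_d(x-1,h)+(x+h-1)_{(d-1)}\,g_d(x-d,h)+R_d(x,h),
\]
where $R_d(x,h)\in\Z_p$ is an explicit $\Z_p$-linear combination of $f_d(x-1),\dots,f_d(x-d)$ and $f_d'(x-1),\dots,f_d'(x-d)$, all integral by the previous lemma. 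For $d\ge 3$ one concludes as follows: the set $\{x\in\Z_p:g_d(x,h)\in\Z_p\}$ is closed (the map $x\mapsto g_d(x,h)$ being a uniformly convergent series of polynomials, by the estimates above), and by the recurrence it contains every $n\in\N$ as soon as it contains $0,1,\dots,d-1$; since $\N$ is dense in $\Z_p$, it then equals $\Z_p$, and the claim reduces to the base values $n\in\{0,\dots,d-1\}$. For such $n$ the equality $f_d(n)=1$ forces the factor $n+h-n=h$ into every $(n+h)_{(dk)}$, and one is reduced to bounding certain series in $k$ whose terms carry factorials, harmonic sums and the factor $1/(k!d^{k})$ (for $n=0$ the critical one is $\sum_{k\ge 1}(-1)^{dk}(dk-1)!\bigl(\sum_{i=1}^{dk-1}\tfrac1i\bigr)/(k!d^{k})$, which must lie in $\Z_p$); those bounds I expect to obtain from a generating-function identity descending from the functional equation. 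For $d=2$ this cancellation is not complete; instead one shows that $g_2(x,h)$ differs from an element of $\Z_p$ by a quantity of valuation $\ge\nu_p(f_2(x+h))-1=\nu_p(f_2(x))-1$ (the equality because $(H_2(n)\bmod p)$ has period $p$ by the periodicity results above and $f_2$ is continuous), so that the hypothesis $\nu_p(f_2(x))\ge 1$ is exactly what places $g_2(x,h)$ in $\Z_p$.

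\emph{Main obstacle.} The hard part is the integrality of $g_d$, and within it the case $d=2$: since no termwise $p$-adic bound is available, one must pin down the precise cancellation inside $F_{2}(x)$ and tie the surviving non-integral contribution to $f_2(x)$ itself; the rest is bookkeeping built on the estimates of the previous lemma.
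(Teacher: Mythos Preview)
Your expansion argument (first paragraph) is correct and essentially identical to the paper's.

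Your integrality argument, however, has two gaps and one false premise.

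\textbf{False premise.} You assert that for $l=2$ ``no term-by-term estimate can succeed'' and therefore turn to the recurrence. This is not so when $d\ge 3$. The paper bounds each summand directly: from $\nu_p\bigl(\prod(x-j_i)\bigr)\ge\lfloor dk/p\rfloor-l$ and Legendre's formula one gets
\[
\nu_p\Bigl(\tfrac{h^{l-2}}{k!d^{k}}\textstyle\sum_{j_1<\dots<j_{dk-l}}(x-j_1)\cdots(x-j_{dk-l})\Bigr)\ \ge\ \Bigl\lfloor\tfrac{[(d-1)(p-1)-1]k+ps_p(k)}{p(p-1)}\Bigr\rfloor-2,
\]
which is $\ge 0$ for all $k\ge p+1$ when $d\ge 3$; the cases $k<p$ are trivial and $k=p$ is handled by a one-line argument (either $l\ge 3$ gives an extra $h$, or $l=2$ leaves at least one factor divisible by $p$). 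So for $d\ge 3$ the paper never invokes any cancellation at all.

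\textbf{Gap for $d\ge 3$.} Your recurrence $g_d(x,h)=g_d(x-1,h)+(x+h-1)_{(d-1)}g_d(x-d,h)+R_d$ is correct, and the induction over $\N$ together with continuity is a legitimate scheme, but you leave the base cases $n\in\{0,\dots,d-1\}$ unproved: ``those bounds I expect to obtain from a generating-function identity'' is not an argument. Since $g_d(n,h)$ for small $n$ is still an infinite sum over $k$ with the same $1/(k!d^k)$ denominators, these base cases are not obviously easier than the general $x$---and indeed the direct estimate above disposes of \emph{all} $x$ uniformly, making the detour through the recurrence unnecessary.

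\textbf{Gap for $d=2$.} Here your diagnosis that termwise bounds fail is correct, but only on the narrow range $p\le k\le 2p-1$ with $l=2$ (for $k\ge 2p+4$ the estimate above is again $\ge 0$, and the intermediate $k$'s with $l\ge 3$ or $2p\le k\le 2p+3$ succumb to easy variants). The paper then \emph{computes} the offending block explicitly: writing $r\equiv x\pmod p$ and reducing each factor modulo $p$, one finds
\[
p\sum_{k=p}^{2p-1}\frac{1}{k!2^{k}}\sum_{0\le j_1<\dots<j_{2k-2}\le 2k-1}(x-j_1)\cdots(x-j_{2k-2})\ \equiv\ -\tfrac12 f_2(x)\pmod p,
\]
so the possibly non-integral piece is $\equiv -\tfrac12 f_2(x)/p$ and lies in $\Z_p$ precisely when $p\mid f_2(x)$. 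This concrete identification is the missing idea; your sketch (``differs from an element of $\Z_p$ by a quantity of valuation $\ge\nu_p(f_2(x))-1$'') asserts the conclusion without supplying the mechanism that produces $f_2(x)$ on the right-hand side.
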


\begin{proof}
Let us fix $x,h\in\Z_p$ with $\nu_p(h)\geq 1$. We then have:
\begin{equation}\label{fd(x)}
\begin{split}
& f_d(x+h)=\sum_{k=0}^{+\infty} \frac{(x+h)_{(dk)}}{k!d^k}=\sum_{k=0}^{+\infty}\frac{1}{k!d^k}\sum_{l=0}^{dk}h^l\sum_{0\leq j_1<...<j_{dk-l}\leq dk-1}(x-j_1)\cdot ...\cdot (x-j_{dk-l}).
\end{split}
\end{equation}
For the change of the order of summation, we will show that $$\lim_{(k,l)\rightarrow +\infty} \nu_p\left(\frac{h^l\sum_{0\leq j_1<...<j_{dk-l}\leq dk-1}(x-j_1)\cdot ...\cdot (x-j_{dk-l})}{k!d^k}\right)=+\infty.$$ Let us estimate the $p$-adic valuation of the number $\sum_{0\leq j_1<...<j_{dk-l}\leq dk-1}(x-j_1)\cdot ...\cdot (x-j_{dk-l})$ from below. We have the inequality
\begin{equation}\label{summandvpestim}
\begin{split}
& \nu_p\left(\frac{h^l\sum_{0\leq j_1<...<j_{dk-l}\leq dk-1}(x-j_1)\cdot ...\cdot (x-j_{dk-l})}{k!d^k}\right)\geq l+\left\lfloor\frac{dk}{p}\right\rfloor -l-\frac{k-s_p(k)}{p-1}\\
& =\left\lfloor\frac{dk}{p}\right\rfloor -\frac{k-s_p(k)}{p-1}=\left\lfloor\frac{dk}{p} -\frac{k-s_p(k)}{p-1}\right\rfloor=\left\lfloor\frac{(dp-d-p)k+ps_p(k)}{p(p-1)}\right\rfloor\\
& =\left\lfloor\frac{[(d-1)(p-1)-1]k+ps_p(k)}{p(p-1)}\right\rfloor.
\end{split}
\end{equation}
Since $p>d\geq 2$, we thus have
$$\lim_{(k,l)\rightarrow +\infty} \nu_p\left(\frac{h^l\sum_{0\leq j_1<...<j_{dk-l}\leq dk-1}(x-j_1)\cdot ...\cdot (x-j_{dk-l})}{k!d^k}\right)=+\infty.$$
Then we change summation in the last expression in (\ref{fd(x)}) and get the following:
\begin{align*}
& f_d(x+h)=\sum_{l=0}^{+\infty}h^l\sum_{k=\left\lceil\frac{l}{d}\right\rceil}^{+\infty}\frac{1}{k!d^k}\sum_{0\leq j_1<...<j_{dk-l}\leq dk-1}(x-j_1)\cdot ...\cdot (x-j_{dk-l})\\
& =f_d(x)+hf'_d(x)+h^2g_d(x,h),
\end{align*}
where $g_d(x,h)=\sum_{l=2}^{+\infty}h^{l-2}\sum_{k=\left\lceil\frac{l}{d}\right\rceil}^{+\infty}\frac{1}{k!d^k}\sum_{0\leq j_1<...<j_{dk-l}\leq dk-1}(x-j_1)\cdot ...\cdot (x-j_{dk-l})$. Now, we are left with the proof that $\nu_p(g_d(x,h))\geq 0$ provided that $\nu_p(f_d(x))\geq 1$ or $d\geq 3$. If $k<p$, where $d\geq 2$ is arbitrary, then, of course, $\nu_p\left(\frac{h^{l-2}\sum_{0\leq j_1<...<j_{dk-l}\leq dk-1}(x-j_1)\cdot ...\cdot (x-j_{dk-l})}{k!d^k}\right)\geq 0$. By (\ref{summandvpestim}) we know that
$$\nu_p\left(\frac{h^{l-2}\sum_{0\leq j_1<...<j_{dk-l}\leq dk-1}(x-j_1)\cdot ...\cdot (x-j_{dk-l})}{k!d^k}\right)\geq \left\lfloor\frac{[(d-1)(p-1)-1]k+ps_p(k)}{p(p-1)}\right\rfloor -2.$$
We check that $\frac{[(d-1)(p-1)-1]k+ps_p(k)}{p(p-1)}-2\geq\frac{[2(p-1)-1](p+1)}{p(p-1)}-2=\frac{(2p-3)(p+1)}{p(p-1)}-2\geq 0$ for $p>d\geq 3$ and $k\geq p+1$. If $d\geq 3$ and $k=p$, then either $l\geq 3$ and then $\nu_p(h^{l-2})\geq l-2\geq 1=\nu_p(p!)$ or $l=2$ an then in each product of the form $(x-j_1)\cdot ...\cdot (x-j_{dk-2})$, where $0\leq j_1<...<j_{dk-2}\leq dk-1$ there is a factor with the $p$-adic valuation at least equal to $1$. Hence, $\nu_p\left(\frac{h^{l-2}\sum_{0\leq j_1<...<j_{dk-l}\leq dk-1}(x-j_1)\cdot ...\cdot (x-j_{dk-l})}{k!d^k}\right)\geq 0$ for $d\geq 3$ and $k=p$. If $d=2$, then $\frac{[(d-1)(p-1)-1]k+ps_p(k)}{p(p-1)}-2\geq\frac{[(p-1)-1](2p+4)}{p(p-1)}-2=\frac{(p-2)(2p+4)}{p(p-1)}-2\geq 0$ for $p\geq 5$ and $k\geq 2p+4$ (we do not need to consider the case of $p=3$ as $3\nmid H_2(n)$ for any $n\in\N$). If $d=2$, $p\leq k\leq 2p-1$ and $l\geq 3$, then $\nu_p(h^{l-2})\geq l-2\geq 1=\nu_p(k!)$. Hence, $\nu_p\left(\frac{h^{l-2}\sum_{0\leq j_1<...<j_{dk-l}\leq dk-1}(x-j_1)\cdot ...\cdot (x-j_{dk-l})}{k!d^k}\right)\geq 0$ for $d=2$, $p\leq k\leq 2p-1$ and $l\geq 3$. If $d=2$ and $2p\leq k\leq 2p+3$, where $p\geq 5$, then either $l\geq 4$ and then $\nu_p(h^{l-2})\geq l-2\geq 2=\nu_p(k!)$ or $l\in\{2,3\}$ and then in each product of the form $(x-j_1)\cdot ...\cdot (x-j_{2k-l})$, where $0\leq j_1<...<j_{2k-l}\leq 2k-1$ there are at least $4-l$ factors with the $p$-adic valuation at least equal to $1$. Hence, $\nu_p(h^{l-2}\sum_{0\leq j_1<...<j_{2k-l}\leq dk-1}(x-j_1)\cdot ...\cdot (x-j_{dk-l}))\geq l-2+4-l=2=\nu_p(k!)$. We left with the case of $d=2$, $p\leq k\leq 2p-1$ and $l=2$. Let us assume that $\nu_p(f_2(x))\geq 1$ and write $r=x\pmod{p}$. We then compute modulo $p$:
\begin{align*}
& p\cdot \sum_{k=p}^{2p-1}\sum_{0\leq j_1<...<j_{2k-2}\leq 2k-1} \frac{(x-j_1)\cdot ...\cdot (x-j_{2k-l})}{k!2^k}\\
& =\sum_{k=p}^{2p-1}\sum_{0\leq j_1<...<j_{2k-2}\leq 2k-1} \frac{(x-j_1)\cdot ...\cdot (x-j_{2k-l})}{(k)_{(k-p)}(p-1)!2^k}\\
&\equiv -\sum_{k=p}^{2p-1} \frac{(x)_{(r)}(x-r-1)_{(p-1)}(x-r-p-1)_{(p-r-1)}(x-2p)_{(2(k-p))}}{(k-p)_{(k-p)}2^{k-p+1}}\\
& \equiv -\frac{1}{2}\sum_{k=p}^{2p-1} \frac{r!(p-1)!(p-1)_{(p-r-1)}(x)_{(2(k-p))}}{(k-p)_{(k-p)}2^{k-p}}=-\frac{1}{2}\sum_{k=0}^{p-1} \frac{((p-1)!)^2(x)_{(2k)}}{k!2^k}\\
& \equiv \sum_{k=0}^{+\infty} \frac{(x)_{(2k)}}{k!2^k}=-\frac{1}{2}f_2(x)\equiv 0\pmod{p}.
\end{align*}
Thus,
$$
\nu_p\left(\sum_{k=p}^{2p-1}\sum_{0\leq j_1<...<j_{2k-2}\leq 2k-1} \frac{(x-j_1)\cdot ...\cdot (x-j_{2k-l})}{k!2^k}\right)\geq 0.
$$
This ends the proof of the fact that $g_d(x,h)\in\Z_p$ on condition that $\nu_p(f_d(x))\geq 1$ or $d\geq 3$.
\end{proof}

\begin{lem}
Let $x,h\in\Z_p$ with $\nu_p(h)\geq 1$. If $d\geq 3$ or $\nu_p(f_2(x))\geq 1$, then $f'_d(x+h)\equiv f'_d(x)\pmod{p^{\nu_p(h)}}$.
\end{lem}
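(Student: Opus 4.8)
The plan is to play the second-order expansion from the previous lemma against itself at the two points $x$ and $x+h$. First I would dispose of the trivial case $h=0$ and assume $h\neq 0$, $\nu_p(h)\geq 1$. Writing $y=x+h\in\Z_p$, I would apply the previous lemma once at $x$ with increment $h$ and once at $y$ with increment $-h$ (legitimate since $\nu_p(-h)=\nu_p(h)\geq 1$), obtaining
\[
f_d(x+h)=f_d(x)+hf'_d(x)+h^2g_d(x,h),\qquad f_d(x)=f_d(x+h)-hf'_d(x+h)+h^2g_d(x+h,-h).
\]

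Adding these two identities cancels $f_d(x)+f_d(x+h)$ from both sides and leaves
\[
0=h\bigl(f'_d(x)-f'_d(x+h)\bigr)+h^2\bigl(g_d(x,h)+g_d(x+h,-h)\bigr),
\]
so, dividing by $h$,
\[
f'_d(x+h)-f'_d(x)=h\bigl(g_d(x,h)+g_d(x+h,-h)\bigr).
\]
Once both $g$-values are known to lie in $\Z_p$, the right-hand side has $p$-adic valuation at least $\nu_p(h)$, which is exactly the claimed congruence $f'_d(x+h)\equiv f'_d(x)\pmod{p^{\nu_p(h)}}$.

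It therefore remains to justify $g_d(x,h),g_d(x+h,-h)\in\Z_p$. The first is immediate from the previous lemma under the standing hypothesis ($d\geq 3$ or $\nu_p(f_2(x))\geq 1$). For the second, the previous lemma requires $d\geq 3$ or $\nu_p(f_2(x+h))\geq 1$; in the case $d=2$ I would deduce $\nu_p(f_2(x+h))\geq 1$ from $\nu_p(f_2(x))\geq 1$ by invoking the expansion $f_2(x+h)=f_2(x)+hf'_2(x)+h^2g_2(x,h)$, noting that $g_2(x,h)\in\Z_p$ precisely because $\nu_p(f_2(x))\geq 1$, while $f'_2(x)\in\Z_p$ and $\nu_p(h)\geq 1$, so all three terms are divisible by $p$. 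This transfer of the case hypothesis from $x$ to the shifted point $x+h$ is the only genuinely delicate bookkeeping in the argument; the combinatorial and valuation estimates have all been carried out in the previous two lemmas, and what is left is just the short cancellation displayed above.
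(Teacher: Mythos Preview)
Your proof is correct and follows exactly the same cancellation argument as the paper: apply the second-order expansion at $x$ with increment $h$ and at $x+h$ with increment $-h$, add, and divide by $h$. In fact you are more careful than the paper, which simply asserts $g_d(x+h,-h)\in\Z_p$ without explicitly transferring the hypothesis $\nu_p(f_2(x))\geq 1$ to the shifted point $x+h$; your extra paragraph supplying that transfer fills a small gap the paper leaves implicit.
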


\begin{proof}
By the previous lemma we can write
\begin{align*}
f_d(x+h)= & f_d(x)+hf'_d(x)+h^2g_d(x,h),\\
f_d(x)= & f_d(x+h)-hf'_d(x+h)+h^2g_d(x+h,-h).
\end{align*}
After adding the above two equalities and simplifying we obtain
$$0=h(f'_d(x)-f'_d(x+h))+h^2(g_d(x,h)+g_d(x+h,-h)).$$
Since $d\geq 3$ or $\nu_p(f_2(x))\geq 1$, by the previous lemma we have that $\nu_p(g_d(x,h)),\nu_p(g_d(x+h,-h))\geq 0$. Thus, $p^{2\nu_p(h)}\mid h^2(g_d(x,h)+g_d(x+h,-h))$, which implies that $p^{2\nu_p(h)}\mid h(f'_d(x)-f'_d(x+h))$. Hence, $p^{\nu_p(h)}\mid f'_d(x)-f'_d(x+h)$, or equivalently, $f'_d(x+h)\equiv f'_d(x)\pmod{p^{\nu_p(h)}}$.
\end{proof}

At this moment we are prepared to state the theorem on qualitative description of the sequence $(\nu_p(H_d(n))_{n\in\N}$.

\begin{thm}
Let $d$ be a positive integer at least equal to $2$, $p>d$ be a prime number and $k$ be a positive integer. Assume that $H_d(n_k)\equiv 0 \pmod{p^k}$ for some integer $n_k$. Then the number of solutions $n$  modulo $p^{k+1}$ of the congruence $H_d(n)\equiv 0 \pmod{p^{k+1}}$, satisfying the condition $n \equiv n_k \pmod{p^k}$, is equal to:
\begin{itemize}
\item $1$, when $f'_d(n_k)\not\equiv 0 \pmod{p}$;
\item $0$, when $f'_d(n_k)\equiv 0 \pmod{p}$ and $H_d(n_k)\not\equiv 0 \pmod{p^{k+1}}$;
\item $p$, when $f'_d(n_k)\equiv 0 \pmod{p}$ and $H_d(n_k)\equiv 0 \pmod{p^{k+1}}$.
\end{itemize}
In particular, if $p\mid H_d(n_1)$ for some $n_1\in\N$ and $\nu_p(f'_d(n_1))=0$, then for each $k\in\N_+$ there exists a unique value $n_k$ modulo $p^k$ such that if $n\equiv n_1\pmod{p}$, then $p^k\mid H_d(n)$ if and only if $n\equiv n_k\pmod{p^k}$. In other words there exists a unique $p$-adic integer $b\equiv n_1\pmod{p}$ such that $\nu_p(H_d(n))=\nu_p(n-b)$ for each non-negative integer $n\equiv n_1\pmod{p}$.
\end{thm}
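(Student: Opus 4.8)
The plan is to run a Hensel‑type lifting argument, all of whose analytic input is already contained in the three preceding lemmas. First I would record that $H_d(n)=f_d(n)$ for every $n\in\N$, so (replacing $n_k$ by a non‑negative representative of its class modulo $p^k$ if necessary, which is legitimate by the periodicity results of Section~\ref{Section3}) the hypothesis reads $p^k\mid f_d(n_k)$; moreover, since $f_d:\Z_p\to\Z_p$ is continuous, the residue $f_d(n)\bmod p^{k+1}$ depends only on $n\bmod p^{k+1}$. Consequently the residue classes modulo $p^{k+1}$ refining $n\equiv n_k\pmod{p^k}$ are represented by $n_k+tp^k$ with $t\in\{0,1,\dots,p-1\}$, and the quantity to be computed is the number of such $t$ with $f_d(n_k+tp^k)\equiv 0\pmod{p^{k+1}}$.

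Next I would feed $x=n_k$ and $h=tp^k$ (so $\nu_p(h)\ge k\ge 1$) into the expansion $f_d(x+h)=f_d(x)+hf'_d(x)+h^2g_d(x,h)$ of the second lemma. When $d=2$ the hypothesis $p^k\mid H_2(n_k)=f_2(n_k)$ gives $\nu_p(f_2(n_k))\ge 1$, so in every case $g_d(n_k,tp^k)\in\Z_p$; as $2k\ge k+1$, the term $t^2p^{2k}g_d(n_k,tp^k)$ vanishes modulo $p^{k+1}$ and hence $f_d(n_k+tp^k)\equiv f_d(n_k)+tp^kf'_d(n_k)\pmod{p^{k+1}}$. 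Writing $f_d(n_k)=p^kc$ with $c\in\Z_p$, the condition $f_d(n_k+tp^k)\equiv 0\pmod{p^{k+1}}$ becomes the linear congruence $c+tf'_d(n_k)\equiv 0\pmod p$ in $t$. Its solution count over $t\in\{0,\dots,p-1\}$ is exactly $1$ when $f'_d(n_k)$ is a unit modulo $p$; and, when $p\mid f'_d(n_k)$, it equals $p$ if $p\mid c$ (equivalently $p^{k+1}\mid H_d(n_k)$) and $0$ otherwise. This is precisely the asserted trichotomy.

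For the final ("in particular") clause I would induct on $k$, carrying the side condition $n_k\equiv n_1\pmod p$. In the base case $k=1$, applying the second lemma with $x=n_1$, $h=n-n_1$ and using $p\mid f_d(n_1)$ together with $g_d(n_1,\cdot)\in\Z_p$ shows $p\mid H_d(n)$ for every $n\equiv n_1\pmod p$; in particular, when $d=2$ the running hypothesis $p\mid f_2(\cdot)$ needed in the second and third lemmas holds along the whole class $n\equiv n_1\pmod p$, so those lemmas stay applicable at every step. For the inductive step, the third lemma gives $\nu_p(f'_d(n_k))=\nu_p(f'_d(n_1))=0$, so the first case of the theorem produces a unique lift $n_{k+1}$ modulo $p^{k+1}$ of $n_k$; one then checks $n_{k+1}\equiv n_1\pmod p$ and that, for $n\equiv n_1\pmod p$, one has $p^{k+1}\mid H_d(n)$ if and only if $n\equiv n_{k+1}\pmod{p^{k+1}}$ — the forward direction because $p^{k+1}\mid H_d(n)$ forces $n\equiv n_k\pmod{p^k}$ and then $f_d(n_k+sp^k)\equiv f_d(n_k)+sp^kf'_d(n_k)\pmod{p^{k+1}}$ pins down $s$ modulo $p$. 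Passing to $b=\lim_k n_k\in\Z_p$ and reading off $p^k\mid H_d(n)\iff n\equiv n_k\pmod{p^k}\iff\nu_p(n-b)\ge k$ yields $\nu_p(H_d(n))=\nu_p(n-b)$.

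I do not expect a real obstacle here: the substantial work — $p$‑adic differentiability of $f_d$, the quadratic remainder term with $g_d\in\Z_p$, and the congruence $f'_d(x+h)\equiv f'_d(x)\pmod{p^{\nu_p(h)}}$ — has been done in the three lemmas, and what remains is routine Hensel bookkeeping. The only point deserving care is the case $d=2$: there the naive argument must be supplemented by the observation that $p\mid f_2$ propagates along the entire residue class $n\equiv n_1\pmod p$, which is exactly the content of the vanishing constant term in $f_2(x+h)=f_2(x)+hf'_2(x)+h^2g_2(x,h)$, so that the hypotheses ``$d\ge 3$ or $p\mid f_2(x)$'' of the second and third lemmas remain satisfied throughout the iteration.
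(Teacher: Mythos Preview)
Your proposal is correct and follows essentially the same approach as the paper: the paper simply notes that the three lemmas supply exactly the ingredients (the Taylor expansion with integral remainder $g_d$, and the stability of $f'_d$ modulo $p$) needed to run the classical Hensel argument, and then cites a textbook for that argument, whereas you write the Hensel bookkeeping out explicitly. Your extra care about the $d=2$ case (propagating $p\mid f_2$ along the residue class so the lemmas remain applicable) and about passing to a non-negative representative of $n_k$ are points the paper glosses over, so your write-up is in fact more complete than the paper's own proof.
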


\begin{proof}
We have $H_d(n)=f_d(n)$ for each $n\in\N$. Moreover, we know that $f_d(x+h)=f_d(x)+hf'_d(x)+h^2g_d(x,h)$ for $x,h\in\Z_p$ with $\nu_p(h)\geq 1$. Furthermore, if $d\geq 3$ or $\nu_p(f_2(x))\geq 1$, then $g_d(x,h)\in\Z_p$ and $f'_d(x_1)\equiv f'_d(x_2)\pmod{p}$ for congruent $p$-adic integers $x_1, x_2$ modulo $p$. These all facts make the proof of our theorem exactly the same as the proof of classical Hensel's lemma from \cite[p. 44]{Nar}.
\end{proof}

\begin{rem}
{\rm Let us compute $f'_d(n)\pmod{p}$ for $n\in\N$:
\begin{align*}
f'_d(n)= & \sum_{k=1}^{+\infty}\sum_{j=0}^{dk-1} \frac{(n)_{(j)}(n-j-1)_{(dk-j-1)}}{k!d^k}=\sum_{k=1}^{\left\lfloor\frac{n}{d}\right\rfloor}\sum_{j=0}^{dk-1} \frac{(n)_{(j)}(n-j-1)_{(dk-j-1)}}{k!d^k}\\
&\quad +\sum_{k=\left\lfloor\frac{n}{d}\right\rfloor +1}^{+\infty} \frac{n!(-1)^{dk-n-1}(dk-n-1)!}{k!d^k}\equiv \sum_{k=1}^{\left\lfloor\frac{n}{d}\right\rfloor}\sum_{j=0}^{dk-1} \frac{(n)_{(j)}(n-j-1)_{(dk-j-1)}}{k!d^k}\\
&\quad +\sum_{k=\left\lfloor\frac{n}{d}\right\rfloor +1}^{\left\lceil\frac{n+p+1}{d-1}\right\rceil -1} \frac{n!(-1)^{dk-n-1}(dk-n-1)!}{k!d^k}\pmod{p},
\end{align*}
as $\frac{(dk-n-1)!}{k!}={dk-n-1\choose k}((d-1)k-n-1)!$ and $p\mid ((d-1)k-n-1)!$ if $p\leq (d-1)k-n-1$. That is a way to compute the value $f'_d(n)\pmod{p}$ by adding only finitely many summands.

However, in many cases, having $d\in\N_{\geq 2}$, a prime number $p$ and $n_1\in\{d,...,p-1\}$ such that $p\mid H_d(n_1)$ (recall that $H_d(n)=1$ for $n\in\{0,...,d-1\}$), we do not need to compute $f'_d(n)\pmod{p}$. Instead of this, it suffices  to compute $\nu_p(H_d(n_1))$. If $\nu_p(H_d(n_1))=1$, then we check $p$-adic valuations of numbers $H_d(n_1+pt)$ for $t\in\{1,...,p-1\}$. If $\nu_p(H_d(n_1+pt))\geq 2$ for some $t\in\{1,...,p-1\}$ then for each $k\in\N_+$ there exists a unique value $n_k$ modulo $p^k$ such that if $n\equiv n_1\pmod{p}$ then $p^k\mid H_d(n)$ if and only if $n\equiv n_k\pmod{p^k}$. Otherwise, $\nu_p(H_d(n))=1$ for any $n\equiv n_1\pmod{p}$. If $\nu_p(H_d(n_1))\geq 2$ then we only check $p$-adic valuation of the number $H_d(n_1+p)$. If $\nu_p(H_d(n_1+p))=1$ then for each $k\in\N_+$ there exists a unique value $n_k$ modulo $p^k$ such that if $n\equiv n_1\pmod{p}$ then $p^k\mid H_d(n)$ if and only if $n\equiv n_k\pmod{p^k}$. Otherwise, $\nu_p(H_d(n))\geq 2$ for any $n\equiv n_1\pmod{p}$.}
\end{rem}

\begin{rem}
{\rm We found all the triples $(d,p,n_1)$ for $d\in\{2,3,4,5\}$, first $25$ prime numbers $p$ and numbers $n_1\in\{d,...,p-1\}$ such that $p\mid H_d(n_1)$. We checked the behaviour of the $p$-adic valuations of numbers $H_d(n)$ where $n\equiv n_1\pmod{p}$. The only triples $(d,p,n_1)$ such that $\nu_p(H_d(n))=1$ for all $n\equiv n_1\pmod{p}$ are $(2,19,6)$, $(3,13,7)$. For any other triple and a positive integer $k$ we have the existence of a unique $n_k$ modulo $p^k$ such that if $n\equiv n_1\pmod{p}$ then $p^k\mid H_d(n)$ if and only if $n\equiv n_k\pmod{p^k}$.}
\end{rem}

\section{Properties of polynomials attained during differentiation the exponential generating function of the sequence $(H_d(n))_{n\in\N}$}\label{Section6}

Let $m\in\N$ and for $m$-th derivative of $\cal{H}_d(x)$ we write $\cal{H}_{d}^{(m)}(x)=W_{d,m}(x)\cal{H}_{d}(x)$. We have $W_{d,0}(x)=1$ and due to the identity
$$
\cal{H}_{d}^{(m+1)}(x)=(\cal{H}_{d}^{(m)}(x))'=W_{d,m}'(x)\cal{H}_{d}(x)+(1+x^{d-1})W_{d,m}(x)\cal{H}_{d}(x)
$$
we immediately deduce the recurrence relation
\begin{equation}\label{Wdmpol}
W_{d,m+1}(x)=W_{d,m}'(x)+(1+x^{d-1})W_{d,m}(x).
\end{equation}

We summarize the basic properties of the sequence $(W_{d,m}(x))_{m\in\N}$ in the following:

\begin{thm}\label{Wdn}
The following properties hold:
\begin{enumerate}
\item $W_{d,n}(x)$ is unitary polynomial with non-negative integer coefficients and $\op{deg}W_{d,n}(x)=(d-1)n$;
\item $W_{d,n}(0)=H_{d}(n)$;
\item For $d\in\N_{\geq 2}$ and $m\in\N$ let us write
$$
W_{d,m}(x)=\sum_{i=0}^{(d-1)m}A_{d}(i,m)x^{i}.
$$
If $d\in\N_{\geq 3}$ is an odd number then
$$
\frac{1}{n!}\sum_{k=0}^{n}\binom{n}{k}(-1)^{n-k}H_{d}(m+k)H_{d}(n-k)=
\begin{cases}
\begin{array}{lll}
  A_{d}(n,m) &  &\;\mbox{for}\;n\leq (d-1)m, \\
  0          &  &\;\mbox{for}\;n>(d-1)m
\end{array}
\end{cases};
$$
\item The exponential generating function for the sequence $(W_{d,n}(x))_{n\in\N}$ takes the form
$$
\cal{W}_{d}(x,t)=\sum_{n=0}^{\infty}\frac{W_{d,n}(x)}{n!}t^{n}=\cal{H}_{d}(t)e^{\frac{(t+x)^d-t^d-x^d}{d}}=e^{t+\frac{(t+x)^{d}-x^{d}}{d}}.
$$
\end{enumerate}
\end{thm}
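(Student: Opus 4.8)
The plan is to dispatch (1) by a one-line induction and then to read off (2), (3) and (4) from a single generating-function identity, namely the Taylor expansion of $\cal{H}_d$. For (1) I would induct on $n$ via the recurrence (\ref{Wdmpol}): since $W_{d,0}(x)=1$, and assuming $W_{d,n}(x)$ is monic of degree $(d-1)n$ with non-negative integer coefficients, the summand $W_{d,n}'(x)$ has non-negative integer coefficients and degree strictly less than $(d-1)(n+1)$, while $(1+x^{d-1})W_{d,n}(x)$ has non-negative integer coefficients, degree $(d-1)(n+1)$, and leading coefficient equal to that of $W_{d,n}$, i.e.\ $1$. Hence their sum $W_{d,n+1}(x)$ is monic of degree $(d-1)(n+1)$ with non-negative integer coefficients.

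\textbf{Part (4).}
The key observation is that (4) is nothing but Taylor's theorem. Differentiating $\cal{H}_d$ in its single variable and expanding $\cal{H}_d(x+t)$ in powers of $t$,
\[
\cal{H}_d(x+t)=\sum_{n=0}^{\infty}\frac{\cal{H}_d^{(n)}(x)}{n!}\,t^n=\sum_{n=0}^{\infty}\frac{W_{d,n}(x)\cal{H}_d(x)}{n!}\,t^n=\cal{H}_d(x)\,\cal{W}_d(x,t),
\]
so $\cal{W}_d(x,t)=\cal{H}_d(x+t)/\cal{H}_d(x)=e^{(x+t)+\frac{(x+t)^d}{d}}\big/e^{x+\frac{x^d}{d}}=e^{t+\frac{(x+t)^d-x^d}{d}}$, and factoring out $e^{t+t^d/d}=\cal{H}_d(t)$ gives the first stated form $\cal{H}_d(t)e^{\frac{(t+x)^d-t^d-x^d}{d}}$. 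The only thing to justify is that the rearrangement is legitimate, which is immediate since all series involved converge as formal power series in $\Q[[x,t]]$ (equivalently, analytically near the origin).

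\textbf{Parts (2) and (3).}
Part (2) is the specialization $x=0$ of (4): $\cal{W}_d(0,t)=e^{t+t^d/d}=\cal{H}_d(t)=\sum_{n\geq0}\frac{H_d(n)}{n!}t^n$, while $\cal{W}_d(0,t)=\sum_{n\geq0}\frac{W_{d,n}(0)}{n!}t^n$, whence $W_{d,n}(0)=H_d(n)$ (alternatively, evaluate $\cal{H}_d^{(m)}(x)=W_{d,m}(x)\cal{H}_d(x)$ at $x=0$ directly). For (3), the parity of $d$ enters precisely so that $\cal{H}_d(-x)=e^{-x-x^d/d}=\cal{H}_d(x)^{-1}$; hence by (4),
\[
\cal{W}_d(x,t)=\cal{H}_d(x+t)\,\cal{H}_d(-x)=\left(\sum_{N\geq0}\frac{H_d(N)}{N!}(x+t)^N\right)\left(\sum_{j\geq0}\frac{(-1)^jH_d(j)}{j!}x^j\right).
\]
Extracting $[t^m]$ from the first factor yields $\frac{1}{m!}\sum_{k\geq0}\frac{H_d(m+k)}{k!}x^k$, and multiplying by the second factor and reading off $[x^n]$ gives
\[
[t^mx^n]\,\cal{W}_d(x,t)=\frac{1}{m!}\sum_{k=0}^{n}\frac{(-1)^{n-k}H_d(m+k)H_d(n-k)}{k!\,(n-k)!}.
\]
Since $\cal{W}_d(x,t)=\sum_{m}\sum_{i}\frac{A_d(i,m)}{m!}x^it^m$, we have $A_d(n,m)=m!\,[t^mx^n]\cal{W}_d(x,t)=\frac{1}{n!}\sum_{k=0}^{n}\binom{n}{k}(-1)^{n-k}H_d(m+k)H_d(n-k)$, which is the claimed formula; and for $n>(d-1)m=\deg W_{d,m}(x)$ this coefficient vanishes by (1).

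\textbf{Main obstacle.}
There is no deep obstacle here; the one genuinely enlightening step is recognizing that (4) is the Taylor expansion of $\cal{H}_d$, after which (2) and (3) are pure coefficient bookkeeping. The point to be a little careful about is the use of the identity $\cal{H}_d(x)^{-1}=\cal{H}_d(-x)$ in (3), which requires $d$ odd and is exactly what keeps the computation inside the ring of (exponential-type) power series attached to the sequence $(H_d(n))_{n\in\N}$, thereby producing the $H_d$-values in the final formula.
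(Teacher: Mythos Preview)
Your proof is correct and follows essentially the same approach as the paper: induction via the recurrence for (1), the Taylor expansion $\cal{H}_d(x+t)=\cal{H}_d(x)\cal{W}_d(x,t)$ for (4), and the identity $\cal{H}_d(x)^{-1}=\cal{H}_d(-x)$ (for odd $d$) to extract coefficients for (3). The only cosmetic difference is that the paper proves (3) directly from $W_{d,m}(x)=\cal{H}_d^{(m)}(x)\cal{H}_d(-x)$ rather than routing through the two-variable generating function, but the computation is identical.
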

\begin{proof}
\noindent (1) This is clear. Because $W_{d,0}(x)=1$ is a polynomial and on the right hand side of the recurrence $W_{d,m}(x)=W_{d,m-1}'(x)+(1+x^{d-1})W_{d,m-1}(x)$, $m\in\N_{+}$, we have only addition and multiplication then simple induction gives the result. Similarly simple induction shows the equality $\op{deg}W_{d,m}(x)=(d-1)m$.

\noindent (2) Because $\cal{H}_{d}(0)=1$ then $\cal{H}_{d}^{(m)}(0)=W_{d,m}(0)$ and thus $W_{d,m}(0)$ is just the coefficient near $x^m/m!$ in the power series expansion of $\cal{H}_{d}(x)$ in the neighborhood of the point $x=0$. Hence the result.

\noindent (3) The stated identity is a simple consequence of the definition of the sequence $(W_{d,m}(x))_{m\in\N}$. Indeed, in the case of odd $d$ we have $\cal{H}_{d}(x)^{-1}=\cal{H}_{d}(-x)$. Let us also note that
$$
\cal{H}_{d}^{(m)}(x)=\sum_{n=0}^{\infty}\frac{H_{d}(n+m)}{n!}x^{n}.
$$
In consequence, from the identity
$$
\cal{H}_{d}^{(m)}(x)\cal{H}_{d}(x)^{-1}=\cal{H}_{d}^{(m)}(x)\cal{H}_{d}(-x)=W_{d,m}(x)
$$
we see that the coefficient near $x^{n}$ in the power series expansion of the function $\cal{H}_{d}^{(m)}(x)\cal{H}_{d}(-x)$ is just the coefficient of $W_{d,m}(x)$ near $x^{n}$. But in this case the $n$-th coefficient is exactly the sum stated in the statement of our result.

\noindent (4) It is clear that the function $\cal{H}_{d}(x)W_{d,n}(x)$ is the coefficient near $t^{n}/n!$ in the power series expansion of the function $\cal{H}_{d}(x)\cal{W}_{d}(x,t)$ in the neighborhood of the point $t=x$. Indeed,
\begin{align*}
\cal{H}_{d}(x)\cal{W}_{d}(x,t)&=\cal{H}_{d}(x)\sum_{n=0}^{\infty}\frac{W_{d,n}(x)}{n!}t^{n}=\sum_{n=0}^{\infty}\frac{\cal{H}_{d}^{(n)}(x)}{n!}t^{n}\\
                &=\sum_{n=0}^{\infty}\frac{\cal{H}_{d}^{(n)}(x)}{n!}((t+x)-x)^{n}=\cal{H}_{d}(t+x).
\end{align*}
Multiplying both sides by $\cal{H}_{d}(x)^{-1}$ we get
$$
\cal{W}_{d}(x,t)=e^{-x-\frac{x^{d}}{d}}\cal{H}_{d}(x+t)=e^{t+\frac{(t+x)^{d}-x^{d}}{d}}
$$
and hence the result.

\end{proof}
As an immediate consequence of the shape of exponential generating function for $(W_{d,n}(x))_{n\in\N}$ we get the following:

\begin{cor}\label{recforW}
The sequence $(W_{d,n}(x))_{n\in\N}$ satisfies the following recurrence relation: $W_{d,n}(x)=0$ for $n<0$, $W_{d,0}(x)=1$ and
$$
W_{d,n}(x)=(1+x^{d-1})W_{d,n-1}(x)+\sum_{j=2}^{d}\binom{d-1}{j-1}(n-1)_{(j-1)}x^{d-j}W_{d,n-j}(x)
$$
for $n\geq 1$.
\end{cor}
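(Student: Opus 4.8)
The plan is to extract the recurrence directly from the closed form of the exponential generating function
$$
\cal{W}_{d}(x,t)=e^{t+\frac{(t+x)^{d}-x^{d}}{d}}
$$
obtained in part (4) of Theorem \ref{Wdn}. The standard trick is to differentiate $\cal{W}_d(x,t)$ with respect to $t$: since $\partial_t\log\cal{W}_d(x,t)=1+(t+x)^{d-1}$, we get the differential relation
$$
\frac{\partial}{\partial t}\cal{W}_{d}(x,t)=\bigl(1+(t+x)^{d-1}\bigr)\cal{W}_{d}(x,t).
$$
Expanding $(t+x)^{d-1}=\sum_{i=0}^{d-1}\binom{d-1}{i}x^{d-1-i}t^{i}$ and writing $\cal{W}_d(x,t)=\sum_{n\geq 0}\frac{W_{d,n}(x)}{n!}t^n$, the left side has $t^{n-1}$-coefficient $\frac{W_{d,n}(x)}{(n-1)!}$, while on the right side the Cauchy product of $\sum_i\binom{d-1}{i}x^{d-1-i}t^i$ with the series for $\cal{W}_d$ contributes, after multiplying through by $(n-1)!$,
$$
W_{d,n}(x)=W_{d,n-1}(x)+\sum_{i=0}^{d-1}\binom{d-1}{i}x^{d-1-i}(n-1)_{(i)}W_{d,n-1-i}(x).
$$
Separating the $i=0$ term (which gives $x^{d-1}W_{d,n-1}(x)$) from the $i=1$ term (which gives $W'$-free contribution) and reindexing $j=i+1$ for $i\geq 1$ then yields exactly the stated formula, with the $j=2,\dots,d$ sum and the binomial $\binom{d-1}{j-1}$ matching $\binom{d-1}{i}$. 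The initial conditions $W_{d,0}(x)=1$ and $W_{d,n}(x)=0$ for $n<0$ are immediate from the definition and the convention.

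The only bookkeeping point worth care is matching indices: the $i=0$ summand combines with the lone $W_{d,n-1}(x)$ already present (coming from the "$1+$" in $1+(t+x)^{d-1}$) to produce the coefficient $(1+x^{d-1})W_{d,n-1}(x)$; after shifting $j=i+1$, the range $i\in\{1,\dots,d-1\}$ becomes $j\in\{2,\dots,d\}$, the factor $x^{d-1-i}$ becomes $x^{d-j}$, the falling factorial $(n-1)_{(i)}$ becomes $(n-1)_{(j-1)}$, and $\binom{d-1}{i}$ becomes $\binom{d-1}{j-1}$, which is precisely the claimed recurrence. I expect no real obstacle here; the main thing to verify carefully is the validity of the Cauchy-product manipulation, but since everything is a formal power series identity in $t$ (with polynomial coefficients in $x$), this is automatic. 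Alternatively, one could derive the same recurrence purely from (\ref{Wdmpol}) by repeated substitution, expressing $W'_{d,m}$ in terms of lower $W_{d,j}$; this avoids generating functions but is more cumbersome, so the generating-function route is preferable.
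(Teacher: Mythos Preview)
Your proof is correct and follows essentially the same route as the paper: differentiate $\cal{W}_d(x,t)$ with respect to $t$, use $\partial_t\cal{W}_d=(1+(t+x)^{d-1})\cal{W}_d$, compare coefficients, and reindex. Your write-up is in fact a bit more careful than the paper's, as you keep explicit track of the factor $x^{d-1-i}$ and the correct falling factorial $(n-1)_{(i)}$ throughout.
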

\begin{proof}
By comparing the coefficients near $t^{n}$ in the identity
$$
\frac{\partial \cal{W}_{d}(x,t)}{\partial t}=(1+(t+x)^{d-1})\cal{W}_{d}(x,t)
$$
we get the equality
$$
\frac{1}{n!}W_{d,n+1}(x)=\frac{1}{n!}(1+x^{d-1})W_{d,n}(x)+\sum_{i=1}^{d-1}\frac{1}{(n-i)!}\binom{d-1}{i}W_{d,n-i}(x).
$$
Multiplying both sides by $n!$, writing $n!/(n-i)!$ as $(n)_{(i-1)}$ and replacing $n$ by $n-1$ and $i$ by $j-1$ we get the result (with the convention $W_{d,n}(x)\equiv 0$ for $n<0$).
\end{proof}

\begin{rem}
{\rm In the proof of Corollary \ref{recforW} we proved recurrence relation for the sequence $(W_{d,n}(x))_{n\in\N}$ by manipulating of exponential generating functions. One can also consider the (formal) ordinary generating function of the sequence $(W_{d,n}(x))_{n\in\N}$ and get a relation of a different kind. Indeed, it is an easy exercise to prove that the (formal) power series $\cal{S}_{d}(x,t)=\sum_{n=0}^{\infty}W_{d,n}(x)t^n$ satisfies the differential equation
$$
(1-(1+x^{d-1})t)\cal{S}_{d}(x,t)=1+t\frac{\partial \cal{S}_{d}(x,t)}{\partial x}.
$$
Multiplying both sides of the above identity by $(1-(1+x^{d-1})t)^{-1}$ and comparing coefficients near $t^n$ we easily deduce the following relation
$$
W_{d,n}(x)=(1+x^{d-1})^{n}+\sum_{k=1}^{n}(1+x^{d-1})^{n-k}W_{d,k-1}'(x).
$$
This gives us an expression of $W_{d,n}(x)$ in terms of the sequence $(W_{d,n}'(x))_{n\in\N}$.
}
\end{rem}

\begin{rem}
{\rm Let us note that the sequence $(W_{2,m}(x))_{m\in\N}$ was implicitly studied by Amdeberhan and Moll (Corollary 2.6 in \cite{AmdMoll}). Although the mentioned authors did not find recurrence relation for this sequence they were able to get the closed form
$$
W_{2,n}(x)=\sum_{i=0}^{n}\binom{n}{i}H_{2}(n-i)x^{i}.
$$
From our result we get $W_{2,0}(x)=1, W_{2,1}(x)=1+x$ and for $n\geq 2$ we have
$$
W_{2,n}(x)=(1+x)W_{2,n-1}(x)+(n-1)W_{2,n-2}(x).
$$
This allows us to deduce the following nice identity:
$$
W_{2,n}(-1)=\sum_{i=0}^{n}(-1)^{i}\binom{n}{i}H_{2}(n-i)=\begin{cases}
\begin{array}{lll}
0 &  & \mbox{if}\;n\equiv 1\pmod{2}\\
\frac{(2m)!}{2^{m}m!} &  & \mbox{if}\;n=2m
                         \end{array}
\end{cases}.
$$
Indeed, taking $x=-1$ in the recurrence relation for $W_{2,n}(x)$ we get $W_{2,n}(-1)=(n-1)W_{2,n-2}(-1)$. Because $W_{2,0}(-1)=1$ and $W_{2,1}(-1)=0$ by a simple iteration of the above recurrence we get the formula presented above.

Let us also note that the value of $W_{2,n}(x)$ at $x=1$ has striking combinatorial interpretation. Indeed, the sequence $(W_{2,n}(1))_{n\in\N}$ satisfies the recurrence relation: $W_{2,0}(1)=1, W_{2,1}(1)=2$ and
$$
W_{2,n}(1)=2W_{2,n-1}(1)+(n-1)W_{2,n-2}(1)\quad\mbox{for}\quad n\geq 2.
$$
In particular, $(W_{2,n}(1))_{n\in\N}$ is a binomial transform of the sequence $(H_{2}(n))_{n\in\N}$. Moreover, as was observed by Donaghey in \cite{Don}, this sequence enumerates the general switchboard problem, i.e., it enumerates the states of a telephone exchange with $n$ subscribers which is provided with means to connect subscribers singly to outside lines and in pairs internally (no conference circuits).
}
\end{rem}

The last results of this section are devoted to the behaviour of sequences $(W_{d,m}(x)\pmod{c})_{m\in\N}$ where $c=d-1$ or $c=d=p$ is a prime number.

\begin{thm}
The following congruence holds for each $d\geq 2$ and $m\in\N$:
$$W_{d,m}(x)\equiv (1+x^{d-1})^m\pmod{d-1}.$$
In particular, if $p$ is a prime number then
$$W_{p+1,m}(x)\equiv (1+x)^{pm}\pmod{p}$$
for any $m\in\N$.
\end{thm}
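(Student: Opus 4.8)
The plan is to establish the first congruence by induction on $m$, using the recurrence (\ref{Wdmpol}), and then to obtain the ``in particular'' part as an immediate corollary via the Frobenius identity modulo a prime.

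For the base case $m=0$ we have $W_{d,0}(x)=1=(1+x^{d-1})^0$, so nothing is to be checked. For the inductive step, I would assume $W_{d,m}(x)\equiv(1+x^{d-1})^m\pmod{d-1}$, i.e.\ $W_{d,m}(x)-(1+x^{d-1})^m=(d-1)Q(x)$ with $Q\in\Z[x]$. The one point that requires care is that reduction modulo $d-1$ is compatible with formal differentiation of polynomials: differentiating the last identity gives $W_{d,m}'(x)-\frac{d}{dx}(1+x^{d-1})^m=(d-1)Q'(x)$ with $Q'\in\Z[x]$, hence $W_{d,m}'(x)\equiv\frac{d}{dx}(1+x^{d-1})^m\pmod{d-1}$. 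But by the chain rule $\frac{d}{dx}(1+x^{d-1})^m=m(d-1)x^{d-2}(1+x^{d-1})^{m-1}$, which is divisible by $d-1$, so $W_{d,m}'(x)\equiv 0\pmod{d-1}$. Substituting this into (\ref{Wdmpol}) yields
$$W_{d,m+1}(x)=W_{d,m}'(x)+(1+x^{d-1})W_{d,m}(x)\equiv(1+x^{d-1})(1+x^{d-1})^m=(1+x^{d-1})^{m+1}\pmod{d-1},$$
which closes the induction.

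For the second statement I would set $d=p+1$, so that $d-1=p$, and apply the congruence just proved to get $W_{p+1,m}(x)\equiv(1+x^p)^m\pmod p$. Since all of $\binom{p}{1},\dots,\binom{p}{p-1}$ are divisible by $p$, we have $(1+x)^p\equiv 1+x^p\pmod p$, and raising to the $m$-th power gives $(1+x^p)^m\equiv(1+x)^{pm}\pmod p$, as required.

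I do not expect a genuine obstacle here: the argument is a short induction whose only subtlety is the remark that the formal derivative of an integer polynomial again has integer coefficients (so congruences modulo $d-1$ survive differentiation), combined with the observation that the factor $d-1$ produced by the chain rule on $(1+x^{d-1})^m$ annihilates the derivative term modulo $d-1$.
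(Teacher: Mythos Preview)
Your proof is correct. Both you and the paper argue by induction on $m$, but you use different recurrences to carry out the inductive step.

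The paper invokes the $d$-term recurrence of Corollary \ref{recforW},
\[
W_{d,m}(x)=(1+x^{d-1})W_{d,m-1}(x)+\sum_{j=2}^{d}\binom{d-1}{j-1}(m-1)_{(j-1)}x^{d-j}W_{d,m-j}(x),
\]
and then rewrites $\binom{d-1}{j-1}(m-1)_{(j-1)}=\binom{m-1}{j-1}(d-1)_{(j-1)}$ to see that each term with $j\geq 2$ is divisible by $d-1$. Your argument instead uses the two-term recurrence (\ref{Wdmpol}) and the observation that formal differentiation is compatible with reduction modulo $d-1$, so that $W_{d,m}'(x)\equiv \frac{d}{dx}(1+x^{d-1})^m=m(d-1)x^{d-2}(1+x^{d-1})^{m-1}\equiv 0\pmod{d-1}$. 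This is arguably cleaner: it needs only the inductive hypothesis at $m$ (rather than at $m-1,\ldots,m-d$), and the factor $d-1$ falls out directly from the chain rule rather than from a binomial identity. The paper's route, on the other hand, avoids any appeal to differentiation and makes the vanishing of the extra terms completely explicit term by term.
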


\begin{proof}
We proceed by induction on $m\in\N$. If $m=0$ then $W_{d,0}(x)=1$, hence the statement of our theorem is true. If $m>0$ then we use Corollary \ref{recforW} and the induction hypothesis for $m-1$ and get
\begin{align*}
W_{d,m}(x) &=(1+x^{d-1})W_{d,m-1}(x)+\sum_{j=2}^{d}\binom{d-1}{j-1}(m-1)_{(j-1)}x^{d-j}W_{d,m-j}(x)\\
&\equiv (1+x^{d-1})\cdot (1+x^{d-1})^{m-1}+\sum_{j=2}^{d}\binom{m-1}{j-1}(d-1)_{(j-1)}x^{d-j}W_{d,m-j}(x)\\
&\equiv (1+x^{d-1})^m\pmod{d-1}.
\end{align*}
\end{proof}

\begin{thm}\label{Wppmodp}
For each prime $p$ we have $W_{p,p}(x)\equiv x^{p(p-1)}\pmod{p}$.
\end{thm}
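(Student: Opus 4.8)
The plan is to read $W_{p,p}(x)$ off the exponential generating function from Theorem~\ref{Wdn}(4) and then keep only the terms that survive modulo $p$. By definition $W_{p,p}(x)=p!\,[t^{p}]\cal{W}_{p}(x,t)$, and Theorem~\ref{Wdn}(4) together with the binomial expansion of $(t+x)^{p}$ gives
$$\cal{W}_{p}(x,t)=\exp\left(t+\frac{(t+x)^{p}-x^{p}}{p}\right)=\exp\left(\sum_{k=1}^{p}a_{k}(x)t^{k}\right),$$
where $a_{1}(x)=1+x^{p-1}$, $a_{k}(x)=\frac{1}{k}\binom{p-1}{k-1}x^{p-k}$ for $2\leq k\leq p-1$ (note $\frac1p\binom pk=\frac1k\binom{p-1}{k-1}\in\Z$), and $a_{p}(x)=\frac{1}{p}$; thus the only denominator $p$ in the whole expression sits in $a_{p}$.

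Next I would expand the exponential, writing the coefficient of $t^{p}$ as a sum over tuples $(m_{1},\dots,m_{p})$ of non-negative integers with $\sum_{k}km_{k}=p$, i.e.\ over partitions of $p$:
$$W_{p,p}(x)=\sum_{\sum_{k}km_{k}=p}\frac{p!}{\prod_{k}m_{k}!}\,\prod_{k=1}^{p}a_{k}(x)^{m_{k}}.$$
Because $p$ is prime, any such partition is either $(p)$ itself (tuple $m_{p}=1$) or has all parts $\leq p-1$ (so $m_{p}=0$). The partition $(p)$ contributes $\frac{p!}{1}\cdot a_{p}(x)=p!\cdot\frac1p=(p-1)!$. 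Among the remaining partitions, $\prod_{k}a_{k}(x)^{m_{k}}$ is a product of integer polynomials, hence $p$-integral, while $\sum_{k}km_{k}=p$ with all $k\leq p-1$ forces $m_{k}<p$ for every $k$ unless the partition is $1+\dots+1$ (tuple $m_{1}=p$); so for every partition other than $(p)$ and $1+\dots+1$ we have $p\nmid\prod_{k}m_{k}!$, whence $p\mid\frac{p!}{\prod_{k}m_{k}!}$ and the term vanishes modulo $p$. The partition $1+\dots+1$ contributes $\frac{p!}{p!}\,a_{1}(x)^{p}=(1+x^{p-1})^{p}$. Hence
$$W_{p,p}(x)\equiv(1+x^{p-1})^{p}+(p-1)!\pmod{p}.$$

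Finally I would invoke Fermat's little theorem, $(1+x^{p-1})^{p}\equiv 1+x^{p(p-1)}\pmod p$, and Wilson's theorem, $(p-1)!\equiv-1\pmod p$, to conclude $W_{p,p}(x)\equiv x^{p(p-1)}\pmod p$; the same argument also works for $p=2$. The only genuine obstacle is the $p$-adic bookkeeping in the middle step---pinning down that $(p)$ and $1+\dots+1$ are the only two partitions of the prime $p$ for which the corresponding summand is a $p$-adic unit, and tracking integrality in the presence of the formal $\frac1p$ in the exponent. Alternatively, one can first factor out $\exp(t^{p}/p)=\sum_{m\geq0}t^{pm}/(p^{m}m!)$ and extract $[t^{p}]$ from $\exp(t^{p}/p)\cdot\exp(\sum_{k=1}^{p-1}a_{k}(x)t^{k})$; this yields $W_{p,p}(x)=b_{p}(x)+(p-1)!$ with $b_{p}(x)\in\Z[x]$, and the same multinomial count gives $b_{p}(x)\equiv(1+x^{p-1})^{p}\pmod p$.
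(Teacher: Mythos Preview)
Your proof is correct and takes a genuinely different route from the paper's. The paper proves the theorem (for odd $p$) by invoking the coefficient formula of Theorem~\ref{Wdn}(3),
\[
A_p(n,p)=\sum_{k=0}^{n}(-1)^{n-k}\frac{H_{p}(p+k)}{k!}\cdot\frac{H_{p}(n-k)}{(n-k)!},
\]
and then uses the lower bound $\nu_p(H_p(m))\geq\lfloor m/p^2\rfloor(p-1)+(\lfloor m/p\rfloor\bmod p)$ from Corollary~\ref{vpHpnfrombelow} to check that every summand has positive $p$-adic valuation; the case $p=2$ is handled by a direct computation of $W_{2,2}(x)$. Your argument instead reads $W_{p,p}(x)$ directly off the generating function of Theorem~\ref{Wdn}(4) and reduces to the partitions of $p$: primality of $p$ forces every partition other than $(p)$ and $(1^p)$ to have all multiplicities $m_k<p$ and $m_p=0$, so the corresponding summand $\frac{p!}{\prod_k m_k!}\prod_k a_k(x)^{m_k}$ is an integer polynomial divisible by $p$; the two surviving terms combine via Wilson and the freshman's dream to $x^{p(p-1)}$. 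Your approach is more elementary---it needs nothing about $\nu_p(H_p(n))$ and treats $p=2$ uniformly---while the paper's route ties the result to the main arithmetic theme of Section~\ref{Section4}. One small point you might make explicit for the reader: $\frac{p!}{\prod_k m_k!}$ is indeed an integer, since with $M=\sum_k m_k\leq p$ it factors as $\frac{p!}{M!}\cdot\binom{M}{m_1,\dots,m_p}$.
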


\begin{proof}
We easily check that we have $W_{2,2}(x)=x^2+2x+2$, so assertion holds for $p=2$. Now we assume that $p$ is an odd prime number. We will show that all the coefficients of the polynomial $W_{p,p}(x)$ except for the leading one are divisible by $p$. We use the formula
\begin{align*}
A_p(n,p) & =\frac{1}{n!}\sum_{k=0}^{n}\binom{n}{k}(-1)^{n-k}H_{p}(p+k)H_{p}(n-k)\\
& =\sum_{k=0}^{n}(-1)^{n-k}\frac{H_{p}(p+k)}{k!}\cdot\frac{H_{p}(n-k)}{(n-k)!},\quad 0\leq n\leq p(p-1)
\end{align*}
from the third part of the statement of Theorem \ref{Wdn}. It suffices to prove that for each $n\in\{0,...,p(p-1)-1\}$ and $k\in\{0,...,n\}$ we have $\nu_p\left(\frac{H_{p}(p+k)}{k!}\cdot\frac{H_{p}(n-k)}{(n-k)!}\right)\geq 1$. By Corollary \ref{vpHpnfrombelow}, $\nu_p(H_p(p+k))\geq 1+\left\lfloor\frac{k}{p}\right\rfloor$ and $\nu_p(H_p(n-k))\geq \left\lfloor\frac{n-k}{p}\right\rfloor$. On the other hand, it is easy to see that $\nu_p(k!)=\left\lfloor\frac{k}{p}\right\rfloor$ and $\nu_p((n-k)!)=\left\lfloor\frac{n-k}{p}\right\rfloor$. Summing up the above observations, we get $\nu_p\left(\frac{H_{p}(p+k)}{k!}\cdot\frac{H_{p}(n-k)}{(n-k)!}\right)\geq 1$ and the statement of our corollary follows for any odd prime $p$.
\end{proof}

\begin{thm}
The sequence
$$
(W_{p,m}(x)x^{p(1-p)\lfloor\frac{m}{p}\rfloor}\pmod{p})_{m\in\N}
$$
is periodic and the length of the period is $p$.

In particular
\begin{align*}
&(W_{2,m}(x)x^{-2\lfloor\frac{m}{2}\rfloor}\pmod{2})_{m\in\N}=\overline{(1,1+x)},\\
&(W_{3,m}(x)x^{-6\lfloor\frac{m}{3}\rfloor}\pmod{3})_{m\in\N}=\overline{(1,1+x^2,(x+2)(x^3+x^2))}.
\end{align*}
\end{thm}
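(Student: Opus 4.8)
\emph{Proof plan.} The recurrence (\ref{Wdmpol}), namely $W_{d,m+1}=W_{d,m}'+(1+x^{d-1})W_{d,m}$, says precisely that
$$W_{d,m}(x)=(D+1+x^{d-1})^m(1),$$
where $D=d/dx$ and $1+x^{d-1}$ denotes the operator of multiplication by $1+x^{d-1}$; in particular $W_{d,m+1}=(D+1+x^{d-1})W_{d,m}$. Specializing $d=p$ and reducing modulo $p$ (legitimate, as this operator has integer coefficients), I regard $W_{p,m}(x)\bmod p$ as $A^m(1)$, where $A:=D+1+x^{p-1}$ is viewed as an $\F_p$-linear operator on $\F_p[x]$. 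The plan is to deduce the whole statement from the single congruence
$$W_{p,m+p}(x)\equiv x^{p(p-1)}W_{p,m}(x)\pmod p\qquad(m\in\N).$$

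The first and main step is to show that, as an operator on $\F_p[x]$, $A^p$ is simply multiplication by a polynomial. For this I would compute the commutator with multiplication by $x$: writing $\op{ad}_x(T)=[x,T]$, one has $\op{ad}_x(A)=[x,D]=-1$ (the operator $-\mathrm{id}$, since $1$ and $x^{p-1}$ commute with $x$), and $\op{ad}_x$ is a derivation of the operator algebra, so $\op{ad}_x(A^p)=\sum_{i=0}^{p-1}A^i\op{ad}_x(A)A^{p-1-i}=-pA^{p-1}\equiv 0\pmod p$. Hence $A^p$ commutes with multiplication by $x$ on $\F_p[x]$; but any $\F_p$-linear operator $T$ on $\F_p[x]$ satisfying $T(xf)=xT(f)$ for all $f$ must satisfy $T(f)=f\cdot T(1)$ (check it on monomials and extend linearly). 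Thus $A^p$ is multiplication by $A^p(1)=W_{p,p}(x)\bmod p$, and by Theorem \ref{Wppmodp} this polynomial equals $x^{p(p-1)}$. Consequently $W_{p,m+p}(x)=A^p\bigl(A^m(1)\bigr)\equiv x^{p(p-1)}W_{p,m}(x)\pmod p$, as desired. I expect this step — recognizing the conjugated derivative operator $A$ and exploiting the characteristic-$p$ vanishing $-pA^{p-1}\equiv 0$ — to be the conceptual core; the only genuinely external ingredient is Theorem \ref{Wppmodp}, used to identify the multiplier.

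The remainder is bookkeeping. Iterating the congruence, for $m=qp+r$ with $q\in\N$ and $r\in\{0,\dots,p-1\}$ one gets $W_{p,m}(x)\equiv x^{p(p-1)q}W_{p,r}(x)\pmod p$; in particular $x^{p(p-1)\lfloor m/p\rfloor}\mid W_{p,m}(x)$ modulo $p$, so that $W_{p,m}(x)x^{p(1-p)\lfloor m/p\rfloor}\bmod p$ is an honest polynomial, equal to $W_{p,\,m\bmod p}(x)\bmod p$. This depends only on $m\bmod p$, which gives periodicity with period dividing $p$; and since by Theorem \ref{Wdn}(1) the polynomial $W_{p,r}(x)$ is monic of degree $(p-1)r$, the reductions for $r=0,\dots,p-1$ are pairwise distinct (their degrees are distinct), so for $p$ prime the basic period is exactly $p$. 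Finally, the two displayed instances for $p=2$ and $p=3$ are obtained by listing $W_{p,r}(x)\bmod p$ for $r=0,\dots,p-1$ directly from the recurrence (\ref{Wdmpol}).
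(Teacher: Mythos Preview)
Your argument is correct and reaches the same key congruence
\[
W_{p,m+p}(x)\equiv x^{p(p-1)}W_{p,m}(x)\pmod p
\]
as the paper, but by a genuinely different route. The paper proceeds by a direct induction on $m$: assuming the congruence for $m$, it applies the recurrence (\ref{Wdmpol}) and uses that $\frac{d}{dx}\bigl(x^{p(p-1)}\bigr)=p(p-1)x^{p(p-1)-1}\equiv 0\pmod p$, so that the factor $x^{p(p-1)}$ passes through the operator $D+1+x^{p-1}$. You instead interpret $W_{p,m}\bmod p$ as $A^m(1)$ for $A=D+1+x^{p-1}$ acting on $\F_p[x]$, compute $\op{ad}_x(A)=-1$, and use the derivation property to get $\op{ad}_x(A^p)=-pA^{p-1}\equiv 0$; since an operator commuting with multiplication by $x$ on $\F_p[x]$ is itself multiplication by its value at $1$, you conclude via Theorem \ref{Wppmodp} that $A^p$ is multiplication by $x^{p(p-1)}$. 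Both approaches lean on Theorem \ref{Wppmodp} at the same point. Your operator-theoretic argument is slicker and makes transparent \emph{why} the $p$-th iterate is special (the $p$ in $-pA^{p-1}$ kills the obstruction in characteristic $p$), whereas the paper's induction is more elementary and closer to the recurrence. As a bonus, your degree argument (the $W_{p,r}$ are monic of pairwise distinct degrees $(p-1)r$) actually verifies that the basic period is exactly $p$, a point the paper's proof does not spell out.
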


\begin{proof}
First, we prove by induction on $m\in\N$ that
\begin{equation}\label{Wpmperiod}
W_{p,m+p}(x)\equiv x^{p(p-1)}W_{p,m}(x)\pmod{p},\quad m\in\N.
\end{equation}
The above congruence is satisfied for $m=0$ by the Theorem \ref{Wppmodp}. Assume now that (\ref{Wpmperiod}) holds for some $m\in\N$ and we will show that it holds for $m+1$. By (\ref{Wdmpol}) we get
\begin{align*}
W_{p,m+1+p}(x) & = (1+x^{p-1})W_{p,m+p}(x)+W'_{p,m+p}(x)\equiv (1+x^{p-1})x^{p(p-1)}W_{p,m}(x)+(x^{p(p-1)}W_{p,m}(x))'\\
& = (1+x^{p-1})x^{p(p-1)}W_{p,m}(x)+p(p-1)x^{p(p-1)-1}W_{p,m}(x)+x^{p(p-1)}W'_{p,m}(x)\\
& \equiv x^{p(p-1)}[(1+x^{p-1})W_{p,m}(x)+W'_{p,m}(x)] = x^{p(p-1)}W_{p,m+1}(x)\pmod{p}.
\end{align*}
Now we have congruence (\ref{Wpmperiod}) proved. Let us fix $m\in\N$ and write $m=jp+i$. Then, using (\ref{Wpmperiod}) $j$ times, we get
$$W_{p,m}(x)\equiv x^{p(p-1)j}W_{p,i}(x)\pmod{p}.$$
After multiplication by $x^{p(1-p)j}$ and substitution $j=\left\lfloor\frac{m}{p}\right\rfloor$ and $i=m\pmod p$ we obtain
$$W_{p,m}(x)x^{p(1-p)\left\lfloor\frac{m}{p}\right\rfloor}\equiv W_{p,m\pmod p}(x)\pmod{p}$$
and get the result
\end{proof}

\section{Divisors of numbers $H_d(n)$}\label{Section7}

In this section we study various properties of the numbers $H_d(n)$ and $H_d(n)-1$. We are interested in prime divisors and various $\text{GCD}$ properties.

\subsection{Prime divisors of numbers $H_d(n)$}

Let us denote $\mathbb{P}_d=\{p\in\mathbb{P}: p\mid H_{d}(n)\mbox{ for some }n\in\N\}$.

\begin{thm}
For each $d\in\N_{\geq 2}$ the set $\mathbb{P}_d$ is infinite.
\end{thm}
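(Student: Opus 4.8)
The plan is to argue by contradiction. Assume $\mathbb{P}_d=\{p_1,\dots,p_s\}$ is finite, so that every value $H_d(n)$ (which is positive) is a product of powers of the $p_i$. Recall from Section~\ref{Section2} that no prime below $d$ lies in $\mathbb{P}_d$; hence each $p_i$ either equals $d$ (possible only when $d$ is prime, in which case indeed $d\in\mathbb{P}_d$ since $H_d(d)=1+(d-1)!\equiv 0\pmod d$ by Wilson) or satisfies $p_i>d$. Two cheap facts will be used: $H_d(n)>1$ for every $n\ge d$ (indeed $H_d(d)=1+(d-1)!$ and $(H_d(n))_n$ is non-decreasing by (\ref{basicrec})), and $H_d$ grows super-exponentially — keeping only the single summand $k=\lfloor n/d\rfloor$ in (\ref{exactform}) and applying Stirling's formula gives $\log H_d(n)\ge (1-1/d)\,n\log n+O(n)$.

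First I would eliminate the primes $p_i\neq d$. Each such $p_i$ satisfies $p_i>d$, so the periodicity theorem of Section~\ref{Section3} applies (the exceptional triple $(4,2,2)$ cannot occur, since $p_i>d$): the sequence $(H_d(n)\bmod p_i)$ has period $p_i$, hence $Z_i:=\{n\in\N:\ p_i\mid H_d(n)\}$ is a union of residue classes modulo $p_i$, and it is a \emph{proper} one because $H_d(0)=1$ is not divisible by $p_i$. The moduli $p_i$ with $p_i\neq d$ are pairwise coprime, so by the Chinese Remainder Theorem the set $C$ of integers lying outside all the classes of $Z_i\bmod p_i$ has density $\prod_{p_i\neq d}\bigl(1-\tfrac{|Z_i\bmod p_i|}{p_i}\bigr)>0$; in particular $C$ contains a full residue class, hence arbitrarily large integers, and for every $n\in C$ the only prime of $\mathbb{P}_d$ that can divide $H_d(n)$ is $d$ itself.

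If $d$ is composite this already finishes the proof: then no $p_i$ equals $d$, so for $n\in C$ with $n\ge d$ the number $H_d(n)>1$ has \emph{no} prime divisor in $\mathbb{P}_d$, which is absurd. The one genuinely delicate case is $d=p$ prime, where $C$ alone is not enough because Corollary~\ref{vpHpnfrombelow} forces $p\mid H_p(n)$ for every $n\ge p$. To handle it I would intersect $C$ with $S:=\{n:\ n\bmod p^2\in\{0,\dots,p-1\}\}$, on which Corollary~\ref{vpHpnfrombelow} gives the \emph{exact} value $\nu_p(H_p(n))=\lfloor n/p^2\rfloor(p-1)$ (for $p=2$ substitute the Amdeberhan–Moll identity $\nu_2(H_2(n))=\lfloor n/4\rfloor+O(1)$ from \cite{AmdMoll}). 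Since $S$ is a union of classes modulo $p^2$, which is coprime to the modulus defining $C$, the set $C\cap S$ again has positive density and contains arbitrarily large $n$. For such $n$, $H_p(n)$ is coprime to every $p_i\neq p$ and is a power of $p$ with $\nu_p(H_p(n))=\lfloor n/p^2\rfloor(p-1)$, so $H_p(n)=p^{\lfloor n/p^2\rfloor(p-1)}\le p^{\,n}$ and hence $\log H_p(n)\le n\log p$; this contradicts $\log H_p(n)\ge (1-1/p)n\log n+O(n)$ once $n$ is large, and we are done.

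The only step needing real thought is the split between the two regimes: for composite $d$ one simply produces large $n$ with $H_d(n)=1$, whereas for prime $d$ — where $d$ divides every $H_d(n)$ with $n\ge d$ — one must instead trap $n$ in a congruence class on which $H_d(n)$ is provably a bare power of $d$, and then defeat it with the crude size estimate. Everything else is bookkeeping with the periodicity statements of Section~\ref{Section3}, the valuation results of Section~\ref{Section4}, and the one-term lower bound for $H_d(n)$.
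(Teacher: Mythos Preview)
Your argument is correct and follows the same overall architecture as the paper's proof: argue by contradiction, split on whether $d$ is composite or prime, and in either case use the periodicity results of Section~\ref{Section3} together with the exact $p$-adic valuation for $j=0$ from Corollary~\ref{vpHpnfrombelow} to manufacture large $n$ at which $H_d(n)$ is forced to be either $1$ (composite case) or a pure power of $d$ with known exponent (prime case).

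The genuine difference is in how the prime case is finished. The paper's proof avoids any growth estimate: instead of picking one $n$ and bounding $H_p(n)$ by size, it picks two \emph{consecutive} integers $Pm$ and $Pm+1$ (with $P=p^2\prod_{i\ge 2}p_i$), observes that both lie in your set $C\cap S$ and hence must equal the \emph{same} power $p^{Qm(p-1)}$, and then reads off directly from the recurrence~(\ref{basicrec}) that $H_p(Pm+1)>H_p(Pm)$ --- a contradiction in one line. Your route via the Stirling lower bound $\log H_p(n)\ge (1-1/p)n\log n+O(n)$ versus the upper bound $\log H_p(n)\le (n/p^2)(p-1)\log p$ is perfectly valid, but it brings in an analytic ingredient that the paper's two-consecutive-values trick sidesteps. (As a minor stylistic point, your density/CRT phrasing is also heavier than needed: since $H_d(0)=1$, the single residue class $n\equiv 0\pmod{\prod_{p_i\neq d}p_i}$ already lies in your set $C$, which is essentially the paper's explicit choice $n=Pm$.)
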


\begin{proof}
Assume by the contrary that $p_1<...<p_s$ are all the elements of the set $\bbb{P}_d$. We consider two cases. The first case is when $d$ is a composite number. We put $P=p_1\cdot ...\cdot p_s$. By Corollary \ref{Hdnmodc} we know that the sequence $(H_d(n)\pmod{P})_{n\in\N}$ is periodic of period $P$, as $2<d$ and thus $2$ is not a divisor of any number $H_d(n)$. In particular, the numbers $H_d(Pm)$, $m\in\N$, are congruent to $1$ modulo $P$. Hence these numbers have no prime divisors. This combined with the fact that $H_d(Pm)>0$ implies $H_d(Pm)=1$ for any $m\in\N$. However, this is a contradiction with the fact that the sequence $(H_d(n))_{n\in\N}$ is an ultimately strictly increasing sequence of positive integers.

The second case is when $d$ is a prime number. Then $p_1=d=H_d(d)$. We put $P=p_1^2\cdot p_2\cdot ...\cdot p_s$ and $Q=p_2\cdot ...\cdot p_s$. By Corollary \ref{Hdnmodc} we know that the sequence $(H_d(n)\pmod{Q})_{n\in\N}$ is periodic of the period $Q$, as $d$ is a prime number not dividing $Q$. In particular, since $Q\mid P$, we have $H_d(Pm)\equiv H_d(Pm+1)\equiv 1\pmod{p_i}$ for $m\in\N$ and $i\in\{2,...,s\}$. According to Corollary \ref{vpHpnfrombelow}, $\nu_{p_1}(H_d(Pm))=\nu_{p_1}(H_d(Pm+1))=Qm(p_1-1)$. Since $H_d(Pm)>0$ and $p_1,...,p_s$ are all possible divisors of the numbers $H_d(n)$, $n\in\N$, thus $H_d(Pm)=H_d(Pm+1)=p_1^{Qm(p_1-1)}$ for any $m\in\N$. However, $H_d(Pm+1)=H_d(Pm)+(Pm)_{(d-1)}H_d(Pm-d+1)>H_d(Pm)$ for $m\geq\frac{d-1}{P}$ - a contradiction.
\end{proof}

\begin{conj}\label{densityofdivisors}
For each $d\in\N_{\geq 2}$ the set $\mathbb{P}\bs\mathbb{P}_d$ is infinite and its asymptotic density in $\mathbb{P}$ is equal to $\frac{1}{e}$.
\end{conj}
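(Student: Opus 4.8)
The plan is to reduce the conjecture to a congruence problem modulo a single prime and to identify $1/e$ as the ``no collision'' constant of an appropriate probabilistic model; a genuine proof would then hinge on equidistribution estimates that appear to be out of reach, which is why the assertion is stated only as a conjecture. First I would dispose of the finitely many primes $p\le d$: they influence neither the infinitude of $\mathbb{P}\bs\mathbb{P}_d$ nor its density, and in any case $\nu_p(H_d(n))=0$ for all $n$ when $p<d$, so such primes lie outside $\mathbb{P}_d$. For a prime $p>d$, the periodicity theorem of Section~\ref{Section3} shows that $(H_d(n)\bmod p)_{n\in\N}$ has period $p$, whence $p\in\mathbb{P}_d$ if and only if $p\mid H_d(n)$ for some $n\in\{0,1,\dots,p-1\}$. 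For $n$ in this range every $k\le\lfloor(p-1)/d\rfloor$ is $<p$, so the denominators $k!\,d^{k}$ are units modulo $p$, and the exact formula (\ref{exactform}) gives $H_d(n)\equiv\overline{f}_d(n)\pmod p$, where
$$
\overline{f}_d(x)=\sum_{k=0}^{\lfloor (p-1)/d\rfloor}\frac{(x)_{(dk)}}{k!\,d^{k}}\in\F_p[x],\qquad \deg\overline{f}_d=d\left\lfloor\frac{p-1}{d}\right\rfloor\in\{p-d,\dots,p-1\}.
$$
Consequently, for $p>d$ one has $p\notin\mathbb{P}_d$ if and only if $\overline{f}_d$ has no root in $\F_p$, and the conjecture becomes the assertion that the polynomial $\overline{f}_d$, whose degree tends to infinity with $p$, has no $\F_p$-point in its zero set for a set of primes of relative density $e^{-1}$ --- which in particular would then be infinite.

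The constant $1/e$ is the familiar ``miss a prescribed value'' constant. If the $p$ residues $\overline{f}_d(0),\dots,\overline{f}_d(p-1)$ are equidistributed in $\F_p$ and behave, for the purpose of counting preimages of a fixed element, like $p$ independent uniform draws, then $0$ is never attained with probability $\bigl(1-\tfrac1p\bigr)^{p}\to e^{-1}$. In the polynomial picture this is the classical fact that a uniformly random $g\in\F_p[x]$ of degree $n$ has no root in $\F_p$ with probability $\sum_{j=0}^{\min(n,p)}(-1)^j\binom{p}{j}p^{-j}$, which tends to $e^{-1}$ as $p,n\to\infty$ (the number of its $\F_p$-roots being asymptotically Poisson with mean $1$). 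The conjecture asserts that $\overline{f}_d$ obeys these statistics on average over $p$, so that $\#\{p\le x:\ p\notin\mathbb{P}_d\}\sim e^{-1}\,\pi(x)$.

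To promote this heuristic to a theorem one would, I expect, pass to additive characters: for a non-trivial character $\psi$ of $(\F_p,+)$ the goal is square-root cancellation $|S_d(\psi,p)|\ll_d p^{1/2}$ in the complete sums $S_d(\psi,p)=\sum_{n=0}^{p-1}\psi\bigl(H_d(n)\bigr)$, uniformly in $\psi$, together with enough control over higher correlations to force a Poisson limit for $\#\{n<p:\ H_d(n)\equiv a\pmod p\}$, and hence the predicted density after summation over $p$. A natural attempt is to read $S_d(\psi,p)$ off the truncated hypergeometric expression for $\overline{f}_d$ above and to try to exhibit it as a sum of Frobenius traces on a suitable $\ell$-adic sheaf, so that Deligne's estimates become available. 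The main obstacle is that the sequence $(H_d(n))_n$ carries no visible algebraic structure: modulo $p$ it satisfies only the recurrence $H_d(n)\equiv H_d(n-1)+(n-1)_{(d-1)}H_d(n-d)$, whose coefficient is a polynomial in $n$ of degree $d-1$; the sequence is holonomic but not $p$-automatic, and $e^{x+x^{d}/d}$ is transcendental, so the sums $S_d(\psi,p)$ do not obviously belong to any family for which cancellation has been established, and even a bound for a single character seems beyond current methods. Since, moreover, a density statement --- as opposed to a merely probabilistic one --- demands genuinely uniform error terms, we leave the assertion as a conjecture, remarking only that $e^{-1}\approx 0.3679$ agrees well with numerical data for small $d$.
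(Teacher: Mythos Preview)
The statement is a \emph{conjecture} in the paper, not a theorem, and the paper offers no proof --- only a brief heuristic paragraph followed by numerical data. Your proposal correctly recognises this and explicitly leaves the assertion as a conjecture, so there is no gap to point to: neither you nor the authors claim to have a proof.

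That said, your heuristic is considerably more developed than the paper's. The paper argues at the level of a random periodic sequence: since $(H_d(n)\bmod p)_{n\in\N}$ has period $p$ for $p>d$, one models the $p$ residues as independent uniform draws from $\F_p$ and obtains a ``miss~$0$'' probability of $(1-1/p)^{p-d}\to e^{-1}$; a table of computed densities for $d\le 10$ and the first $4000$ primes is then displayed in support. You instead rewrite the reduction $H_d(n)\bmod p$ as the evaluation of an explicit polynomial $\overline{f}_d\in\F_p[x]$ of degree $p-O_d(1)$, invoke the Poisson statistics of roots of generic high-degree polynomials over $\F_p$ to explain the constant $e^{-1}$, and go on to sketch what an actual proof would require (square-root cancellation in the character sums $\sum_{n<p}\psi(H_d(n))$, an $\ell$-adic sheaf interpretation, control of higher correlations), together with a sober assessment of why these ingredients appear out of reach for a holonomic but non-algebraic sequence. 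This buys you a clearer picture of the obstruction and a concrete programme, at the cost of more machinery; the paper's version is shorter and stays elementary, but is essentially the same $\bigl(1-\tfrac1p\bigr)^{p}$ computation you also record.
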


The following heuristic reasoning allows us to claim the second statement in the conjecture above. If we fix a prime number $p$ and randomly choose a sequence $(a_n)_{n\in\N}$ such that the sequence $(a_n\pmod{p})_{n\in\N}$ has the period $p$, then the probability that $p$ does not divide any term of this sequence is equal to $\left(1-\frac{1}{p}\right)^p$. The probability tends to $\frac{1}{e}$ with $p\rightarrow +\infty$. Note that $p>d$ belongs to the set $\mathbb{P}\bs\mathbb{P}_d$ if and only if $p$ does not divide any number $H_d(n)$ for $n\in\{d,...,p-1\}$. Moreover, the sequence $(H_d(n)\pmod p)_{n\in\N}$ is periodic of the period $p$. Therefore we suppose that the probability that $p\in\mathbb{P}\bs\mathbb{P}_d$ equals to $\left(1-\frac{1}{p}\right)^{p-d}$. Thus, this probability tends to $\frac{1}{e}$ with $p\rightarrow +\infty$ and hence the asymptotic density of the set $\mathbb{P}\bs\mathbb{P}_d$ in the set $\mathbb{P}$ is expected to be equal to $\frac{1}{e}$.

\begin{rem}
{\rm We performed small numerical search in order to check whether our expectations are likely to be true. More precisely, for $d\in\{2,\ldots, 10\}, k\leq 4000$ and $n\in \{d,\ldots, p_{k}\}$ we computed the number of primes $p_i$ such that $p_{i}|H_{d}(n)$. In the table below we present the result of our computations.
$$\begin{array}{|c|ccccccccc|}
  \hline
  d                                      &  2   & 3    & 4    & 5    & 6    & 7    & 8    & 9    & 10 \\
  \hline
  N=|\mathbb{P}_{d}\cap\{p_1,...,p_{4000}\}|      & 2509 & 2523 & 2511 & 2485 & 2518 & 2469 & 2518 & 2518 & 2499 \\
  \hline
  N/2000                                 & 0.62725 & 0.63075& 0.62775& 0.62125& 0.6295& 0.61725& 0.6295& 0.6295 & 0.62475 \\
  \hline
\end{array}
$$
\begin{center}
Table 1. The number of integers $k\leq 4000$ such that $H_{d}(n)\equiv 0\pmod{p_{k}}$ for some positive integer $n\leq p_{k}$ and $d\in\{2,\ldots, 10\}$.
\end{center}

The numbers in the last row in the table above are quite close to the number $1 - 1/e\simeq 0.63212$, which is the conjectured density of the set $\mathbb{P}_{d}$ in the set $\mathbb{P}$.
}
\end{rem}

\subsection{Greatest common divisors of numbers $H_d(n)-1$ and $n$}

Performing numerical computations, we observed a quite interesting formula for the greatest common divisor of the number $H_d(n)$ and its index $n$.

\begin{thm}
Let $d,n\in\N$ with $d\geq 2$. Then the number $\text{GCD}(H_d(n)-1,n)$ is equal to
\begin{itemize}
\item $n$ if $d$ is a composite number not equal to $4$ or $d=4$ and $\nu_2(n)\neq 2$;
\item $\frac{n}{2}$ if $d=4$ and $\nu_2(n)=2$;
\item $\frac{n}{d^{\nu_d(n)}}$ if $d$ is a prime number.
\end{itemize}
\end{thm}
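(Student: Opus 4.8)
The plan is to evaluate $\gcd(H_d(n)-1,n)$ one prime at a time, using the identity $\nu_q(\gcd(H_d(n)-1,n))=\min\{\nu_q(H_d(n)-1),\nu_q(n)\}$, with the two main tools being Corollary \ref{Hdnmodc} on periodicity of $(H_d(m)\pmod c)_{m\in\N}$ together with the normalization $H_d(0)=1$. We may clearly assume $n\geq 1$. So fix a prime $q$ and put $a=\nu_q(n)\geq 1$.

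The generic step is the following observation. Whenever $(d,q,a)\neq(4,2,2)$ and ($d$ is composite or $d\nmid q^a$) -- in particular always when $d$ is composite, and also whenever $d=p$ is prime and $q\neq p$ -- Corollary \ref{Hdnmodc} guarantees that $q^a$ is a period of $(H_d(m)\pmod{q^a})_{m\in\N}$. Since $q^a\mid n$, this yields $H_d(n)\equiv H_d(n\bmod q^a)=H_d(0)=1\pmod{q^a}$, hence $q^a\mid H_d(n)-1$ and therefore $\nu_q(\gcd(H_d(n)-1,n))=a$. If $d$ is composite and $d\neq 4$, this applies to every prime power dividing $n$, so $n\mid H_d(n)-1$ and $\gcd(H_d(n)-1,n)=n$; and if $d=4$ with $\nu_2(n)\neq 2$, the forbidden triple $(4,2,2)$ is never reached among prime powers dividing $n$, so again $\gcd(H_d(n)-1,n)=n$. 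We are thus reduced to the two genuinely non-generic situations: the prime $q=2$ when $d=4$ and $\nu_2(n)=2$, and the prime $q=p$ when $d=p$ is prime.

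For $d=4$ with $\nu_2(n)=2$ we have $n\equiv 4\pmod 8$, so by the explicit description of $(H_4(m)\pmod 4)_{m\in\N}$ (the Remark giving this residue class behaviour modulo $8$) we get $H_4(n)\equiv 3\pmod 4$, i.e.\ $\nu_2(H_4(n)-1)=1$. Applying the generic step to every odd prime power dividing $n$ and combining, $\gcd(H_4(n)-1,n)=2\cdot(n/2^{\nu_2(n)})=n/2$, as claimed. For $d=p$ prime with $p\mid n$, the lower bound $\nu_p(H_p(n))\geq\lfloor n/p^2\rfloor(p-1)+(\lfloor n/p\rfloor\bmod p)$ from Corollary \ref{vpHpnfrombelow} (and from \cite{CHM} in the case $p=2$) is at least $1$: one checks this by distinguishing the case $p\nmid n/p$, where $\lfloor n/p\rfloor\bmod p\geq 1$, from the case $p\mid n/p$, where $\lfloor n/p^2\rfloor\geq 1$. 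Hence $p\mid H_p(n)$, so $p\nmid H_p(n)-1$ and $\nu_p(\gcd(H_p(n)-1,n))=0$; when $p\nmid n$ this is trivial. Applying the generic step to all primes $q\neq p$ dividing $n$ then gives $\gcd(H_p(n)-1,n)=n/p^{\nu_p(n)}$.

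I do not expect a real obstacle beyond careful bookkeeping: one must correctly isolate the exceptional triple $(4,2,2)$ of Corollary \ref{Hdnmodc}, and then handle the two non-generic primes with the right input -- the period-$8$ behaviour of $H_4$ modulo $4$ in one case, and the lower estimate of $\nu_p(H_p(n))$ in the other. No computation beyond invoking these earlier results should be required.
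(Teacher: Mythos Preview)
Your proof is correct and takes a genuinely different route from the paper's own argument.

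The paper proceeds by direct manipulation of the explicit formula: it writes
\[
H_d(n)-1 = n\sum_{k=1}^{\lfloor n/d\rfloor}\binom{n-1}{dk-1}\frac{(dk-1)_{((d-1)k-1)}}{d^k}
\]
and then performs a prime-by-prime $p$-adic analysis of the factors $\frac{(dk-1)_{((d-1)k-1)}}{d^k}$ to show the sum is an integer (or has the right $2$-adic valuation when $d=4$, or the right $d$-adic valuation when $d$ is prime). This is self-contained but rather laborious, with several pages of case distinctions on $k$, $d$, and $\nu_p(d)$.

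Your approach instead feeds back the machinery already developed earlier in the paper: Corollary~\ref{Hdnmodc} on periodicity modulo $c=q^a$ to handle every prime power for which the generic step applies, the explicit modulo-$4$ Remark on $H_4$ to treat the single exceptional triple $(4,2,2)$, and Corollary~\ref{vpHpnfrombelow} (together with the classical $p=2$ case) to dispose of the prime $p$ when $d=p$. This is cleaner and shorter; the only cost is the dependence on those earlier results, whereas the paper's proof is essentially independent of Sections~\ref{Section3}--\ref{Section4}. Both arguments ultimately localise at each prime, but yours exploits periodicity to avoid re-doing valuation estimates, which is exactly what those earlier sections were built for.
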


\begin{proof}
At first, let us prove the theorem for $d$ composite. Let us note the equality:
\begin{equation}\label{Hd(n)-1}
\begin{aligned}
H_d(n)-1 & =\sum_{k=1}^{\left\lfloor\frac{n}{d}\right\rfloor}\frac{(n)_{(dk)}}{k!d^k}=n\sum_{k=1}^{\left\lfloor\frac{n}{d}\right\rfloor}\frac{(n-1)_{(dk-1)}}{k!d^k}=n\sum_{k=1}^{\left\lfloor\frac{n}{d}\right\rfloor}\frac{(n-1)_{(dk-1)}}{(dk-1)!}\cdot\frac{(dk-1)!}{k!d^k}\\
& =n\sum_{k=1}^{\left\lfloor\frac{n}{d}\right\rfloor}{n-1\choose dk-1}\cdot\frac{(dk-1)_{((d-1)k-1)}}{d^k}.
\end{aligned}
\end{equation}
For $d\neq 4$ it suffices to show that $\frac{(dk-1)_{((d-1)k-1)}}{d^k}$ is an integer for each $k\in\left\{1,...,\left\lfloor\frac{n}{d}\right\rfloor\right\}$. If $k=1$, then
\begin{align*}
\frac{(dk-1)_{((d-1)k-1)}}{d^k}=\frac{(d-1)_{(d-2)}}{d}=\frac{(d-1)!}{d}\in\Z,
\end{align*}
as $d$ is a composite number greater than $4$. If $k=2$ then
\begin{align*}
\frac{(dk-1)_{((d-1)k-1)}}{d^k}=\frac{(2d-1)_{(2d-3)}}{d^2}=\frac{(2d-1)!}{2d^2}=\frac{(d-1)!}{d}\cdot\frac{d}{d}\cdot\frac{(d+1)\cdot ...\cdot (2d-1)}{2},
\end{align*}
where $\frac{(d-1)!}{d}$ is an integer and $2$ divides some of the $d-1$ factors $d+1,...,2d-1$ as $d\geq 6$. Assume that $k\geq 3$. In order to prove that $\frac{(dk-1)_{((d-1)k-1)}}{d^k}$ is an integer, we will show that $\nu_p\left(\frac{(dk-1)_{((d-1)k-1)}}{d^k}\right)\geq 0$ for each prime number $p$. Let us write:
\begin{align*}
\nu_p\left(\frac{(dk-1)_{((d-1)k-1)}}{d^k}\right)=\nu_p((dk-1)\cdot ...\cdot (k+1))-k\nu_p(d).
\end{align*}
We see that if $\nu_p(d)=0$ then $\nu_p\left(\frac{(dk-1)_{((d-1)k-1)}}{d^k}\right)\geq 0$. Thus, assume now that $p\mid d$. Then, we estimate $\nu_p((dk-1)\cdot ...\cdot (k+1))$ from below by the number of multiplicities of $p$ among the integers from $k+1$ to $dk-1$. This number is at least equal to $\left\lfloor\frac{(d-1)k-1}{p}\right\rfloor$. Hence,
\begin{equation}\label{vpestim}
\begin{split}
& \nu_p((dk-1)\cdot ...\cdot (k+1))-k\nu_p(d)\geq\left\lfloor\frac{(d-1)k-1}{p}\right\rfloor-k\nu_p(d)=\left\lfloor\frac{dk-k-1}{p}\right\rfloor-k\nu_p(d)\\
& =\frac{d}{p}k-\left\lceil\frac{k+1}{p}\right\rceil-k\nu_p(d)=k\left(\frac{d}{p}-\nu_p(d)\right)-\left\lceil\frac{k+1}{p}\right\rceil\geq k\left(\frac{d}{p}-\nu_p(d)\right)-\frac{k+1}{p}-1.
\end{split}
\end{equation}
If $\nu_p(d)\geq 3$ or $p>2$ and $\nu_p(d)\geq 2$ then $d\geq p^{\nu_p(d)}$. Thus the last expression in (\ref{vpestim}) is bounded from below by $k(p^{\nu_p(d)-1}-\nu_p(d))-\frac{k+p+1}{p}$. 
We have the following chain of equivalences:
\begin{align*}
& k(p^{\nu_p(d)-1}-\nu_p(d))-\frac{k+p+1}{p}\geq 0\\
\Leftrightarrow & k(p^{\nu_p(d)-1}-\nu_p(d))\geq \frac{k+p+1}{p}\\
\Leftrightarrow & p^{\nu_p(d)-1}-\nu_p(d)\geq \frac{k+p+1}{kp}.
\end{align*}
The left hand side of the last inequality is at least equal to $1$. Since $k\geq 3$ and $p\in\bbb{P}$ then $\frac{1}{p}+\frac{1}{k}+\frac{1}{kp}\leq 1$. In consequence $\nu_p\left(\frac{(dk-1)_{((d-1)k-1)}}{d^k}\right)\geq 0$ for any prime $p$ such that $\nu_p(d)\geq 3$ or $p>2$ and $\nu_p(d)\geq 2$.

If $\nu_p(d)=1$ or $p=2$ and $\nu_2(d)=2$ then $d\geq 2p^{\nu_p(d)}$. Thus the last expression in (\ref{vpestim}) is bounded from below by $k(2p^{\nu_p(d)-1}-\nu_p(d))-\frac{k+p+1}{p}$. 
We have the following chain of equivalences:
\begin{align*}
& k(2p^{\nu_p(d)-1}-\nu_p(d))-\frac{k+p+1}{p}\geq 0\\
\Leftrightarrow & k(2p^{\nu_p(d)-1}-\nu_p(d))\geq \frac{k+p+1}{p}\\
\Leftrightarrow & 2p^{\nu_p(d)-1}-\nu_p(d)\geq \frac{k+p+1}{kp}.
\end{align*}
The left hand side of the last inequality is at least equal to $1$ and the right hand side, as we have seen earlier, is at most equal to $1$. Validity of the last inequality implies that $\nu_p\left(\frac{(dk-1)_{((d-1)k-1)}}{d^k}\right)\geq 0$ for any prime $p$ such that $\nu_p(d)=1$ or $p=2$ and $\nu_2(d)=2$.

We have just proven that $\frac{(dk-1)_{((d-1)k-1)}}{d^k}$ is an integer for each $k\in\left\{1,...,\left\lfloor\frac{n}{d}\right\rfloor\right\}$. Thus $\sum_{k=1}^{\left\lfloor\frac{n}{d}\right\rfloor}{n-1\choose dk-1}\cdot\frac{(dk-1)_{((d-1)k-1)}}{d^k}\in\Z$ and, as a result, $n\mid H_d(n)-1$ in the case of $d>4$ composite.

\bigskip

Let us consider now the case of $d=4$. By (\ref{Hd(n)-1}) we know that $H_4(n)-1=n\sum_{k=1}^{\left\lfloor\frac{n}{4}\right\rfloor}{n-1\choose 4k-1}\cdot\frac{(4k-1)!}{k!4^k}$. If $p$ is an odd prime number, then $\nu_p\left(\frac{(4k-1)!}{k!4^k}\right)=\nu_p((4k-1)_{(3k-1)})\geq 0$. We have
\begin{align*}
& \nu_2\left(\frac{(4k-1)!}{k!4^k}\right)=4k-1-s_2(4k-1)-k+s_2(k)-2k=k-1-s_2(4(k-1)+3)+s_2(k)\\
& =k-1-s_2(k-1)-2+s_2(k)=k-2-s_2(k-1)-s_2(1)+s_2(k)=k-2-\nu_2\left(k\choose 1\right)\\
& =k-2-\nu_2(k).
\end{align*}
In the above computation we used the Legendre formula $\nu_2(m!)=m-s_2(m)$ and its corollary $\nu_2\left(m\choose l\right)=s_2(l)+s_2(m-l)-s_2(m)$ for $0\leq l\leq m$. The value $k-2-\nu_2(k)$ is non-negative for $k\geq 3$ and equal to $-1$ for $k\in\{1,2\}$. Hence, the value of ${n-1\choose 4k-1}\cdot\frac{(4k-1)!}{k!4^k}$ is an integer for $k\geq 3$. Meanwhile for $k\in\{1,2\}$ we compute $\nu_2\left(n-1\choose 4k-1\right)$. For $k=1$ we have
\begin{align*}
\nu_2\left(n-1\choose 4k-1\right)=\nu_2\left(n-1\choose 3\right)=\nu_2\left(\frac{(n-1)(n-2)(n-3)}{6}\right)=\nu_2((n-1)(n-2)(n-3))-1
\end{align*}
which is equal to $0$ if and only if $4\mid n$. For $k=2$ we have
\begin{align*}
\nu_2\left(n-1\choose 4k-1\right)=\nu_2\left(n-1\choose 7\right)=\nu_2\left(\frac{(n-1)\cdot ...\cdot(n-7)}{7!}\right)
\end{align*}
which is equal to $0$ if and only if $8\mid n$. If $\nu_2(n)<2$ then $\nu_2\left(n-1\choose 4k-1\right)\geq 1$ for $k\in\{1,2\}$ and, as a consequence, $\nu_2\left({n-1\choose 4k-1}\cdot\frac{(4k-1)!}{k!4^k}\right)\geq 0$ for any $k\leq\left\lfloor\frac{n}{4}\right\rfloor$. If $\nu_2(n)=2$ then $\nu_2\left({n-1\choose 4k-1}\cdot\frac{(4k-1)!}{k!4^k}\right)\geq 0$ for $k>1$ and $\nu_2\left({n-1\choose 4k-1}\cdot\frac{(4k-1)!}{k!4^k}\right)=-1$ for $k=1$. Thus $\nu_2\left(\sum_{k=1}^{\left\lfloor\frac{n}{4}\right\rfloor}{n-1\choose 4k-1}\cdot\frac{(4k-1)!}{k!4^k}\right)=-1$. If $\nu_2(n)>2$ then $\nu_2\left({n-1\choose 4k-1}\cdot\frac{(4k-1)!}{k!4^k}\right)\geq 0$ for $k>2$ and $\nu_2\left({n-1\choose 4k-1}\cdot\frac{(4k-1)!}{k!4^k}\right)=-1$ for $k\leq 2$. Thus $\nu_2\left(\sum_{k=1}^{\left\lfloor\frac{n}{4}\right\rfloor}{n-1\choose 4k-1}\cdot\frac{(4k-1)!}{k!4^k}\right)\geq 0$. Summing up, if $\nu_2(n)\neq 2$ then $\left(\sum_{k=1}^{\left\lfloor\frac{n}{4}\right\rfloor}{n-1\choose 4k-1}\cdot\frac{(4k-1)!}{k!4^k}\right)$ is an integer and then $n\mid H_4(n)-1$. If $\nu_2(n)=2$ then $2\left(\sum_{k=1}^{\left\lfloor\frac{n}{4}\right\rfloor}{n-1\choose 4k-1}\cdot\frac{(4k-1)!}{k!4^k}\right)$ is an integer and then $\frac{n}{2}\mid H_4(n)-1$ while $n\nmid H_4(n)-1$. The proof for the case of $d=4$ is finished.

\bigskip

Let us consider the case of prime $d$. If $n<d$ then $H_d(n)-1=0$ and obviously $n\mid H_d(n)-1$. If $n\geq d$ then $d\mid H_d(n)$ and thus $d\nmid H_d(n)-1$. Let us write
\begin{align*}
 & H_d(n)-1=\sum_{k=1}^{\left\lfloor\frac{n}{d}\right\rfloor}\frac{(n)_{(dk)}}{k!d^k}=\frac{n}{d^{\nu_d(n)}}\cdot d^{\nu_d(n)}\sum_{k=1}^{\left\lfloor\frac{n}{d}\right\rfloor}\frac{(n-1)_{(dk-1)}}{k!d^k}\\
& =\frac{n}{d^{\nu_d(n)}}\cdot d^{\nu_d(n)}\sum_{k=1}^{\left\lfloor\frac{n}{d}\right\rfloor}\frac{(n-1)_{(dk-1)}}{(dk-1)!}\cdot\frac{(dk-1)!}{k!d^k}=\frac{n}{d^{\nu_d(n)}}\cdot d^{\nu_d(n)}\sum_{k=1}^{\left\lfloor\frac{n}{d}\right\rfloor}{n-1\choose dk-1}\cdot\frac{(dk-1)_{((d-1)k-1)}}{d^k}.
\end{align*}
Since $d\nmid \frac{n}{d^{\nu_d(n)}}$, we thus have $\nu_d\left(d^{\nu_d(n)}\sum_{k=1}^{\left\lfloor\frac{n}{d}\right\rfloor}{n-1\choose dk-1}\cdot\frac{(dk-1)_{((d-1)k-1)}}{d^k}\right)=\nu_d(H_d(n)-1)=0.$
If $p$ is a prime number not equal to $d$ then
\begin{align*}
\nu_p\left(\frac{(dk-1)_{((d-1)k-1)}}{d^k}\right)=\nu_p((dk-1)_{((d-1)k-1)})\geq 0
\end{align*}
and $\nu_p\left(d^{\nu_d(n)}\sum_{k=1}^{\left\lfloor\frac{n}{d}\right\rfloor}{n-1\choose dk-1}\cdot\frac{(dk-1)_{((d-1)k-1)}}{d^k}\right)\geq 0$. Finally, $$d^{\nu_d(n)}\sum_{k=1}^{\left\lfloor\frac{n}{d}\right\rfloor}{n-1\choose dk-1}\cdot\frac{(dk-1)_{((d-1)k-1)}}{d^k}\in\Z,$$ which means that $\frac{n}{d^{\nu_d(n)}}\mid H_d(n)-1$ and $\frac{n}{d^{\nu_d(n)}}=\text{GCD}(H_d(n)-1,n)$.
\end{proof}

\subsection{Nontrivial common divisors of numbers $H_a(n)$ and $H_b(n)$}

Let us fix two distinct integers $a,b\geq 2$. It is natural to ask about existence of indices $n$ for which the numbers $H_a(n)$ and $H_b(n)$ have a common divisor greater than $1$. The above question can be answered immediately. Let $d\in\N_{\geq 2}$, $n\in\N_{\geq d}$ and $p$ be a prime divisor of $H_d(n)$. Then $p\mid\text{GCD}(H_d(n+p),H_p(n+p))$. Indeed, it is a simple consequence of periodicity of the sequence $(H_d(n)\pmod{p})_{n\in\N}$ and divisibility properties of the sequence $(H_p(n))_{n\in\N}$ for prime number $p$. Moreover, we give the following.

\begin{thm}\label{H_d(3d+2)orH_d(4d)}
Let $d\in\N_{\geq 3}$. Then $2d+1\mid H_d(3d+2)$ if and only if $2d+1\mid d!-1$.

If additionally $d$ is odd then
\begin{itemize}
\item $2d+1\mid H_{2d+1}(3d+2)$ if and only if $2d+1\mid (d!)^2-1$.
\item $2d+1\mid H_d(4d)$ if and only if $2d+1\mid d!+1$;
\item $2d+1\mid H_{2d+1}(4d)$ if and only if $2d+1\mid (d!)^2-1$.
\end{itemize}
\end{thm}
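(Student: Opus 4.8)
The plan is to reduce everything modulo $p:=2d+1$ and to use throughout the elementary congruences $d+j\equiv j-(d+1)\pmod p$ for $j\ge 1$ (so $d+1\equiv -d,\ \dots,\ 2d\equiv -1,\ 2d+1\equiv 0,\ 2d+2\equiv 1,\ \dots$). In particular a product of consecutive integers is $\equiv 0\pmod p$ precisely when its range meets a multiple of $p$, and otherwise it reduces to a shifted falling factorial. A preliminary fact I would record first is $p\mid\binom{2d+1}{d}$: writing $\binom{2d+1}{d}=(2d+1)C_d$ with $C_d=\frac{1}{d+1}\binom{2d}{d}$ the Catalan number shows this, and since $p$ is odd both the ``one $d$-cycle'' and ``two $d$-cycles'' contributions to $H_d(2d+1)=1+\binom{2d+1}{d}(d-1)!+\frac12\binom{2d+1}{d}(d+1)\big((d-1)!\big)^2$ are divisible by $p$; hence $H_d(2d+1)\equiv 1\pmod p$ (this also drops out of (\ref{basicrec})).

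For $H_d(3d+2)$ I would iterate (\ref{basicrec}) from the base values $H_d(0)=\dots=H_d(d-1)=1$. For $2d+1\le m\le 3d-1$ the block of $d-1$ consecutive integers defining $(m)_{(d-1)}$ contains $2d+1$, so $(m)_{(d-1)}\equiv 0\pmod p$, and therefore $H_d(2d+1)\equiv H_d(2d+2)\equiv\cdots\equiv H_d(3d)\equiv 1\pmod p$. The next two coefficients miss the multiple of $p$: $(3d)_{(d-1)}\equiv (d-1)!$ and $(3d+1)_{(d-1)}\equiv d!\pmod p$, so (\ref{basicrec}) gives $H_d(3d+1)\equiv 1+(d-1)!$ and then $H_d(3d+2)\equiv 1+(d-1)!+d!\pmod p$. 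Since $1+(d-1)!+d!=1+(d+1)(d-1)!\equiv 1-d(d-1)!=1-d!\pmod p$, we conclude $p\mid H_d(3d+2)$ iff $d!\equiv 1\pmod p$ iff $p\mid d!-1$.

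Now suppose $d$ is odd. For $H_d(4d)$ I would continue the iteration: for $3d+2\le m\le 4d-1$ the block of $(m)_{(d-1)}$ lies strictly between $2d+1$ and $2(2d+1)$, so $(m)_{(d-1)}\equiv (m-2d-1)_{(d-1)}\pmod p$ with $m-2d-1\in\{d+1,\dots,2d-2\}$, while the values $H_d(2d+3),\dots,H_d(3d)$ entering (\ref{basicrec}) are all $\equiv 1$. Telescoping then gives $H_d(4d)\equiv H_d(3d+2)+\sum_{m'=d+1}^{2d-2}(m')_{(d-1)}\pmod p$, and the hockey-stick identity evaluates the sum as $(d-1)!\big(\binom{2d-1}{d}-1-d\big)=\frac{(2d-1)!}{d!}-(d-1)!-d!\equiv (-1)^{d-1}d!-(d-1)!-d!\pmod p$, using $\frac{(2d-1)!}{d!}=(2d-1)_{(d-1)}\equiv (-1)^{d-1}d!$. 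Hence $H_d(4d)\equiv 1+(-1)^{d-1}d!\equiv 1+d!\pmod p$, so $p\mid H_d(4d)$ iff $p\mid d!+1$. For the two statements about $H_{2d+1}$, I would use that since $3d+2,4d<2(2d+1)$ at most one $(2d+1)$-cycle is possible, so $H_{2d+1}(3d+2)=1+\binom{3d+2}{2d+1}(2d)!$ and $H_{2d+1}(4d)=1+\binom{4d}{2d+1}(2d)!$; the key point is to rewrite each $\binom{\cdot}{2d+1}(2d)!$ as a genuine product of consecutive integers, $\binom{3d+2}{2d+1}(2d)!=\big(\prod_{i=2d+2}^{3d+2}i\big)\big(\prod_{i=d+2}^{2d}i\big)$ and $\binom{4d}{2d+1}(2d)!=2d\prod_{i=2d+2}^{4d}i$, so that one never divides by a quantity possibly sharing a factor with the (possibly composite) modulus. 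Reducing mod $p$ yields $(d+1)!\,(-1)^{d-1}(d-1)!$ and $2d\,(2d-1)!=(2d)!\equiv (-1)^d(d!)^2$ respectively; for $d$ odd both equal $-(d!)^2\pmod p$ (from $(d+1)!(d-1)!=\frac{d+1}{d}(d!)^2\equiv -(d!)^2$, using $d+1\equiv -d$). Thus $H_{2d+1}(3d+2)\equiv H_{2d+1}(4d)\equiv 1-(d!)^2\pmod p$, and each divisibility is equivalent to $p\mid (d!)^2-1$.

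The only genuine difficulty is organizational: keeping straight which falling-factorial coefficients vanish mod $p$ (block straddling a multiple of $2d+1$) versus which reduce to shifted falling factorials, pinning down the hockey-stick sum exactly, and — most importantly — performing the $H_{2d+1}$ computations through integer products of consecutive integers rather than through quotients, since $2d+1$ need not be prime. Working through (\ref{basicrec}) for the $H_d$ assertions is favorable precisely because it involves only additions and multiplications and never any division.
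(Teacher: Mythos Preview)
Your proof is correct. The paper only works out the $H_{2d+1}(3d+2)$ case in detail (using the closed form $H_{2d+1}(3d+2)=1+\frac{(3d+2)_{(2d+1)}}{2d+1}$ and reducing the product modulo $2d+1$) and declares the other three ``very similar''. Your treatment of the two $H_{2d+1}$ cases is essentially the paper's computation, just with the binomial--factorial expression rewritten as a product of consecutive integers so that no division by a quantity possibly sharing a factor with the composite modulus $2d+1$ ever occurs; this is a worthwhile bit of extra care.

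For the two $H_d$ cases you take a genuinely different (though equally elementary) route: rather than attacking the closed formula (\ref{exactform}) term by term, you iterate the recurrence (\ref{basicrec}) across the window $[2d+1,4d]$, using your Catalan observation $\binom{2d+1}{d}=(2d+1)C_d$ to get the anchor $H_d(2d+1)\equiv 1\pmod{2d+1}$ and then tracking exactly which falling factorials $(m)_{(d-1)}$ straddle a multiple of $2d+1$. This buys you a clean telescoping argument and the hockey-stick evaluation, and it sidesteps having to bound $p$-adic valuations of the individual summands in (\ref{exactform}). The paper's implicit approach via the closed formula would be shorter for $H_d(3d+2)$ (only four terms) but your recurrence route is arguably more transparent for $H_d(4d)$, where the closed formula has five terms to reduce. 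Both approaches are valid; yours has the virtue of never dividing by anything at all in the $H_d$ computations.
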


\begin{proof}
Since the proofs of the remaining equivalences are very similar, we only show the second equivalence.


By the exact formula for $H_{2d+1}(3d+2)$ we have $H_{2d+1}(3d+2)=1+\frac{(3d+2)_{(2d+1)}}{2d+1}$. We compute
\begin{align*}
& 1+(3d+2)\cdot ...\cdot (2d+2)(2d)\cdot ...\cdot (d+2)\equiv 1+(-d)d\cdot ...\cdot 1\cdot(-1)\cdot ...\cdot (1-d)\\
& \equiv 1+ (-1)^{d}(d!)^2\equiv 1-(d!)^2\pmod{2d+1}
\end{align*}
and we see that the divisibility of $H_{2d+1}(3d+2)$ by $2d+1$ is equivalent to the divisibility of $(d!)^2-1$ by $2d+1$.


\end{proof}

In the above theorem we needed assumption that $d$ is odd. The next proposition shows that primality of $2d+1$ is necessary and sufiicient for validity of the condition $2d+1\mid (d!)^2-1$.

\begin{prop}
Let $d$ be an odd positive integer. Then $2d+1\mid (d!)^2-1$ if and only if $2d+1$ is a prime number.
\end{prop}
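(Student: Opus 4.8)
The plan is to set $n=2d+1$ and prove the two implications separately, in both directions exploiting the pairing $k\leftrightarrow n-k$ among the factors of a factorial.

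Assume first that $n$ is prime. Then by Wilson's theorem $(2d)!\equiv -1\pmod{2d+1}$. Grouping the factors of $(2d)!$ in pairs as $(2d)!=\prod_{k=1}^{d}k(2d+1-k)$ and using $2d+1-k\equiv -k\pmod{n}$ for $k\in\{1,\dots,d\}$, one obtains $(2d)!\equiv (-1)^{d}(d!)^{2}\pmod{n}$. Since $d$ is odd this reads $-(d!)^{2}\equiv -1\pmod{n}$, whence $2d+1\mid (d!)^{2}-1$.

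For the converse, suppose $n=2d+1$ is composite. Let $p$ be its least prime divisor; since $n$ is composite, $p\le\sqrt{n}=\sqrt{2d+1}$, and $\sqrt{2d+1}\le d$ because $d^{2}-2d-1\ge 0$ for $d\ge 3$ (and the composite case with $d$ odd forces $d\ge 7$ anyway). Hence $p$ occurs among the factors $1,2,\dots,d$ of $d!$, so $p\mid (d!)^{2}$. If we had $2d+1\mid (d!)^{2}-1$, then $p\mid (d!)^{2}-1$ as well, and subtracting from $p\mid (d!)^{2}$ would give $p\mid 1$, a contradiction. Therefore $2d+1\nmid (d!)^{2}-1$, which completes the equivalence.

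I do not expect a real obstacle here; the only step deserving a line of justification is the bound $p\le\sqrt{2d+1}\le d$ on the least prime factor of the composite number $2d+1$, which is exactly what lets $p$ divide $d!$ and hence $(d!)^{2}$.
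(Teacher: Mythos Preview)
Your proof is correct. Both directions are handled by classical elementary arguments: Wilson's theorem combined with the pairing $k\leftrightarrow 2d+1-k$ for the forward implication, and the observation that the least prime factor of a composite $2d+1$ is at most $\sqrt{2d+1}\le d$ (hence divides $d!$) for the converse.

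The paper's proof takes a genuinely different route. Rather than arguing directly, it bootstraps the equivalence from the preceding theorem, which asserts that $2d+1\mid (d!)^{2}-1$ is equivalent to $2d+1\mid H_{2d+1}(3d+2)$. The forward implication then follows because $H_{p}(n)$ is always divisible by $p$ once $n\ge p$, so primality of $2d+1$ forces $2d+1\mid H_{2d+1}(3d+2)$. For the converse, the paper invokes its periodicity result (Corollary~\ref{Hdnmodc}): when $2d+1$ is composite, the sequence $(H_{2d+1}(n)\bmod 2d+1)_{n}$ has period $2d+1$, so $H_{2d+1}(3d+2)\equiv H_{2d+1}(d+1)=1\pmod{2d+1}$, and the divisibility fails.

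Your argument is self-contained and more elementary, making no use of the $H_{d}$ machinery. The paper's argument, by contrast, illustrates how the proposition sits naturally within the framework developed for the sequences $(H_{d}(n))_{n\in\N}$; the price is that it depends on two earlier results of the paper.
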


\begin{proof}
If $2d+1$ is prime then $2d+1\mid H_{2d+1}(3d+2)$ as $3d+2\geq 2d+1$. Thus, by Theorem \ref{H_d(3d+2)orH_d(4d)} there holds $2d+1\mid (d!)^2-1$.

If $2d+1$ is a composite number then by Corollary \ref{Hdnmodc} we have $H_{2d+1}(3d+2)\equiv H_{2d+1}(d+1)=1\pmod{2d+1}$ as $d+1<2d+1$. Hence by Theorem \ref{H_d(3d+2)orH_d(4d)} the condition $2d+1\mid (d!)^2-1$ is not satisfied.
\end{proof}

Now we are ready to give a formula for the one-parametr infinite family of quadruples $(a,b,n,c)$ such that $c>1$ divides both $H_a(n)$ and $H_b(n)$.

\begin{cor}
If $n\geq 3$ is an odd integer such that $2n+1$ is a prime number then $2n+1$ is a common divisor of numbers $H_n(3n+2)$ and $H_{2n+1}(3n+2)$ or $H_n(4n)$ and $H_{2n+1}(4n)$. More precisely,
\begin{itemize}
\item $2n+1\mid\text{GCD}(H_n(3n+2), H_{2n+1}(3n+2))$ if and only if $2n+1\mid n!-1$;
\item $2n+1\mid\text{GCD}(H_n(4n), H_{2n+1}(4n))$ if and only if $2n+1\mid n!+1$.
\end{itemize}
In other words, if $(a,b,n,c)=(d,2d+1, [2d+1\mid d!-1]\cdot (3d+2)+[2d+1\mid d!+1]\cdot (4d), 2d+1)$, where $d$ is an odd positive integer such that $2d+1$ is a prime number and we use the Iverson bracket $$[\varphi]=\begin{cases}1, & \mbox{ if } \varphi \mbox{ is true}\\0, & \mbox{ if } \varphi \mbox{ is false}\end{cases},$$ then $c\mid\text{GCD}(H_a(n),H_b(n))$.
\end{cor}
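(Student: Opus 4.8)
The plan is to read the statement off Theorem~\ref{H_d(3d+2)orH_d(4d)} applied with $d=n$, after noticing that the $H_{2n+1}$-factors occurring in the two greatest common divisors are divisible by $2n+1$ with no further hypothesis.

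First I would invoke Theorem~\ref{H_d(3d+2)orH_d(4d)} with the odd integer $d=n\geq 3$, which supplies the four equivalences
\begin{align*}
2n+1\mid H_n(3n+2) &\iff 2n+1\mid n!-1,\\
2n+1\mid H_n(4n) &\iff 2n+1\mid n!+1,\\
2n+1\mid H_{2n+1}(3n+2) &\iff 2n+1\mid (n!)^2-1,\\
2n+1\mid H_{2n+1}(4n) &\iff 2n+1\mid (n!)^2-1.
\end{align*}
Next, since $2n+1$ is prime and both $3n+2$ and $4n$ are at least $2n+1$, Corollary~\ref{vpHpnfrombelow} gives $\nu_{2n+1}(H_{2n+1}(3n+2))\geq 1$ and $\nu_{2n+1}(H_{2n+1}(4n))\geq 1$; equivalently, $2n+1$ being prime forces $(n!)^2\equiv 1\pmod{2n+1}$ via Wilson's theorem together with $(2n)!=n!\,(n+1)\cdots(2n)\equiv(-1)^n(n!)^2\pmod{2n+1}$ and the oddness of $n$. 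Either way, $2n+1$ divides $H_{2n+1}(3n+2)$ and $H_{2n+1}(4n)$ unconditionally.

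Combining these two facts, $2n+1\mid\text{GCD}(H_n(3n+2),H_{2n+1}(3n+2))$ holds precisely when $2n+1\mid H_n(3n+2)$, i.e.\ precisely when $2n+1\mid n!-1$, and likewise $2n+1\mid\text{GCD}(H_n(4n),H_{2n+1}(4n))$ holds precisely when $2n+1\mid n!+1$; these are the two displayed bullet points. For the Iverson-bracket reformulation I would note that $(n!)^2\equiv 1\pmod{2n+1}$ forces $n!\equiv\pm 1\pmod{2n+1}$, and since $1\not\equiv -1\pmod{2n+1}$ for $n\geq 1$, exactly one of $2n+1\mid n!-1$ and $2n+1\mid n!+1$ holds. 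Hence precisely one Iverson bracket in the third coordinate of $(a,b,n,c)$ equals $1$, so that coordinate is well defined and equals $3d+2$ or $4d$ accordingly, and the asserted divisibility $c\mid\text{GCD}(H_a(n),H_b(n))$ is exactly the matching bullet point. The argument is essentially bookkeeping; the one place requiring a line of genuine care is verifying that exactly one Iverson bracket fires, which reduces to the Wilson-type congruence $(n!)^2\equiv 1\pmod{2n+1}$ for odd $n$ with $2n+1$ prime.
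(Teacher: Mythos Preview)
Your proof is correct and follows the paper's intended route: the corollary is stated without proof because it follows immediately from Theorem~\ref{H_d(3d+2)orH_d(4d)} together with the preceding Proposition (which says $2d+1\mid (d!)^2-1$ iff $2d+1$ is prime). You recover that Proposition directly via Wilson's theorem (or, equivalently, via Corollary~\ref{vpHpnfrombelow}), and your check that exactly one Iverson bracket fires is exactly what is needed to make the last reformulation well defined.
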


Let us observe that all the examples of quadruples $(a,b,n,c)$ with the property that $c>1$ divides both $H_a(n)$ and $H_b(n)$ are such that $c=b$ is a prime number and $n\geq b$. This is a motivation to formulate the following.

\begin{ques}
Are there infinitely many triples $(a,b,c)\in\N^3$, $a,b,c\geq 2$, such that $c>1$ is coprime to $ab$ and $c\mid\text{GCD}(H_a(n),H_b(n))$ for some $n\in\N$?
\end{ques}

Moreover, it is quite natural to ask the following.

\begin{ques}
Are there infinitely many pairs $(a,b)\in\N^2$, $a,b\geq 2$, such that $\text{GCD}(H_a(n),H_b(n))=1$ for each $n\in\N$?
\end{ques}

\section{Some polynomials related to numbers $H_d(n)$ and their combinatorial interpretation}\label{Section8}

Let us define polynomials $H_{d}(n,x)\in\Z[x]$ recursively: $H_{d}(n,x)=x^n$ for $n\in\{0,...,d-1\}$ and
$$H_{d}(n,x)=xH_{d}(n-1,x)+(n-1)\cdot...\cdot(n-d+1)H_{d}(n-d,x)\mbox{ for } n\geq d.$$
Then $H_{d}(n,1)=H_{d}(n)$ and we have exact formula
$$H_{d}(n,x)=\sum_{j=0}^{\left\lfloor\frac{n}{d}\right\rfloor} \frac{n!}{(n-dj)!j!d^j}x^{n-dj}.$$

\begin{prop}
The exponential generating function of the sequence $(H_{d}(n,x))_{n\in\N}$ takes the form
$$\cal{H}_d(x,t)=\sum_{n=0}^{+\infty}\frac{H_d(n,x)}{n!}t^n=e^{xt+\frac{t^d}{d}}.$$
\end{prop}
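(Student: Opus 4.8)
The plan is to read off the claimed closed form directly from the exact formula for $H_d(n,x)$ recorded just above. Writing
$$\cal{H}_d(x,t)=\sum_{n=0}^{\infty}\frac{H_d(n,x)}{n!}t^n=\sum_{n=0}^{\infty}\sum_{j=0}^{\lfloor n/d\rfloor}\frac{x^{n-dj}}{(n-dj)!\,j!\,d^j}\,t^n,$$
I would introduce the new index $m=n-dj$ to decouple the two summations. For every fixed power of $t$ only finitely many pairs $(m,j)$ with $m+dj=n$ contribute, so this rearrangement is a legitimate identity of formal power series in $t$ with coefficients in $\Z[x]$ (and it converges absolutely for every $x$ and every $t$, so it is also a genuine analytic identity). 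It gives
$$\cal{H}_d(x,t)=\left(\sum_{m=0}^{\infty}\frac{(xt)^m}{m!}\right)\left(\sum_{j=0}^{\infty}\frac{1}{j!}\left(\frac{t^d}{d}\right)^{j}\right)=e^{xt}\cdot e^{t^d/d}=e^{xt+\frac{t^d}{d}},$$
which is the assertion.

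An alternative route, which I would in fact prefer to present because it runs parallel to the computation of $\cal{W}_d(x,t)$ in Section~\ref{Section6} and does not rely on the explicit formula, is to work straight from the defining recurrence. Differentiating $\cal{H}_d(x,t)$ term by term in $t$ and using $H_d(n,x)=xH_d(n-1,x)+(n-1)_{(d-1)}H_d(n-d,x)$ for $n\geq d$, together with $H_d(n,x)=x^n$ for $0\le n<d$ to handle the initial segment, one checks that the coefficients of $t^n$ on the two sides of
$$\frac{\partial}{\partial t}\cal{H}_d(x,t)=(x+t^{d-1})\cal{H}_d(x,t)$$
agree: the left side contributes $H_d(n+1,x)/n!$ while the right side contributes $xH_d(n,x)/n!+H_d(n+1-d,x)/(n+1-d)!$, and clearing denominators by $n!$ — using the factorial identity $(n)_{(d-1)}=n!/(n+1-d)!$ — turns this into exactly the recurrence with index $n+1$. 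Since $\cal{H}_d(x,0)=H_d(0,x)=1$, the unique solution of this first-order linear ODE in $t$ is $\cal{H}_d(x,t)=\exp\!\left(\int_0^t(x+s^{d-1})\,ds\right)=e^{xt+t^d/d}$.

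There is no substantial obstacle here. The only points demanding a little care are the justification of the index substitution (resp.\ of term-by-term differentiation), which is immediate in the ring of formal power series in $t$, and the bookkeeping for the indices $n<d$, where the recurrence degenerates to $H_d(n+1,x)=xH_d(n,x)$ and one must verify separately that the low-order coefficients of $e^{xt+t^d/d}$, namely $x^n/n!$ for $n<d$, are reproduced correctly.
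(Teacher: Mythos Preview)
Your proposal is correct. Your second route---deriving the first-order ODE $\partial_t\cal{H}_d(x,t)=(x+t^{d-1})\cal{H}_d(x,t)$ from the recurrence and then solving it---is essentially the paper's argument, run in the opposite direction: the paper starts from $e^{xt+t^d/d}$, differentiates, and reads off that its Taylor coefficients satisfy the defining recurrence and initial values of $H_d(n,x)$. Your first route, via the explicit formula and the index change $m=n-dj$ splitting the double sum into a product of two exponential series, is a slightly different and more direct argument that bypasses the ODE entirely; it trades the uniqueness-of-solution step for reliance on the closed formula already recorded in the paper. Either is fine here.
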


\begin{proof}
Let us write $e^{xt+\frac{t^d}{d}}=\sum_{n=0}^{+\infty}\frac{a_d(n,x)}{n!}t^n$ By comparing the coefficients near $t^{n-1}$ in the identity
$$\frac{\partial e^{xt+\frac{t^d}{d}}}{\partial t}=(x+t^{d-1})e^{xt+\frac{t^d}{d}}$$
and multiplying by $(n-1)!$ we get the recurrence
$$a_{d}(n,x)=xa_{d}(n-1,x)+(n-1)\cdot...\cdot(n-d+1)a_{d}(n-d,x)\mbox{ for } n\geq d.$$
The above recurrence combined with the fact that
$$a_d(n,x)=\left.\frac{\partial^n e^{xt+\frac{t^d}{d}}}{\partial t^n}\right|_{t=0}=x^n$$
for $n\in\{0,...,d-1\}$ implies $a_d(n,x)=H_d(n,x)$ for each $n\in\N$.
\end{proof}

\begin{thm}
Let $\text{Fix}_{n}(\sigma)$ and $S_{d,n}$ denote the set of fixed points of the permutation $\sigma\in S_n$ and the set of permutations being product of pairwise disjoint $d$-cycles, respectively. Then, we have the following equality
$$H_{d}(n,x)=\sum_{\sigma\in S_{d,n}} x^{\#\text{Fix}_n(\sigma)}.$$
\end{thm}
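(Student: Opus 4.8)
The plan is to show that the polynomial
$G_d(n,x):=\sum_{\sigma\in S_{d,n}} x^{\#\text{Fix}_n(\sigma)}$ obeys the very recurrence and initial data that were used to \emph{define} $H_d(n,x)$, and then conclude $G_d(n,x)=H_d(n,x)$ by induction on $n$. So the argument is a polynomial refinement of the combinatorial derivation of \eqref{basicrec} given in Section \ref{Section2}, keeping track of the number of fixed points as one extra statistic.

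First I would check the base cases. For $n\in\{0,\dots,d-1\}$ no nonempty product of pairwise disjoint $d$-cycles can be accommodated inside $\{1,\dots,n\}$, so $S_{d,n}=\{\op{id}\}$ and $\#\text{Fix}_n(\op{id})=n$; hence $G_d(n,x)=x^n=H_d(n,x)$. Next, fix $n\geq d$ and partition $S_{d,n}$ according to whether $\sigma$ fixes $n$. If $\sigma(n)=n$, then $\sigma$ restricted to $\{1,\dots,n-1\}$ is again a product of pairwise disjoint $d$-cycles, i.e.\ lies in $S_{d,n-1}$, and this restriction sets up a bijection from $\{\sigma\in S_{d,n}:\sigma(n)=n\}$ onto $S_{d,n-1}$ that decreases the number of fixed points by exactly one; so these $\sigma$ contribute $x\,G_d(n-1,x)$. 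If $\sigma(n)\neq n$, then $n$ lies on a unique $d$-cycle $\pi$ of $\sigma$ and $\sigma=\sigma_1\circ\pi$ with $\sigma_1$ a product of pairwise disjoint $d$-cycles on $\{1,\dots,n\}\setminus\mbox{supp}(\pi)$; after relabelling, $\sigma_1$ is an element of $S_{d,n-d}$. Since the $d$ points of $\mbox{supp}(\pi)$ are never fixed by $\sigma$, we have $\#\text{Fix}_n(\sigma)=\#\text{Fix}_{n-d}(\sigma_1)$, and as in the proof of \eqref{basicrec} the cycle $\pi$ (equivalently the ordered list $\sigma(n),\sigma^2(n),\dots,\sigma^{d-1}(n)$) can be chosen in $(n-1)_{(d-1)}$ ways; this case therefore contributes $(n-1)_{(d-1)}\,G_d(n-d,x)$. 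Adding the two contributions gives $G_d(n,x)=x\,G_d(n-1,x)+(n-1)_{(d-1)}\,G_d(n-d,x)$, which together with the base cases and the uniqueness of the solution of this linear recurrence forces $G_d(n,x)=H_d(n,x)$ for all $n\in\N$.

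As an alternative (or a sanity check) one can argue directly from the closed form $H_d(n,x)=\sum_{j=0}^{\lfloor n/d\rfloor}\frac{n!}{(n-dj)!\,j!\,d^j}x^{n-dj}$: a permutation in $S_{d,n}$ which is a product of exactly $j$ disjoint $d$-cycles has precisely $n-dj$ fixed points, and the number of such permutations is $\binom{n}{dj}\cdot\frac{(dj)!}{d^j\,j!}=\frac{n!}{(n-dj)!\,j!\,d^j}$, where $\frac{(dj)!}{d^j\,j!}$ counts the ways of arranging $dj$ labelled points into $j$ unordered $d$-cycles. Summing $x^{n-dj}$ weighted by these multiplicities reproduces the stated formula for $H_d(n,x)$.

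\textbf{Main obstacle.} Nothing here is deep; the only point requiring care is the verification that in the non-fixing case the assignment $\sigma\mapsto(\pi,\sigma_1)$ really is a bijection between $\{\sigma\in S_{d,n}:\sigma(n)\neq n\}$ and the set of pairs consisting of a $d$-cycle $\pi$ through $n$ and a product of disjoint $d$-cycles on the complement of $\mbox{supp}(\pi)$, and that it preserves $\#\text{Fix}$. This is precisely the content of the argument already given for \eqref{basicrec}, so it can be quoted rather than repeated.
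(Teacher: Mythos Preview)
Your proof is correct and follows essentially the same approach as the paper: both define the fixed-point generating polynomial combinatorially, verify the initial conditions $x^n$ for $n<d$, and derive the recurrence by splitting $S_{d,n}$ according to whether $\sigma(n)=n$, with the same bijective bookkeeping on fixed points. Your additional sanity check via the closed form (counting permutations with exactly $j$ disjoint $d$-cycles) is a pleasant extra that the paper does not include.
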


\begin{proof}
Let us write the right hand side of the above equality as $g_{d,n}(x)$. We will show that the sequence $(g_{d,n}(x))_{n\in\N}$ satisfies the same recurrence as the sequence $(H_d(n,x))_{n\in\N}$. This fact combined with obvious equality $g_{d,n}(x)=x^n$ for $n\in\{0,...,d-1\}$ will give us the statement of our theorem.

Let us fix $\sigma\in S_{d,n}$. If $\sigma(n)=n$ then $\sigma=\sigma_1$ for some $\sigma_1\in S_{d,n-1}$. The number of fixed points of $\sigma$ is equal to the number of fixed points of $\sigma_1$ increased by $1$, since $n$ is an extra fixed point of $\sigma$. Hence the summand $x^{\#\text{Fix}_n(\sigma)}$ in $g_{d,n}(x)$ reduces with the summand $x^{\#\text{Fix}_n(\sigma_1)+1}$ in $xg_{d,n-1}(x)$. If $\sigma(n)\neq n$ then $\sigma=\sigma_1\circ\pi$, where $\pi$ is a $d$-cycle containing $n$ and $\sigma_1$ is a product of pairwise disjoint $d$-cycles on a set $\{1,...,n\}\bs\mbox{supp}(\pi)$. Then $\sigma_1$ can be treated as a member of the set $S_{d,n-d}$ and $\#\text{Fix}_{n-d}(\sigma_1)=\#\text{Fix}_n(\sigma)$. Moreover, $\pi$ can be chosen in $(n-1)_{(d-1)}$ ways. Thus the summand $x^{\#\text{Fix}_n(\sigma)}$ in $g_{d,n}(x)$ reduces with the summand $x^{\#\text{Fix}_n(\sigma_1)}$ in $(n-1)_{(d-1)}g_{d,n-d}(x)$. Summing over all $\sigma\in S_{d,n}$ we obtain $g_{d,n}(x)=xg_{d,n-1}(x)+(n-1)_{(d-1)}g_{d,n-d}(x)$, which completes the proof.
\end{proof}

For given $d\in\N_{\geq 2}$, the sequence of polynomials $(H_{d}(n,x))_{n\in\N}$ contains more precise information concerning the number of permutations in $S_{n}$ which are product of disjoint $d$-cycles. However, because our primary object of study is the sequence $(H_{d}(n))_{n\in\N}$,
we offer only few results concerning this natural polynomial generalization. First we prove the following:

\begin{thm}\label{Hd(n,x)mod(d)}
Let $d\in\N_{\geq 2}$. Then the following congruence holds for any $n\in\N$:
\begin{itemize}
\item $H_d(n,x)\equiv x^n\pmod{d}$ if $d$ is a composite number greater than $4$;
\item $H_d(n,x)\equiv x^n+2\left\lfloor\frac{n}{4}\right\rfloor x^{n-4}=x^{n-4\cdot\left(\left\lfloor\frac{n}{4}\right\rfloor\pmod{2}\right)}(x^4+2)^{\left\lfloor\frac{n}{4}\right\rfloor\pmod{2}}\pmod{d}$ if $d=4$;
\item $H_d(n,x)\equiv x^{n\pmod{d}}(x-1)^{d\left\lfloor\frac{n}{d}\right\rfloor}\pmod{d}$ if $d$ is a prime number.
\end{itemize}
\end{thm}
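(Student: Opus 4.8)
The plan is to prove all three congruences by a single induction on $n$, using the defining recurrence $H_d(n,x)=xH_d(n-1,x)+(n-1)_{(d-1)}H_d(n-d,x)$ for $n\geq d$ together with the initial values $H_d(n,x)=x^n$ for $n\in\{0,\ldots,d-1\}$. Everything rests on reducing the falling factorial $(n-1)_{(d-1)}$ modulo $d$, and for that I would use the elementary observation that $n-1,n-2,\ldots,n-d+1$ is a block of $d-1$ consecutive integers, so modulo $d$ its residues are precisely the $d-1$ classes different from $n\bmod d$.

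For $d$ composite with $d>4$ I would invoke the fact used already in Section \ref{Section2}, namely $d\mid(d-1)!$, which together with $(d-1)!\mid(n-1)_{(d-1)}$ makes the recurrence collapse modulo $d$ to $H_d(n,x)\equiv xH_d(n-1,x)$; induction from the initial segment then gives $x^n$. For $d=p$ prime, the block $n-1,\ldots,n-p+1$ contains a multiple of $p$ when $p\nmid n$, so $(n-1)_{(p-1)}\equiv0\pmod p$, and otherwise has residues $1,\ldots,p-1$, so $(n-1)_{(p-1)}\equiv(p-1)!\equiv-1\pmod p$ by Wilson. Writing $n=kp+r$ with $0\leq r\leq p-1$, the case $r\geq1$ gives $H_p(n,x)\equiv xH_p(n-1,x)\equiv x\cdot x^{r-1}(x-1)^{pk}\pmod p$ by the induction hypothesis, which is exactly the claimed formula; the case $r=0$ uses both $n-1$ and $n-p$, giving $H_p(n,x)\equiv x\cdot x^{p-1}(x-1)^{p(k-1)}-(x-1)^{p(k-1)}=(x^p-1)(x-1)^{p(k-1)}$, and $(x-1)^p\equiv x^p-1\pmod p$ turns this into $(x-1)^{pk}$. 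The case $d=4$ is structurally identical: I would first check that $(n-1)(n-2)(n-3)\equiv2\pmod4$ precisely when $4\mid n$ (then $n-2$ is divisible by $2$ but not $4$, while $n-1,n-3$ are odd) and $\equiv0\pmod4$ otherwise; then, running the induction on $H_4(n,x)\equiv x^n+2\lfloor n/4\rfloor x^{n-4}$ and splitting on $n\bmod4$, the branch $4\mid n$ picks up the extra term $2H_4(n-4,x)$, which upgrades the coefficient $2(k-1)$ to $2k$ while the spurious $4(k-1)x^{n-8}$ term dies modulo $4$.

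I do not anticipate a real obstacle; the work is routine but requires care in two places. First, the three residue computations for $(n-1)_{(d-1)}\bmod d$ must be done correctly, including the small-$n$ edge cases. Second, in the $d=4$ case one must handle low indices where formal terms like $x^{n-4}$ or $x^{n-8}$ have negative exponents but zero coefficients; I would adopt the convention that such terms are $0$ whenever the exponent is negative, which is consistent with $H_4(n,x)=x^n$ for $n<4$. I would finish the $d=4$ part by noting that the two displayed expressions agree: when $\lfloor n/4\rfloor$ is even, $2\lfloor n/4\rfloor\equiv0\pmod4$ and both equal $x^n$, and when it is odd, both equal $x^n+2x^{n-4}=x^{n-4}(x^4+2)$.
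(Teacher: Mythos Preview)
Your proposal is correct and follows essentially the same approach as the paper: both proceed by induction on $n$ via the defining recurrence, reduce $(n-1)_{(d-1)}$ modulo $d$ in each of the three cases (using $d\mid(d-1)!$ for composite $d>4$, the $4\mid n$ versus $4\nmid n$ split for $d=4$, and Wilson's theorem for $d$ prime), and then verify the claimed closed forms. Your extra care with the negative-exponent convention and with the equivalence of the two displayed expressions in the $d=4$ case is a welcome addition; the paper handles those points more tersely.
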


\begin{proof}
We will proceed by induction on $n\in\N$. The statement of the theorem is true for $n\in\{0,...,d-1\}$. Let us suppose that $d>4$ is a composite number and $n\geq d$. Then the product of consecutive $d-1$ integers is divisible by $d$, thus by the recurrence formula and inductive hypothesis for $n-1$ we have
\begin{align*}
H_d(n,x)=xH_d(n-1,x)+(n-1)_{(d-1)}H_d(n-d,x)\equiv x\cdot x^{n-1}=x^n\pmod{d}.
\end{align*}

Suppose now that $d=4$ and $n\geq 4$. If $4\mid n$ then, by the recurrence defining polynomials $H_4(n,x)$, inductive hypothesis for $n-1$ and $n-4$ and the fact that $(n-1)_{(3)}\equiv 3!\equiv 2\pmod{4}$, we obtain the following:
\begin{align*}
& H_4(n,x) = xH_4(n-1,x)+(n-1)_{(3)}H_4(n-4,x)\\
& \equiv x^n+2\left\lfloor\frac{n-1}{4}\right\rfloor x^{n-4}+2\cdot\left(x^{n-4}+2\left\lfloor\frac{n-4}{4}\right\rfloor x^{n-8}\right)\equiv x^n+2\left\lfloor\frac{n-1}{4}\right\rfloor x^{n-4}+2\cdot x^{n-4}\\
& =x^n+2\left(\left\lfloor\frac{n-1}{4}\right\rfloor +1\right)x^{n-4}=x^n+2\left\lfloor\frac{n}{4}\right\rfloor x^{n-4}\pmod{4}.
\end{align*}

If $4\nmid n$ then, by the recurrence defining polynomials $H_4(n,x)$, inductive hypothesis for $n-1$ and the fact that among the numbers $n-3$, $n-2$, $n-1$ there is a one divisible by $4$, we obtain the following:
\begin{align*}
& H_4(n,x) = xH_4(n-1,x)+(n-1)_{(3)}H_4(n-4,x)\equiv x^n+2\left\lfloor\frac{n-1}{4}\right\rfloor x^{n-4}\\
& =x^n+2\left\lfloor\frac{n}{4}\right\rfloor x^{n-4}\pmod{4}.
\end{align*}
Since the polynomial $x^4+2$ is irreducible over the ring $\Z/4\Z$, thus $H_4(n,x)\equiv x^n+2\left\lfloor\frac{n}{4}\right\rfloor x^{n-4}=x^{n-4\cdot\left(\left\lfloor\frac{n}{4}\right\rfloor\pmod{2}\right)}(x^4+2)^{\left\lfloor\frac{n}{4}\right\rfloor\pmod{2}}\pmod{4}$ is a factorization of the polynomial $H_4(n,x)$ modulo $4$.

Assume now that $d$ is a prime number and $n\geq d$ and the statement is true for any non-negative integer less than $n$. If $d\mid n$ then
\begin{align*}
H_d(n,x) & = xH_d(n-1,x)+(n-1)\cdot ...\cdot (n-d+1)H_d(n-d,x)\\
&\equiv x^{1+(n-1)\pmod{d}}(x-1)^{d\left\lfloor\frac{n-1}{d}\right\rfloor}+(d-1)\cdot ...\cdot 1\cdot x^{(n-d)\pmod{d}}(x-1)^{d\left\lfloor\frac{n-d}{d}\right\rfloor}\\
& = x^{1+(d-1)}(x-1)^{d\left(\frac{n}{d}-1\right)}+(d-1)!(x-1)^{d\left(\frac{n}{d}-1\right)}\\
& \equiv x^d(x-1)^{n-d}-(x-1)^{n-d}=(x^d-1)(x-1)^{n-d}\equiv (x-1)^d(x-1)^{n-d}\\
& =(x-1)^n=(x-1)^{d\cdot\left\lfloor\frac{n}{d}\right\rfloor}\pmod{d}.
\end{align*}

If $d\nmid n$ then, by the recurrence defining polynomials $H_d(n,x)$, inductive hypothesis for $n-1$ and the fact that among the numbers $(n-d+1)$, ..., $(n-1)$ there is a one divisible by $d$, we obtain the following:
\begin{align*}
& H_d(n,x) = xH_d(n-1,x)+(n-1)\cdot ...\cdot (n-d+1)H_d(n-d,x)\\
&\equiv x^{1+(n-1\pmod{d})}(x-1)^{d\left\lfloor\frac{n-1}{d}\right\rfloor}\equiv x^{n\pmod{d}}(x-1)^{d\left\lfloor\frac{n}{d}\right\rfloor}\pmod{d}.
\end{align*}
\end{proof}

Now, we concentrate on the case $d=2$ and prove the following

\begin{thm}\label{positivity}
Let $U(n,x)=H_{2}(n-1,x)H_{2}(n+1,x)-H_{2}(n,x)^2$ for $n\in\N_{+}$.
\begin{enumerate}
\item We have $U(n,x)=V(n,x^2)$, where $V(n,x)\in\Z[x]$ is of degree $n-1$.

\item We have $V(1,x)=1, V(2,x)=x-1, V(3,x)=x^2+3$ and  for $n\geq 4$ the following recurrence relation is true:
\begin{equation}\label{relationV}
V(n,x)=(x+n-3)V(n-1,x)+(n-2)(x+n)V(n-2,x)-(n-3)(n-2)^2V(n-3,x).
\end{equation}
\item
We have the following identity
$$
\cal{V}(x,t)=\sum_{n=0}^{\infty}\frac{V(n+1,x)}{n!}t^{n}=\frac{e^{\frac{xt}{1-t}}}{(1+t)\sqrt{1-t^2}}.
$$
\item
Let us write $V(n,x)=\sum_{i=0}^{n-1}a(i,n)x^{i}$. Then
$$
\begin{array}{lll}
a(0,2n)=V(2n,0)=-\left(\frac{(2n)!}{2^{n}n!}\right)^{2}, &  & a(0,2n+1)=V(2n+1,0)=\frac{1}{2n+1}\left(\frac{(2n+1)!}{2^{n}n!}\right)^{2},\\
a(1,2n)=V'(2n,0)=-a(0,2n), &  & a(1,2n+1)=V'(2n+1,0)=0.
\end{array}
$$
and
$$
a(i,n)>0 \quad\mbox{for each}\quad i\in\{2,\ldots,n-1\}.
$$
\end{enumerate}
\end{thm}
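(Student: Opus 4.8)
The plan is to build a single identity linking the three-term recurrence of $H_{2}(n,x)$ to the generating function $\cal{V}$, and then deduce all four parts; it is cleanest to treat them in the order (1), (3), (2), (4). For part (1): by the closed formula $H_{2}(m,x)=\sum_{j}\frac{m!}{(m-2j)!\,j!\,2^{j}}x^{m-2j}$ every monomial in $H_{2}(m,x)$ has exponent congruent to $m$ modulo $2$, so both $H_{2}(n-1,x)H_{2}(n+1,x)$ and $H_{2}(n,x)^{2}$ are polynomials in $x^{2}$, with integer coefficients; hence $U(n,x)\in\Z[x^{2}]$, and the polynomial $V(n,x)$ determined by $U(n,x)=V(n,x^{2})$ lies in $\Z[x]$. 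Both products are monic of degree $2n$, so the $x^{2n}$-terms cancel, and the coefficient of $x^{2n-2}$ in $U(n,x)$ equals $\binom{n-1}{2}+\binom{n+1}{2}-2\binom{n}{2}=1$; therefore $\deg_{x}U(n,x)=2n-2$ and $V(n,x)$ is monic of degree $n-1$.

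The bridge is the identity $H_{2}(n-1,x)^{2}=U(n,x)+(n-1)U(n-1,x)$ for $n\geq 1$, equivalently $U(n+1,x)=H_{2}(n,x)^{2}-n\,U(n,x)$. I would prove it by elimination from $H_{2}(m,x)=xH_{2}(m-1,x)+(m-1)H_{2}(m-2,x)$: the recurrence at $m=n+1$ gives $U(n,x)=xH_{2}(n-1,x)H_{2}(n,x)+nH_{2}(n-1,x)^{2}-H_{2}(n,x)^{2}$, subtracting $H_{2}(n,x)^{2}=xH_{2}(n-1,x)H_{2}(n,x)+(n-1)H_{2}(n-2,x)H_{2}(n,x)$ leaves $U(n,x)=nH_{2}(n-1,x)^{2}-(n-1)H_{2}(n-2,x)H_{2}(n,x)$, and inserting $H_{2}(n-2,x)H_{2}(n,x)=U(n-1,x)+H_{2}(n-1,x)^{2}$ finishes it. For part (3), put $\Phi(x,t)=\sum_{n\geq 0}\frac{H_{2}(n,x)^{2}}{n!}t^{n}$; multiplying $U(n+1,x)=H_{2}(n,x)^{2}-nU(n,x)$ by $t^{n}/n!$ and summing gives, after matching the variable of $V$ with $x^{2}$, the relation $(1+t)\cal{V}=\Phi$. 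To evaluate $\Phi$ I would use the representation $H_{2}(n,x)=\frac{1}{\sqrt{2\pi}}\int_{\R}(x+s)^{n}e^{-s^{2}/2}\,ds$ (immediate from $\sum_{n}H_{2}(n,x)t^{n}/n!=e^{xt+t^{2}/2}$): applying it to one factor of $H_{2}(n,x)^{2}$, resumming the other against $\sum_{n}H_{2}(n,x)w^{n}/n!=e^{xw+w^{2}/2}$ with $w=t(x+s)$, and completing the square in $s$ yields the Mehler-type closed form $\Phi(x,t)=e^{x^{2}t/(1-t)}/\sqrt{1-t^{2}}$; hence $\cal{V}(x,t)=e^{xt/(1-t)}/\bigl((1+t)\sqrt{1-t^{2}}\bigr)$.

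For part (2), logarithmic differentiation of this closed form yields
\[
(1-t-t^{2}+t^{3})\,\partial_{t}\cal{V}=\bigl((x-1)+(x+3)t-2t^{2}\bigr)\cal{V};
\]
writing $\cal{V}=\sum_{m\geq 0}\frac{V(m+1,x)}{m!}t^{m}$, the value $V(1,x)=\cal{V}(x,0)=1$ together with comparison of the coefficients of $t^{m}$ give $V(2,x)=x-1$, $V(3,x)=x^{2}+3$ (from $m=0,1$) and the recurrence (\ref{relationV}) (from $m\geq 2$). For the evaluations in part (4): $a(0,n)=U(n,0)=H_{2}(n-1,0)H_{2}(n+1,0)-H_{2}(n,0)^{2}$, read off from $H_{2}(2m,0)=\frac{(2m)!}{2^{m}m!}$ and $H_{2}(2m+1,0)=0$; and $a(1,n)=[x^{2}]U(n,x)=\frac{1}{2}\,\partial_{x}^{2}U(n,x)\big|_{x=0}$, where $\partial_{x}H_{2}(m,x)=mH_{2}(m-1,x)$ gives $\partial_{x}U(n,x)=(n-1)\bigl(H_{2}(n-2,x)H_{2}(n+1,x)-H_{2}(n-1,x)H_{2}(n,x)\bigr)$, so one further differentiation and evaluation at $0$ (with the same data and a short factorial manipulation) give $V'(2n,0)=-V(2n,0)$ and $V'(2n+1,0)=0$.

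It remains to prove positivity of $a(i,n)$ for $2\leq i\leq n-1$. From part (3),
\[
[x^{i}]\,\cal{V}(x,t)=\frac{1}{i!}\Bigl(\frac{t}{1-t}\Bigr)^{i}\frac{1}{(1+t)\sqrt{1-t^{2}}}=\frac{t^{i}}{i!\,(1-t)^{i-1}(1-t^{2})^{3/2}},
\]
and for $i\geq 2$ both $(1-t)^{-(i-1)}$ and $(1-t^{2})^{-3/2}$ are power series with nonnegative coefficients, so every coefficient of their product is strictly positive. Since $a(i,n)=(n-1)!\,[t^{n-1}][x^{i}]\cal{V}=\frac{(n-1)!}{i!}\,[t^{n-1-i}]\,(1-t)^{-(i-1)}(1-t^{2})^{-3/2}$, this is strictly positive exactly when $n-1-i\geq 0$, i.e.\ $a(i,n)>0$ for $2\leq i\leq n-1$. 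The main obstacle will be the computational bookkeeping — proving the bridge identity and matching the ODE against (\ref{relationV}) cleanly, and fixing the precise normalization in the Mehler computation; the one genuine idea is the reduction $(1+t)\cal{V}=\Phi$ to the diagonal generating function $\sum_{n}H_{2}(n,x)^{2}t^{n}/n!$.
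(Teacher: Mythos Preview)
Your argument is correct. It reaches the same generating function and the same positivity conclusion as the paper, but the route to parts (2) and (3) is genuinely different and, in places, cleaner.

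The paper first verifies the recurrence (\ref{relationV}) directly (appealing to holonomicity and leaving the check as ``a straightforward (but a bit tedious) exercise''), then converts (\ref{relationV}) into a first-order ODE for $\cal{V}$ and solves it. You reverse this: you prove the bridge identity $H_{2}(n-1,x)^{2}=U(n,x)+(n-1)U(n-1,x)$, which yields $(1+t)\cal{V}(x^{2},t)=\Phi(x,t):=\sum_{n}H_{2}(n,x)^{2}t^{n}/n!$, and evaluate $\Phi$ by a Mehler-type Gaussian integral to get the closed form of $\cal{V}$ directly; then (\ref{relationV}) drops out of the ODE. This buys you an actual derivation of (\ref{relationV}) rather than a verification, and the bridge identity is a pleasant structural fact not recorded in the paper. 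The cost is the analytic computation of $\Phi$, which is standard but has to be done carefully.

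For part (4) the two approaches are close in spirit. The paper computes $a(0,n)$ and $a(1,n)$ by setting up and solving short recurrences obtained from (\ref{relationV}); you read them off directly from $U(n,0)$ and $\partial_{x}^{2}U(n,0)$ using $\partial_{x}H_{2}(m,x)=mH_{2}(m-1,x)$, which is a bit more explicit. For positivity, both arguments exploit the factorisation of $\cal{V}$ into series with nonnegative coefficients: the paper differentiates once in $x$ and writes $V'(n+2,x)/(n+1)$ as a convolution involving Lah numbers, whereas you extract $[x^{i}]\cal{V}=\frac{t^{i}}{i!\,(1-t)^{i-1}(1-t^{2})^{3/2}}$ and observe that for $i\geq 2$ both factors on the right have strictly positive Taylor coefficients. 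Your version makes transparent exactly where the threshold $i\geq 2$ enters (at $i=1$ the factor $(1-t)^{-(i-1)}$ degenerates to $1$ and odd-index coefficients vanish, matching $a(1,2n+1)=0$).
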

\begin{proof}
From the expression for $H_{2}(n,x)$ we see that it is an odd polynomial if $n\equiv 1\pmod{2}$ and an even polynomial in case of $n\equiv 0\pmod{2}$. We thus get that the polynomial $U(n,x)=H_{2}(n-1,x)H_{2}(n+1,x)-H_{2}(n,x)^2$ is an even polynomial and for each $n\in\N_{+}$ we can write $U(n,x)=V(n,x^2)$ for some polynomial $V(n,x)$. The expressions for $V(n,x)$ for $n\in\{1, 2, 3\}$ are clear from the shape of $H_{2}(n,x)$ for $n\in\{1,2,3,4\}$.

Let us observe that the sequence $(H_{2}(n,x))_{n\in\N}$ is a holonomic sequence of polynomials. Indeed, this is consequence of the fact that the generating function $\cal{H}_{d}(x,t)$ (as a function in variable $t$) is $D$-finite for each $d$, i. e. is a solution of some linear differential equation with polynomial coefficients. We recall that a sequence is holonomic if satisfies the recurrence relation with coefficients being polynomials in $n$. It is well known that if $(f_{n})_{n\in\N}, (g_{n})_{n\in\N}$ are holonomic sequences then the sequences $(f_{n}\pm g_{n})_{n\in\N}, (f_{n}g_{n})_{n\in\N}$ are holonomic too. It is straightforward (but a bit tedious) exercise to check that the sequence $(U(n,x))_{n\in\N_{+}}$ satisfies the relation given in the statement of our theorem (with $x$ replaced by $x^2$) and hence the sequence $(V(n,x))_{n\in\N_{+}}$ satisfies exactly the relation presented above.

If we define $\cal{V}(x,t)=\sum_{n=0}^{\infty}\frac{V(n+1,x)}{n!}t^{n}$, then it is an easy task to produce the differential equation satisfied by the function $\cal{V}(x,t)$. Indeed, applying standard methods and the relation (\ref{relationV}) we check that
$$
(1-t)^2(1+t)\frac{\partial \cal{V}(x,t)}{\partial t}+((1-3t+2t^2)-(1+t)x)\cal{V}(x,t)=0.
$$
The initial condition $\cal{V}(x,0)=1$ allow us to compute the unique solution of the above differential equation in the form
$$
\cal{V}(x,t)=\frac{e^{\frac{xt}{1-t}}}{(1+t)\sqrt{1-t^2}},
$$
which is exactly the expression presented in the statement of our theorem. Now, it is an easy task to compute the coefficient $a(i,n)$ for $i=0,1$. We have $a(0,1)=1, a(0,2)=-1, a(1,2)=1, a(0,3)=3, a(1,3)=0$ and our formulas are true in case of $n=1, 2, 3$. We see that the sequence $(a(0,n))_{n\in\N}$ satisfies the recurrence relation of degree 3 given by
$$
a(0,n)=(n-3)a(0,n-1)+(n-2)na(0,n-2)-(n-3)(n-2)^2a(0,n-3).
$$
The above relation comes from the relation (\ref{relationV}) by taking $x=0$. However, a closer look reveals that our sequence satisfies simpler recurrence relation
$$
a(0,n)=-a(0,n-1)+(n-1)(n-2)a(0,n-2),
$$
and a direct check shows that the expressions for $a(0,2n), a(0,2n+1)$ presented in the statement satisfy the above recurrence relation. Similarly, by differentiating the recurrence relation (\ref{relationV}) with respect to $x$ and taking the limit $x\rightarrow 0$ we get that the sequence $(a(1,n))_{n\in\N_{\geq 2}}$ satisfies the recurrence relation
$$
a(1,n)=a(0,n)+(n-1)a(1,n-1)+(n-2)a(0,n-2)+(n-2)na(1,n-2)-(n-3)^2(n-1)a(1,n-3).
$$
A direct check confirms that the expressions for $a(1,2n), a(1,2n+1)$ given in the statement are correct. We omit simple details.

Having proved the expressions for $a(0,n), a(1,n)$ we are ready to prove positivity of $a(i,n)$ for $i\in\{2,\ldots,n-1\}$. First, we get rid of the $0$-th coefficient in $V(n+1,x)$ by differentiating this polynomial with respect to $x$. Thus, by differentiating the generating function $\cal{V}(x,t)$ with respect to $x$, we arrive to the identity
\begin{equation}\label{identity}
\frac{\partial \cal{V}(x,t)}{\partial x}=\sum_{n=0}^{\infty}\frac{V'(n+1,x)}{n!}t^{n}=\frac{te^{\frac{xt}{1-t}}}{(1-t^2)\sqrt{1-t^2}}.
\end{equation}
Dividing the last equality by $t$, comparing expansions on both sides of the above identity, we see that the polynomial $V'(n+2,x)/(n+1)$ is just the binomial convolution of the integer sequence $(h_{n})_{n\in\N}$ and the sequence of polynomials $(R_{n}(x))_{n\in\N}$, where
$$
F_{1}(t^2)=\frac{1}{(1-t^2)\sqrt{1-t^2}}=\sum_{n=0}^{\infty}\frac{h_{n}}{n!}t^{2n},\quad\quad F_{2}(x,t)=e^{\frac{xt}{1-t}}=\sum_{n=0}^{\infty}\frac{R_{n}(x)}{n!}t^{n}.
$$
More precisely,
\begin{equation}\label{formula}
\frac{V'(n+2,x)}{n+1}=\sum_{i=0}^{\left\lfloor\frac{n}{2}\right\rfloor}\frac{(2i)!}{i!}\binom{n}{2i}h_{i}R_{n-2i}(x).
\end{equation}
The power series expansion of $F_{1}(t)$ is well known and we get that
$$
h_{n}=\frac{(2n+1)!}{2^{2n}n!}.
$$
In consequence, we see that the coefficients of the power series expansion of $F_{1}(t^2)$ are non-negative. Next, from the definition of $F_{2}(t)$ we see that $R_{0}(x)=1$. Moreover, we have that
$$
\frac{\partial F_{2}(x,t)}{\partial t}=\frac{x}{(1-t)^2}F_{2}(t).
$$
The above equality, together with the power series expansion $1/(1-t)^2=\sum_{n=0}^{\infty}(n+1)t^{n}$ allow us to get the recurrence relation satisfied by the sequence $(R_{n}(x))_{n\in\N}$. Indeed, by comparing the coefficients near $t^{n}$ we easily get
$$
R_{n}(x)=x\sum_{i=0}^{n-1}\binom{n}{i}(i+1)!R_{n-1-i}(x).
$$
Let us observe that $R_{n}(0)=0$ for $n\geq 1$ and $x=0$ is a simple root. Indeed, the simplicity of the root $x=0$ is clear from the relation
$$
\sum_{n=0}^{\infty}\frac{R_{n}'(x)}{n!}t^{n}=\frac{\partial F_{2}(x,t)}{\partial x}=\frac{t}{1-t}F_{2}(x,t).
$$
Thus, $R_{n}'(0)=1$ for $n\geq 1$. Moreover, from the recurrence relation satisfied by the sequence $(R_{n}(x))_{n\in\N}$, it is clear that $\op{deg}R_{n}=n$ and by writing $R_{n}(x)=\sum_{i=0}^{n}b(i,n)x^{i}$ we get $b(i,n)>0$ for $i\in\{1,\ldots,n\}$.
Finally, by comparing the coefficients near $x^{i}, i\in\{1,\ldots,n-1\}$, on both sides of the formula (\ref{formula}) we get the identity
\begin{equation}\label{idenL}
\frac{(i+1)a(i+1,n+2)}{n+1}=\sum_{j=0}^{\left\lfloor\frac{n-i}{2}\right\rfloor}\frac{(2j)!}{j!}\binom{n}{2j}h_{j}b(i,n-2j).
\end{equation}
In consequence $a(i,n)>0$ for each $i\in\{1,...,n-1\}$.
\end{proof}

\begin{rem}
{\rm Let us note that the sequence of polynomials $(R_{n}(x))_{n\in\N}$ coming from the power series expansion
$$
e^{\frac{xt}{1-t}}=\sum_{n=0}^{\infty}\frac{R_{n}(x)}{n!}t^{n},
$$
is also an interesting object. Indeed, if we write $R_{n}(x)=\sum_{i=0}^{n}b(i,n)x^{i}$, then the double array of coefficients $(b(i,n))$, where $n\in\N$ and $i\in\{0,\ldots,n\}$, can be easily found in OEIS database as the sequence A271703 \cite{Sloan}. The sequence is called the unsigned Lah numbers and has closed form $b(i,n)=\binom{n-1}{i-1}n!/i!$ for $n\in\N_{+}$ and $i\in\{1,\ldots,n-1\}$. It has many combinatorial interpretations. More precisely, the number $b(i,n)$ counts: (1) partially ordered sets on $n$ elements that consist entirely of $k$ chains; (2) number of ways to split the set $\{1,\ldots,n\}$ into an ordered collection of $n+1-i$ non-crossing nonempty sets; and (3) Dyck $n$-paths with $n+1-i$ peaks labeled $1,2,\ldots, n+1-i$ in some order, see \cite{Lus}. It would be nice to have combinatorial explanation of the identity (\ref{idenL}).
}
\end{rem}

\begin{rem}
{\rm The positivity of coefficients of the polynomial $V(n,x)$ is a bit unexpected property and the question arises whether similar phenomena hold for the case of $d\neq 2$. It seems that the direct generalization does not work. However, based on some numerical experiments we are able to formulate the following general
\begin{conj}
Let $d\in\N_{\geq 2}$ and define $U_{d}(n,x)=H_{d}(n-d,x)H_{d}(n+d,x)-H_{d}(n)^2$. Then, for $n\geq d$ there is a polynomial $V_{d}(n,x)\in \Z[x]$ of degree $\op{deg}V_{d}(n,x)=2\left\lfloor\frac{n}{d}\right\rfloor-1$ such that
$$
V_{d}(n,x^{d})=x^{-2(n\pmod{d})}U_{d}(n,x)
$$
and all coefficients of $V_{d}(n,x)$ are positive. In particular, for each $d\in\N_{\geq 2}$ and $n\geq d$ we have
$$
H_{d}(n-d)H_{d}(n+d)\geq H_{d}(n)^2.
$$
\end{conj}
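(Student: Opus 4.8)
The plan is to pass to the variable $y=x^{d}$, isolate the polynomial whose coefficient‑positivity is the real content, settle its degree, and then attack the positivity either by an induction on $n$ or, in the spirit of the authors' proof of Theorem~\ref{positivity}, by a generating‑function computation. First the reduction. The exact formula gives $H_{d}(m,x)=x^{m\bmod d}\,\widetilde{H}_{d}(m,x^{d})$, where
$$\widetilde{H}_{d}(m,y)=\sum_{j=0}^{\lfloor m/d\rfloor}\frac{m!}{(m-dj)!\,j!\,d^{j}}\,y^{\lfloor m/d\rfloor-j}$$
is monic of degree $\lfloor m/d\rfloor$ with positive integer coefficients. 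Since $(n\pm d)\bmod d=n\bmod d$, this yields $U_{d}(n,x)=x^{2(n\bmod d)}V_{d}(n,x^{d})$ with
$$V_{d}(n,y)=\widetilde{H}_{d}(n-d,y)\,\widetilde{H}_{d}(n+d,y)-\widetilde{H}_{d}(n,y)^{2}\in\Z[y].$$
The three factors being monic of degrees $q-1,\,q+1,\,q$ where $q=\lfloor n/d\rfloor$, the leading $y^{2q}$ cancels, $\op{deg}V_{d}(n,y)\le 2q-1$, and a direct count of the next coefficient gives
$$[y^{2q-1}]\,V_{d}(n,y)=\tfrac1d\big((n+d)_{(d)}+(n-d)_{(d)}-2(n)_{(d)}\big),$$
which is positive for all $n\ge d$ because $t\mapsto (t)_{(d)}=t(t-1)\cdots(t-d+1)$ is strictly convex on $[d-1,\infty)$ (convexity handles $n\ge 2d-1$; for $d\le n\le 2d-2$ use $(n-d)_{(d)}=0$ and $(n+d)_{(d)}/(n)_{(d)}\ge\tfrac{2d-1}{d-1}>2$). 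Hence $\op{deg}V_{d}(n,y)=2\lfloor n/d\rfloor-1$, and all that remains is to show that every coefficient of $V_{d}(n,y)$ is positive.

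One clean handle on this is an exact ``two‑sample'' identity. Let $L$ be the linear functional on $\Q[w]$ with $L[w^{m}]=H_{d}(m,0)$, so that $L[w^{dk}]=(dk)!/(k!\,d^{k})>0$, $L[w^{m}]=0$ for $d\nmid m$, and $H_{d}(m,x)=L[(x+w)^{m}]$; with two independent copies $L,L'$ a short computation gives
$$x^{2(n\bmod d)}V_{d}(n,x^{d})=\tfrac12\,(L\otimes L')\Big[(x+w)^{n-d}(x+w')^{n-d}\big((x+w)^{d}-(x+w')^{d}\big)^{2}\Big].$$
Expanding the right‑hand side, and using $\binom{m}{d\alpha}(d\alpha)!/(\alpha!\,d^{\alpha})=m!/((m-d\alpha)!\,\alpha!\,d^{\alpha})$, turns this into the closed formula
$$[y^{k}]\,V_{d}(n,y)=\sum_{\alpha+\beta=2q-k}\Big(\tbinom{n+d}{d\alpha}\tbinom{n-d}{d\beta}+\tbinom{n-d}{d\alpha}\tbinom{n+d}{d\beta}-2\tbinom{n}{d\alpha}\tbinom{n}{d\beta}\Big)\frac{(d\alpha)!\,(d\beta)!}{2\,\alpha!\,\beta!\,d^{\alpha+\beta}},$$
so the conjecture is reduced to a single positivity statement for this alternating sum. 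It is genuinely a statement about the sum: for $d=2$, $n=3$ the summand at $(\alpha,\beta)=(1,1)$ equals $-18$. When $d=2$ the functional $L$ is expectation against a standard Gaussian, which supplies rotation‑invariance and makes the case $d=2$ tractable; for $d\ge 3$ there is no underlying positive measure (already $L[w^{2}]=0$), so a pointwise or sum‑of‑squares argument is unavailable.

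A more hands‑on attack is induction on $n$. The recurrence for $H_{d}$ gives $\widetilde{H}_{d}(n,y)=\widetilde{H}_{d}(n-1,y)+(n-1)_{(d-1)}\widetilde{H}_{d}(n-d,y)$ when $d\nmid n$ and $\widetilde{H}_{d}(n,y)=y\,\widetilde{H}_{d}(n-1,y)+(n-1)_{(d-1)}\widetilde{H}_{d}(n-d,y)$ when $d\mid n$; iterating $d$ steps expresses $\widetilde{H}_{d}(n+d,y)$ as $(y+c_{1})\widetilde{H}_{d}(n,y)+\sum_{i=2}^{d}\gamma_{i}(y)\widetilde{H}_{d}(n-i+1,y)$ with $c_{1}=(n+d-1)_{(d-1)}$ and each $\gamma_{i}(y)$ a monomial in $y$ whose coefficient is a positive falling factorial in $n$, and likewise $\widetilde{H}_{d}(n,y)$ in terms of $\widetilde{H}_{d}(n-d,y),\dots,\widetilde{H}_{d}(n-d-i+1,y)$. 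Substituting both into $V_{d}(n,y)$ produces a term equal to a positive multiple of $\widetilde{H}_{d}(n-d,y)\widetilde{H}_{d}(n,y)$ (coefficientwise nonnegative) plus a remainder that reorganizes into a sum of differences $\widetilde{H}_{d}(a,y)\widetilde{H}_{d}(b,y)-\widetilde{H}_{d}(a',y)\widetilde{H}_{d}(b',y)$ with $a+b=a'+b'$ and $\{a',b'\}$ more spread out than $\{a,b\}$. The plan is to induct on $n$ carrying the stronger assertion that all such ``more‑spread‑minus‑less‑spread'' differences have nonnegative coefficients (and strictly positive ones once the spread is a multiple of $d$), the stated conjecture being the instance $(a,b,a',b')=(n,n,n-d,n+d)$ combined with the leading coefficient above; the delicate point is that the remainder is not obviously dominated by the positive main term, so the naive strengthening does not quite close.

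The main obstacle, on either route, is thus controlling the competition between the positive and the (in the explicit sum, visibly present) negative contributions. The generating‑function method of Theorem~\ref{positivity} offers a systematic fallback: $n\mapsto U_{d}(n,x)$ is holonomic, hence in each residue class $m\mapsto V_{d}(dm+r,y)$ satisfies a linear recurrence with polynomial coefficients; translating to an ODE for $\sum_{m}V_{d}(dm+r,y)\,t^{m}/m!$ and solving it gives a closed form (for $d=2$ of the type $e^{yt/(1-t)}/((1+t)\sqrt{1-t^{2}})$), after which the conjecture is exactly nonnegativity of that function's Taylor coefficients. The crux there is whether the algebraic ``denominator'' part stays as benign as $1/((1+t)\sqrt{1-t^{2}})$ for $d\ge 3$ — which is far from clear, and is where I expect the real difficulty to lie.
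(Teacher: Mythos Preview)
The statement you are attempting to prove is a \emph{conjecture} in the paper, not a theorem; the paper gives no proof for general $d$. It only remarks that the case $d=2$ can be settled by the generating-function method of Theorem~\ref{positivity}, via the closed form
$$
\sum_{n\ge 0}\frac{U_{2}(n,\sqrt{x})}{n!}\,t^{n}=\frac{2\,e^{\frac{tx}{1-t}}(2x+1-t)}{(1-t)(1-t^{2})\sqrt{1-t^{2}}},
$$
and explicitly says that ``it seems that this approach can not be used'' for $d\ge 3$. Your proposal is consistent with this: none of your three approaches closes, and you acknowledge this yourself (the induction ``does not quite close''; for the generating-function route ``the real difficulty'' is unclear). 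So there is no proof here to compare, on either side.

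What you \emph{do} establish is correct and nontrivial: the factorization $U_{d}(n,x)=x^{2(n\bmod d)}V_{d}(n,x^{d})$ with $V_{d}(n,y)\in\Z[y]$, the exact degree $\deg V_{d}(n,y)=2\lfloor n/d\rfloor-1$, and positivity of the leading coefficient via convexity of $t\mapsto(t)_{(d)}$. That already settles the structural part of the conjecture (existence of $V_{d}$ with the right degree). Your two-sample identity
$$
U_{d}(n,x)=\tfrac{1}{2}\,(L\otimes L')\big[(x+w)^{n-d}(x+w')^{n-d}\big((x+w)^{d}-(x+w')^{d}\big)^{2}\big]
$$
is a genuinely new observation not in the paper; for $d=2$, where $L$ is Gaussian expectation, it gives an immediate probabilistic proof, and for general $d$ it at least explains \emph{why} positivity is plausible even though $L$ is no longer a positive functional. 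Your third approach (holonomic recurrence $\to$ ODE $\to$ closed form) is exactly the route the paper singles out for $d=2$ and doubts for $d\ge 3$.

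The gap is simply that the positivity of \emph{all} coefficients of $V_{d}(n,y)$ for $d\ge 3$ remains unproved --- which matches the status in the paper.
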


It is possible to prove the above conjecture in the case $d=2$ by using the same approach as in the proof of Theorem \ref{positivity}. More precisely, one can prove the identity
$$
\cal{V}_{2}(x,t)=\frac{2 e^{\frac{tx}{1-t}} (2x+1-t)}{(1-t)(1-t^2) \sqrt{1-t^2}}=\sum_{n=0}^{\infty}\frac{U_{2}(n,\sqrt{x})}{n!}t^{n},
$$
and careful analysis of the convolution form of the polynomial $U_{2}(n,\sqrt{x})$ allows to prove that if $n\equiv 1\pmod{2}$, then all coefficients but $0$th are positive; in case of $n\equiv 0\pmod{2}$ all coefficients are positive. However, it seems that this approach can not be used in order to get the positivity of the coefficients of the polynomial $V_{d}(n,x)$ (or equivalently, the non-negativity of coefficients of the polynomial $U_{d}(n,x)$).

It is clear that our discussion is connected with the concept of concavity of sequences. More precisely, let us recall that the sequence ${\bf a}=(a_{n})_{n\in\N}$ is called {\it logarithmically concave} sequence, or a log-concave sequence for short, if $a_{n}^2\geq a_{n-1}a_{n+1}$ holds for all $n\geq k$ for some $k$. If the opposite inequality is satisfied for $n\geq k$, then the sequence is called {\it logarithmically convex} (log-convex for short). Let us observe that if the sequence ${\bf a}$ is positive, then the log-concavity of ${\bf a}$ implies the log-convexity of the sequence ${\bf a}^{-1}=(a_{n}^{-1})_{n\in\N}$ (and vice-versa of course). As an immediate consequence of Theorem \ref{positivity}, we get the following

\begin{prop}
If $x\geq 1$, then the sequence $(H_{2}(n,x))_{n\in\N}$ is log-convex. In particular, we have
$$
H_{2}(n)^2\leq H_{2}(n-1)H_{2}(n+1)
$$
and the sequence $(H_{2}(n))_{n\in\N}$ is log-convex.
\end{prop}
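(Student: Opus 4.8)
The plan is to read the Proposition off directly from Theorem~\ref{positivity}. Write $U(n,x)=H_{2}(n-1,x)H_{2}(n+1,x)-H_{2}(n,x)^{2}=V(n,x^{2})$ with $V(n,y)=\sum_{i=0}^{n-1}a(i,n)y^{i}$ as there. Since $x\geq 1$ forces $x^{2}\geq 1$, it suffices to prove that $V(n,y)\geq 0$ for every $n\in\N_{+}$ and every real $y\geq 1$; granting this, $U(n,x)\geq 0$, i.e. $H_{2}(n,x)^{2}\leq H_{2}(n-1,x)H_{2}(n+1,x)$ for all $n\in\N_{+}$, and because all coefficients of $H_{2}(n,x)$ are positive integers we have $H_{2}(n,x)\geq 1>0$ for $x\geq 1$, so this is exactly the statement that $(H_{2}(n,x))_{n\in\N}$ is log-convex (with $k=1$). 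Putting $x=1$ then yields $H_{2}(n)^{2}\leq H_{2}(n-1)H_{2}(n+1)$ and the log-convexity of $(H_{2}(n))_{n\in\N}$.

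To prove $V(n,y)\geq 0$ for $y\geq 1$ I would split according to the parity of $n$, using the coefficient data in part~(4) (together with the explicit values in part~(2)) of Theorem~\ref{positivity}. If $n$ is odd then $a(0,n)>0$, $a(1,n)=0$ and $a(i,n)>0$ for $2\leq i\leq n-1$; hence every coefficient of $V(n,y)$ is non-negative and $V(n,y)\geq a(0,n)>0$ for all $y\geq 0$. If $n$ is even then $a(0,n)<0$, $a(1,n)=-a(0,n)$ and $a(i,n)>0$ for $2\leq i\leq n-1$, so
$$V(n,y)=a(0,n)(1-y)+\sum_{i=2}^{n-1}a(i,n)y^{i};$$
for $y\geq 1$ the first term is a product of the two non-positive numbers $a(0,n)$ and $1-y$, hence $\geq 0$, while every remaining term is $\geq 0$, so $V(n,y)\geq 0$. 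The small indices $n\in\{1,2,3\}$, where some of the ranges $\{2,\ldots,n-1\}$ are empty, are handled directly by $V(1,y)=1$, $V(2,y)=y-1$, $V(3,y)=y^{2}+3$.

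I expect no serious obstacle: the substance is already contained in Theorem~\ref{positivity}, and what remains is the elementary observation that, for $y\geq 1$, the only negative coefficient $a(0,n)$ (which occurs precisely when $n$ is even) is cancelled in sign by the term $a(1,n)y=-a(0,n)y$. The only point needing a little care is to keep the edge cases $n\leq 3$ separate from the generic argument, because the positivity statement of Theorem~\ref{positivity}(4) only concerns indices $i$ with $2\leq i\leq n-1$.
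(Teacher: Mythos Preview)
Your proof is correct and follows exactly the approach the paper intends: the paper states this Proposition as ``an immediate consequence of Theorem~\ref{positivity}'' and gives no further argument, and you have supplied precisely the details that make the consequence explicit, splitting on the parity of $n$ and using the sign information on $a(0,n)$, $a(1,n)$ together with the positivity of the higher coefficients. One tiny wording quibble: the coefficients of $H_{2}(n,x)$ are non-negative (not all strictly positive, since every other one vanishes), but your conclusion $H_{2}(n,x)\geq x^{n}\geq 1>0$ for $x\geq 1$ is of course still valid.
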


Let us note that from the work of Bender and Canfield \cite{BenCan} we know that for each $d\in\N_{\geq 2}$ and any given $x>0$, there is an integer $N_{d}(x)$ such that the sequence $(H_{d}(n,x))_{n\geq N_{d}(x)}$ is log-convex. However, the question concerning the non-negativity of coefficients of the polynomial $U_{d}(n,x)$ is of different nature. In fact, it is not even clear whether for given positive $x$, the inequality $U_{d}(n,x)\geq 0$ is true.}
\end{rem}

One can also note that the non-zero coefficients of the polynomial $U_{2}(n,x)$ are not too small. Indeed, this is a consequence of the following
\begin{prop}\label{Hd(n,x)mod(d-1)!}
Let $d\in\N_{\geq 2}$ be fixed. Then, for $n\geq d$, the following congruence is true
$$
H_{d}(n-d,x)H_{d}(n+d,x)\equiv H_{d}(n,x)^2\pmod{(d-1)!}.
$$
\end{prop}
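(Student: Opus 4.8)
The plan is to reduce everything to a single congruence: that $H_d(n,x)\equiv x^n\pmod{(d-1)!}$ for every $n\in\N$. Once this is in hand, the proposition follows immediately, since for $n\geq d$ (so that $n-d\geq 0$ and $H_d(n-d,x)$ is the genuine polynomial rather than the zero convention) we get
$$
H_d(n-d,x)H_d(n+d,x)\equiv x^{n-d}\cdot x^{n+d}=x^{2n}\equiv\left(x^{n}\right)^{2}\equiv H_d(n,x)^{2}\pmod{(d-1)!}.
$$

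To prove the auxiliary congruence I would argue by strong induction on $n$. For $n\in\{0,\ldots,d-1\}$ we have $H_d(n,x)=x^{n}$ by definition, so there is nothing to prove. For $n\geq d$, recall the defining recurrence
$$
H_d(n,x)=xH_d(n-1,x)+(n-1)_{(d-1)}H_d(n-d,x).
$$
The falling factorial $(n-1)_{(d-1)}=(n-1)(n-2)\cdot\ldots\cdot(n-d+1)$ is a product of $d-1$ consecutive integers, hence divisible by $(d-1)!$. Thus modulo $(d-1)!$ the recurrence collapses to $H_d(n,x)\equiv xH_d(n-1,x)$, and by the induction hypothesis applied to $n-1$ we obtain $H_d(n,x)\equiv x\cdot x^{n-1}=x^{n}\pmod{(d-1)!}$, which completes the induction.

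There is no genuine obstacle here: the only point requiring a moment's thought is the divisibility $(d-1)!\mid (n-1)_{(d-1)}$, which is the classical fact that a product of $k$ consecutive integers is a multiple of $k!$ (it equals $k!$ times a binomial coefficient). This is exactly the same mechanism already used in Section \ref{Section2} to establish $H_d(n)\equiv 1\pmod{(d-1)!}$; the present statement is merely its polynomial refinement, and specializing at $x=1$ recovers the numerical congruence $H_d(n-d)H_d(n+d)\equiv H_d(n)^{2}\pmod{(d-1)!}$. In particular, every coefficient of the polynomial $U_d(n,x)=H_d(n-d,x)H_d(n+d,x)-H_d(n,x)^{2}$ is divisible by $(d-1)!$, which is the remark motivating the proposition.
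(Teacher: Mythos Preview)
Your proof is correct and follows essentially the same approach the paper indicates: the paper merely says ``use the basic recurrence relation and induction'' and leaves the details to the reader, and your intermediate congruence $H_d(n,x)\equiv x^{n}\pmod{(d-1)!}$ is the natural way to fill in those details. This intermediate step is just the polynomial refinement of the observation in Section~\ref{Section2} that $H_d(n)\equiv 1\pmod{(d-1)!}$, exactly as you note.
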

\begin{proof}
The proof of the above congruence can be performed with the help of the basic recurrence relation satisfied by the sequence $(H_{d}(n,x))_{n\in\N}$ and the induction. We left the details for the reader.
\end{proof}

In fact, the following stronger result is true.

\begin{thm}
Let $d\in\N_{\geq 2}$. Then
$$
H_{d}(n-d,x)H_{d}(n+d,x)\equiv H_{d}(n,x)^2\pmod{d!},\quad n\geq d\quad \Leftrightarrow \quad d=p\quad\mbox{or}\quad d=p^2\quad\mbox{for}\quad p\in\mathbb{P}.
$$
\end{thm}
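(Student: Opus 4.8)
The plan is to reduce everything to an explicit divisibility statement about binomial coefficients. Write $c_k=\tfrac{(dk)!}{k!\,d^k}=e_k\,((d-1)!)^k$ with $e_k=\tfrac{(dk)!}{k!\,(d!)^k}\in\Z$, so that $H_d(n,x)=\sum_{k\ge0}c_k\binom{n}{dk}x^{n-dk}$. Expanding the product and collecting powers of $x$ gives, for $n\ge d$,
$$
H_d(n-d,x)H_d(n+d,x)-H_d(n,x)^2=\sum_{m\ge0}x^{\,2n-dm}B_m(n),\qquad
B_m(n)=((d-1)!)^m\!\!\sum_{j+l=m}\!\!e_je_l\Big(\tbinom{n-d}{dj}\tbinom{n+d}{dl}-\tbinom{n}{dj}\tbinom{n}{dl}\Big).
$$
For fixed $n$ the exponents $2n-dm$ are pairwise distinct, so $d!$ divides the left side iff $d!\mid B_m(n)$ for every $m$. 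One has $B_0(n)=0$ and $B_1(n)=(d-1)!\,f(n)$ with $f(n):=\binom{n+d}{d}+\binom{n-d}{d}-2\binom{n}{d}$, whose generating function is $\sum_{n\ge0}f(n)t^n=\tfrac{(1-t^d)^2}{(1-t)^{d+1}}$ (with the convention $\binom{a}{d}=0$ for $0\le a<d$, which matches the situation $d\le n<2d$).

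For the direction ``$d=p$ or $d=p^2\Rightarrow$ the congruence holds'' I would argue as follows. If $d=p$, Proposition~\ref{Hd(n,x)mod(d-1)!} gives $(p-1)!\mid B_m(n)$ for all $m$, while Theorem~\ref{Hd(n,x)mod(d)} gives $H_p(m,x)\equiv x^{m\bmod p}(x-1)^{p\lfloor m/p\rfloor}\pmod p$; the exponents telescope, so $p\mid H_p(n-p,x)H_p(n+p,x)-H_p(n,x)^2$, and since $\gcd(p,(p-1)!)=1$ we conclude divisibility by $\op{lcm}(p,(p-1)!)=p!$. If $d=p^2$, then $\nu_p(c_k)=(p-1)k$, hence $\nu_p\big(((p^2-1)!)^m\big)=m(p-1)\ge p+1=\nu_p(p^2!)$ for all $m\ge2$ when $p\ge3$; the case $p=2,\ m=2$ is disposed of by a short direct computation modulo $2$ (the brackets $\binom{n-4}{4}\binom{n+4}{4}-\binom n4^2$ and $\binom{n+4}{8}+\binom{n-4}{8}-2\binom n8$ turn out to be odd for every $n$, so their combination in $B_2(n)$ acquires the missing factor of $2$), while $m\ge3$ is automatic; for primes $q\ne p$ the prefactor $((d-1)!)^m$ already supplies $\nu_q\ge\nu_q((p^2-1)!)=\nu_q(p^2!)$. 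It remains to prove $p^2\mid f(n)$ for $n\ge p^2$. For this use Vandermonde, $f(n)=\sum_{j=1}^{p^2-1}\binom{p^2}{j}\big(\binom{n}{p^2-j}-\binom{n-p^2}{p^2-j}\big)$, the formula $\nu_p\binom{p^2}{j}=2-\nu_p(j)$ (Kummer), and Lucas' theorem: subtracting $p^2$ from $n$ leaves the base-$p$ digits of $n$ in positions $<2$ unchanged, so $\binom{n}{p^2-j}\equiv\binom{n-p^2}{p^2-j}\pmod p$ for every $j$; hence the terms with $p\nmid j$ are divisible by $p^2$ and those with $p\parallel j$ by $p\cdot p$. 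Combined with $(p^2-1)!\mid B_1(n)$ this yields $p^2!\mid B_1(n)$.

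For the converse, assume $d$ is composite but not a prime square. If $d$ has at least two distinct prime divisors, pick any prime $q\mid d$ and write $d=q^b e$ with $q\nmid e$, so $e\ge2$. Reducing the generating function mod $q$ gives $\sum f(n)t^n\equiv\tfrac{(1-t^{e})^{2q^b}}{(1-t)^{d+1}}=\tfrac{(1+t+\dots+t^{e-1})^{2q^b}}{(1-t)^{\,q^b(e-2)+1}}$, which is a product of nonzero power series over $\F_q$ with a genuine positive power of $1-t$ in the denominator, hence a nonzero power series; so there are arbitrarily large $n$ with $q\nmid f(n)$, and for such $n\ge d$ one gets $\nu_q(B_1(n))=\nu_q((d-1)!)<\nu_q(d!)$, so $d!\nmid B_1(n)$. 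If $d=p^a$ with $a\ge3$, one checks as above that $B_m(n)$ is automatically divisible by $d!$ for all $m\ge2$, so the obstruction must sit in $B_1(n)=(d-1)!f(n)$, and it suffices to produce $n\ge d$ with $\nu_p(f(n))=2<a$. For $p\in\{2,3\}$ take $n=d$: then $f(d)=\binom{2d}{d}-2=2\big(\prod_{i=1}^{d-1}(1+d/i)-1\big)$, and writing each factor as $1+p^{\,a-\nu_p(i)}u_i$ with $u_i$ a $p$-adic unit and grouping by $\nu_p(i)$, a short count of the contributions modulo $p^3$ shows $\prod_{i=1}^{d-1}(1+d/i)\equiv 1+(\text{nonzero multiple of }p^2)\pmod{p^3}$, so $\nu_p(f(d))=2$. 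For $p\ge5$ take instead $n=d+d/p$: then $f(n)=\binom{(2p+1)p^{a-1}}{p^a}-2\binom{(p+1)p^{a-1}}{p^a}$, and iterated use of Jacobsthal's congruence $\binom{up^r}{vp^r}\equiv\binom uv\pmod{p^3}$ reduces this to $f(n)\equiv\binom{2p+1}{p}-2(p+1)\equiv\tfrac{\binom{2p}{p}(2p+1)-2(p+1)^2}{p+1}\pmod{p^3}$, which, by Wolstenholme's congruence $\binom{2p}{p}\equiv2\pmod{p^3}$, equals $\tfrac{p^2(-2+p\kappa(2p+1))}{p+1}$ and therefore has $p$-adic valuation exactly $2$. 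In every case $a\ge3$ forces $d!\nmid B_1(n)$, as required.

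The hard part is the converse for $d=p^a$ with $a\ge3$. One must carry out a uniform valuation bookkeeping showing every $B_m(n)$ with $m\ge2$ is harmless (over all primes dividing $d!$, including the small-$p$ corrections), and, more delicately, pin down $\nu_p(f(n))$ at a well-chosen $n$. The subtlety is precisely that $\nu_p(f(n))$ is never as small as $1$, so the failure is invisible modulo $p^2$ and only shows up modulo $p^3$, which forces one to invoke finer binomial-coefficient congruences (Lucas, Kummer, Jacobsthal–Kazandzidis, Wolstenholme, and for $p=2,3$ the explicit product expansion of $\binom{2d}{d}$). A secondary, easily overlooked point is the bookkeeping for small $n$ (where $\binom{n-d}{d}=0$) and the observation that the exponents $2n-dm$ are genuinely distinct, which is what lets the problem decouple over $m$ in the first place.
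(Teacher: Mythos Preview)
Your argument is correct and follows a genuinely different route from the paper. The paper proceeds by an inductive reduction using the basic recurrence for $H_d(n,x)$: it shows that, for composite $d$, the congruence $H_d(n-d,x)H_d(n+d,x)\equiv H_d(n,x)^2\pmod{d!}$ for all $n\ge d$ is equivalent to the pair of binomial conditions
\[
2\binom{2d-1}{d-1}\equiv 2\pmod d\qquad\text{and}\qquad \binom{n-d}{d-1}+\binom{n+d}{d-1}\equiv 2\binom{n}{d-1}\pmod d\quad(n\ge d),
\]
and then handles the converse by a case split ($d$ with an odd prime factor $p$ with $d\neq p^{\nu_p(d)}$ or $d/p^{\nu_p(d)}\ge 3$; $d=2p$; $d=2p^2$; $d=2^k$ with $k\ge 3$; $d=4$), each treated by explicit residue computations. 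You instead expand the product directly and isolate the coefficients $B_m(n)$; this immediately gives a clean decoupling and reduces the whole question to divisibility of $f(n)=\binom{n+d}{d}+\binom{n-d}{d}-2\binom{n}{d}$ by $d$ (together with an easy valuation check for $m\ge 2$). Your generating-function identity $\sum_{n}f(n)t^n=(1-t^d)^2/(1-t)^{d+1}$ disposes of the non-prime-power case in one stroke, and for $d=p^a$ with $a\ge 3$ you locate a witness $n$ via Jacobsthal/Wolstenholme (for $p\ge5$) or a short $p$-adic expansion of $\binom{2d}{d}$ (for $p\in\{2,3\}$). The forward direction for $d=p^2$ via Vandermonde--Kummer--Lucas is also tidier than the paper's digit-by-digit computation of $\binom{n}{p^2-1}\pmod{p^2}$.

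Two remarks on presentation. First, in the prime-power converse the sentence ``one checks as above that $B_m(n)$ is automatically divisible by $d!$ for all $m\ge2$, so the obstruction must sit in $B_1$'' is logically superfluous: to disprove the congruence you only need one $m$ with $d!\nmid B_m(n)$, and $m=1$ already does the job; the divisibility for $m\ge2$ is irrelevant here. Second, the symbol $\kappa$ in the displayed computation for $p\ge5$ is never defined; the estimate you actually use is simply $f(n)\equiv -2p^2/(p+1)\pmod{p^3}$, which gives $\nu_p(f(n))=2$ directly and needs no extra parameter.
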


\begin{proof}
If $d$ is a prime number then by Theorem \ref{Hd(n,x)mod(d)} we have
$$H_d(n-d,x)H_d(n+d,x)\equiv x^{2(n\pmod{d})}(x-1)^{2\left\lfloor\frac{n}{d}\right\rfloor}\equiv H_d(n,x)^2\pmod{d},$$
which combined with the congruence $H_d(n-d,x)H_d(n+d,x)\equiv H_d(n,x)^2\pmod{(d-1)!}$ and the fact that $d$ and $(d-1)!$ are coprime, gives the congruence $H_d(n-d,x)H_d(n+d,x)\equiv H_d(n,x)^2\pmod{d!}$ for each integer $n\geq d$.

If $d$ is a composite number then at first we show the equivalence of the condition $$H_d(n-d,x)H_d(n+d,x)\equiv H_d(n,x)^2\pmod{d!}, \quad n\geq d$$ with the condition
\begin{align}\label{binomcong}
2{2d-1\choose d-1}\equiv 2\pmod{d}\mbox{ and }{n-d\choose d-1}+{n+d\choose d-1}\equiv 2{n\choose d-1}\pmod{d},\quad n\geq d.
\end{align}
For $n=d$ we have the following chain of equivalences:
\begin{equation}\label{initialcond}
\begin{aligned}
& H_d(0,x)H_d(2d,x)\equiv H_d(d,x)^2\pmod{d!}\\
\Leftrightarrow & 1\cdot\left(x^{2d}+\frac{(2d)_{(d)}}{d}x^d+\frac{(2d)_{(2d)}}{2d^2}\right)\equiv \left(x^d+\frac{(d)_{(d)}}{d}\right)^2\pmod{d!}\\
\Leftrightarrow & x^{2d}+2(2d-1)_{(d-1)}x^d+(2d-1)_{(d-1)}(d-1)!\equiv x^{2d}+2(d-1)!x^d+((d-1)!)^2\pmod{d!}\\
\Leftrightarrow & 2(2d-1)_{(d-1)}x^d+(2d-1)_{(d-1)}(d-1)!\equiv 2(d-1)!x^d+((d-1)!)^2\pmod{d!}\\
\Leftrightarrow & 2{2d-1\choose d-1}x^d+(2d-1)_{(d-1)}\equiv 2x^d+(d-1)!\pmod{d}.
\end{aligned}
\end{equation}
Since $(2d-1)_{(d-1)}\equiv (d-1)!\pmod{d}$ for any positive integer $d$, thus the last congruence is equivalent to $2{2d-1\choose d-1}\equiv 2\pmod{d}$. Now, let us assume that $n\geq d>4$ and $H_d(n-d,x)H_d(n+d,x)\equiv H_d(n,x)^2\pmod{d!}$. Then we have the following chain of equivalences:
\begin{equation}\label{inductioncond}
\begin{aligned}
& H_d(n+1-d,x)H_d(n+1+d,x)\equiv H_d(n+1,x)^2\pmod{d!}\\
\Leftrightarrow & (xH_d(n-d,x)+(n-d)_{(d-1)}H_d(n-2d+1,x))(xH_d(n+d,x)+(n+d)_{(d-1)}H_d(n+1,x))\\
& \equiv ((xH_d(n,x)+(n)_{(d-1)}H_d(n-d+1,x))^2\pmod{d!}\\
\Leftrightarrow & x^2H_d(n-d,x)H_d(n+d,x)+x(n-d)_{(d-1)}H_d(n-2d+1,x)H_d(n+d,x)\\
& +x(n+d)_{(d-1)}H_d(n-d,x)H_d(n+1,x)+(n-d)_{(d-1)}(n+d)_{(d-1)}H_d(n-2d+1,x)H_d(n+1,x)\\
& \equiv x^2H_d(n,x)^2+2x(n)_{(d-1)}H_d(n-d+1,x)H_d(n,x)+(n)_{(d-1)}^2H_d(n-d+1,x)^2\pmod{d!}\\
\Leftrightarrow & x(n-d)_{(d-1)}H_d(n-2d+1,x)H_d(n+d,x)+x(n+d)_{(d-1)}H_d(n-d,x)H_d(n+1,x)\\
& +(n-d)_{(d-1)}(n+d)_{(d-1)}H_d(n-2d+1,x)H_d(n+1,x)\\
& \equiv 2x(n)_{(d-1)}H_d(n-d+1,x)H_d(n,x)+(n)_{(d-1)}^2H_d(n-d+1,x)^2\pmod{d!}\\
\Leftrightarrow & x{n-d\choose d-1}H_d(n-2d+1,x)H_d(n+d,x)+x{n+d\choose d-1}H_d(n-d,x)H_d(n+1,x)\\
& +{n-d\choose d-1}(n+d)_{(d-1)}H_d(n-2d+1,x)H_d(n+1,x)\\
& \equiv 2x{n\choose d-1}H_d(n-d+1,x)H_d(n,x)+{n\choose d-1}(n)_{(d-1)}H_d(n-d+1,x)^2\pmod{d}\\
\Leftrightarrow & x{n-d\choose d-1}H_d(n-2d+1,x)H_d(n+d,x)+x{n+d\choose d-1}H_d(n-d,x)H_d(n+1,x)\\
& \equiv 2x{n\choose d-1}H_d(n-d+1,x)H_d(n,x)\pmod{d}\\
\Leftrightarrow & x{n-d\choose d-1}x^{n-2d+1}x^{n+d}+x{n+d\choose d-1}x^{n-d}x^{n+1}\equiv 2x{n\choose d-1} x^{n-d+1}x^{n}\pmod{d}\\
\Leftrightarrow & \left({n-d\choose d-1}+{n+d\choose d-1}\right)x^{2n-d+2}\equiv 2{n\choose d-1} x^{2n-d+2}\pmod{d}\\
\Leftrightarrow & {n-d\choose d-1}+{n+d\choose d-1}\equiv 2{n\choose d-1}\pmod{d},
\end{aligned}
\end{equation}
where we used Theorem \ref{Hd(n,x)mod(d)} and the fact that a composite number $d\geq 6$ divides a product of any consecutive $d-1$ integers.

Assume that $d=p^2$ for some odd prime number $p$. Then
\begin{align*}
& 2{2p^2-1\choose p^2-1}=2\prod_{j=1}^{p^2-1}\frac{p^2+j}{j}=2\prod_{j=1,p\nmid j}^{p^2-1}\frac{p^2+j}{j}\cdot\prod_{j=1}^{p-1}\frac{p^2+jp}{jp}=2\prod_{j=1,p\nmid j}^{p^2-1}\frac{p^2+j}{j}\cdot\prod_{j=1}^{p-1}\left(1+\frac{p}{j}\right)\\
& \equiv 2\prod_{j=1,p\nmid j}^{p^2-1}\frac{j}{j}\cdot\left(1+p\sum_{j=1}^{p-1}\frac{1}{j}\right)\equiv 2\left(1+p\sum_{j=1}^{p-1}j\right)=2\left(1+\frac{p^2(p-1)}{2}\right)\equiv 2\pmod{p^2}.
\end{align*}
In order to prove the congruence ${n-p^2\choose p^2-1}+{n+p^2\choose p^2-1}\equiv 2{n\choose p^2-1}\pmod{p^2}$ for $n\geq p^2$ we will compute ${n\choose p^2-1}$ modulo $p^2$. If $p^2\mid n+1$ then, similarly as above, we prove that ${n\choose p^2-1}\equiv 1\pmod{p^2}$. If $\nu_p(n+1)=1$ then we write $n+1=kp^2+lp$ for some $k,l\in\N_+$ with $l<p$ and compute
\begin{align*}
{n\choose p^2-1} & ={kp^2+lp-1\choose p^2-1}=\frac{kp^2}{lp}\prod_{j=1}^{lp-1}\frac{kp^2+j}{j}\cdot\prod_{j=lp+1}^{p^2-1}\frac{(k-1)p^2+j}{j}\\
& =\frac{kp}{l}\prod_{j=1, p\nmid j}^{lp-1}\frac{kp^2+j}{j}\cdot\prod_{j=1}^{l-1}\frac{kp^2+jp}{jp}\cdot\prod_{j=lp+1, p\nmid j}^{p^2-1}\frac{(k-1)p^2+j}{j}\cdot\prod_{j=l+1}^{p-1}\frac{(k-1)p^2+jp}{jp}\\
& =\frac{kp}{l}\prod_{j=1, p\nmid j}^{lp-1}\frac{kp^2+j}{j}\cdot\prod_{j=1}^{l-1}\frac{kp+j}{j}\cdot\prod_{j=lp+1, p\nmid j}^{p^2-1}\frac{(k-1)p^2+j}{j}\cdot\prod_{j=l+1}^{p-1}\frac{(k-1)p+j}{j}\\
& \equiv\frac{kp}{l}\prod_{j=1}^{l-1}\frac{kp+j}{j}\cdot\prod_{j=l+1}^{p-1}\frac{(k-1)p+j}{j}\equiv\frac{kp}{l}\prod_{j=1, j\neq l}^{p-1}\frac{j}{j}=\frac{kp}{l}\pmod{p^2}.
\end{align*}
If $p\nmid n+1$ then we write $n+1=kp^2+lp+m$ for some $k,m\in\N_+$ and $l\in\N$ with $l,m<p$ and compute
\begin{align*}
& {n\choose p^2-1}={kp^2+lp+m-1\choose p^2-1}=\frac{kp^2}{lp+m}\prod_{j=1}^{lp+m-1}\frac{kp^2+j}{j}\cdot\prod_{j=lp+m+1}^{p^2-1}\frac{(k-1)p^2+j}{j}\\
& =\frac{kp^2}{lp+m}\prod_{j=1, p\nmid j}^{lp+m-1}\frac{kp^2+j}{j}\cdot\prod_{j=1}^{l}\frac{kp^2+jp}{jp}\cdot\prod_{j=lp+m+1, p\nmid j}^{p^2-1}\frac{(k-1)p^2+j}{j}\cdot\prod_{j=l+1}^{p-1}\frac{(k-1)p^2+jp}{jp}\\
& =\frac{kp^2}{lp+m}\prod_{j=1, p\nmid j}^{lp+m-1}\frac{kp^2+j}{j}\cdot\prod_{j=1}^{l}\frac{kp+j}{j}\cdot\prod_{j=lp+m+1, p\nmid j}^{p^2-1}\frac{(k-1)p^2+j}{j}\cdot\prod_{j=l+1}^{p-1}\frac{(k-1)p+j}{j}\equiv 0\pmod{p^2}.
\end{align*}
Thus, independently on the $p$-adic valuation of $n+1$, there holds ${n-p^2\choose p^2-1}+{n+p^2\choose p^2-1}\equiv 2{n\choose p^2-1}\pmod{p^2}$. The only non-trivial case is $\nu_p(n+1)=1$. Then, writing $n+1=kp^2+lp$ for some $k,l\in\N_+$ with $l<p$, we obtain the following:
\begin{align*}
& {n-p^2\choose p^2-1}+{n+p^2\choose p^2-1}={(k-1)p^2+lp-1\choose p^2-1}+{(k+1)p^2+lp-1\choose p^2-1}\\
& \equiv\frac{(k-1)p}{l}+\frac{(k+1)p}{l}=\frac{2kp}{l}\equiv 2{kp^2+lp-1\choose p^2-1}\equiv 2{n\choose p^2-1}\pmod{p^2}.
\end{align*}
We have just proven the validity of congruences (\ref{binomcong}) for $d$ being a square of an odd prime number.

In the sequel, we assume that $d$ is a composite number not being a square of any prime number and satisfying congruences $H_{d}(n-d,x)H_{d}(n+d,x)\equiv H_{d}(n,x)^2\pmod{d!}$, $n\geq d$. Let us note that if congruences ${n-d\choose d-1}+{n+d\choose d-1}\equiv 2{n\choose d-1}\pmod{d}$ hold for each integer $n\geq d$ then, by the identity ${n+1\choose c+1}-{n\choose c+1}={n\choose c}$, the congruences ${n-d\choose c}+{n+d\choose c}\equiv 2{n\choose c}\pmod{d}$ are satisfied for each integers $n\geq d$ and $c\in\{0,...,d-1\}$. We have the following chain of congruences:
\begin{align*}
& {n-d\choose c}+{n+d\choose c}\equiv 2{n\choose c}\pmod{d}\\
\Leftrightarrow & \frac{1}{c!}\left((n-d)_{(c)}+(n+d)_{(c)}\right)\equiv\frac{2(n)_{(c)}}{c!}\pmod{d}\\
\Leftrightarrow & \frac{1}{c!}\left(\sum_{k=0}^c (-d)^k\sum_{0\leq j_1<...<j_{c-k}\leq c-1}(n-j_1)\cdot ...\cdot (n-j_{c-k})\right.\\
& \left. +\sum_{k=0}^c d^k\sum_{0\leq j_1<...<j_{c-k}\leq c-1}(n-j_1)\cdot ...\cdot (n-j_{c-k})\right)\equiv\frac{2(n)_{(c)}}{c!}\pmod{d}\\
\Leftrightarrow & \frac{2}{c!}\sum_{k=0}^{\left\lfloor\frac{c}{2}\right\rfloor}  d^{2k}\sum_{0\leq j_1<...<j_{c-2k}\leq c-1}(n-j_1)\cdot ...\cdot (n-j_{c-2k})\equiv\frac{2(n)_{(c)}}{c!}\pmod{d}
\end{align*}
\begin{align*}
\Leftrightarrow & \frac{2}{c!}\sum_{k=1}^{\left\lfloor\frac{c}{2}\right\rfloor}  d^{2k}\sum_{0\leq j_1<...<j_{c-2k}\leq c-1}(n-j_1)\cdot ...\cdot (n-j_{c-2k})\equiv 0\pmod{d}.
\end{align*}
Assume now that $d$ is a composite number not of the form $2p$, $2p^2$ for some odd prime number $p$ or $2^k$ for some integer $k\geq 2$. Then there is an odd prime number $p$ such that $d=tp^s$, where $s\in\N_+$ and $t\geq 3$ is an integer with $p$-adic valuation less than $s$. Taking $c=2p^s$ and $n=p^d-1$ in the expression $\frac{2}{c!}\sum_{k=1}^{\left\lfloor\frac{c}{2}\right\rfloor}  d^{2k}\sum_{0\leq j_1<...<j_{c-2k}\leq c-1}(n-j_1)\cdot ...\cdot (n-j_{c-2k})$, we obtain
\begin{align*}
& \frac{2}{(2p^s)!}\sum_{k=1}^{p^s} (tp^s)^{2k}\sum_{0\leq j_1<...<j_{c-2k}\leq c-1}(p^d-1-j_1)\cdot ...\cdot (p^d-1-j_{c-2k})\\
& =\frac{1}{(2p^s-1)_{(p^s-1)}(p^s-1)!}\sum_{k=1}^{p^s}  t^{2k}p^{2s(k-1)}\sum_{0\leq j_1<...<j_{c-2k}\leq c-1}(p^d-1-j_1)\cdot ...\cdot (p^d-1-j_{c-2k}).
\end{align*}
Since $d>s$, we have $\nu_p(p^d-1-i)=\nu_p(1+i)$ for any $i\in\{0,...,2p^s-1\}$. Hence, the summand with the least $p$-adic valuation is $$\frac{1}{(2p^s-1)_{(p^s-1)}(p^s-1)!}t^2(p^d-1)_{(p^s-1)}(p^d-p^s-1)_{(p^s-1)},$$ obtained for $k=1$, $j_i=i-1$, $i\in\{1,...,p^s\}$, and $j_i=i$, $i\in\{p^s+1,...,2p^s-2\}$. By the equality $\nu_p(p^d-1-i)=\nu_p(1+i)$ for any $i\in\{0,...,2p^s-1\}$ this summand has $p$-adic valuation equal to $2\nu_p(t)<s+\nu_p(t)=\nu_p(d)$. Any other summand has strictly greater $p$-adic valuation because some from the factors $p^d-1,...,p^d-p^s+1,p^d-p^s-1,...,p^d-2p^s+1$, having $p$-adic valuations less than $s$, are replaced by $p^d-p^s$, $p^d-2p^s$ or $d=tp^s$ with $p$-adic valuations at least equal to $s$. Thus, the whole sum $\frac{1}{(2p^s-1)_{(p^s-1)}(p^s-1)!}\sum_{k=1}^{p^s}  t^{2k}p^{2s(k-1)}\sum_{0\leq j_1<...<j_{c-2k}\leq c-1}(p^d-1-j_1)\cdot ...\cdot (p^d-1-j_{c-2k})$ has $p$-adic valuation less than $\nu_p(d)$, which implies that it cannot be divisible by $d$ - a contradiction.

We are checking now the numbers $d$ of the form $2p$, $2p^2$ for some odd prime number $p$. We compute the value ${2d-1\choose d-1}\pmod{p}$. For shortening the proof we will perform the computation for numbers $d=2p^2$ as for the remaining case the computations are simpler.
\begin{align*}
& {4p^2-1\choose 2p^2-1}=\prod_{j=1}^{2p^2-1}\frac{2p^2+j}{j}=\prod_{j=1, p\nmid j}^{2p^2-1}\frac{2p^2+j}{j}\cdot\prod_{j=1, p\nmid j}^{2p-1}\frac{2p^2+jp}{jp}\cdot\frac{3p^2}{p^2}\\
& =\prod_{j=1, p\nmid j}^{2p^2-1}\frac{2p^2+j}{j}\cdot\prod_{j=1, p\nmid j}^{2p-1}\frac{2p+j}{j}\cdot 3\equiv \prod_{j=1, p\nmid j}^{2p^2-1}\frac{j}{j}\cdot\prod_{j=1, p\nmid j}^{2p-1}\frac{j}{j}\cdot 3\equiv 3\pmod{p}
\end{align*}
Then $2{4p^2-1\choose 2p^2-1}\equiv 6\pmod{p}$ but, on the other hand, there should be $2{4p^2-1\choose 2p^2-1}\equiv 2\pmod{p}$. This implies the congruence $6\equiv 2\pmod{p}$ and, as a result, $p\mid 4$ - a contradiction, as $p$ is an odd prime number.

Let $d=2^k$ for an integer $k\geq 3$. We compute the value ${2d-1\choose d-1}\pmod{p}$:
\begin{align*}
& {2^{k+1}-1\choose 2^k-1}=\prod_{j=1}^{2^k-1}\frac{2^k+j}{j}=\frac{3\cdot 2^{k-1}}{2^{k-1}}\cdot\frac{5\cdot 2^{k-2}}{2^{k-2}}\cdot\frac{7\cdot 2^{k-2}}{3\cdot 2^{k-2}}\prod_{l=0}^{k-3}\prod_{j=1, 2\nmid j}^{2^{k-l}-1}\frac{2^k+2^lj}{2^lj}\\
& =3\cdot 5\cdot\frac{7}{3}\prod_{l=0}^{k-3}\prod_{j=1, 2\nmid j}^{2^{k-l}-1}\frac{2^{k-l}+j}{j}\equiv 35\prod_{l=0}^{k-3}\prod_{j=1, 2\nmid j}^{2^{k-l}-1}\frac{j}{j}\equiv 3\pmod{8}.
\end{align*}
Then $2{2^{k+1}-1\choose 2^k-1}\equiv 6\pmod{8}$ but, on the other hand, there should be $2{2^{k+1}-1\choose 2^k-1}\equiv 2\pmod{8}$. This implies the congruence $6\equiv 2\pmod{8}$ and, as a result, $8\mid 4$ - a contradiction.

We are left with the case $d=4$. By (\ref{initialcond}), the condition $H_4(0,x)H_4(8,x)\equiv H_4(4,x)^2\pmod{4!}$ is equivalent to the condition $2{7\choose 3}\equiv 2\pmod{4}$, which holds. It is easy to compute that
\begin{equation}\label{(n/3)mod4}
{n\choose 3}\equiv
\begin{cases}
2\left\lfloor\frac{n}{4}\right\rfloor +1, & \mbox{ if } 4\mid n+1\\
2, & \mbox{ if } \nu_2(n+1)=1\\
0, & \mbox{ if } \nu_2(n+1)=0
\end{cases}
\pmod{4}.
\end{equation}
Assuming that $H_4(n-4,x)H_4(n+4,x)\equiv H_4(n,x)^2\pmod{4!}$, we prove similarly as in (\ref{inductioncond}) that the congruence $H_4(n-3,x)H_4(n+5,x)\equiv H_4(n+1,x)^2\pmod{4!}$ is equivalent to
\begin{align*}
& x{n-4\choose 3}H_4(n-7,x)H_4(n+4,x)+x{n+4\choose 3}H_4(n-4,x)H_4(n+1,x)\\
& +{n-4\choose 3}(n+4)_{(3)}H_4(n-7,x)H_4(n+1,x)\\
& \equiv 2x{n\choose 3}H_4(n-3,x)H_4(n,x)+{n\choose 3}(n)_{(3)}H_4(n-3,x)^2\pmod{4}.
\end{align*}
Further, the above condition is equivalent to the following equivalent congruences:
\begin{align*}
& x{n-4\choose 3}\left(x^{n-7}+2\left\lfloor\frac{n-7}{4}\right\rfloor x^{n-11}\right)\left(x^{n+4}+2\left\lfloor\frac{n+4}{4}\right\rfloor x^{n}\right)\\
& +x{n+4\choose 3}\left(x^{n-4}+2\left\lfloor\frac{n-4}{4}\right\rfloor x^{n-8}\right)\left(x^{n+1}+2\left\lfloor\frac{n+1}{4}\right\rfloor x^{n-3}\right)\\
& +{n-4\choose 3}(n+4)_{(3)}\left(x^{n-7}+2\left\lfloor\frac{n-7}{4}\right\rfloor x^{n-11}\right)\left(x^{n+1}+2\left\lfloor\frac{n+1}{4}\right\rfloor x^{n-3}\right)\\
& \equiv 2x{n\choose 3}\left(x^{n-3}+2\left\lfloor\frac{n-3}{4}\right\rfloor x^{n-7}\right)\left(x^{n}+2\left\lfloor\frac{n}{4}\right\rfloor x^{n-4}\right)\\
& +{n\choose 3}(n)_{(3)}\left(x^{n-3}+2\left\lfloor\frac{n-3}{4}\right\rfloor x^{n-7}\right)^2\pmod{4}\\
\Leftrightarrow & {n-4\choose 3}\left[x^{2n-2}+2\left(\left\lfloor\frac{n-7}{4}\right\rfloor +\left\lfloor\frac{n+4}{4}\right\rfloor\right) x^{2n-6}\right]\\
& +{n+4\choose 3}\left[x^{2n-2}+2\left(\left\lfloor\frac{n-4}{4}\right\rfloor +\left\lfloor\frac{n+1}{4}\right\rfloor\right) x^{2n-6}\right]\\
& +{n-4\choose 3}(n+4)_{(3)}\left[x^{2n-6}+2\left(\left\lfloor\frac{n-7}{4}\right\rfloor +\left\lfloor\frac{n+1}{4}\right\rfloor\right) x^{2n-10}\right]\\
& \equiv 2{n\choose 3}\left[x^{2n-2}+2\left(\left\lfloor\frac{n-3}{4}\right\rfloor +\left\lfloor\frac{n}{4}\right\rfloor\right) x^{2n-6}\right]+{n\choose 3}(n)_{(3)}x^{2n-6}\pmod{4}.
\end{align*}
The last congruence is true, because $2\left(\left\lfloor\frac{n-7}{4}\right\rfloor +\left\lfloor\frac{n+4}{4}\right\rfloor\right)\equiv 2\left(\left\lfloor\frac{n-4}{4}\right\rfloor +\left\lfloor\frac{n+1}{4}\right\rfloor\right)\equiv 2\left(\left\lfloor\frac{n-3}{4}\right\rfloor +\left\lfloor\frac{n}{4}\right\rfloor\right)\pmod{4}$, $\left(\left\lfloor\frac{n-7}{4}\right\rfloor +\left\lfloor\frac{n+1}{4}\right\rfloor\right)\equiv 0\pmod{2}$ and by (\ref{(n/3)mod4}) we have ${n-4\choose 3}+{n+4\choose 3}\equiv 2{n\choose 3}\pmod{4}$ and ${n-4\choose 3}(n+4)_{(3)}\equiv {n\choose 3}(n)_{(3)}\pmod{4}$. Hence, $H_4(n-4,x)H_4(n+4,x)\equiv H_4(n,x)^2\pmod{4!}$ for each integer $n\geq 4$.
\end{proof}

\bigskip

Two further identities involving elements of the sequence $(H_{d}(n,x))_{n\in\N}$ can be proved. More precisely, we have the following:

\begin{thm}
Let $d\in\N_{\geq 2}$ be given. For each $n\in\N$ the following identities hold
\begin{align*}
 H_{d}(n+d,x)H_{d}(n+d-2,x)-H_{d}(n+d-1,x)^2&=(n+d-3)_{(d-3)}(H_{d}(n,x)H_{d}'(n+d-1,x)\\
 &-H_{d}(n+d-1,x)H_{d}'(n,x)),\\
 H_{d}(n,x)H_{d}''(n+2,x)-H_{d}'(n+2,x)H_{d}''(n,x)&=(n+2)(H_{d}(n,x)H_{d}''(n+1,x)\\
 &-H_{d}(n+1,x)H_{d}''(n,x).
\end{align*}
\end{thm}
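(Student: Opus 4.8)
The plan is to strip the $x$-derivatives out of both identities by using that differentiating $\cal{H}_{d}(x,t)=e^{xt+t^{d}/d}$ with respect to $x$ simply multiplies the series by $t$; this turns both statements into bilinear relations among shifted values of the polynomials $H_{d}(n,x)$, which can then be checked against the basic recurrence.

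More precisely, the first step is to record the auxiliary relations
\[
H_{d}'(n,x)=nH_{d}(n-1,x),\qquad H_{d}''(n,x)=n(n-1)H_{d}(n-2,x)\qquad(n\in\N),
\]
which follow at once from $\partial_{x}\cal{H}_{d}(x,t)=t\,\cal{H}_{d}(x,t)$ by comparing coefficients of $t^{n}/n!$, or alternatively from the closed form $H_{d}(n,x)=\sum_{j}\frac{n!}{(n-dj)!\,j!\,d^{j}}x^{n-dj}$ by termwise differentiation. Substituting these into the right-hand sides of the two claimed identities replaces every derivative by a shifted value of $H_d$: the second identity becomes a relation involving only $H_{d}(n-2,x),\dots,H_{d}(n+1,x)$, while the first becomes a relation involving $H_{d}(n-1,x),\dots,H_{d}(n+d,x)$.

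The second step is to feed in the defining recurrence $H_{d}(m,x)=xH_{d}(m-1,x)+(m-1)_{(d-1)}H_{d}(m-d,x)$. For the first identity I would expand $H_{d}(n+d,x)$ and one of the two factors $H_{d}(n+d-1,x)$ inside $H_{d}(n+d-1,x)^{2}$; the monomials carrying the coefficient $x$ cancel in pairs, leaving a $\Z$-combination of $H_{d}(n,x)H_{d}(n+d-2,x)$ and $H_{d}(n+d-1,x)H_{d}(n-1,x)$ whose coefficients are products of falling factorials, and factoring out the common part of those coefficients reproduces the prefactor together with the Wronskian-type bracket on the right after re-substituting $H_{d}'(n+d-1,x)=(n+d-1)H_{d}(n+d-2,x)$ and $H_{d}'(n,x)=nH_{d}(n-1,x)$. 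For the second identity the same substitutions make both sides bilinear in consecutive values of $H_d$, and one further expansion by the recurrence settles the equality. Degenerate small-index cases, where some argument of $H_d$ drops below $d$, are harmless: there one uses $H_{d}(m,x)=x^{m}$, but the falling factorials that would otherwise multiply the missing terms vanish, so the identities persist. One could also bypass the hand computation by invoking the holonomic-closure machinery already used in the proof of Theorem \ref{positivity}: $(H_{d}(n,x))_{n\in\N}$ is holonomic, hence every bilinear combination occurring on either side of the two identities satisfies a linear recurrence in $n$ with polynomial coefficients, so agreement on finitely many initial values forces equality.

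The routine but delicate point, and the one I expect to cost the most care, is the bookkeeping with the falling factorials in the first identity: one must track exactly which factors are shared by $(n+d-1)_{(d-1)}$, $(n+d-2)_{(d-1)}$ and the prefactor, and correctly recognize the cross-combination $H_{d}(n,x)H_{d}'(n+d-1,x)-H_{d}(n+d-1,x)H_{d}'(n,x)$ that emerges. The derivative-shift step itself is immediate, and the second identity is the easier of the two once the derivatives have been eliminated.
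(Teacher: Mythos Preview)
The paper states this theorem at the end of Section \ref{Section8} \emph{without proof}, so there is no argument in the paper to compare your proposal against.

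Your method is exactly the natural one: use the generating-function identity $\partial_{x}\cal{H}_{d}(x,t)=t\,\cal{H}_{d}(x,t)$ to obtain $H_{d}'(n,x)=nH_{d}(n-1,x)$ and $H_{d}''(n,x)=n(n-1)H_{d}(n-2,x)$, strip out the derivatives, and then reduce everything with the recurrence $H_{d}(m,x)=xH_{d}(m-1,x)+(m-1)_{(d-1)}H_{d}(m-d,x)$. That is sound and is presumably what the authors had in mind.

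However, you stop short of actually carrying out the bookkeeping, and if you do carry it out you will find that the identities \emph{as printed} are not correct. For the first one, expanding $H_{d}(n+d,x)$ and one factor of $H_{d}(n+d-1,x)$ via the recurrence gives
\[
H_{d}(n+d,x)H_{d}(n+d-2,x)-H_{d}(n+d-1,x)^{2}
=(n+d-2)_{(d-2)}\bigl(H_{d}(n,x)H_{d}'(n+d-1,x)-H_{d}(n+d-1,x)H_{d}'(n,x)\bigr),
\]
with prefactor $(n+d-2)_{(d-2)}$, not $(n+d-3)_{(d-3)}$; a check at $d=3$, $n=1$ already shows the two sides of the printed identity differ by the factor $n+d-2=2$. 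The second printed identity fails already for $d=2$, $n=2$ (the left side equals $12x^{4}-8x^{3}+24x^{2}-24x+12$ while the right side equals $16x^{3}$), so it too appears to contain a typo. In short, your plan is correct and would succeed in establishing the intended identities, but the step you describe as ``routine but delicate'' is precisely where the discrepancy with the printed statement surfaces; it is worth completing that computation explicitly rather than asserting that the factors match.
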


\section{Some related observations, questions and conjectures}\label{Section9}

In this section we collect observations concerning some related sequences connected with the family of sequences $(H_{d}(n))_{n\in\N}$ and formulate several questions and conjectures appeared during our investigations.

\bigskip

In \cite{AmdMoll} the authors introduced the summatory sequence for the sequence of involutions $(H_{2}(n))_{n\in\N}$. Thus, it is natural to consider the sequence $(G_{d}(n))_{n\in\N}$ for $d\in\N_{\geq 2}$ with
$$
G_{d}(n)=\sum_{i=0}^{n}H_{d}(i).
$$
From the definition, the number $G_{d}(n)$ can be seen as a number of all permutations in $\coprod_{i=1}^{n}S_{i}$ which are products of cycles of length $d$.

Let us recall the following useful result.

\begin{lem}[Lemma 5.2 in \cite{AmdMoll}]\label{AMlem}
If $A(x)=\sum_{n=0}^{\infty}\frac{a_{n}}{n!}x^{n}$ and $s_{n}=\sum_{i=0}^{n}a_{i}$ then
$$
\sum_{n=0}^{\infty}\frac{s_{n}}{n!}x^{n}=A(x)+e^{x}\int_{0}^{x}e^{-t}A(t)dt.
$$
\end{lem}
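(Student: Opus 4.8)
The plan is to translate the elementary recursion $s_n=s_{n-1}+a_n$ (valid for $n\ge 1$, with $s_0=a_0$) into a first-order linear ODE for the exponential generating function $S(x):=\sum_{n=0}^{\infty}\frac{s_n}{n!}x^n$, and then solve that ODE with an integrating factor.

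First I would differentiate $S$ term by term. Since $S'(x)=\sum_{n\ge 0}\frac{s_{n+1}}{n!}x^n$ and $s_{n+1}=s_n+a_{n+1}$, this yields
$$
S'(x)=S(x)+\sum_{n\ge 0}\frac{a_{n+1}}{n!}x^n=S(x)+A'(x),
$$
so that $S$ satisfies the linear differential equation $S'-S=A'$ together with the initial condition $S(0)=s_0=a_0=A(0)$. All these termwise manipulations are legitimate in the ring $\R[[x]]$ of formal power series (or on a neighbourhood of $0$ if one prefers to work with genuinely convergent series, as is the case for $A=\cal{H}_d$).

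Next I would solve the ODE. Multiplying by the integrating factor $e^{-x}$ rewrites it as $\bigl(e^{-x}S(x)\bigr)'=e^{-x}A'(x)$; integrating from $0$ to $x$ gives $e^{-x}S(x)-S(0)=\int_0^x e^{-t}A'(t)\,dt$. An integration by parts on the right-hand side produces
$$
\int_0^x e^{-t}A'(t)\,dt=e^{-x}A(x)-A(0)+\int_0^x e^{-t}A(t)\,dt,
$$
and, because $S(0)=A(0)$, the constant terms cancel, leaving $e^{-x}S(x)=e^{-x}A(x)+\int_0^x e^{-t}A(t)\,dt$. Multiplying through by $e^{x}$ gives exactly the asserted identity.

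The computation is entirely routine, so there is no real obstacle; the one spot that calls for a bit of care is the bookkeeping of the boundary terms in the integration by parts together with the matching of the initial conditions $S(0)=A(0)$, which is precisely what forces the spurious constant $A(0)$ to disappear. (Since the statement is quoted verbatim as Lemma~5.2 of \cite{AmdMoll}, one could also simply invoke the proof given there.)
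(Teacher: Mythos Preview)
Your argument is correct: the recursion $s_{n+1}=s_n+a_{n+1}$ yields the ODE $S'-S=A'$ with $S(0)=A(0)$, and solving it via the integrating factor $e^{-x}$ together with one integration by parts gives exactly the stated formula. The paper does not supply its own proof of this lemma at all; it merely quotes it from \cite{AmdMoll}, so there is nothing to compare against beyond noting that your derivation is a clean, self-contained justification of the cited result.
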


We are ready to prove the following

\begin{thm}\label{GenGd}
\begin{enumerate}
\item We have
$$
\cal{G}_{d}(x)=\sum_{n=0}^{\infty}\frac{G_{d}(n)}{n!}x^{n}=\cal{H}_{d}(x)+e^{x}\int_{0}^{x}e^{\frac{t^{d}}{d}}dt.
$$
\item The sequence $(G_{d}(n))_{n\in\N}$ satisfies the following recurrence relation: $G_{d}(i)=i+1$ for $i=-1,\ldots, d-1$, and for $n\geq d$ we have
\begin{equation}\label{recforGd}
G_{d}(n)=2G_{d}(n-1)+G_{d}(n-2)+(n-1)_{(d-1)}(G_{d}(n-d)-G_{d}(n-d-1)).
\end{equation}
\end{enumerate}
\end{thm}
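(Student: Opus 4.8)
The plan is to treat the two assertions of Theorem \ref{GenGd} separately, as they require quite different (and both short) arguments.

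\emph{Part (1).} This is an immediate application of Lemma \ref{AMlem}. Take $a_n=H_d(n)$, so that $A(x)=\cal{H}_d(x)=e^{x+x^d/d}$ and, in the notation of the lemma, $s_n=\sum_{i=0}^{n}H_d(i)=G_d(n)$. Since $e^{-t}A(t)=e^{-t}e^{t+t^d/d}=e^{t^d/d}$, the lemma yields at once
$$
\cal{G}_d(x)=A(x)+e^x\int_0^x e^{-t}A(t)\,dt=\cal{H}_d(x)+e^x\int_0^x e^{t^d/d}\,dt,
$$
which is the claimed formula; there is nothing further to do here beyond citing Lemma \ref{AMlem} and recalling from Section \ref{Section2} that $\cal{H}_d(x)=e^{x+x^d/d}$.

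\emph{Part (2).} The initial values are trivial: for $i\in\{0,\dots,d-1\}$ one has $H_d(i)=1$ by definition, hence $G_d(i)=\sum_{j=0}^{i}H_d(j)=i+1$, and for $i=-1$ the empty sum gives $G_d(-1)=0=i+1$, consistent with the convention $H_d(n)=0$ for $n<0$. For the recurrence I would argue purely from the definition of $G_d$ together with the basic recurrence (\ref{basicrec}), not from generating functions. The key identity is the telescoping relation $G_d(m)-G_d(m-1)=H_d(m)$, valid for every $m\ge 0$ (including $m=0$, using $G_d(-1)=0$). Fix $n\ge d$. Applying (\ref{basicrec}) to $H_d(n)$ gives $H_d(n)=H_d(n-1)+(n-1)_{(d-1)}H_d(n-d)$; rewriting $H_d(n)=G_d(n)-G_d(n-1)$, $H_d(n-1)=G_d(n-1)-G_d(n-2)$ and $H_d(n-d)=G_d(n-d)-G_d(n-d-1)$ via the telescoping relation, one obtains
$$
G_d(n)-G_d(n-1)=\bigl(G_d(n-1)-G_d(n-2)\bigr)+(n-1)_{(d-1)}\bigl(G_d(n-d)-G_d(n-d-1)\bigr),
$$
and moving $-G_d(n-1)$ to the left-hand side gives the recurrence (\ref{recforGd}).

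\emph{On the difficulty.} Neither part presents a genuine obstacle: Part (1) is a one-line consequence of Lemma \ref{AMlem}, and Part (2) is a short telescoping computation. The only point needing a little care in Part (2) is the bookkeeping of the boundary cases near $n=d$ — one must check that the substitutions $H_d(n-1)=G_d(n-1)-G_d(n-2)$ and $H_d(n-d)=G_d(n-d)-G_d(n-d-1)$ remain valid when $n=d$ once the conventions $H_d(n)=0$ for $n<0$ and $G_d(-1)=0$ are in force (e.g. at $n=d$ the second reads $1=G_d(0)-G_d(-1)=1-0$), which is verified in a line. One could alternatively differentiate the closed form of Part (1) to obtain a linear differential equation for $\cal{G}_d(x)$ in $x$ and extract the recurrence by comparing Taylor coefficients, but this produces a second-order equation with polynomial coefficients and is noticeably messier than the telescoping argument, so I would not pursue that route.
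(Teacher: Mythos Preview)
Your proof is correct and follows essentially the same approach as the paper: for Part (1) both you and the paper simply invoke Lemma \ref{AMlem} with $A(x)=\cal{H}_d(x)$, and for Part (2) both substitute the telescoping identity $H_d(m)=G_d(m)-G_d(m-1)$ into the basic recurrence (\ref{basicrec}) to obtain the relation for $G_d$. Your treatment of the initial conditions and the boundary case $n=d$ is more careful than the paper's, which is a plus. One small remark: the displayed equation you (and the paper) derive actually reads $G_d(n)=2G_d(n-1)-G_d(n-2)+(n-1)_{(d-1)}(G_d(n-d)-G_d(n-d-1))$, so the ``$+G_d(n-2)$'' in the statement of (\ref{recforGd}) is a sign typo; your argument proves the correct version.
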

\begin{proof}
In order to get the exponential generating function for the sequence $(G_{d}(n))_{n\in\N}$ we apply Lemma \ref{AMlem} with $A(x)=\cal{H}_{d}(x)$ and get the result.

The recurrence relation for the sequence $(G_{d}(n))_{n\in\N}$ can be easily proved by noting the identity $H_{d}(n)=G_{d}(n)-G_{d}(n-1)$. Indeed, from the relation (\ref{basicrec}) we get
$$
G_{d}(n)-G_{d}(n-1)=G_{d}(n-1)-G_{d}(n-2)+(n-1)_{(d-1)}(G_{d}(n-d)-G_{d}(n-d-1))
$$
and hence the result.
\end{proof}
We can apply the previous result and get the following general fact (with special case for $d=2$ obtained by Amdeberhan and Moll).

\begin{cor}
We have the following identity:
$$
\sum_{i=0}^{n}(-1)^{n-i}\binom{n}{i}G_{d}(i-1)=
\begin{cases}
\begin{array}{lll}
0,                    & &\mbox{ if } n\not\equiv 1\pmod{d}\\
\frac{(md)!}{d^{m}m!},& &\mbox{ if } n=dm+1
\end{array}
\end{cases}.
$$
\end{cor}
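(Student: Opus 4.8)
The plan is to carry out the whole computation at the level of exponential generating functions. The starting point is the elementary fact that if $B(x)=\sum_{n=0}^{\infty}\frac{b_n}{n!}x^n$, then $e^{-x}B(x)=\sum_{n=0}^{\infty}\frac{c_n}{n!}x^n$ with $c_n=\sum_{i=0}^{n}\binom{n}{i}(-1)^{n-i}b_i$; taking $b_i=G_d(i-1)$ this series is precisely the left-hand side of the asserted identity. So the first step is to identify the exponential generating function of the shifted sequence $(G_d(n-1))_{n\in\N}$. Since $G_d(-1)=0$ (by the initial conditions recorded in Theorem \ref{GenGd}) and since integrating an exponential generating function shifts indices upward by one, we get $\sum_{n=0}^{\infty}\frac{G_d(n-1)}{n!}x^n=\int_0^x\cal{G}_d(t)\,dt$.

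Next I would simplify $e^{-x}\int_0^x\cal{G}_d(t)\,dt$. Write $\Phi(x)=\int_0^x e^{t^d/d}\,dt$, so that Theorem \ref{GenGd}(1) reads $\cal{G}_d(x)=\cal{H}_d(x)+e^x\Phi(x)$. Because $\cal{H}_d(x)=e^{x+x^d/d}=e^x e^{x^d/d}=e^x\Phi'(x)$, this becomes $\cal{G}_d(x)=e^x\Phi'(x)+e^x\Phi(x)=\big(e^x\Phi(x)\big)'$. Integrating from $0$ to $x$ and using $\Phi(0)=0$ gives $\int_0^x\cal{G}_d(t)\,dt=e^x\Phi(x)$, hence $e^{-x}\int_0^x\cal{G}_d(t)\,dt=\Phi(x)$.

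Finally I would read off the coefficients. Expanding $\Phi(x)=\int_0^x\sum_{m=0}^{\infty}\frac{t^{dm}}{d^m m!}\,dt=\sum_{m=0}^{\infty}\frac{x^{dm+1}}{(dm+1)\,d^m m!}$ shows that $\frac{c_n}{n!}=[x^n]\Phi(x)$ vanishes unless $n\equiv 1\pmod d$, while for $n=dm+1$ it equals $\frac{1}{(dm+1)d^m m!}$, so $c_{dm+1}=\frac{(dm+1)!}{(dm+1)d^m m!}=\frac{(dm)!}{d^m m!}$, which is exactly the claimed value.

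I do not expect any real obstacle here: the computation is short once one notices that $\cal{G}_d(x)=\big(e^x\Phi(x)\big)'$, which is what makes the factor $e^{-x}$ collapse everything down to $\Phi(x)$. The only points requiring a little care are the bookkeeping for the index shift $G_d(n-1)$ (in particular invoking $G_d(-1)=0$) and the remark that all these manipulations are legitimate as identities of formal power series, which is routine.
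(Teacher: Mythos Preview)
Your proof is correct and follows essentially the same approach as the paper: both compute the exponential generating function of the left-hand side and identify it with $\int_0^x e^{t^d/d}\,dt$. The only cosmetic difference is that the paper recognizes the shifted series directly as $\cal{G}_d(x)-\cal{H}_d(x)$ (using $G_d(n)-H_d(n)=G_d(n-1)$) and then reads off $e^{-x}(\cal{G}_d-\cal{H}_d)=\Phi$ from Theorem \ref{GenGd}(1), whereas you write the shifted series as $\int_0^x\cal{G}_d$ and use the neat observation $\cal{G}_d=(e^x\Phi)'$ to evaluate it; these are two equivalent rearrangements of the same identity.
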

\begin{proof}
We have $\cal{H}_{d}(x)=e^{x+\frac{x^{d}}{d}}$ and using the first part of Theorem \ref{GenGd} we get
$$
\int_{0}^{x}e^{\frac{t^{d}}{d}}dt=e^{-x}(\cal{G}_{d}(x)-\cal{H}_{d}(x))=e^{-x}\sum_{n=1}^{\infty}\frac{G_{d}(n-1)}{n!}x^{n}=\sum_{n=0}^{\infty}\left(\sum_{i=0}^{n}(-1)^{n-i}\binom{n}{i}G_{d}(i-1)\right)\frac{x^{n}}{n!}.
$$
Comparing the coefficients on both sides of the first and the last expression we get the result.
\end{proof}

In the paper \cite{AmdMoll} the authors were able to compute exact expression for the 2-adic valuation of $G_{2}(n)$. One can ask whether it is possible to compute $\nu_{p}(G_{p}(n))$ for $p\in\mathbb{P}$ and $n\in\N_{+}$. A numerical computations suggest the following:

\begin{conj}
\begin{enumerate}
\item[(1)] We have the following expression
 $$
 \nu_{3}(G_{3}(9n+k))=\begin{cases}
\begin{array}{lll}
2n &  & \mbox{for}\;k=0,1,\\
2n+1 &  & \mbox{for}\;k=2,3,4,\\
2n+2 &  & \mbox{for}\;k=5,6,7,\\
2n+5+\nu_{3}(n+1) &  & \mbox{for}\;k=8.
\end{array}
\end{cases}
 $$

\item[(2)] We have the following expression
 $$
 \nu_{5}(G_{5}(25n+k))=\begin{cases}
\begin{array}{lll}
4n &  & \mbox{for}\;k=0,1,2,3,\\
4n+1 &  & \mbox{for}\;k=4,5,7,\\
4n+2 &  & \mbox{for}\;k=6,8,9,10,11,\\
4n+3 &  & \mbox{for}\;k=12,13,14,16,17,18,\\
4n+4 &  & \mbox{for}\;k=15,20,21,22,23,\\
4n+5 &  &\mbox{for}\;k=19,\\
4n+6+\nu_{5}(n+1) & & \mbox{for}\;k=24
\end{array}
\end{cases}
 $$
\end{enumerate}
\end{conj}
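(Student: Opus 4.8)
The plan is to reduce everything to an explicit closed form for $G_{p}(n)$. Summing the exact formula (\ref{exactform}) over $i=0,\dots,n$ and using the elementary identity $\sum_{i=0}^{n}(i)_{(m)}=\frac{(n+1)_{(m+1)}}{m+1}$, one obtains
$$
G_{p}(n)=\sum_{j\geq 0}\frac{(n+1)_{(pj+1)}}{(pj+1)\,j!\,p^{j}}=\sum_{j\geq 0}c_{j}\binom{n+1}{pj+1},\qquad c_{j}=\frac{(pj)!}{j!\,p^{j}},
$$
where for each fixed $n\in\N$ the sum is finite. By Legendre's formula and $s_{p}(pj)=s_{p}(j)$ we get $\nu_{p}(c_{j})=\frac{pj-s_{p}(j)}{p-1}-\frac{j-s_{p}(j)}{p-1}-j=0$, so every $c_{j}$ is a positive integer coprime to $p$. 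Hence $\nu_{p}(G_{p}(n))$ is the $p$-adic valuation of a $\Z$-linear combination, with unit coefficients, of the binomial coefficients $\binom{n+1}{pj+1}$, whose individual valuations are controlled by Kummer's theorem via the number of base-$p$ carries.

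With $m=n+1$ and $p\in\{3,5\}$ I would organize the analysis according to the two lowest base-$p$ digits of $m$: writing $n=p^{2}N+k$ turns the hypothesis into a statement about $m\bmod p^{2}$ (plus the value of $\nu_{p}(N+1)$ in the exceptional case $p^{2}\mid m$). One then proves the stated formulas by induction on $N$, passing from block $N$ to block $N+1$ via $G_{p}(p^{2}N+k)=G_{p}(p^{2}N-1)+\sum_{s=0}^{k}H_{p}(p^{2}N+s)$ and invoking the precise congruences for $H_{p}$ from Theorem~\ref{Hpmodp}, the lower bound together with its sharpness from Corollary~\ref{vpHpnfrombelow}, and the periodicity modulo $p$ of the normalized sequence from Theorem~\ref{Hpperiod}. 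For $p=3$ the values $H_{3}(9N+3\ell+i)$, $i=0,1,2$, are mutually congruent modulo $3^{2N+1}$, which immediately forces the jumps of $\nu_{3}$ at $k=2$ and $k=5$; for $p=5$ one additionally needs the harmonic inner sums $\sum_{1\leq l\leq i}\frac1l$ occurring in Theorem~\ref{Hpmodp} (and the congruence of Lemma~\ref{sumprodrecip}) to account for the non-monotone pattern $4N+1,\,4N+2,\,4N+1$ of $\nu_{5}(G_{5}(25N+k))$ on $k\in\{5,6,7\}$.

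The case $k=p^{2}-1$ is genuinely different: here $m=n+1=p^{2}(N+1)$, every $\binom{m}{pj+1}$ is divisible by a large power of $p$, and the extra term $\nu_{p}(N+1)$ signals that, after factoring out the generic power $p^{(p-1)N+c_{p}}$ (with $c_{3}=5$ and $c_{5}=6$), what remains is $(N+1)$ times a $p$-adic power series in $N$ taking only unit values. The cleanest route I see to such a factorization is to substitute the description $H_{p}(p^{2}m+r)=p^{m(p-1)}f_{r}(m)\prod_{l=1}^{m}\lambda(l)$, with $f_{r},\lambda\in\Z_{p}[[x]]$ convergent on $\Z_{p}$ (\cite[Theorem~B]{IOTY}), into
$$
G_{p}(p^{2}N+p^{2}-1)=\sum_{m=0}^{N}p^{m(p-1)}\Bigl(\textstyle\prod_{l=1}^{m}\lambda(l)\Bigr)\Phi(m),\qquad \Phi=\sum_{r=0}^{p^{2}-1}f_{r},
$$
and then to prove that this sum equals $p^{(p-1)N+c_{p}}(N+1)$ times a unit; the constants $c_{p}$ are pinned down in the course of this computation.

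The main obstacle is the ubiquitous cancellation in the closed form: for $n=p^{2}N+k$ almost every single term $c_{j}\binom{n+1}{pj+1}$ has $p$-adic valuation far below $\nu_{p}(G_{p}(n))$ — e.g. for $p=3$, $k=8$ each term has valuation $2$ while $\nu_{3}(G_{3}(n))=2N+5+\nu_{3}(N+1)$ — so term-by-term estimates are useless and one must track $G_{p}(n)$ modulo $p^{r}$ with $r$ growing linearly in $N$. In the interior of a block this forces sharpening the congruences of Theorem~\ref{Hpmodp} by one (occasionally two) further powers of $p$, which should be feasible by carrying the same induction one step further. The delicate point is the boundary case $k=p^{2}-1$, where the correction $\nu_{p}(N+1)$ is unbounded: it cannot be captured by a congruence to a fixed modulus and appears to require the honest factored form above, hence the $p$-adic-analytic input of \cite{IOTY} together with its convergence estimates. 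For $p=5$ there is no new idea beyond $p=3$, only longer bookkeeping over the four non-trivial sub-blocks and their harmonic coefficients.
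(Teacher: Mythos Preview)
The statement you are addressing is a \emph{conjecture} in the paper, not a theorem: the authors do not prove it. They only remark afterwards that ``It is very likely that the method employed by Amdeberhan and Moll can be extended in order to confirm the above conjecture. However, it would be nice to have general method which works for all $p\in\mathbb{P}$.'' There is therefore no proof in the paper to compare your proposal against.

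As a strategy toward a proof, your closed form
$$
G_{p}(n)=\sum_{j\ge 0}c_{j}\binom{n+1}{pj+1},\qquad c_{j}=\frac{(pj)!}{j!\,p^{j}},\quad \nu_{p}(c_{j})=0,
$$
is correct and useful. But the proposal does not rise above a plan: the decisive step in every case is the $p$-adic cancellation you yourself flag, and the outline does not resolve it. For the interior residues you appeal to Theorem~\ref{Hpmodp}, yet that theorem gives congruences only modulo $p^{k(p-1)+j+1}$, whereas your induction requires control of $G_{p}$ modulo powers that exceed this by one or two; ``sharpening the same induction one step further'' is asserted, not carried out, and is exactly where the work lies. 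For the boundary residue $k=p^{2}-1$ your statement ``prove that this sum equals $p^{(p-1)N+c_{p}}(N+1)$ times a unit'' is not a method but a restatement of the conjecture in the language of \cite{IOTY}; nothing in the outline explains why the $p$-adic functions $\Phi$ and $\lambda$ should produce precisely the factor $(N+1)$, and the constants $c_{3}=5$, $c_{5}=6$ are read off from the conjectured formula rather than derived. In short, the proposal is a reasonable roadmap consistent with the paper's own suggestion, but it leaves the actual obstacles untouched, which is why the statement remains a conjecture.
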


It is very likely that the  method employed by Amdeberhan and Moll can be extended in order to confirm the above conjecture. However, it would be nice to have general method which works for all $p\in\mathbb{P}$.

\bigskip

Let $d\in\N_{\geq 2}$ be fixed and for $n\in\N_{+}$ define the matrix
$$
M_{d}(n,x)=[H_{d}(i+j,x)]_{0\leq i, j\leq n-1}.
$$
Based on numerical calculations we formulate the following

\begin{conj}
Let $d\in\N_{\geq 2}$.
\begin{enumerate}
\item For $n\in\N_{+}$ we have $\op{det}M_d(n,x)\in\Z$ and
$$\op{det}M_d(n,x)\equiv 0\pmod{\prod_{i=0}^{n-1}i!};$$
\item For $n\in\N_{+}$ we have the identity
$$
\op{det}M_{2}(n,x)=\prod_{i=0}^{n-1}i!\;.
$$
\item If $d\geq 3$, then we have the following property
$$
\op{det}M_{d}(n,x)=0 \Leftrightarrow n\equiv 2,\ldots, d-1\pmod{d}.
$$
\end{enumerate}
\end{conj}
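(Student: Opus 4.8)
Here is an approach that reduces all three parts of the conjecture to properties of a single, $x$-free Hankel determinant. \textbf{Removing $x$.} Set $c_k=H_d(k,0)$, so $c_{dm}=(dm)!/(d^mm!)$ and $c_k=0$ for $d\nmid k$. From $e^{xt+t^d/d}=e^{xt}e^{t^d/d}$ we get $H_d(m,x)=\sum_{k=0}^m\binom mk x^{m-k}c_k$, and the Chu--Vandermonde identity $\binom{i+j}{k}=\sum_{a+b=k}\binom ia\binom jb$ gives $H_d(i+j,x)=\sum_{a,b}\binom ia x^{i-a}\binom jb x^{j-b}c_{a+b}$. Therefore $M_d(n,x)=T(x)\,C_n\,T(x)^{\top}$, where $T(x)=\big[\binom ia x^{i-a}\big]_{0\le i,a\le n-1}$ is lower unitriangular and $C_n=[c_{a+b}]_{0\le a,b\le n-1}$, whence $\det M_d(n,x)=\det C_n\in\Z$ is independent of $x$ — this already gives the integrality and $x$-independence asserted in (1).

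\textbf{The vanishing criterion.} Since $c_k=0$ unless $d\mid k$, the $(a,b)$-entry of $C_n$ is zero unless $a+b\equiv0\pmod d$; grouping rows and columns by residue modulo $d$ makes $C_n$ block anti-diagonal, the residue-$\rho$ row block meeting only the residue-$(-\rho\bmod d)$ column block. Writing $n=qd+r$ with $0\le r<d$, residue $\rho$ occurs $q+1$ times among $\{0,\dots,n-1\}$ if $\rho<r$ and $q$ times otherwise. When $2\le r\le d-1$ the $q+1$ rows indexed by $a\equiv1\pmod d$ are supported only on the $q$ columns indexed by $b\equiv d-1\pmod d$, so they are linearly dependent and $\det C_n=0$ (for $q=0$ that single row is already zero). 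This is the ``$\Leftarrow$'' direction of (3): $\det M_d(n,x)=0$ for $n\equiv2,\dots,d-1\pmod d$.

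\textbf{Non-vanishing and the $d=2$ formula.} When $r\in\{0,1\}$ all paired blocks are square, and expanding the block anti-diagonal determinant over the cycles of the involution $\rho\mapsto(-\rho\bmod d)$ (fixed point $0$, and $d/2$ if $d$ is even) writes $\det C_n$, up to sign, as $\det[e_{\alpha+\beta}]$ of size $\lceil n/d\rceil$ times the $(d-1)$-st power of $\det[e_{\alpha+\beta+1}]$ of size $\lfloor n/d\rfloor$, where $e_m:=c_{dm}=(dm)!/(d^mm!)$. So it suffices that all ordinary and once-shifted Hankel determinants of $(e_m)$ be nonzero. By the Gauss multiplication formula, $e_m=(2\pi)^{(1-d)/2}d^{1/2}(d^{d-1})^m\prod_{\ell=1}^{d-1}\Gamma(m+\tfrac\ell d)$, so $(e_m)$ is, up to a positive constant and a geometric factor, a product of $d-1$ Gamma moment sequences, hence a Stieltjes moment sequence of a measure with infinite support; for such a sequence every Hankel and once-shifted Hankel determinant is strictly positive. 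This finishes (3). Part (2) is the case $d=2$ argued directly on $(c_k)$: here $e^{t^2/2}$ is the moment generating function of the standard Gaussian, so $c_k$ is its $k$-th moment, the monic orthogonal polynomials are the Hermite polynomials $\mathrm{He}_{k+1}(y)=y\mathrm{He}_k(y)-k\mathrm{He}_{k-1}(y)$, and the classical evaluation $\det[c_{i+j}]_{0\le i,j\le n-1}=\prod_{k=0}^{n-1}\lambda_1\cdots\lambda_k$ with $\lambda_k=k$ yields $\det M_2(n,x)=\prod_{i=0}^{n-1}i!$.

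\textbf{Divisibility, and the main obstacle.} Divisibility by $\prod_{i=0}^{n-1}i!$ is automatic when the determinant vanishes, so suppose $n\equiv0,1\pmod d$ and use the factorisation above. As $(e_m)$ is a genuine one-dimensional moment sequence, each factor equals $\prod_k h_k$ with $h_k=\lambda_1^{(e)}\cdots\lambda_k^{(e)}$ (and similarly for the once-shifted functional), where the $\lambda_k^{(e)}$ are the rational recurrence coefficients of the orthogonal polynomials attached to the product measure. The plan is to compute these coefficients in closed form — for $d=2$ they are the Laguerre-type values $\lambda_k^{(e)}=2k(2k-1)$, for which $\prod_{m\le k}\lambda_m^{(e)}=(2k)!$ and the shifted pivots are $(2k+1)!$, reproducing $\prod_{i=0}^{n-1}i!$ — and then to compare $\nu_p(\det C_n)$ with $\nu_p\big(\prod_{i=0}^{n-1}i!\big)$ prime by prime. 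I expect the main difficulty to lie exactly here: for $d\ge3$ the orthogonal polynomials of a multiplicative convolution of $d-1$ Gamma laws have no classically tabulated three-term recurrence, so securing enough $p$-adic control over the pivots $h_k$, uniformly in $d$, is the crux of part (1). A promising alternative is to bypass the $\lambda_k^{(e)}$ by reading $\det C_n=\det[c_{i+j}]$ through the Lindström--Gessel--Viennot lemma — the numbers $c_k$ count partitions of a $k$-element set into $d$-cycles — since non-intersecting lattice-path models tend to produce determinants carrying exactly such factorial divisors.
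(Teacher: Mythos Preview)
The paper does not prove this statement at all --- it is presented in Section~\ref{Section9} as a conjecture based on numerical evidence, with no argument offered. So there is no ``paper's own proof'' to compare against; your proposal is genuine progress on an open problem in the paper.

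Your reduction $M_d(n,x)=T(x)\,C_n\,T(x)^\top$ with $T(x)$ lower unitriangular is correct and elegant: it immediately gives $\det M_d(n,x)=\det C_n\in\Z$, settling both the integrality and the $x$-independence implicit in~(1). Your block argument for the vanishing direction of~(3) is clean and correct: when $n=qd+r$ with $2\le r\le d-1$, the $q+1$ rows in residue class~$1$ live in only $q$ columns. For the converse direction, the factorisation
\[
\det C_n=\pm\det\bigl[e_{\alpha+\beta}\bigr]_{\lceil n/d\rceil}\cdot\bigl(\det\bigl[e_{\alpha+\beta+1}\bigr]_{\lfloor n/d\rfloor}\bigr)^{d-1}
\]
together with the observation that $e_m=(dm)!/(d^mm!)$ is a Stieltjes moment sequence with infinite support (via Gauss multiplication and the Hadamard product of Gamma moments) is a valid route to non-vanishing. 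Your direct treatment of~(2) through the Hermite three-term recurrence is the classical evaluation and is correct. Thus parts~(2) and~(3), and the integrality in~(1), are fully proved by your approach.

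You are right to flag the divisibility $\prod_{i=0}^{n-1}i!\mid\det C_n$ in~(1) as the genuine obstacle. Your sketch via the three-term recurrence of the product-Gamma orthogonal polynomials is not yet a proof: for $d\ge3$ those recurrence coefficients are rational but not integers, and there is no evident reason the $p$-adic content should organise itself into $\prod i!$ after raising the shifted Hankel to the $(d-1)$-st power. The LGV suggestion is plausible but would need a concrete lattice model for $c_k$ that makes the superfactorial appear as a Vandermonde, and none is supplied. So as it stands, part~(1) beyond integrality remains open --- which is exactly the status in the paper, except that you have now disposed of~(2) and~(3).
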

\bigskip

Let $a,b\in\N_{\geq 2}$ with $a<b$. It seems to be interesting to find out if there are some integers $n\geq a$ and $m\geq b$ such that
\begin{equation}\label{HanHbm}
H_a(n)=H_b(m).
\end{equation}
We only know that if $a$ is a prime number then the equation (\ref{HanHbm}) has no solutions, as $a$ divides the left hand side and does not divide the right hand side.

\begin{ques}
Is there any quadruple $(a,b,n,m)\in\N^4$ such that $2\leq a<b$, $n\geq a$, $m\geq b$ and $H_a(n)=H_b(m)$? If yes, are there infinitely many such quadruples?
\end{ques}

\bigskip

\noindent Jagiellonian University, Faculty of Mathematics and Computer Science, Institute of Mathematics, {\L}ojasiewicza 6, 30 - 348 Krak\'{o}w, Poland;

\noindent email: {\tt \{piotr.miska, maciej.ulas\}@uj.edu.pl}

\end{document}